\documentclass[11pt, letterpaper,reqno]{amsart}
\usepackage[left=1in,right=1in,bottom=0.5in,top=0.8in]{geometry}

\usepackage{amsmath, amsthm, amsfonts, amssymb, bm,bbm}

\usepackage{booktabs}
\usepackage{natbib}
\usepackage[title]{appendix}
\usepackage{setspace}
\usepackage{graphicx}
\usepackage{enumitem}
\usepackage[labelfont=bf, font={small,it}]{caption}
\usepackage{float}
\usepackage{mathtools}
\usepackage{array}
\usepackage{xcolor}
\usepackage{hyperref}
\usepackage{float}
\usepackage{subcaption}
\usepackage{fancyhdr}
\usepackage{mdframed}
\usepackage{listings}
\usepackage{upgreek, bm}
\usepackage{tikz} 
\usetikzlibrary{patterns} 
\usetikzlibrary{arrows}
\usetikzlibrary{decorations.markings} 
\usetikzlibrary{shapes} 
\usetikzlibrary{decorations.pathreplacing}

\newtheorem{thm}{Theorem}[section]
\newtheorem{cor}[thm]{Corollary}
\newtheorem{lemma}[thm]{Lemma}
\newtheorem{prop}[thm]{Proposition}
\newtheorem{rmk}[thm]{Remark}
\newtheorem{conj}[thm]{Conjecture}

\newtheorem{define}[thm]{Definition}

\newcommand{\Pb}{\mathbb{P}}
\newcommand{\E}{\mathbbm{E}}
\newcommand{\Id}{\mathbbm{1}}

\newcommand{\I}{{\rm i}}

\newcommand{\Gamm}{\text{Gamma}^{-1}}

\newcommand{\K}{\mathcal{K}}
\newcommand{\B}{\mathcal{B}}
\newcommand{\A}{\mathcal{A}}
\newcommand{\M}{\mathcal{M}}

\newcommand{\BBB}{\mathsf{B}}
\newcommand{\AAA}{\mathsf{A}}
\newcommand{\DDD}{\mathsf{D}}
\newcommand{\CCC}{\mathsf{C}}

\newcommand{\ttt}{\tilde{t}}
\newcommand{\tU}{\tilde{U}}
\newcommand{\tV}{\tilde{V}}
\newcommand{\tbeta}{\tilde{\beta}}

\newcommand{\tW}{\tilde{W}}
\newcommand{\tZ}{\tilde{Z}}
\newcommand{\CC}{\mathbb{C}}

\newcommand{\X}{\mathcal{X}}
\newcommand{\Y}{\mathcal{Y}}

\newcommand{\G}{\overline{G}}
\newcommand{\D}{\mathcal{D}}
\newcommand{\C}{\mathcal{C}}

\newcommand{\R}{\mathbb{R}}
\newcommand{\N}{\mathbb{N}}

\DeclareMathOperator{\Z}{\mathbb{Z}}
\DeclareMathOperator{\PP}{\mathbb{P}}
\newcommand{\Nsigma}{(\sigma N)^{1/3}}
\newcommand{\Nsig}{(\sigma N)^{-1/3}}

\numberwithin{equation}{section}

\DeclareMathOperator{\NN}{\mathcal{N}}
\DeclareMathOperator{\Pf}{\mathrm{Pf}}

\newcommand{\bra}[1]{\left\langle #1 \right|}
\newcommand{\ket}[1]{\left| #1 \right\rangle}
\newcommand{\braket}[2]{\left\langle #1  \left| #2  \right\rangle \right.}
\newcommand{\brabarket}[3]{\left\langle #1 \left| #2 \right| #3 \right\rangle}
\newcommand{\ketbra}[2]{\left| #1 \right\rangle \left\langle #2 \right|}

\title{Stationary log-gamma Polymer in half-space}
\author{Jiyue Zeng, Xinyi Zhang}
\thanks{Email: jz3524@columbia.edu} 
\thanks{Email: xz3272@columbia.edu} 
\date{\today}

\begin{document}

\maketitle

\begin{abstract}
    We study the half-space log-gamma polymer model with stationary initial conditions. We derive exact formulas for the distribution of the partition function along the diagonal across the entire High density phase and Low density phase. We obtain asymptotics of these distributions under the critical scaling. We also prove the first exponential upper bounds for the upper and lower tail of the scaled free energy for these half-space stationary log-gamma models. 
\end{abstract}

\section{Introduction}
Log-gamma polymer model, introduced in \cite{Seppalainen2012}, is an exactly solvable model in the KPZ universality class. It can access KPZ equation and zero temperature models such as Last Passage Percolation (LPP), Totally Asymmetric Simple Exclusion Process (TASEP) under certain limits. Exact Laplace transform formulas for the full-space log-gamma free energy were derived using tropical combinatorics and Whittaker function identities \cite{CorwinOConnellSeppalainenZygouras2014}, and their asymptotics were later analyzed via nested contour integral techniques introduced in \cite{borodin2013log}.

Restricting to the half-space geometry introduces boundary interactions
that affects the overall behavior and fluctuations. In \cite{BarraquandShouda}, the authors established a distributional identity between half- and full-space log-gamma free energies. Based on the distributional identity \cite{BarraquandShouda} and half-space Whittaker process, \cite{barraquand2025kpz} proved the KPZ exponents for the half-space log-gamma without access to exact formulas. Explicit Fredholm Pfaffian formulas for the point-to-point partition function of half-space log-gamma polymer were later obtained in \cite{imamura2022solvablemodelskpzclass}.

The stationary measures of these models are of particular interests.
For instance, stationary measures have been characterized for the open KPZ equation \cite{corwin2024stationary,2023stationary}, for LPP and log-gamma polymer on a strip \cite{stationarystrip}, and for the half-space KPZ equation \cite{LogGammaStationary,barraquand2021steady}. Moreover, \cite{das2025convergence} proves the convergence of half-space log-gamma polymer to the stationary measure along anti-diagonal path. In particular, \cite{LogGammaStationary} proved that the half-space log-gamma polymer model admits a two-parameter family of stationary measures, one boundary parameter and one parameter that describes the drift of the initial condition; this family of stationary measures is the main focus of the present work.

The same two-parameter stationary structure appears in half-space LPP models, where under a special choice of parameters it degenerates to Brownian motion.
Fluctuations of half-space LPP with stationary initial condition have been studied in several settings: \cite{PatrikStatExp} derived the limiting one-point distribution for exponential LPP with Brownian stationary initial condition; \cite{zeng2025stationary} obtained the one-point distribution and critical limits for the two-parameter stationary half-space geometric LPP. It is then natural to further investigate limiting distributions of the half-space log-gamma model with the two-parameter family of stationary initial conditions.

In this work, we derive explicit formulas for
the one-point distribution of the free energy and obtain their critical scaling limits, which should describe the one-point distribution of conjectural half-space KPZ fixed point with stationary initial condition.
We introduce the model and stationary measures explicitly in the following section.

\subsection{Half-space log-gamma polymer with initial conditions}
Given the initial condition $(Z(N,1))_{N\in \Z_{\geq 1}}$, we define the partition function $Z(N,M)$ of the half-space log-gamma polymer via the following recurrence relation:
\begin{equation}\label{def:recurrence}
    \begin{cases}
        Z(N,M) = \omega_{N,M} (Z(N-1,M) + Z(N,M-1)), &\text{ for }N>M\geq 2,\\
        Z(N,N) = \omega_{N,N}Z(N,N-1), &\text{ for } N\geq 2
    \end{cases}
\end{equation}
where the weights $\omega_{N,M}$ are all independent and distributed as
\begin{equation}\label{def:weights}
\begin{cases}
    \omega_{N,M} \sim \Gamm(2\alpha), &\text{ for }N>M\geq 2,\\
    \omega_{N,N} \sim  \Gamm(\beta + \alpha), &\text{ for }N\geq 2.\\
\end{cases}
\end{equation}
for $\alpha >0$ and $\beta + \alpha >0.$ We call $\log Z(N,M)$ the free energy of the polymer.

\begin{define}
    We say a process $(\mathcal{T}(x))_{x \in \Z_{\geq 0}}$ is stationary for the half-space log-gamma polymer if the solution to \eqref{def:recurrence} with the initial condition $Z(1+\cdot,1)/Z(1,1) \stackrel{(d)}{=}\mathcal{T}(\cdot)$ has the property that the law of $(Z(N+x, N) / Z(N,N))_{x\in\Z_{\geq 0}}$ is the same as the law of $(\mathcal{T}(x))_{x \in \Z_{\geq 0}}$ for all $N\geq 1$.
\end{define}

\begin{define}
    For $\alpha \in \R_{\geq 0}$ and $\beta, t\in \R$ with $\alpha>t>0$ and $\beta > -t$, we define a process
    \begin{equation}\label{def: stationary process}
        \mathcal{T}_{\beta, t} (x) := R_2(x) + \frac{1}{\omega}\sum_{\ell = 1}^x R_1(\ell)\frac{R_2(x)}{R_2(\ell-1)} \quad \text{ for } x \in \Z_{\geq 0},
    \end{equation}
    where $R_1$ and $R_2$ are independent $\Gamm (\alpha -t)$ and $\Gamm(\alpha + t)$ multiplicative random walks with $R_1(0) = R_2(0) =1,$ and $\omega \sim \Gamm(\beta + t)$ is independent of $R_1,R_2$. When $\beta = -t,$ we set $\frac{1}{\omega} = 0.$ We call $\mathcal{T}_{\beta, t}(\cdot)$ the \emph{two-parameter stationary measure}. 
\end{define}
It is clear that $\mathcal{T}_{-t,t}$ has the law of a multiplicative $\Gamm(\alpha +t)$ random walk. When $\beta = t$, $\mathcal{T}_{t, t}$ is also a multiplicative $\Gamm(\alpha +t)$ random walk. The proof of this fact can be found in \cite[Lemma 2.9]{LogGammaStationary}. We call $\mathcal{T}_{-t,t}$ the \emph{product stationary measure}.

We state \cite[Theorem 1.8]{LogGammaStationary} in the setting of our parameters.
\begin{thm}
    For any $\alpha \in \R_{>0}$ and $\beta,t \in \R$ such that $\alpha > t >0$ and $\beta > \max\{-t,-\alpha\}$, the process $\mathcal{T}_{\beta,t}(\cdot)$ in \eqref{def: stationary process} is stationary for the half-space log-gamma polymer model \eqref{def:recurrence}.
\end{thm}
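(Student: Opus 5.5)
By stationarity of the increment process it suffices to treat one step of the dynamics. Set $N=1$ and suppose $Z(1+\cdot,1)/Z(1,1)\stackrel{(d)}{=}\mathcal{T}_{\beta,t}(\cdot)$. I will show that $(Z(2+x,2)/Z(2,2))_{x\ge 0}$ again has the law of $\mathcal{T}_{\beta,t}$; induction on $N$ then finishes the proof. Write the new row ratios as $U_x:=Z(x+1,2)/Z(x,2)$ for $x\ge 2$, and the old ones as $V_x:=Z(x+1,1)/Z(x,1)$. The recurrence \eqref{def:recurrence} gives
\[
U_x=\frac{\omega_{x+1,2}\bigl(Z(x,2)+Z(x+1,1)\bigr)}{Z(x,2)} .
\]
This is a Markov-type update in the "queue" variables $Z(x,1)/Z(x,2)$, which is exactly the structure of the Burke property for the full-space log-gamma polymer \cite{Seppalainen2012}.

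\textbf{Step 1: the product case.} I would first check $\beta=-t$, i.e.\ the claim that i.i.d.\ $\Gamm(\alpha+t)$ increments are preserved. At the diagonal, $Z(2,2)=\omega_{2,2}Z(2,1)$. Seppäläinen's Burke lemma says: if $V\sim\Gamm(\alpha+t)$ and $W\sim\Gamm(\alpha-t)$ are independent and $\omega\sim\Gamm(2\alpha)$, then
\[
U=\omega(1+V/W),\qquad W'=\omega(1+W/V),\qquad \omega'=(1/V+1/W)^{-1}
\]
are independent, with the same marginal laws as $(V,W,\omega)$ in the appropriate order. Iterating this lemma along the row gives independence of the new increments, provided the first vertical ratio $Z(2,2)/Z(2,1)=\omega_{2,2}$ has the correct law. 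In the product case that law must be $\Gamm(\alpha-t)$, and the boundary weight $\Gamm(\alpha+\beta)=\Gamm(\alpha-t)$ is exactly this.

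\textbf{Step 2: general $\beta$ via a two-layer representation.} For general $\beta$ I would not work with $\mathcal{T}_{\beta,t}$ directly. Instead I would realize it through \eqref{def: stationary process} as a two-line object: $R_2$ is the increment process, and $R_1$ together with $\omega$ records a hidden second layer. Concretely, $\mathcal{T}_{\beta,t}(x)$ is the partition function from an auxiliary point into $x$, where the paths are either "straight" (weight $R_2$) or enter through a boundary weight $1/\omega\sim$ inverse-$\Gamm(\beta+t)$ and then travel along $R_1$. I would then construct a coupled dynamics on the pair $(R_1,R_2,\omega)$ in which:
\begin{itemize}
\item[(i)] $R_2$ evolves by the Burke map with drift $\alpha+t$;
\item[(ii)] $R_1$ evolves with the reflected drift $\alpha-t$;
\item[(iii)] $\omega$ is updated at the diagonal using $\omega_{N,N}\sim\Gamm(\alpha+\beta)$.
\end{itemize}
I would then check that the induced evolution of $\mathcal{T}$ is exactly \eqref{def:recurrence}. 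This is in the spirit of the proof in \cite{LogGammaStationary}: a half-space Whittaker or "two-layer Gibbs" structure, or equivalently a pair of coupled Burke queues. Invariance of the triple's law then follows from repeated applications of the gamma–beta algebra:
\begin{itemize}
\item[(a)] if $X\sim\mathrm{Gamma}(a)$ and $Y\sim\mathrm{Gamma}(b)$ are independent, then $X+Y$ and $X/(X+Y)$ are independent;
\item[(b)] the corresponding identity for inverse gammas, which underlies the Burke lemma.
\end{itemize}

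\textbf{Main obstacle.} The hard step is the diagonal update. There the new $\omega$ must be shown to be $\Gamm(\beta+t)$ and independent of the updated $R_1,R_2$. This requires combining $\omega_{2,2}\sim\Gamm(\alpha+\beta)$, the old $\omega$, and the first increment $R_1(1)\sim\Gamm(\alpha-t)$. Parameter bookkeeping suggests a beta–gamma identity of the form
\[
(\alpha+\beta)+(\alpha-t)\ \leftrightarrow\ (\beta+t)+(2\alpha-\ldots),
\]
and the condition $\beta>\max\{-t,-\alpha\}$ is exactly what keeps all parameters positive. I would first try to find this identity by computing the joint density of the three inputs and changing variables to the three outputs, checking that the Jacobian and the exponents factorize. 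If no clean identity appears, the fallback is to cite the half-space Whittaker measure argument of \cite{LogGammaStationary}, which is the source of \cite[Theorem 1.8]{LogGammaStationary}.
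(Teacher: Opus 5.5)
In the paper this theorem is not proved at all: it is quoted from \cite[Theorem 1.8]{LogGammaStationary}, so your fallback is exactly what the paper does. As a self-contained argument, however, your proposal has a genuine gap, and it is Step~2, which is the entire content of the theorem.

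\textbf{Where the gap is.} Step~1 is the classical Burke computation and is correct. But it only treats the endpoint $\beta=-t$, which the statement excludes. For general $\beta$ the increments of $\mathcal T_{\beta,t}$ are not i.i.d., and the horizontal inputs to any Burke step along the row are exactly these increments, so the lemma cannot be applied.

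Write $\mathcal T_{\beta,t}=\Phi(R_1,R_2,\omega)$ for the two-row partition function in \eqref{def: stationary process}. The polymer step gives $\mathcal T'(1)=\omega_{N+2,N+1}\bigl(1+\mathcal T(2)/(\omega_{N+1,N+1}\mathcal T(1))\bigr)$, and similarly further along the row. So it sees $\mathcal T$ only through the ratios $\mathcal T(1+k)/\mathcal T(1)$.

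A proof along your lines needs a new triple $(R_1',R_2',\omega')$, built from $(R_1,R_2,\omega)$ and the fresh row, that has the same joint law and satisfies $\Phi(R_1',R_2',\omega')=\mathcal T'$. Updating $R_2$ and $R_1$ by separate Burke maps and then patching $\omega$ at the diagonal gives no reason for $\Phi$ to intertwine with the dynamics. You never check this intertwining, and the diagonal beta--gamma identity is only guessed. Such an intertwining is a genuinely two-layer (geometric-RSK or Pitman-type) problem, and nothing in your proposal shows how one-vertex beta--gamma identities would produce it.

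\textbf{A route that closes the argument.} Replace the queueing coupling by parameter symmetry.
\begin{itemize}
\item By Definition~\ref{def: two-param stationary model}, $\mathcal T_{\beta,t}$ is the normalized row-$2$ profile of the half-space model with $(A_0;A_1,A_2,A_3,\dots)=(\beta;-t,t,\alpha,\dots)$. The weights at $(1,1)$ and $(2,1)$ are global factors and are set to $1$.
\item Running \eqref{def:recurrence} from $\mathcal T_{\beta,t}$ is this same model. So you must show that its normalized row-$N$ profile has the law of its normalized row-$2$ profile.
\item Take $A_1=-t+\epsilon$ and move the pair $(A_1,A_2)$ to rows $N-1,N$. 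The weight at $(N,N-1)$ becomes $\Gamm(\epsilon)$, and as $\epsilon\to0$ it carries all the mass.
\item After dividing by $Z(N,N)=\omega_{N,N}Z(N,N-1)$, the row-$N$ profile then converges to $\Phi$ evaluated at three inputs: the row-$(N-1)$ weights, which are $\Gamm(\alpha-t)$; the row-$N$ weights, which are $\Gamm(\alpha+t)$; and $\omega_{N,N}\sim\Gamm(\beta+t)$. This holds whatever happens in the rows below.
\end{itemize}

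So the missing input is that the joint law of the row-$N$ profile is invariant under permutations of $A_1,\dots,A_N$. This is a multi-point analogue of Lemma~\ref{lem: symmetry}, coming from half-space Whittaker processes and geometric RSK (cf.\ \cite[Lemma 2.7]{LogGammaStationary}). Note that $A_0$ cannot be included here: already the law of $Z(3,2)/Z(2,2)$ changes under $A_0\leftrightarrow A_1$.

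Two further steps are then needed. First, take the $\epsilon\to0$ limit on both sides. Second, extend from $\beta>t$ to $\beta>\max\{-t,-\alpha\}$ by analytic continuation in $\beta$, since after the move the weight at $(N-1,N-1)$ has parameter $\beta-t+\epsilon$.
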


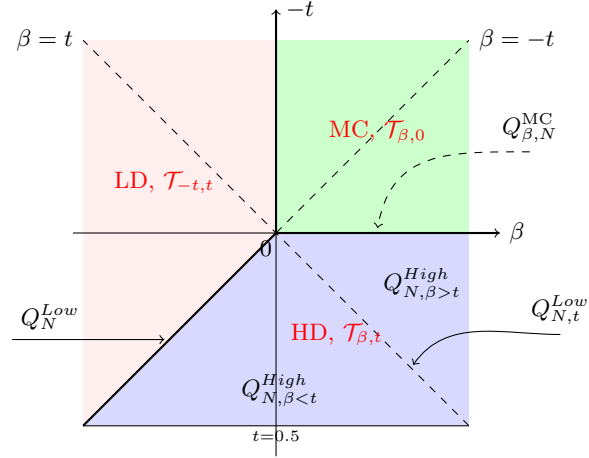
\begin{figure}
\centering
    \begin{tikzpicture}[scale=1.35]
        \fill[green!20] (0,0) -- (1.9,0) -- (1.9,1.9) -- (0,1.9) -- (0,0) -- cycle;
        \fill[red!20,opacity=0.3] (0,0) -- (0,1.9) -- (-1.9,1.9) -- (-1.9,-1.9) -- (0,0) -- cycle;
        \fill[blue!15] (0,0) -- (1.9,0) -- (1.9,-1.9) -- (0,0) -- cycle;
        \fill[blue!15] (0,0) -- (1.9,-1.9) -- (-1.9,-1.9) -- (0,0) -- cycle;

        \draw[thin] (-1.9,-1.9) -- (1.9,-1.9);
        \node  at (0,-2) {\footnotesize{$\scriptstyle t = 0.5$}};

        \draw[thin,->] (-2,0) -- (2.2,0)node[right] {\footnotesize{$\beta$}};
        \draw[thin,->] (0,-2.2) -- (0,2.2)node[right] 
        {\footnotesize{$-t$}};

        \draw[thick] (0,0) -- (0,2.2);
        \draw[thick] (0,0) -- (2.2,0);
        
        \draw[thick] (-1.9,-1.9) -- (0,0);
        \draw[thin, dashed] (0,0) -- (1.9,1.9) node[right] {\footnotesize{$\beta = -t$}};
        \draw[thin,dashed] (1.9,-1.9) -- (-1.9,1.9) node[left] {\footnotesize{$\beta = t$}};

        \node at (-0.1,-0.15) {\footnotesize{$0$}};
        
        \node[text = {red}] at (0.6,-1){\footnotesize{HD, $\mathcal{T}_{\beta,t}$}};
        \node[text = {red}] at (1,1){\footnotesize{MC, $\mathcal{T}_{\beta,0}$}};
        \node[text = {red}] at (-1.1,0.5){\footnotesize{LD, $\mathcal{T}_{-t,t}$}};

        \draw[dashed,->]
        (2.5,0.8) node[above]{\footnotesize $Q^{\mathrm{MC}}_{\beta,N}$}to[out=180,in=80,looseness=1.2] (1,0.05);
    
      \draw[thin,->]
        (-2.6,-1.05) -- (-1.1,-1.05)
        node[near start,above]{\footnotesize $Q_N^{Low}$};
    
      \draw[thin,->]
        (2.8,-1) node[above]{\footnotesize $Q_{N,t}^{Low}$}to[out=180,in=55] (1.35,-1.3);

    \node[text = {black}] at (1.4,-0.5){\footnotesize{ $Q_{N,\beta>t}^{High}$}};
    \node[text = {black}] at (0,-1.55){\footnotesize{ ${Q}_{N,\beta<t}^{High}$}};
        
    \end{tikzpicture}
    \captionof{figure}{This picture describes the phase diagram of the half-space log-gamma polymer model given the boundary parameter $\beta$ and the parameter $t$ that describes the drift. Depending on where $(\beta,t)$ lies in the diagram, the process $Z(\cdot,N) / Z(N,N)$ converges to one of the three spatial processes, $\mathcal{T}_{\beta,t},$ $\mathcal{T}_{\beta,0},$ or $\mathcal{T}_{-t,t},$ which is claimed in Conjecture \ref{conj: stationary measures}. On the full line $\beta = t$ and the half line $\beta = -t$, $\beta \leq 0$, $t\geq 0$, the process converges to the $\text{Gamma}^{-1}$ $(\alpha+t)$ multiplicative random walk. \textbf{MC}, \textbf{HD}, and \textbf{LD} represent Maximal current (green), High density (blue), and Low density (red) phases. We characterize the distribution of $Z(N,N)$ in the High density phase under different conditions of $\beta$ and $t$. When $\beta > t,$ we obtain $Q_{N,\beta >t}^{High}$ and when $\beta < t$, we obtain $Q_{N,\beta <t}^{High}$. Along the Low density boundary, we obtain $Q_{N,t}^{Low}$. In particular, we have ${Q}_{N,t}^{Low}$ on the line $\beta = t$, $t>0$ in High density phase. We obtain these distributions under the condition $t< 0.5$. We do not have the distribution for the Maximal current phase, which is drawn as a dashed line.}
    \label{phaseDiagram}
\end{figure}

\subsection*{Phase diagram}
We define the digamma function and polygamma functions. Let
\begin{equation}
    \psi(z) := \frac{1}{\Gamma(z)} \frac{\mathrm{d}}{\mathrm{d}z}\Gamma(z),
\qquad \qquad
\psi^{(n)}(z) = \frac{\mathrm{d}^n}{\mathrm{d}z^n}\psi(z).
\end{equation}

\begin{conj}\label{conj: stationary measures}
    For given $\alpha \in \R_{>0}$ and $\beta, t \in \R$ with $\alpha>t>0$ and $\beta > -t$, $\{\mathcal{T}_{\beta, t}\}$ constitutes all extremal stationary measures for \eqref{def:recurrence}. For any initial data $Z(x,1)$ such that $\lim_{x\rightarrow \infty} \log Z(x,1)/x = -d \in \R$, we have the following. Let
    \begin{equation}\label{eq: relation}
     t = \alpha - \psi^{-1}(d).        
    \end{equation}
    \begin{itemize}
        \item(High density) For $\beta \geq -t$ and $t \geq 0,$ the process $\left(Z(x+N, N) / Z(N,N)\right)_{x\in \Z_{\geq 0}}$ converges weakly as $N \rightarrow \infty$ to $(\mathcal{T}_{\beta,t}(x))_{x\in \Z_{\geq 0}}$.
        \item(Maximal current) For $\beta \geq 0$ and $t \leq 0$, the process $\left(Z(x+N, N)/Z(N,N)\right)_{x\in \Z_{\geq 0}}$ 
        converges weakly as $N \rightarrow \infty$ to $(\mathcal{T}_{\beta,0}(x))_{x\in \Z_{\geq0}}$.
        \item(Low density) For $-t \geq \beta$ and $\beta \leq 0,$ the process $\left(Z(x+N, N)/ Z(N,N)\right)_{x\in \Z_{\geq 0}}$ converges weakly as $N \rightarrow \infty$ to $(\mathcal{T}_{-t,t}(x))_{x\in \Z_{\geq 0}}$, i.e., a $\Gamm(\alpha + t)$ multiplicative random walk.
    \end{itemize}
\end{conj}

The parameter $t$ describes the drift of the initial condition through the relation \eqref{eq: relation}. Depending on the $(\beta,t)$ values, the system exhibits three different
regimes. We give a heuristic interpretation of each regime: 
\begin{enumerate}
    \item \textbf{High-density regime.}
    The free energy is dominated by contributions of weights from the first row (the initial condition).

    \item \textbf{Low-density regime.}
    By symmetry with the High density regime, the dominant contribution to the free energy
    comes from weights on the diagonal, whereas the influence of
    the initial condition becomes negligible.

    \item \textbf{Maximal-current regime.}
    In this regime, weights from the initial condition and the diagonal have comparable
    strength, leading to a competition between the two. Thus, the dominant contribution comes from the bulk weights.
\end{enumerate}

\subsection{Main results}
We define the diagamma function and polygamma functions. Let
\begin{equation}
    \psi(z) := \frac{1}{\Gamma(z)} \frac{\mathrm{d}}{\mathrm{d}z}\Gamma(z),
\qquad \qquad
\psi^{(n)}(z) = \frac{\mathrm{d}^n}{\mathrm{d}z^n}\psi(z).
\end{equation}
Then we define two constants for the following theorems. Let
\begin{equation}\label{fSconstant}
    f = 2 \psi(\alpha),\quad \sigma = - \psi^{(2)}(\alpha).
\end{equation}
\subsubsection{Main results for Low density phase}
\begin{define}\label{def: product stationary Z}
    Consider the product stationary process $\mathcal{T}_{-t,t}$. We use $\mathcal{Z}^t(N,N)$ to denote the partition function defined by \eqref{def:recurrence} and \eqref{def:weights} with $\beta = -t$ and set the initial condition $\mathcal{Z}^t(1+\cdot,1) \stackrel{(d)}{=} \mathcal{T}_{-t,t}.$
\end{define}

\begin{thm}[Low density phase]\label{thm: one_param_finite}
    Fix any two parameters $t,\alpha \in \R_{>0}$ satisfying $t \in (0,0.5)$ and $\alpha > t.$ Let $\mathcal{Z}^t(N,N)$ be defined as in definition \ref{def: product stationary Z}. Choose $N_0 \in \Z_{>0}$ such that $t> \frac{1}{N_0}$. Then for all $N\geq N_0,$ we have \begin{equation}\label{eq: One_param_formula}
        \begin{aligned}
             &\E\bigg[ 2K_0\left( 2
            {e^{\left(\log{{\mathcal{Z}}^t}(N,N)-\tau\right)/2} }\right)  \bigg]=Q_{N,t}^{Low}(\tau)
        \end{aligned}
    \end{equation}
    where $Q_{N,t}^{Low}(\tau)$ is defined in Definition \ref{def: one_param_finite}. 
\end{thm}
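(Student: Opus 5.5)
The plan is to realise the product stationary model $\mathcal{Z}^t$ as a limit, or a marginal, of a half-space log-gamma polymer with an extra inhomogeneous boundary row. Then we apply the known Fredholm Pfaffian formula for the point-to-point half-space log-gamma partition function \cite{imamura2022solvablemodelskpzclass}, and finally remove the auxiliary parameter by analytic continuation.

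\textbf{Step 1: encode the initial condition by boundary weights.} Since $\mathcal{T}_{-t,t}$ is a multiplicative $\Gamm(\alpha+t)$ random walk, $\mathcal{Z}^t(N,N)$ is equal in law to a half-space log-gamma partition function on $\{(n,m): n\geq m\geq 1\}$. In this polymer the first row $m=1$ (for $n\geq 2$) carries weights $\Gamm(\alpha+t)$, the bulk carries $\Gamm(2\alpha)$, and the diagonal carries $\Gamm(\alpha-t)$ (because $\beta=-t$). This is exactly a half-space log-gamma polymer with inhomogeneous parameters: the first column/row parameter is $\alpha_1=t$, the others are $\alpha_i=\alpha$, and the boundary parameter is $-t$, in the convention where $\omega_{ij}\sim\Gamm(\alpha_i+\alpha_j)$ and $\omega_{ii}\sim\Gamm(\alpha_i+\beta)$.

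The Pfaffian formula of \cite{imamura2022solvablemodelskpzclass} for $\E[2K_0(2e^{(\log Z-\tau)/2})]$, which is the Laplace-type transform produced by the half-space Whittaker measure, requires positivity conditions that fail here. In particular, the weight at the corner $(1,1)$ and the pairing $\alpha_1+\beta=0$ are degenerate. So I would instead introduce a two-parameter deformation. Replace $(t,-t)$ by generic $(u,\beta)$ with $u+\beta>0$, so that the formula holds. Then write $\E[2K_0(\cdot)]$ for the deformed model as an explicit Fredholm Pfaffian $\Pf(J-K^{u,\beta})$ with a $2\times2$ matrix kernel given by double contour integrals of Gamma-function ratios.

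\textbf{Step 2: relate the deformed model to $\mathcal{Z}^t$.} The stationary model differs from the deformed one by a single independent weight at the corner (a $\Gamm(u+\beta)$ factor), in the spirit of the standard stationary trick of \cite{PatrikStatExp,zeng2025stationary}. Multiplying by an independent inverse-gamma variable acts on the $K_0$-transform by a shift operator in $\tau$. Equivalently, one divides by $\Gamma(u+\beta)$ and uses a relation of the form $\E[\cdots]$ for the stationary model $=\lim_{\beta\to -u}$ of a derivative or difference in $\tau$ of the deformed formula, after setting $u=t$. Concretely, I would extract the pole in $u+\beta$: the deformed kernel has a rank-one piece with a factor $\Gamma(u+\beta)$ coming from residues at $z=u$ or $z=-\beta$. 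I would isolate this piece, multiply through, and take $\beta\to -t$. This yields $Q^{Low}_{N,t}$ as a Fredholm Pfaffian plus a rank-one correction.

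\textbf{Step 3: analytic continuation.} This is the main obstacle. The contours in the deformed kernel must separate poles at $\pm u$, $\beta$ and $\alpha$ with width less than $1$. Moving contours past the poles at $t$ generates the residue terms that appear in Definition \ref{def: one_param_finite}. The constraint $t<1/2$ is exactly what lets a vertical contour lie strictly between $t$ and $1-t$ (and between the shifted Gamma poles). Meanwhile $N\geq N_0$ with $t>1/N_0$ is needed for the kernel's decay factors $(\Gamma(\alpha-z)/\Gamma(\alpha+z))^N$ to dominate the polynomial growth of the residue terms. This ensures trace-class convergence uniformly in the parameters, so both sides are analytic in $(u,\beta)$ on a common domain containing the stationary point.

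I would verify the needed Hadamard-type bounds on the kernel entries uniformly in a neighborhood of $(t,-t)$. Then, using that both sides are analytic and agree on an open set, the identity of Theorem \ref{thm: one_param_finite} follows.
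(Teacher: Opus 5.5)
Your skeleton (deform the degenerate parameters into the regime of \cite{imamura2022solvablemodelskpzclass}, extract the residues of the pole that crosses the contour, multiply by $\Gamma$ of the corner parameter, continue analytically) is the paper's. Sending the boundary parameter $\beta\to-t$ with first-row parameter $u=t$, instead of the paper's first-row $u\to-t$ with boundary $t$, is harmless by the permutation symmetry of the kernel and of the corner weight.

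The genuine gap is Step~2, the bridge to the left-hand side. Theorem~\ref{thm: Pfaffian structure} gives the Laplace transform $\E[\exp(-e^{-\tau}Z(N,N))]$, not a $K_0$-transform. Removing an inverse-gamma weight is also not done by a shift or derivative in $\tau$; that is the exponential-LPP deconvolution. The missing identity comes from integrating out the corner weight $\omega_{1,1}\sim\Gamm(\nu)$, $\nu=u+\beta$, in $Z(N,N)=\omega_{1,1}Z^{-}(N,N)$:
\[
\Gamma(\nu)\,\E\big[e^{-e^{-\tau}Z(N,N)}\big]=\E\Big[\int_0^\infty e^{-e^{-\tau}Z^{-}(N,N)\,y-y^{-1}}\,y^{-\nu-1}\,dy\Big].
\]
As $\nu\to0$ the right side tends to $\E[2K_0(2e^{(\log Z^{-}(N,N)-\tau)/2})]$. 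This identity is the source of both the $K_0$ and the divergent prefactor $\Gamma(\nu)$. The limit on the probabilistic side needs a domination argument; the paper uses $e^{-x}\le x^{-1}$ and $\E[1/Z^{-}(N,N)]<\infty$ from a single-path lower bound. Under your literal reading, where the Pfaffian already equals $\E[2K_0(\cdot)]$ of the deformed $Z$ with its corner weight, multiplying by $\Gamma(\nu)$ and letting $\nu\to0$ would give $\E[\int_0^\infty 2K_0(2\sqrt{e^{-\tau}yZ^{-}})\,y^{-1}e^{-1/y}\,dy]$. That is not the left side of the theorem.

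Step~3 also misplaces the compensation. The extracted residue piece carries the vanishing factor $\Gamma(u-\beta)/\Gamma(u+\beta)$, not $\Gamma(u+\beta)$. Near the stationary point it grows like $e^{tX}$ on $(\tau,\infty)$, so no uniform Hadamard bounds hold for the full kernel, and you cannot get ``both sides analytic on a common domain'' that way. What is needed:
\begin{enumerate}
\item Use the factorization $\Pf(J-\check{\mathbf K})=\Pf(J-\mathbf K)\big(1-\brabarket{Y_1}{(\Id-\overline{G})^{-1}}{X_1}\big)$.
\item Evaluate $\braket{Y_1}{X_1}$ explicitly for positive parameter. The $X$-integration creates a new pole (at $W=\beta$ in your labelling, $W=u$ in the paper's). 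This pole crosses the contour during the continuation, and its residue is exactly $1$, cancelling the constant $1$. That cancellation is why $1-\braket{Y_1}{X_1}=O(1/\Gamma(\nu))$ and $\Gamma(\nu)(1-\braket{Y_1}{X_1})\to\widehat C_L$.
\item Rewrite the resolvent term $\Gamma(\nu)\brabarket{Y_1}{(\Id-\overline{G})^{-1}}{\overline{G}X_1}$ as a difference of two Fredholm Pfaffians, with $\Gamma(\nu)$ absorbed into the rank-two perturbation (Lemma~\ref{lem: difference of pfaffians}). This avoids the resolvent and produces the shape of $Q^{Low}_{N,t}$.
\end{enumerate}
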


\begin{thm}[Low density phase]\label{thm: One param asymptotic}
    Fix any $\tilde{t},\alpha \in \R_{>0}$ and $s \in \R$. Consider the scaling
    \begin{equation}
        t = \frac{\tilde{t}}{(\sigma N)^{1/3}},  \quad \tau = -fN + (\sigma N)^{1/3} s.
    \end{equation} Let $\mathcal{Z}^t(N,N)$ be defined as in definition \ref{def: product stationary Z}. There exists $N_0$ such that for all $N\geq N_0,$ the identity \eqref{eq: One_param_formula} holds. Then we have the following asymptotic limit:
    \begin{equation}\label{eq: one param asymptotic formula}
        \begin{aligned}
             \lim_{N\rightarrow \infty} (\sigma N)^{-1/3} \E\bigg[ 2K_0\left( 2
            {e^{\left(\log \mathcal{Z}^t(N,N)+Nf-(\sigma N)^{1/3}s\right)/2} }\right)  \bigg] = \widetilde{Q}_{\ttt}^{Low}(s)   
        \end{aligned}
    \end{equation}
    where $\widetilde{Q}_{\ttt}^{Low}(s)$ is defined in Definition \ref{def: One param asymptotic}.
\end{thm}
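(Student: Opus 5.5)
The plan is to take the exact finite-$N$ identity of Theorem \ref{thm: one_param_finite} as the starting point and to carry out a steepest-descent analysis of $Q_{N,t}^{Low}(\tau)$ term by term. Under the scaling $t=\tilde t(\sigma N)^{-1/3}$ we have $t<0.5$ and $t>1/N_0$ for all large $N$, so the identity \eqref{eq: One_param_formula} applies. I expect $Q_{N,t}^{Low}$ in Definition \ref{def: one_param_finite} to have the standard shape of a stationary-model formula: a Fredholm Pfaffian $\Pf(J-\K_N)$ of a half-space log-gamma kernel. Its entries are double contour integrals in $z,w$ whose integrands involve $\Gamma(\alpha\pm z)$-type factors raised to the power $N$, together with the boundary factors coming from $\beta=-t$. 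The formula should also carry a finite-rank correction from the stationary initial condition, with poles at $z=\pm t$. Because of the degeneracy at $t\to0$, there may in addition be a prefactor or derivative in $\tau$. That would explain the normalization $(\sigma N)^{-1/3}$ on the left of \eqref{eq: one param asymptotic formula}, since $\partial_\tau=(\sigma N)^{-1/3}\partial_s$.

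\textbf{Step 1 (critical point and local scaling).} Write the $N$-dependent part of each integrand as $\exp\bigl(N[\log\Gamma(\alpha+z)-\log\Gamma(\alpha-z)] - \tau z\bigr)$, or its analogue. With $\tau=-fN+(\sigma N)^{1/3}s$, $f=2\psi(\alpha)$, the exponent has a double critical point at $z=0$. Its Taylor expansion is $-\tfrac{\sigma N}{3}z^3+\dots$ plus linear terms. The substitution $z=\zeta(\sigma N)^{-1/3}$ produces the Airy-type cubic $\zeta^3/3-s\zeta$. In the same variables the poles at $\pm t$ become $\pm\tilde t$. Gamma ratios such as $\Gamma(2z)$ and the $\beta$-dependent factors expand to rational functions of $\zeta$, and these supply the structure of $\widetilde{Q}^{Low}_{\tilde t}(s)$ in Definition \ref{def: One param asymptotic}.

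\textbf{Step 2 (pointwise convergence).} Deform the contours into steepest-descent contours leaving $0$ at angles $\pm\pi/3$ and $\pm 2\pi/3$. These must stay on the correct sides of the poles $\pm t$; where a deformation crosses a pole, I will either keep small circles around it or record the residue explicitly. The residue terms are exactly the finite-rank pieces, which converge to the explicit $\tilde t$-dependent terms of the limit. Rescaling the kernel variables by $(\sigma N)^{1/3}$ then gives pointwise convergence of each kernel entry, with the $(\sigma N)^{-1/3}$ factors absorbed by the Jacobians.

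\textbf{Step 3 (uniform bounds and main obstacle).} Passing the limit through the Fredholm Pfaffian expansion requires exponential bounds, uniform in $N$, of the form $|\K_N(x,y)|\le Ce^{-c(x+y)}$ in rescaled variables, and then dominated convergence via Hadamard's inequality. Near $0$ these bounds follow from the cubic Taylor estimate. Far from $0$ they require showing that $\mathrm{Re}$ of the exponent is strictly decreasing along the global contours. This uses Stirling-type monotonicity of $\log\Gamma$ on vertical lines and circles, as in \cite{borodin2013log} and \cite{imamura2022solvablemodelskpzclass}. The main obstacle is that $t\to0$: the poles $\pm t$ collide with the critical point and with each other. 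The individual terms then blow up like powers of $(\sigma N)^{1/3}$, and only suitably combined terms have a finite limit. I would first regroup $Q_{N,t}^{Low}$ so that these singular pieces appear as a single combination, for example a difference quotient or a $\tau$-derivative, before taking limits. Only after that regrouping would I apply the prefactor $(\sigma N)^{-1/3}$, verifying that it exactly cancels the leading divergence, so that what remains is $\widetilde{Q}^{Low}_{\tilde t}(s)$.
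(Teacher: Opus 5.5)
Your plan is essentially the paper's proof, which reruns Lemmas \ref{lem: limit and upper tail of kernel}, \ref{lem: converge of Pfaffian 2param beta > t}, \ref{lem: limit of functions 2param beta >t} and \ref{lem: 1param pfaffian diff} with $\beta=\alpha$. That is: steepest descent on $C(\delta;\pi/3)$ with $0<\delta<\tilde t$, whose vertex sits at $-\delta(\sigma N)^{-1/3}$, strictly between $-t$ and the pole of $\Gamma(-2Z)\Gamma(-2W)$ at $0$ (so not through $0$ itself); separate limits of $(\sigma N)^{-1/3}\widehat{C}_L$, $\widehat{B}_L$, $(\sigma N)^{1/3}\widehat{B}_L'$, $\widehat{\phi}_1^L$, $(\sigma N)^{-1/3}\widehat{\phi}_2^L$ and the rescaled kernel entries; uniform bounds $Ce^{-\delta(u+v)}$; then Hadamard and dominated convergence.

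The obstacle is milder than you expect. No poles are crossed, since the finite-rank terms come from the kernel decomposition behind Theorem \ref{thm: one_param_finite} and not from residues at this stage. Rather than a difference quotient or $\tau$-derivative, the only regrouping is to move the prefactor inside the second Pfaffian, using that $c\mapsto\Pf\bigl(J-\widehat{\mathbf K}_L-c\,\mathcal{R}\bigr)$ is affine in the coefficient of its rank-two term $\mathcal{R}$ (Remark \ref{modifed}). This gives $(\sigma N)^{-1/3}Q_{N,t}^{Low}=\Pf(J-\widehat{\mathbf K}_L)\bigl((\sigma N)^{-1/3}\widehat{C}_L-1\bigr)+\Pf\bigl(J-\widehat{\mathbf K}_L-(\sigma N)^{-1/3}\mathcal{R}\bigr)$, and each piece converges on its own.
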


\begin{thm}[Low density phase]\label{thm: One param probability}
    Fix any $\alpha,\ttt \in \R_{>0}$ and $s \in \R$. Consider the scaling
    \begin{equation}
        t = \frac{\tilde{t}}{\Nsigma}, \quad \tau = -Nf + (\sigma N)^{1/3} s.
    \end{equation} Let $\mathcal{Z}^t(N,N)$ be defined as in definition \ref{def: product stationary Z}. Let $\widetilde{{Q}}_{\ttt}^{Low}$ be defined as in Definition \ref{def: One param asymptotic} with a choice of $0<\delta <\ttt$ for the contour. Then for any $2\ttt>A>\ttt - \delta$, we have
    \begin{equation}\label{eq: Low final prob}
    \begin{aligned}
        & \Pb\left(\lim_{N\rightarrow \infty}\frac{\log \mathcal{Z}^{t}(N,N) +Nf}{(\sigma N)^{1/3}} \leq s\right) =\int_{\R}\int_{\R} (-A+\I y)e^{-Ax} e^{\I x y} \left(\widetilde{{Q}}_{\ttt}^{Low}(x+s)\right) dxdy.
    \end{aligned}
    \end{equation}
\end{thm}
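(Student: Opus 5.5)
Starting from Theorem \ref{thm: One param asymptotic}, the plan is to convert the $K_0$-Laplace-type transform into a distribution function, in two stages. First remove the $K_0$ smoothing in the limit, then invert a Laplace transform in $s$ with an exponential weight $e^{-Ax}$.

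\textbf{Step 1: the smoothing kernel becomes a derivative of an indicator.} Set $X_N := (\log\mathcal{Z}^t(N,N)+Nf)/(\sigma N)^{1/3}$. The function $u\mapsto 2K_0(2e^{u/2})$ satisfies $2K_0(2e^{u/2}) = -u + O(1)$ as $u\to-\infty$ (since $K_0(z)\sim -\log(z/2)-\gamma$), and it decays superexponentially as $u\to+\infty$. Hence, with $u=(\sigma N)^{1/3}(X_N-s)$,
\[
(\sigma N)^{-1/3}\,2K_0\bigl(2e^{(\sigma N)^{1/3}(X_N-s)/2}\bigr) \longrightarrow (s-X_N)_+
\]
pointwise, with errors $O((\sigma N)^{-1/3})$ uniformly on compacts. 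I would establish tightness of $X_N$, and uniform integrability of $(s-X_N)_+$, from the finite-$N$ formula or by a simple comparison. Combining this with \eqref{eq: one param asymptotic formula} gives, along subsequences, $\E[(s-X)_+]=\widetilde{Q}^{Low}_{\ttt}(s)$ for any limit point $X$. Since $\E[(s-X)_+]$ determines the law of $X$ (its right derivative in $s$ is $\Pb(X\le s)$), $X_N$ converges in distribution. This is how the left side of \eqref{eq: Low final prob} should be interpreted, namely as $\lim_N\Pb(X_N\le s)$, at continuity points, which hold everywhere once we show the limit is continuous.

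\textbf{Step 2: Fourier/Laplace inversion.} Define $F(s)=\Pb(X\le s)=\partial_s \widetilde{Q}^{Low}_{\ttt}(s)$. Instead of differentiating the explicit kernel formula, I would write $F$ via a two-sided Laplace transform. For $A>0$ in an admissible strip,
\[
\int_\R e^{-Ax}e^{\I xy}\,\widetilde{Q}(x+s)\,dx
\]
converges as $x\to-\infty$ because $\widetilde{Q}$ grows only linearly there (like $|x|$ plus a possible exponential term tied to $\ttt$; this is where the constraint $A<2\ttt$ enters). Convergence as $x\to+\infty$ requires that $\widetilde{Q}(x)\to 0$ fast enough; here I expect the condition $A>\ttt-\delta$ to come from the contour choice in Definition \ref{def: One param asymptotic}, whose integrand behaves like $e^{(\ttt-\delta)x}$ or similar.

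Integrating by parts in $x$ turns the multiplication by $(-A+\I y)$ into $\partial_x$. Fourier inversion in $y$ then yields $F(s)$ up to the normalization convention; the $(2\pi)^{-1}$ is presumably absorbed into the paper's definition. Concretely,
\[
\int_\R e^{\I xy}(-A+\I y)\,\widehat{\ }\cdots\,dy
\]
reproduces $e^{Ax}\partial_x\bigl(e^{-Ax}\widetilde{Q}\bigr)$ evaluated at $0$, plus $A\widetilde{Q}$, which gives $\widetilde{Q}'(s)$.

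\textbf{Main obstacle.} The hard part is the growth and decay estimates on $\widetilde{Q}^{Low}_{\ttt}(x)$ as $x\to\pm\infty$ that justify the strip $\ttt-\delta<A<2\ttt$. These must be read off the contour-integral representation by shifting contours and tracking residues, specifically the pole at $\ttt$ that generates exponential terms. The same estimates are needed for the uniform integrability in Step 1.
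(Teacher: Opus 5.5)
Your Step 1 takes a genuinely different route from the paper's, and a good one. The function $u\mapsto 2K_0(2e^{u/2})-(-u)_+$ is bounded on all of $\R$, not just on compacts. Hence $(\sigma N)^{-1/3}Q^{Low}_{N,t}(\tau)=\E[(s-X_N)_+]+O((\sigma N)^{-1/3})$ uniformly in $s$, and Theorem~\ref{thm: One param asymptotic} gives $\E[(s-X_N)_+]\to\widetilde{Q}^{Low}_{\ttt}(s)$. Convexity in $s$ then yields $\lim_N\Pb(X_N\le s)=\partial_s\widetilde{Q}^{Low}_{\ttt}(s)$ at every differentiability point. That is exactly \eqref{eq: change to shift low}, which the paper only states as an expectation.

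The paper argues differently. It passes to the limit inside the exact finite-$N$ two-sided Laplace identity (E.2) of \cite{borodin2015height}, where $1/(\Gamma(-\rho\xi)\Gamma(1-\rho\xi))\approx\rho(A-\I y)$ with $\rho=(\sigma N)^{-1/3}$. It uses dominated convergence with $N$-uniform bounds on $\rho\, Q^{Low}_{N,t}$, then Prohorov's theorem and Fourier inversion of the limit.

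\textbf{The gap is in Step 2 and in your ``main obstacle''.} You have the tails of $\widetilde{Q}^{Low}_{\ttt}$ backwards. By your own Step 1 it equals $\E[(s-X)_+]$, so it tends to $0$ as $s\to-\infty$ and grows linearly as $s\to+\infty$. If it grew linearly at $-\infty$, then $\int e^{-Ax}\widetilde{Q}(x+s)\,dx$ would diverge for every $A>0$.

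What the weighted transform actually needs is exponential decay of $\widetilde{Q}$ at $-\infty$ with rate larger than $A$. That is, you need an $N$-uniform lower-tail bound $\Pb(X_N\le -x)\le Ce^{-(1-\varepsilon)2\ttt x}$. This bound is the real source of the constraint $A<2\ttt$. It also supplies the uniform integrability and lower tightness that you defer to a ``simple comparison''. For the bare identity $F=\partial_s\widetilde{Q}$ the convexity argument avoids uniform integrability, but nothing avoids the lower tail for the $e^{-Ax}$-weighted form.

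This bound is not something you can read off by shifting contours. As $s\to-\infty$ the Fredholm Pfaffians are far from $1$, and the representation gives no usable control. The paper obtains it probabilistically in Theorem~\ref{thm:lower_tail_prodstat}, combining four ingredients:
\begin{itemize}
\item the half-/full-space identity of Theorem~\ref{thm:BW};
\item a stochastic-dominance coupling;
\item the maximal inequality of Lemma~\ref{lem_RW_approximation} for the stationary increments, where the rate $2\ttt$ is the exponential-martingale parameter $\lambda=2t$;
\item the full-space lower tail of \cite{basu2024temporal}.
\end{itemize}
It then transfers the bound to $Q$ via Theorem~\ref{thm:Qlower}. Upper tightness comes from Theorem~\ref{thm:two_para_upper_tail}.

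Contour analysis is used only for growth at $+\infty$, in Lemma~\ref{lem: 1param 2param beta large upper tail Q}. In your route even that step is unnecessary, since $\widetilde{Q}(s)\le |s|+\E|X|$.

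Finally, the prefactor is not a matter of convention. Integrating by parts gives $\int(-A+\I y)e^{(-A+\I y)x}\widetilde{Q}(x+s)\,dx=-\int e^{(-A+\I y)x}\widetilde{Q}'(x+s)\,dx$. So the right-hand side as displayed equals $-2\pi\,\Pb(X\le s)$. The correct identity, which is what the paper's proof actually derives, carries the factor $\frac{1}{2\pi}(A-\I y)$.
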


\subsubsection{Main results for High density phase}

\begin{define}\label{def: two param stationary condition}
    Let $X$ be an independent $\Gamm(\beta + t)$ random variable. Let $R_1, R_2$ be independent $\Gamm(\alpha -t)$ and $\Gamm(\alpha +t)$ random walks with $R_1(0)= R_2(0) = 1.$ Let $\mathcal{T}_{\beta,t}$ be the two parameter stationary process constructed from $R_1$, $R_2$ and $\omega = X$. We use $\mathcal{Z}^{t,\beta}(N,N)$ to denote the partition function defined by \eqref{def:recurrence} and \eqref{def:weights} with $\mathcal{Z}^{t,\beta}(1,1) = X$ and $\mathcal{Z}^{t,\beta}(1+\cdot,1)/\mathcal{Z}^{t,\beta}(1,1) = \mathcal{T}_{\beta,t}(\cdot).$
\end{define}

\begin{thm}[High density phase with $\beta > t$]\label{thm: two param beta > t}
    Fix any three parameters $t, \beta,\alpha \in \R_{>0}$ satisfying $t \in (0,0.5)$ and $\alpha,\beta > t$. Let $\mathcal{Z}^{t,\beta}$ be defined as in definition \ref{def: two param stationary condition}. Choose $N_0 \in \Z_{>0}$ such that $t> \frac{1}{N_0}$. Then for all $N\geq N_0$, we have
\begin{equation}\label{eq: Two_param_formula_positive}
        \begin{aligned}
            \frac{1}{\Gamma(\beta - t)}\E \bigg[ \int_{0}^{\infty} 2K_0\left( 2\sqrt{e^{-\tau} \mathcal{Z}^{t,\beta}(N-1,N-1) w} \right) w^{t-\beta-1} e^{-w^{-1}}dw\bigg]= {Q}_{N,\beta >t}^{High}(\tau)
        \end{aligned}
    \end{equation}
    where ${Q}_{N,\beta >t}^{High}(\tau)$ is defined in Definition \ref{def: two param beta > t}.
\end{thm}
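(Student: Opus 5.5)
Unlike the Low density case, here the initial condition carries an extra boundary weight $X\sim\Gamm(\beta+t)$ with $\beta>t$. The plan is to remove this weight by an integral transform, reducing the statement to a one-parameter computation of the type behind Theorem \ref{thm: one_param_finite}.

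\textbf{Step 1: peel off $X$.} By Definition \ref{def: two param stationary condition}, the initial row equals
\[
X R_2(x)+\sum_{\ell=1}^x R_1(\ell)\,\frac{R_2(x)}{R_2(\ell-1)}.
\]
The second term is the contribution of a path entering through the $\Gamm(\alpha-t)$ walk $R_1$ and then travelling along $R_2$. Viewing $X$ as an extra diagonal-type weight, the point-to-point partition function can be written as a sum over the first step of the polymer. This produces a half-space log-gamma partition function whose boundary weight has parameter $\beta+t$. The variable $w$ in \eqref{eq: Two_param_formula_positive} enters through the identity
\[
\frac{1}{\Gamma(\beta-t)}\int_0^\infty F(w)\,w^{t-\beta-1}e^{-1/w}\,dw=\E[F(W)], \qquad W\sim\Gamm(\beta-t).
\]
The left side of \eqref{eq: Two_param_formula_positive} is therefore $\E[2K_0(2\sqrt{e^{-\tau}\mathcal{Z}^{t,\beta}(N-1,N-1)W})]$ with an independent inverse-gamma $W$. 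The first concrete task is to identify $\mathcal{Z}^{t,\beta}(N-1,N-1)\,W$ with a partition function in a model whose Laplace-type transform is known.

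\textbf{Step 2: Laplace transform formula.} Using $\E[2K_0(2\sqrt{uY})]$ with $Y$ a $\Gamm$ variable, or equivalently the Mellin--Barnes representation $2K_0(2\sqrt{z})=\frac{1}{2\pi\I}\int\Gamma(s)^2 z^{-s}\,ds$, I would apply the half-space log-gamma Fredholm Pfaffian formula of \cite{imamura2022solvablemodelskpzclass}, or the half-space Whittaker measure approach. The inhomogeneous parameters would be $\alpha$ in the bulk, $\alpha\pm t$ on the first row, and $\beta\pm t$ on the boundary. This gives a Fredholm Pfaffian of a kernel whose contour integrands contain factors
\[
\frac{\Gamma(\alpha+t-z)\,\Gamma(\beta-t+z)}{\Gamma(\alpha-t+z)\cdots}.
\]

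\textbf{Step 3: analytic continuation.} The Pfaffian formulas are valid only when the inhomogeneity parameters satisfy positivity conditions, which fail at the stationary point: the combination $\alpha-t$ with $-(\alpha+t)$, and a zero-mode pole in $t$. Following the strategy for the stationary half-space geometric LPP in \cite{zeng2025stationary}, I would first work with a perturbed initial parameter $\alpha+t'$ in place of $\alpha+t$ where the formulas hold. The inverse-gamma structure yields a factor like $\Gamma(t'-t)$ multiplying a ``shift'' identity, and the stationary model is recovered by letting $t'\to t$. Multiplying by the singular prefactor and taking the limit isolates the residue from the pole at $z=t$, which crosses the contour. The hypotheses $t<1/2$ and $t>1/N_0$ ensure this pole is separated from the others and that the contour deformations are legitimate. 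Collecting the surviving terms defines ${Q}_{N,\beta>t}^{High}(\tau)$.

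\textbf{Main obstacle.} I expect Step 3 to be the hardest part: justifying the analytic continuation in $t'$, which requires uniform convergence of the Fredholm Pfaffian expansion, and correctly tracking the residues from the $\beta>t$ boundary pole. The condition $\beta>t$ is needed so that $\Gamma(\beta-t)$ is finite and no additional pole collides with the contour. I would handle this by bounding the kernel entries with Hadamard-type estimates on fixed contours that are uniform in $t'$ near $t$.
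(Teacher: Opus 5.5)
Your outline has the right overall shape: perturb a parameter until the Pfaffian formula of \cite{imamura2022solvablemodelskpzclass} applies, multiply by a divergent Gamma factor, and continue analytically. However, the perturbation sits on the wrong parameter, Step 1 leaves the key identification open, and Step 3 would fail as described.

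In the half-space model with spectral parameters $(A_0,A_1,A_2,A_{i\ge 3})=(t,u,\beta,\alpha)$, the stationary point is $u=-t$. There the $(1,1)$ weight $\Gamm(A_0+A_1)=\Gamm(u+t)$ degenerates, so it is the first-row parameter $A_1$ that must be moved to $u>0$; perturbing $\alpha+t$ restores no admissibility. This also settles Step 1. The $(2,1)$ weight $\Gamm(\beta+u)$ becomes $\Gamm(\beta-t)$, which is exactly your $W$ (hence $\beta>t$). Lemma \ref{lem: symmetry} then identifies the rest with $Z^{t,\beta}(N,N)\stackrel{d}{=}\mathcal{Z}^{t,\beta}(N-1,N-1)$. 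The Bessel function does not come from a Mellin--Barnes integral, which would need Mellin transforms of $Z$ that a Laplace-transform Pfaffian does not provide. It comes from integrating out $\omega_{1,1}$: $\Gamma(u+t)\E[e^{-xZ^{u}}]=\E[2(xZ^{u,-})^{(u+t)/2}K_{-u-t}(2\sqrt{xZ^{u,-}})]$. After separating $\omega_{2,1}$, this tends to the left side of \eqref{eq: Two_param_formula_positive} by dominated convergence, using $e^{-y}\le y^{-1}$ and a bound on $\E[(Z^{u,-})^{-1}]$ uniform in $u$.

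The real gap is Step 3: the full kernel $\check{\mathbf K}$ admits no Hadamard bounds uniform in $u$ near $-t$. As $u\to-t$ the poles $Z,W=-u$ cross the contour and move to $t>0$. Their residue carries $e^{-uX}\approx e^{tX}$, so the kernel grows on $(\tau,\infty)$, the Fredholm series is undefined, and the continued Laplace transform tends to $0$ like $1/\Gamma(u+t)$.

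The missing idea has the following steps.
\begin{enumerate}
\item Write $\check{\mathbf K}=\mathbf K+R$, where $R$ is the rank-two residue part built from $A(X)=e^{-Xu}\frac{\Gamma(t-u)}{\Gamma(t+u)}G_{\alpha,\beta}(-u)$ and $B$.
\item Factor it out as $\Pf(J-\check{\mathbf K})=\Pf(J-\mathbf K)\bigl(1-\langle Y_1|(\Id-\G)^{-1}|X_1\rangle\bigr)$. This is affine in $A$, so the $1/\Gamma(t+u)$ inside $A$ absorbs the prefactor $\Gamma(u+t)$.
\item Convert the resolvent term into a difference of Pfaffians (Lemma \ref{lem: difference of pfaffians}, proved for large $N$ and continued in $N$).
\item Compute the $X$-integrals in closed form while $u>0$, and only then continue in $u$. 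In $\langle B'|A\rangle-\langle B|A'\rangle$ this produces a new pole at $W=u$. It crosses the contour as $u\to-t$ and has residue exactly $1$, cancelling the constant $1$. In $\G X_1$ the corresponding factor is absorbed into $\Gamma(1+u-W)$.
\end{enumerate}
Only after this do $\Gamma(u+t)(1-\langle B|A'\rangle+\langle B'|A\rangle)\to\widehat C$ and $\Gamma(u+t)\phi_i\to\widehat\phi_i$ hold. Together with $B\to\widehat B$ and $\mathbf K\to\widehat{\mathbf K}$, these limits assemble into $Q^{High}_{N,\beta>t}$.
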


\begin{thm}[High density with $\tbeta > \ttt$]\label{thm: High density beta large asymptotic}
    Fix any $\alpha \in \R_{>0}.$
    Fix any three parameters $\tilde{t},\tilde{\beta}, s\in \R$ satisfying $ 0< \tilde{t}<\tilde{\beta}.$ Consider the scaling
    \begin{equation}
        \beta = \frac{\tilde{\beta}}{\Nsigma}, \quad t = \frac{\tilde{t}}{\Nsigma}, \quad \tau = -Nf + (\sigma N)^{1/3}s.
    \end{equation} Let $\mathcal{Z}^{t,\beta}$ be defined as in definition \ref{def: two param stationary condition}. There exists $N_0 \in \Z_{\geq 0}$ such that for all $N \geq N_0$,
    the identity \eqref{eq: Two_param_formula_positive} holds. Then we have the following asymptotic limit:
    \begin{equation}\label{eq: 2 param beta large asymptotic final}
        \begin{aligned}
            \lim_{N\rightarrow \infty} \frac{(\sigma N)^{-1/3}}{\Gamma(\beta - t)}\E \bigg[ \int_{0}^{\infty} 2K_0\left( 2\sqrt{e^{Nf - (\sigma N)^{1/3}s} \mathcal{Z}^{t,\beta}(N-1,N-1) w} \right) w^{t-\beta-1} e^{-w^{-1}}dw\bigg] = \widetilde{Q}_{\tbeta >\ttt}^{High}(s)
        \end{aligned}
    \end{equation}
    where $\widetilde{Q}_{\tbeta >\ttt}^{High}(s)$ is defined in Definition \ref{def: High density beta large asymptotic}
\end{thm}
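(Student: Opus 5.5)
The approach is a steepest-descent analysis of the finite-$N$ formula $Q_{N,\beta>t}^{High}(\tau)$ from Theorem \ref{thm: two param beta > t}, carried out under the critical scaling. The first step is to check that the hypotheses of Theorem \ref{thm: two param beta > t} hold for large $N$. Since $t=\ttt/\Nsigma\to 0$, we have $t\in(0,0.5)$ eventually. Also $\beta-t=(\tbeta-\ttt)/\Nsigma>0$, and $\alpha>t$ eventually. Moreover $t>1/N$ holds for $N$ large because $t\sim N^{-1/3}$. Hence there is an $N_0$ beyond which \eqref{eq: Two_param_formula_positive} holds, and it remains to compute $\lim_{N\to\infty}\Nsig Q_{N,\beta>t}^{High}(-Nf+\Nsigma s)$.

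I expect $Q_{N,\beta>t}^{High}$ (Definition \ref{def: two param beta > t}) to have the same structure as the Low density formula: a Fredholm Pfaffian built from a kernel with contour integrals of ratios of Gamma functions raised to the power $N$, together with finitely many explicit correction terms. These corrections come from the poles at $z=\pm t$ and $z=\beta$ that the stationary initial data introduce, plus the prefactor $\Gamma(\beta-t)^{-1}$. The plan is the following.

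First, I will write the $N$-dependent part as $\exp\big(N[G(z)-G(w)]\big)$, where $G(z)=\log\Gamma(\alpha+z)-\log\Gamma(\alpha-z)+fz$ or a variant of it, with the $\tau$ contribution $e^{\tau z}$ included. The double critical point is at $z=0$: there $G'(0)=0$ because $f=2\psi(\alpha)$, and the first nonvanishing term is $G'''(0)=2\psi^{(2)}(\alpha)=-2\sigma$. Rescaling $z=\tilde z/\Nsigma$ gives $N G(z)-\Nsigma s z\to -\tilde z^3/3 - s\tilde z$, which is the Airy-type exponent.

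Second, I will rescale all variables. The poles at $t$ and $\beta$ become poles at $\ttt$ and $\tbeta$ after the rescaling. The factor $\Gamma(\beta-t)^{-1}\approx(\beta-t)=(\tbeta-\ttt)\Nsig$, and Gamma ratios such as $\Gamma(\beta+z)/\Gamma(\beta-z)$ become rational functions like $(\tbeta-\tilde z)/(\tbeta+\tilde z)$. Bookkeeping the powers of $\Nsigma$ in each correction term should show that the overall prefactor $\Nsig$ exactly compensates the growth of the finite-$N$ expression. This matching is where the nonstandard normalization in \eqref{eq: 2 param beta large asymptotic final} enters. The explicit terms then converge pointwise to their counterparts in $\widetilde{Q}_{\tbeta>\ttt}^{High}(s)$ (Definition \ref{def: High density beta large asymptotic}).

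Third, I will justify the limit. The contours must be deformed into steep-descent contours for $\mathrm{Re}\,G$ near $0$, passing between the poles at $\pm t$ and $\beta$, for instance vertical lines at distance $\delta\Nsig$ with $0<\delta<\ttt$, which are then bent into wedges at angles $\pm\pi/3$. On these contours I need uniform bounds of the form $|e^{N(G(z)-G(0))}|\le e^{-cN|z|^3}$ near $0$ and $\le e^{-cN}$ away from $0$. These bounds give dominated convergence for each kernel entry. Hadamard's inequality then gives absolute convergence of the Fredholm Pfaffian series, uniformly in $N$, so the limit can be exchanged with the sum.

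I expect the main obstacle to be this uniform control. The difficulty is that the contours are pinned within $O(N^{-1/3})$ of poles that collapse to the origin, so the steep-descent paths must be built carefully near $0$ and only then joined to global paths on which the Gamma-function ratio decays. A second delicate point is that some correction terms individually blow up like powers of $\Nsigma$, and these blow-ups must cancel against the $\Gamma(\beta-t)^{-1}$ factor and the $\Nsig$ normalization. I would handle both by checking the local Taylor expansions first and then applying Stirling estimates on the outer part of the contours.
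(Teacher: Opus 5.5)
Your steepest-descent analysis matches the paper's. You use the critical point $0$ of $h(Z)=\log\Gamma(\alpha+Z)-\log\Gamma(\alpha-Z)-fZ$ (the sign must be $-fZ$; with $+fz$ you get $h'(0)\neq 0$), the rescaling $Z=-\tilde Z/\Nsigma$, contours $C(\delta;\pi/3)$ with $0<\delta<\ttt$, bounds $Ce^{-\delta(u+v)}$, Hadamard and dominated convergence. This correctly gives the limits of $\Pf(J-\widehat{\mathbf K})$, $\Nsig\widehat C$, $\widehat B$, $\Nsigma\widehat B'$, $\widehat\phi_1$ and $\Nsig\widehat\phi_2$.

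The gap is the second Fredholm Pfaffian of $Q^{High}_{N,\beta>t}$. Work in the normalization where the entries of $\widehat{\mathbf K}$ are $O(1)$. There, every entry of its rank-two perturbation
\[
P=\ketbra{\begin{array}{c}\widehat\phi_2\\ -\widehat\phi_1\end{array}}{\widehat B\quad -\widehat B'}+\ketbra{\begin{array}{c}\widehat B\\ -\widehat B'\end{array}}{-\widehat\phi_2\quad \widehat\phi_1}
\]
is of order $\Nsigma$, because the prefactor $\Gamma(2t)\approx\Nsigma/(2\ttt)$ sits in $\widehat\phi_1,\widehat\phi_2$ just as it does in $\widehat C$. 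So $\Pf(J-\widehat{\mathbf K}-P)$ is itself generically of order $N^{1/3}$. Hadamard's bound then gives nothing uniform in $N$ for its Fredholm series. The outer factor $\Nsig$ also cannot in general be pushed inside a Pfaffian. Local Taylor expansions and Stirling bounds cannot produce the cancellation you invoke, because that cancellation is algebraic.

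Nor is $\Gamma(\beta-t)^{-1}$ available to absorb anything. It sits on the expectation side of \eqref{eq: Two_param_formula_positive}. The parameter $\beta$ enters $Q^{High}_{N,\beta>t}$ only through $\Gamma(\beta\pm Z)$ and through $\Gamma(\beta+t)/\Gamma(\beta-t)\to(\tbeta-\ttt)/(\tbeta+\ttt)$ inside $G_{\alpha,\beta}(t)$.

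The missing idea is that $c\mapsto\Pf(J-\widehat{\mathbf K}-cP)$ is affine in $c$. $P$ is an antisymmetrized rank-two kernel, so in every term of the Fredholm expansion the finite Pfaffian of $M+c(xy^{T}-yx^{T})$ is affine in $c$. This holds because Pfaffian minors of order at least $4$ of a rank-two matrix vanish. This is the content of \eqref{eq: PfSimple} and Remark \ref{modifed}, and it gives the exact identity
\[
\Nsig\,Q^{High}_{N,\beta>t}(\tau)=\Pf\bigl(J-\widehat{\mathbf K}\bigr)\bigl(\Nsig\widehat C-1\bigr)+\Pf\bigl(J-\widehat{\mathbf K}-\Nsig P\bigr).
\]
Every ingredient here is $O(1)$ with exponential decay in $u,v$. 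Only after this rewriting do your estimates, Hadamard and dominated convergence yield $\widetilde Q^{High}_{\tbeta>\ttt}(s)$; this is Lemma \ref{lem: 1param pfaffian diff}.

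The $-1$ in $\widetilde C-1$ appears only through this rearrangement. A term-by-term limit would send $\Nsig\Pf(J-\widehat{\mathbf K})(\widehat C-1)$ to $\Pf(J-\widetilde{\mathbf K})\widetilde C$. The missing $-\Pf(J-\widetilde{\mathbf K})$ is hidden in $\Nsig\Pf(J-\widehat{\mathbf K}-P)$, whose limit your dominated-convergence argument cannot reach.
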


\begin{thm}[High density with $\tbeta > \ttt$]\label{thm: High density beta large probability}
    Fix any $\alpha \in \R_{>0}.$
    Fix any three parameters $\tilde{t},\tilde{\beta},s\in \R$ satisfying $ \tbeta > \ttt.$ Consider the scaling
    \begin{equation}
        \beta = \frac{\tilde{\beta}}{\Nsigma}, \quad t = \frac{\tilde{t}}{\Nsigma}, \quad \tau = -Nf + \sigma N^{1/3} s.
    \end{equation} Let $Z^{t,\beta}$ be defined as in definition \ref{def: two param stationary condition} and $\widetilde{{Q}}_{\tbeta > \ttt}^{High}$ be defined as in Definition \ref{def: High density beta large asymptotic} with a choice of $0<\delta < \ttt$ for the contour. Then for any $2\ttt>A>\ttt-\delta$, we have  
    \begin{equation}\label{eq: beta large probability final}
    \begin{aligned}
        \Pb\bigg(\lim_{N\rightarrow \infty}&\frac{\log \mathcal{Z}^{t,\beta}(N-1,N-1) +Nf}{(\sigma N)^{1/3}} \leq s\bigg)\\
        &=\left(1 + \frac{1}{\tbeta - \ttt} \partial_{s}\right)\frac{1}{2\pi}\int_{\R}\int_{\R} (-A+\I y)e^{-Ax} e^{\I x y} \left(\widetilde{{Q}}_{\tbeta > \ttt}^{High}(x+s)\right) dxdy.
    \end{aligned}
    \end{equation}
\end{thm}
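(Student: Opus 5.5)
Starting from Theorem \ref{thm: High density beta large asymptotic}, I will turn the convergence of the $K_0$-Laplace-type transform into convergence in distribution of the rescaled free energy $F_N := (\log \mathcal{Z}^{t,\beta}(N-1,N-1)+Nf)/(\sigma N)^{1/3}$, and then invert the transform. First I integrate out $w$. Write $Y = \log w$. The density $\Gamma(\beta-t)^{-1} w^{t-\beta-1}e^{-w^{-1}}$ is exactly the law of a $\Gamm(\beta-t)$ variable $W$, taken independent of $\mathcal{Z}^{t,\beta}$. So the left side of \eqref{eq: 2 param beta large asymptotic final} equals $(\sigma N)^{-1/3}\E[2K_0(2e^{(\log \mathcal{Z}+\log W-\tau)/2})]$. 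Under the scaling $\beta-t = (\tbeta-\ttt)(\sigma N)^{-1/3}$, the variable $(\sigma N)^{-1/3}\log W$ converges in law to $-E/(\tbeta-\ttt)$ with $E$ a standard exponential. Indeed $\log \Gamm(\epsilon)$ behaves like $-\log \mathrm{Gamma}(\epsilon)\approx \epsilon^{-1}\cdot(-\log U)$ in sign conventions; this is where the factor $(1+\frac{1}{\tbeta-\ttt}\partial_s)$ will come from.

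Next I use that $2K_0(2e^{x/2})$, as a function of $x$, has derivative in $x$ that is an approximate delta function at scale $O(1)$. Precisely, $-\partial_x 2K_0(2e^{x/2})$ is a probability density $g$ on $\R$; equivalently, $2K_0(2e^{x/2}) = \int_x^\infty g$ is an explicit function tending to $0$ as $x\to+\infty$, growing linearly ($\sim -x$) as $x\to-\infty$. Hence $(\sigma N)^{-1/3}\E[2K_0(\cdots)] = (\sigma N)^{-1/3}\E[h(\log\mathcal{Z}+\log W-\tau)]$ with $h(x)\approx x_-$. After scaling, this converges to $\E[(s-F-G)_+]$, where $F$ is the limit of $F_N$ and $G = -E/(\tbeta-\ttt)$. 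Justifying this requires tightness and uniform integrability of the negative parts of $F_N$. I would obtain these from the exponential tail bounds promised in the abstract, or else argue along subsequences using the fact that the transforms converge for all $s$.

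So $\widetilde{Q}^{High}_{\tbeta>\ttt}(s) = \E[(s-F-G)_+]$, whose second derivative in $s$ is the density of $F+G$. To extract the law of $F+G$ I take a bilateral Laplace/Fourier transform. The double integral $\frac{1}{2\pi}\int\int(-A+\I y)e^{-Ax}e^{\I xy}\widetilde{Q}(x+s)\,dx\,dy$ is the Fourier inversion of $\partial_s$ applied to $\widetilde Q$ after exponential tilting by $e^{-Ax}$; this reproduces $\Pb(F+G\le s)$ by recovering $\partial_s\widetilde{Q}$. The tilt $A$ must lie where $e^{-Ax}\widetilde Q(x+s)$ is integrable, and the contour analyticity strip in Definition \ref{def: High density beta large asymptotic} ($\ttt-\delta<A<2\ttt$) ensures exactly this. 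Finally, since $G$ is an independent negative exponential of rate $\lambda=\tbeta-\ttt$, we have $\Pb(F+G\le s) = \E[e^{-\lambda(F-s)_+}]$ type convolution. Inverting the convolution gives $\Pb(F\le s) = (1+\lambda^{-1}\partial_s)\Pb(F+G\le s)$, which is \eqref{eq: beta large probability final}.

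The main obstacle is the limit interchange in the second step, i.e.\ the uniform integrability of the lower tail of $F_N$, together with justifying the Fourier inversion (absolute convergence of the $x$-integral in the stated range of $A$). I would try first to derive the integrability directly from the finite-$N$ formula \eqref{eq: Two_param_formula_positive}, by choosing the contour shift uniformly in $N$.
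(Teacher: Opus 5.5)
Your argument is correct in substance, but you reach the inversion by a different route from the paper's.

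\textbf{The paper's route.} It first takes a subsequential limit using Theorems~\ref{thm:lower_2para} and~\ref{thm:two_para_upper_tail}. It then applies the exact finite-$N$ identity of \cite[Prop.~E.1]{borodin2015height}, $\int e^{vw}Q^{High}_{N,\beta>t}(v+\tau)\,dv=\Gamma(-w)\Gamma(1-w)\int \Pb(\log\mathcal{Z}^{t,\beta}+\log W\le u+\tau)\,e^{uw}\,du$, and rescales $w=\xi(\sigma N)^{-1/3}$ with $\xi=-A+\I y$. The limit is taken inside the $x$-integral by dominated convergence. This needs two bounds on $Q$: the lower-tail bound of Corollary~\ref{cor: 1param and 2param beta large lower tails}, and the contour-based growth bound $Ce^{(\ttt-\delta)s}$ of Lemma~\ref{lem: 1param 2param beta large upper tail Q}. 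Fourier inversion and the exponential deconvolution of \cite[Lemma 3.3]{PatrikStatExp} then finish.

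\textbf{Your route.} You use $2K_0(2e^{x/2})=x_-+O(1)$ uniformly in $x$ to identify $\widetilde Q^{High}_{\tbeta>\ttt}(s)=\E[(s-F-G)_+]$ directly. The double integral is then just $\partial_s\widetilde Q$, rewritten by one integration by parts. This buys several things:
\begin{itemize}
\item You need neither the Pfaffian growth bound on $Q$ nor dominated convergence inside the Fourier integrals.
\item Every $0<A<2\ttt$ works. The constraint $A>\ttt-\delta$ is an artifact of the paper's crude upper bound, not of contour analyticity, while the upper limit $2\ttt$ is the lower-tail exponent of $F_N$.
\item You obtain $\Pb(F\le s)=(\partial_s+\frac{1}{\tbeta-\ttt}\partial_s^2)\widetilde Q(s)$ directly. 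The paper only states this as an expectation in \eqref{eq: change to shift high}.
\end{itemize}
The paper's route, in exchange, is the uniform machinery it reuses across all three regimes.

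\textbf{Slips to fix.}
\begin{itemize}
\item \emph{Sign of $G$.} In fact $(\sigma N)^{-1/3}\log W\Rightarrow +E/(\tbeta-\ttt)$, not $-E/(\tbeta-\ttt)$. With $a=\beta-t\to0$ one has $(1/W)^a\Rightarrow U$ uniform, so $a\log W\Rightarrow-\log U$. For your negative $G$, the identity $\Pb(F+G\le s)=\E[e^{-\lambda(F-s)_+}]$ inverts with $(1-\lambda^{-1}\partial_s)$, not $(1+\lambda^{-1}\partial_s)$. Your final operator is right only because the true $G$ is positive, in which case $\Pb(F+G\le s)=\int_{-\infty}^s\Pb(F\le v)\lambda e^{-\lambda(s-v)}\,dv$.
\item \emph{Not a density.} $-\partial_x2K_0(2e^{x/2})=2e^{x/2}K_1(2e^{x/2})=\Pb(V>x)$, with $V=\log(E_1E_2)$ and $E_i$ i.i.d.\ $\mathrm{Exp}(1)$. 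This is a survival function decreasing from $1$ to $0$, not a density. The fact you actually need is $2K_0(2e^{x/2})=\E[(V-x)_+]$, which gives $|2K_0(2e^{x/2})-x_-|\le\E|V|$.
\item \emph{Sign of the Fourier factor.} Integration by parts produces $(A-\I y)$, as in the paper's own final display. With the printed $(-A+\I y)$ the double integral equals $-\partial_s\widetilde Q$, so do not claim it reproduces the CDF.
\item \emph{Source of uniform integrability.} Uniform integrability of $(s-F_N-G_N)_+$ must come from Theorem~\ref{thm:lower_2para}. The contour formula \eqref{eq: Two_param_formula_positive} only controls $\tau\to+\infty$, so your fallback is not suited to this.
\end{itemize}
Uniqueness of subsequential limits then follows: $\widetilde Q$ fixes the law of $F+G$, and $\E[e^{\I\xi G}]=\lambda/(\lambda-\I\xi)$ never vanishes.
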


\begin{thm}[High density phase with $-t<\beta <t$]\label{thm: two param beta < t finite}
    Fix any three parameters $t,\alpha,\beta \in \R$ satisfying $t \in (0,0.5),$ $\beta \in (-t,t)$, and $\alpha > t.$ Choose any $N_0 \in \Z_{\geq 2}$ such that $t>\frac{1}{N_0}.$ Let $\mathcal{Z}^{t,\beta}$ be defined as in definition \ref{def: two param stationary condition}. Then for all $N\geq N_0$, we have
    \begin{equation}\label{Two_param_formula_negative}
        \begin{aligned}
            \frac{1}{\Gamma(\beta - t)}\E \bigg[ \int_{0}^{\infty} 2K_0\left( 2\sqrt{e^{-\tau} {\mathcal{Z}}^{t,\beta}(N-1,N-1) w} \right) w^{t-\beta-1} e^{-w^{-1}}dw\bigg] ={Q}_{N,\beta <t}^{High}(\tau) 
        \end{aligned}
    \end{equation}
    where ${Q}_{N,\beta <t}^{High}(\tau) $ is defined in Definition \ref{def: two param beta < t finite}.
\end{thm}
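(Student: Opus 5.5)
The plan is to obtain Theorem \ref{thm: two param beta < t finite} from Theorem \ref{thm: two param beta > t} by analytic continuation in $\beta$, with $t,\alpha,N,\tau$ held fixed. The left-hand side of \eqref{Two_param_formula_negative} has the same expression as \eqref{eq: Two_param_formula_positive}. The change $u=w^{-1}$ shows that $\frac{1}{\Gamma(\beta-t)}w^{t-\beta-1}e^{-w^{-1}}$ is the density of a $\Gamm(\beta-t)$ variable only when $\beta>t$, i.e.\ exactly where Theorem \ref{thm: two param beta > t} is a probabilistic identity. For $\beta<t$ the $w$-integral still makes sense because of the $K_0$ factor. So I would show that both sides extend analytically to a complex neighbourhood of the real segment $(-t,\infty)$ and agree there. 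This requires the continuation of ${Q}_{N,\beta>t}^{High}$ below $\beta=t$ to be ${Q}_{N,\beta<t}^{High}$.

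\textbf{Step 1 (analyticity of the left-hand side).} The law of $\mathcal{Z}^{t,\beta}(N-1,N-1)$ depends on $\beta$ only through $X\sim\Gamm(\beta+t)$ and the diagonal weights $\omega_{k,k}\sim\Gamm(\alpha+\beta)$. I would therefore write the expectation as an explicit integral over these finitely many variables against their densities, which are analytic in $\beta$ for $\mathrm{Re}\,\beta>-t$ (note $\alpha>t$). The factor $1/\Gamma(\beta-t)$ is entire. For the $w$-integral:
\begin{itemize}
\item near $w=0$ the factor $e^{-1/w}$ kills any power of $w$;
\item near $w=\infty$ we have $K_0(2\sqrt{cw})\lesssim e^{-2\sqrt{cw}}$ with $c=e^{-\tau}\mathcal{Z}>0$, which beats $w^{t-\mathrm{Re}\beta-1}$;
\item for small $cw$ the bound $K_0(2\sqrt{x})\le C(1+|\log x|)$ suffices.
\end{itemize}
Consequently the full integrand is dominated, locally uniformly in $\beta$, by a function whose integrability needs only moments of $|\log \mathcal{Z}|$ and negative fractional moments of $\mathcal{Z}$ up to some small order. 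These are finite since $\mathcal{Z}$ is a polynomial in independent inverse-gamma variables. Morera plus Fubini then gives holomorphy of the left-hand side on a neighbourhood of $(-t,\infty)$.

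\textbf{Step 2 (continuation of the right-hand side).} ${Q}_{N,\beta>t}^{High}(\tau)$ is built from contour integrals (and a Fredholm Pfaffian-type series) whose integrands contain gamma factors such as $\Gamma(\beta-t\pm z)$ or $\Gamma(\beta+z)$. As long as no pole of these factors meets the contours, the expression is analytic in $\beta$. As $\beta$ decreases through $t$, a pole (I expect at $z=\pm(\beta-t)$, i.e.\ a pole tied to the boundary-drift mismatch) crosses the contour. I would deform the contour to avoid it, then move it back and pick up the residue. The resulting expression, the original integral plus explicit residue terms, should be precisely ${Q}_{N,\beta<t}^{High}(\tau)$ as in Definition \ref{def: two param beta < t finite}. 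I would check that for $\beta\in(-t,t)$ no further pole crosses. This is where $t<1/2$ enters, keeping poles such as $z=\pm 2t$ or $1-t$ separated from the contour, and where $t>1/N_0$ enters, keeping the contour admissible for all $N\ge N_0$.

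\textbf{Step 3 (conclusion and main obstacle).} By Theorem \ref{thm: two param beta > t}, the two holomorphic functions agree for $\beta>t$, hence by the identity theorem on the connected domain they agree on $(-t,t)$. The main obstacle is Step 2. Beyond tracking the crossing pole, one needs uniform-in-$\beta$ convergence of the kernel series/Pfaffian expansion on compact subsets, so that analyticity passes through the infinite sum. This requires Hadamard-type bounds on the kernel entries with constants uniform in $\beta$ near $t$. My first attempt would be to bound the gamma ratios on the fixed contours using Stirling's formula and the exponential decay coming from the $e^{\tau z}$-type factors.
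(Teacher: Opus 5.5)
Your Step 1 (holomorphy of the left-hand side in $\beta$, via the decay of $K_0$ and the negative moments of $\mathcal{Z}^{t,\beta}$) and the identity-theorem logic of Step 3 are sound. Step 2, however, carries all the content of the theorem, and it is aimed at the wrong place and would not go through as sketched. In ${Q}_{N,\beta>t}^{High}$ the parameter $\beta$ enters only through $G_{\alpha,\beta}$. That means the ratios $\Gamma(\beta+Z)/\Gamma(\beta-Z)$ and $\Gamma(\beta+W)/\Gamma(\beta-W)$, and the prefactor $G_{\alpha,\beta}(t)$, whose only $\beta-t$ dependence is the entire factor $1/\Gamma(\beta-t)$. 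There is no $\Gamma(\beta-t\pm z)$, and nothing happens at $\beta=t$. With the contour $\I\R-d$ fixed, every ingredient stays holomorphic for all $\beta>d$, so \eqref{eq: Two_param_formula_positive} itself persists on $(d,t)$. The poles that actually cross are $Z=-\beta$ and $W=-\beta$, and they cross at $\beta=d$. Since the contour must satisfy $d>1/N$, this crossing is unavoidable on the part of $(-t,t)$ near and below $0$.

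After the crossing, the residues are not scalar corrections you can add to the original integrals. The residue at $Z=-\beta$ is $e^{-\beta X}$ times a function of $Y$. It therefore enters $\widehat{\mathbf{K}}$, $\widehat{B}$, $\widehat{B}'$ and $\widehat{\phi}_i$ (hence both Fredholm Pfaffians) as antisymmetric finite-rank pieces which, for $\beta\le 0$, do not decay on $(\tau,\infty)$. Entrywise bounds then read $e^{-\beta X-dY}+e^{-dX-\beta Y}$, so the Stirling-plus-Hadamard control you propose fails. As explained in Section~\ref{section: difference in models}, all four matrix entries of the log-gamma kernel grow or decay together, so there is no decay in one entry to trade against growth in another (unlike geometric LPP).

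The missing idea is to remove these pieces from the Pfaffian algebraically, as the paper does in Section~\ref{section: 2 param decompose}:
\begin{enumerate}
\item Extract the residues at $Z,W=-u$ and $Z,W=-\beta$ as a rank-four perturbation $\sum_{i=1}^4\ketbra{X_i}{Y_i}$ of $J^{-1}\boldsymbol{\mathcal{K}}$.
\item Use $\det(\Id-\G-\sum_i\ketbra{X_i}{Y_i})=\det(\Id-\G)\det\bigl(\Id-[\braket{Y_i}{(\Id-\G)^{-1}X_j}]_{i,j}\bigr)$ together with antisymmetry to get $\Pf(J-\boldsymbol{\check{\mathcal{K}}})=\Pf(J-\boldsymbol{\mathcal{K}})(\M\NN-\A\D+\B\C)$.
\item Rewrite each of $\Pf(J-\boldsymbol{\mathcal{K}})\M,\dots,\Pf(J-\boldsymbol{\mathcal{K}})\D$ as a difference of Fredholm Pfaffians, which eliminates the resolvent.
\item Continue pairings such as $\braket{\BBB}{\AAA'}$ and $\braket{\BBB}{\DDD'}$ by moving the contour past the spurious pole at $W=u$ they create and subtracting its residue. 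This is, for example, where the explicit constant in $\B_1$ comes from.
\end{enumerate}
Only then is everything $O(e^{-d(X+Y)})$ uniformly for $\beta\in(-t,t)$. The outcome has the quotient form of Definition~\ref{def: two param beta < t finite}, so you must also show that zeros of $\Pf(J-\widehat{\boldsymbol{\mathcal{K}}})$ are removable singularities.

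Your assertion that the continuation ``should be precisely'' ${Q}_{N,\beta<t}^{High}$ is therefore unsupported: that formula is not ``original integral plus residue terms''. Identifying your continuation with it requires exactly this algebra, whether you anchor at Theorem~\ref{thm: two param beta > t} as you propose, or at the original formula with $u,\beta>1/N$ as the paper does.
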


\begin{thm}[High density phase with $-\ttt<\tbeta <\ttt$]\label{thm: High density beta small asymptotic}
    Fix any $\alpha \in \R$ and any three parameters $\tilde{t}, \tbeta, s \in \R$ satisfying $\tilde{t}>0,$ $\ttt> \tilde{\beta} >-\ttt.$ Consider the scaling
    \begin{equation}
        t = \frac{\tilde{t}}{\Nsigma},\quad \beta = \frac{\tilde{\beta}}{\Nsigma}, \quad \tau = -Nf + (\sigma N)^{1/3}s
    \end{equation} Let $\mathcal{Z}^{t,\beta}$ be defined as in definition \ref{def: two param stationary condition}. There exists $N_0 \in \Z_{>0}$ such that for all $N\geq N_0$ the identity in \eqref{Two_param_formula_negative} holds. Then we have the following asymptotic limit:
    \begin{equation}\label{eq: K_0 version 2 param}
    \begin{aligned}
        \lim_{N\rightarrow \infty}\frac{(\sigma N)^{-1/3}}{\Gamma(\beta - t)}\E \bigg[ \int_{0}^{\infty} 2K_0\left( 2\sqrt{e^{Nf - (\sigma N)^{1/3}s} {\mathcal{Z}}^{t,\beta}(N-1,N-1) w} \right) w^{t-\beta-1} e^{-w^{-1}}dw\bigg] =\widetilde{{Q}}_{\tbeta < \ttt}^{High}(s)
    \end{aligned}
    \end{equation}
  where $\widetilde{{Q}}_{\tbeta < \ttt}^{High}(s)$ is defined in Definition \ref{def: High density beta small asymptotic}.
\end{thm}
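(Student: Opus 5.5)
Starting from the finite-$N$ identity of Theorem \ref{thm: two param beta < t finite}, we study the contour-integral expression ${Q}_{N,\beta <t}^{High}(\tau)$ of Definition \ref{def: two param beta < t finite} under the critical scaling $t=\ttt(\sigma N)^{-1/3}$, $\beta=\tbeta(\sigma N)^{-1/3}$, $\tau=-Nf+(\sigma N)^{1/3}s$. The first step is to pick $N_0$. Since $t\to 0$, we need $t>1/N_0$, and this holds for all large $N$ because $\ttt(\sigma N)^{-1/3}\gg 1/N$. Hence \eqref{Two_param_formula_negative} is valid for $N\ge N_0$, and it suffices to prove that $(\sigma N)^{-1/3}{Q}^{High}_{N,\beta<t}(-Nf+(\sigma N)^{1/3}s)\to\widetilde{Q}^{High}_{\tbeta<\ttt}(s)$. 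We expect ${Q}^{High}_{N,\beta<t}$ to have the same structure as in the Low density and $\beta>t$ cases. It should consist of a Fredholm Pfaffian-type (or series) term built from a kernel with integrand $\left(\Gamma(\alpha-z)/\Gamma(\alpha+z)\right)^{N}e^{\tau z}$ together with gamma-ratio factors in $z\pm t$ and $z\pm\beta$. In addition there should be explicit residue terms produced when the contours were deformed past the poles at $z=\pm t$ and $z=\pm\beta$; the condition $-t<\beta<t$ decides which of these poles are crossed, and this is what separates this definition from the $\beta>t$ one.

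The second step is the steepest-descent analysis. We write $G_N(z)=N\left(\log\Gamma(\alpha-z)-\log\Gamma(\alpha+z)\right)+\tau z$. With $f=2\psi(\alpha)$ this function has a double critical point at $z=0$, and the expansion $G_N(z)=-\frac{\sigma N}{3}z^3+(\sigma N)^{1/3}s\,z+O(Nz^5)$ holds, using that the odd part of $\log\Gamma(\alpha-z)-\log\Gamma(\alpha+z)$ has cubic coefficient $-\psi^{(2)}(\alpha)/3$. We change variables $z=\tilde z(\sigma N)^{-1/3}$. All the parameter poles then sit at $\pm\ttt,\pm\tbeta$ in the rescaled variable, and the gamma-ratio factors, for example $\Gamma(t+z)/\Gamma(t-z)$, converge to rational functions such as $(\ttt-\tilde z)/(\ttt+\tilde z)$ after suitable normalization (the leftover powers of $(\sigma N)^{1/3}$ account for the prefactor $(\sigma N)^{-1/3}$). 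We deform the contours so that locally they are the Airy-type rays leaving the origin at angles $\pm\pi/3$ (shifted by $\delta$ to stay off the poles), and globally they are steep-descent paths for $\mathrm{Re}\,G_N$. Pointwise convergence of the rescaled kernel to the Airy-type limit kernel of Definition \ref{def: High density beta small asymptotic} then follows from Taylor expansion.

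The third step justifies the interchange of limit with the Fredholm expansion and the contour integrals, using dominated convergence. We need uniform bounds of the form $|e^{G_N(z)}|\le Ce^{-c|\tilde z|^3}$ near the origin, and exponential decay of order $e^{-cN}$ away from it. Combined with Hadamard's inequality for the Pfaffian (determinant) terms, these give a summable bound on the series. The explicit residue terms converge by the same expansion: evaluating $e^{G_N}$ at $z=\pm t$ or $\pm\beta$ gives the exponential factors $e^{\mp\ttt^3/3\pm s\ttt}$, and similarly for $\tbeta$.

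We expect the main obstacle to be the global steep-descent estimate. The function $\mathrm{Re}\,G_N$ must decrease along the chosen contours far from the origin, uniformly in $N$. At the same time the contours have to thread between the poles at $\pm t,\pm\beta$, which collide at the origin at rate $N^{-1/3}$, and between the poles of $\Gamma(\alpha\pm z)$. The mixed-sign regime $-t<\beta<t$ forces one pole to lie on each side, so crossing only the intended ones requires care. We would first try the vertical-line and arc contours used for the full-space log-gamma polymer in \cite{borodin2013log}, and check monotonicity of $\mathrm{Re}\,G_N$ along them using the series representation of $\psi$.
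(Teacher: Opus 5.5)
Your analytic toolkit is the paper's. You rescale $Z=-\tZ(\sigma N)^{-1/3}$ at the double critical point, pass every contour through $-\delta(\sigma N)^{-1/3}$ with $\max\{\tbeta,0\}<\delta<\ttt$ opened along the $\pm\pi/3$ rays, and conclude by dominated convergence and Hadamard's bound using the decay $e^{-\delta(u+v)}$. But you never work with the formula whose limit you are taking, and this hides the real difficulty.

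${Q}^{High}_{N,\beta<t}$ is not ``a Fredholm Pfaffian plus residue terms''. The residues at $-u$ and $-\beta$ were already extracted at finite $N$ into the rank-one pieces built from $\mathsf{A},\mathsf{B},\mathsf{C},\mathsf{D}$. The deformation from $\I\R-d$ (with $\beta<d<t$) to the steep-descent contour crosses no poles, so the pole-threading you single out as the main obstacle is not where the difficulty lies; the steep-descent property is the same monotonicity of $\mathrm{Re}\,h$ used in Lemma~\ref{lem: limit and upper tail of kernel}. What matters is that the formula is a \emph{quotient},
\[
{Q}^{High}_{N,\beta<t}(\tau)=\frac{\widehat{\M}\widehat{\NN}-\widehat{\A}\widehat{\D}+\widehat{\B}\widehat{\C}}{\Pf\left(J-\widehat{\boldsymbol{\mathcal K}}\right)_{\mathbb{L}^2(\tau,\infty)}}.
\]
This is the genuine gap. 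Termwise convergence of the numerator, and of the denominator to $\Pf(J-\widetilde{\boldsymbol{\mathcal K}})_{\mathbb{L}^2(s,\infty)}$, proves the claim only at those $s$ where the limiting Pfaffian is nonzero. Nothing in your argument excludes zeros, and at a zero the right-hand side $\widetilde Q^{High}_{\tbeta<\ttt}(s)$ is not even defined without a further argument.

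The paper closes this with monotonicity; you need to add this step.
\begin{itemize}
\item Write $f_N(s)$ for the left-hand side of \eqref{eq: K_0 version 2 param} before the limit. For fixed $N$, $f_N$ is monotone in $s$, because $K_0$ is decreasing and $1/\Gamma(\beta-t)$ has a fixed sign. Hence the limit is monotone on the set where it has been identified with the ratio of limiting quantities.
\item The map $s\mapsto\Pf(J-\widetilde{\boldsymbol{\mathcal K}})_{\mathbb{L}^2(s,\infty)}$ is analytic and tends to $1$ as $s\to\infty$, by the Hadamard bound as in Lemma~\ref{lem:upper tail for modified kernel 2param}. So its zeros are isolated.
\item Near each zero $s_0$, monotonicity keeps this ratio of analytic functions bounded, so the singularity is removable.
\item Sandwiching $f_N(s_0)$ between $f_N(s_0-\epsilon)$ and $f_N(s_0+\epsilon)$, and using continuity of the extension, gives convergence at $s_0$ as well.
\end{itemize}

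There are two smaller points.
\begin{itemize}
\item The prefactor $(\sigma N)^{-1/3}$ is not absorbed automatically. $\M$, $\C$, $\D$ each carry one factor $(\sigma N)^{1/3}$ coming from $\Gamma(2t)$, while $\A$, $\B$, $\NN$ are $O(1)$. Where $(\sigma N)^{-1/3}$ multiplies a difference $\Pf(J-K)-\Pf(J-K-R)$, you must move it inside as the coefficient of $R$, using the linearity in Remark~\ref{modifed}.
\item Your integrand $(\Gamma(\alpha-z)/\Gamma(\alpha+z))^{N}e^{\tau z}$ has the ratio inverted: with it, the linear terms add to $-2Nfz$ instead of cancelling. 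Your stated cubic expansion is correct for $(\Gamma(\alpha+z)/\Gamma(\alpha-z))^{N}e^{\tau z}$, which is the paper's integrand.
\end{itemize}
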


\begin{thm}[High density phase with $-\ttt<\tbeta <\ttt$]\label{thm: High density beta small probability}
    Fix any $\alpha \in \R$ and any two parameters $\tilde{t}, \tbeta, s\in \R$ satisfying $\tilde{t}>0,$ $\ttt>\tilde{\beta} >-\tilde{t}.$ Consider the scaling
    \begin{equation}
        t = \frac{\tilde{t}}{\Nsigma},\quad \beta = \frac{\tilde{\beta}}{\Nsigma}, \quad \tau = -Nf + (\sigma N)^{1/3}s
    \end{equation} Let $\mathcal{Z}^{t,\beta}$ be defined as in definition \ref{def: two param stationary condition} and $\widetilde{{Q}}_{\tbeta < \ttt}^{High}$ be defined as in Definition \ref{def: High density beta small asymptotic} with a choice of $\max\{\tbeta, 0\}<\delta <\ttt$. Then for any $2\ttt>A>\ttt-\tbeta$, we have
    \begin{equation}\label{eq: beta small final probability}
    \begin{aligned}
        & \Pb\left(\lim_{N\rightarrow \infty}\frac{\log \mathcal{Z}^{t,\beta}(N-1,N-1) +Nf}{(\sigma N)^{1/3}} \leq s\right)
        =\left(1 + \frac{1}{\tbeta - \ttt} \partial_s\right)\int_{\R}\int_{\R} \frac{(A-\I y)}{2\pi}e^{-Ax} e^{\I x y} \left(\widetilde{{Q}}_{\tbeta < \ttt}^{High}(x+s)\right) dxdy.
    \end{aligned}
    \end{equation}
\end{thm}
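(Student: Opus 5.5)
The plan is to deduce the statement from the asymptotic identity \eqref{eq: K_0 version 2 param} of Theorem \ref{thm: High density beta small asymptotic}, in three steps: interpret the left-hand side as a smoothed distribution function, undo the $K_0$ smoothing by an inverse Laplace-type transform, and undo the extra Gamma randomization by the operator $(1+\frac{1}{\tbeta-\ttt}\partial_s)$.

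\textbf{Step 1: probabilistic meaning of the left-hand side.} Use the identity $\int_0^\infty e^{-u}\, e^{-v/u}\,\frac{du}{u} = 2K_0(2\sqrt{v})$. This rewrites $2K_0(2\sqrt{e^{-\tau}Zw})$ as $\E_G[e^{-e^{-\tau} Z w/G}]$ up to the $du/u$ weight, where $G$ is Gamma$(1)$-like. Combined with the factor $\frac{1}{\Gamma(\beta-t)}w^{t-\beta-1}e^{-1/w}\,dw$, this is an inverse-gamma density. (Since $\beta-t<0$, this density has to be read via analytic continuation or as the signed normalization in the definition.) The left side of \eqref{Two_param_formula_negative} then becomes
\[
\E\big[F\big((\log Z(N-1,N-1)+\log W - \log G - \tau)\big)\big],
\]
where $W$ is an independent (Gamma-type) variable with parameter $t-\beta$, $G$ carries the $K_0$ kernel, and $F$ is an explicit kernel. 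Under the scaling $t,\beta\sim N^{-1/3}$, the log-inverse-gamma variable with parameter $(t-\beta)$ is dominated by its large part. Specifically, $(\sigma N)^{-1/3}\log W$ converges to an exponential random variable $E$ of rate $\ttt-\tbeta$ (with sign adjusted), while $\log G/(\sigma N)^{1/3}\to 0$. The kernel $F$ itself has the form $\int_{x}^\infty(\cdots)$, so after dividing by $(\sigma N)^{1/3}$ it converges to an integrated indicator $(x)_+$-type function.

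\textbf{Step 2: inverting the smoothing.} We will get that $\widetilde Q^{High}_{\tbeta<\ttt}(s)=\int_{\R} (s-x)_+$-type$\;\cdot\, dP_{\xi+E}(x)$, or more precisely $\widetilde Q(s)=\E[g(\xi+E-s)]$ with an explicit $g$ whose two-sided Laplace transform at $A$ is $1/(A^2)$-type. Here $\xi$ denotes the limit of $(\log Z+Nf)/(\sigma N)^{1/3}$, taken along subsequences by tightness (from the tail bounds). Then the double integral $\frac{1}{2\pi}\int\int (A-\I y)e^{-Ax}e^{\I xy}\widetilde Q(x+s)\,dx\,dy$ is Fourier/Laplace inversion: multiplying by $(A-\I y)$ cancels one factor of the kernel's transform, and leaves $\Pb(\xi+E\le s)$. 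The range $\ttt-\tbeta<A<2\ttt$ is exactly what ensures the Laplace transforms converge. The lower bound comes from the exponential moment of $E$, whose rate is $\ttt-\tbeta$; the upper bound comes from the growth of $\widetilde Q$ dictated by the contour choice $\delta$.

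\textbf{Step 3: removing $E$.} If $H(s)=\Pb(\xi+E\le s)$ with $E\sim\mathrm{Exp}(\lambda)$ independent, then $\Pb(\xi\le s)=(1+\lambda^{-1}\partial_s)H(s)$. This holds because $H=F_\xi * \lambda e^{-\lambda\cdot}\mathbf 1_{>0}$, and $(1+\lambda^{-1}\partial)$ inverts that convolution. With $\lambda=\ttt-\tbeta$ (appearing as $1/(\tbeta-\ttt)$ after the sign convention) this yields the stated formula.

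\textbf{Justification and main obstacle.} The limit of expectations must be justified by dominated convergence, using the exponential tail bounds together with the explicit decay of $K_0$. Uniqueness of the limit then follows because the right side determines the law, so the whole sequence converges.

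The main obstacle is Step 1 with $\beta<t$: the factor $1/\Gamma(\beta-t)$ with negative argument makes the $w$-integral not a genuine probability. I would first try to handle it by an integration by parts in $w$, which converts it to a Gamma$(t-\beta+1)$ weight plus a derivative term. I expect that derivative term to be exactly the source of the $(1+\frac1{\tbeta-\ttt}\partial_s)$ structure. I would check signs carefully there.
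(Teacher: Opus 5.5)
The gap is in Steps~1--3, which treat the limit of $(\sigma N)^{-1/3}\log W$ as a genuine exponential variable $E$ of rate $\ttt-\tbeta$ ``with sign adjusted''. For $-\ttt<\tbeta<\ttt$ no such variable exists. The weight $\frac{1}{\Gamma(\beta-t)}w^{t-\beta-1}e^{-1/w}\,dw$ is the inverse-gamma weight with \emph{negative} shape $\beta-t$ (not $t-\beta$), so it is negative ($\Gamma(\beta-t)<0$) and has infinite mass. Under $y=(\sigma N)^{-1/3}\log w$ it converges to the signed measure $(\tbeta-\ttt)e^{(\ttt-\tbeta)y}\mathbf{1}_{y>0}\,dy$.

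Consequently Step~2 is false. The double integral in the statement equals $\int_0^\infty \widetilde F(s-u)(\tbeta-\ttt)e^{(\ttt-\tbeta)u}\,du\le 0$, with $\widetilde F$ the limiting CDF. This is of order $-e^{(\ttt-\tbeta)s}$ as $s\to+\infty$, so it is not a distribution function at all, let alone $\Pb(\xi+E\le s)$.

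No sign convention repairs this. For a genuine $E\sim\mathrm{Exp}(\ttt-\tbeta)$, deconvolving $\xi+E$ gives $(1+\frac{1}{\ttt-\tbeta}\partial_s)$, the opposite of the statement. For $\xi-E$ the operator sign matches, but $e^{-Ax}\Pb(\xi-E\le x+s)$ is not integrable at $-\infty$ once $A>\ttt-\tbeta$.

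You also have the two constraints on $A$ reversed. $A>\ttt-\tbeta$ is forced by the $e^{(\ttt-\tbeta)s}$ growth of $\widetilde Q^{High}_{\tbeta<\ttt}$ at $+\infty$ (Lemma~\ref{lem: two param beta small upper tail Q}), which is exactly the fingerprint of the signed weight. $A<2\ttt$ comes from the lower tail of the free energy (Theorems~\ref{thm:lower_2para} and~\ref{thm: lower tail for Q beta small 2param}). Your final formula survives only by accident: $\Pb(\xi-E\le s)$ and the signed convolution solve the same first-order ODE and differ by a multiple of $e^{(\ttt-\tbeta)s}$, which $(1+\frac{1}{\tbeta-\ttt}\partial_s)$ annihilates.

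The paper instead carries the signed kernel $(\tbeta-\ttt)e^{-(\tbeta-\ttt)u}\mathbf{1}_{u>0}$ through the limiting transform identity. It checks the $\mathbb{L}^2$ condition using $\ttt-\tbeta<A<(1-\epsilon)2\ttt$ and inverts. It then deconvolves via $(1+\lambda^{-1}\partial_r)\int_0^\infty F(r-u)\lambda e^{-\lambda u}\,du=F(r)$, extended to $\lambda=\tbeta-\ttt<0$ by analytic continuation in $\tbeta$.

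Your integration-by-parts idea is the right repair and gives a genuinely different, cleaner route, but it must replace Steps~1--3 rather than be appended to them. Write $Q_N(\tau)$ for the left side of \eqref{Two_param_formula_negative}. Use $w\partial_w=-\partial_\tau$ on the $K_0$ factor, integrate by parts in $w$, and use $\Gamma(\beta-t+1)=(\beta-t)\Gamma(\beta-t)$. This gives
\[
\Big(1+\frac{1}{\beta-t}\partial_\tau\Big)Q_N(\tau)=\E\Big[2K_0\Big(2\sqrt{e^{-\tau}\mathcal{Z}^{t,\beta}(N-1,N-1)\,W'}\Big)\Big],\qquad W'\sim\Gamm(\beta-t+1).
\]
This is a bona fide expectation since $\beta-t+1>0$. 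Moreover $\frac{1}{\beta-t}\partial_\tau=\frac{1}{\tbeta-\ttt}\partial_s$ exactly under the scaling, and $\log W'=O(1)$ is negligible at scale $N^{1/3}$. So the correctly signed operator appears already at finite $N$, and the right side is handled by the standard $K_0$ inversion for an honest random variable (any $0<A<2\ttt$).

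Two things remain for you to do. First, pass the operator through $N\to\infty$ in integrated (ODE) form, since Theorem~\ref{thm: High density beta small asymptotic} gives only pointwise convergence of $(\sigma N)^{-1/3}Q_N$, not of its $s$-derivative. Second, commute the operator with the $A$-weighted inversion of $\widetilde Q^{High}_{\tbeta<\ttt}$; this is where $A>\ttt-\tbeta$ and the decay of $\widetilde Q^{High}_{\tbeta<\ttt}$ at $-\infty$ are genuinely used. Compared with the paper, this route never manipulates a signed ``law'' and makes the deconvolution an exact finite-$N$ identity rather than an analytic continuation in $\tbeta$.
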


\subsection{Connection to existing distributions}
It is known that positive-temperature models such as the log-gamma polymer can degenerate to zero-temperature models such as exponential last-passage percolation (ELPP). There has been work on the one point distribution of the exponential LPP model with product stationary initial condition, see \cite[Theorem 2.7]{PatrikStatExp}. By the KPZ universality, we expect that the asymptotic distribution \eqref{eq: Low final prob} in Theorem \ref{thm: One param probability} should be the same as \cite[Theorem 2.7]{PatrikStatExp}. 
However, the central kernel $\widetilde{\bold{K}}_L$ in \eqref{eq: one param asymptotic formula} is the GSE kernel whereas the central kernel $\overline{\mathcal{A}}$ in \cite[Theorem 2.7]{PatrikStatExp} is the cross-over kernel. It is not clear why these two formulas should match.

For the two-parameter stationary case, we also expect that the distributions in Theorem \ref{thm: High density beta large probability} and Theorem \ref{thm: High density beta small probability} should match the one in \cite[Theorem 1.6 (1.13)]{zeng2025stationary}. This matching would be significantly harder since formulas in either case are very involved. The differences between kernel decompositions of the log-gamma model and geometric LPP (GLPP) are explained in Section \ref{section: difference in models}.

It is clear that one may first take the degeneration from the log-gamma model to the exponential LPP model and then tune parameters to obtain stationary ELPP. Alternatively, one may pass from GLPP to ELPP. In either case, it is reasonable to expect that using the shift argument in \cite[Lemma 3.3]{PatrikStatExp} and \cite[section 6.3.1]{zeng2025stationary} rather than Fourier transform and inversion can equivalently remove extraneous weights at $(1,1)$ (for LD) and $(2,1)$ (for HD). Consequently, we expect that 
\begin{equation}\label{eq: change to shift low}
    \Pb\left(\lim_{N\rightarrow \infty}\frac{\log \mathcal{Z}^{t}(N,N) +Nf}{(\sigma N)^{1/3}} \leq s\right) = \partial_{s}\left(\widetilde{{Q}}_{\ttt}^{Low}(s)\right),
\end{equation}
and similarly, 
\begin{equation}\label{eq: change to shift high}
\begin{aligned}
     \Pb\left(\lim_{N\rightarrow \infty}\frac{\log \mathcal{Z}^{t,\beta}(N-1,N-1) +Nf}{(\sigma N)^{1/3}} \leq r\right)  = \left(\partial_{s} + \frac{1}{\tbeta - \ttt}\partial_s^2\right)\widetilde{{Q}}_{\tbeta > \ttt}^{High}(s) \quad \text{ when }\tbeta > \ttt,\\
     \Pb\left(\lim_{N\rightarrow \infty}\frac{\log \mathcal{Z}^{t,\beta}(N-1,N-1) +Nf}{(\sigma N)^{1/3}} \leq r\right)  = \left(\partial_{s} + \frac{1}{\tbeta - \ttt}\partial_s^2\right)\widetilde{{Q}}_{\tbeta < \ttt}^{High}(s) \quad \text{ when }\tbeta < \ttt,\\
\end{aligned}   
\end{equation}
where $\widetilde{{Q}}_{\ttt}^{Low},$ $\widetilde{{Q}}_{\tbeta > \ttt}^{High},$ $\widetilde{{Q}}_{\tbeta < \ttt}^{High}$
are defined in Definition \ref{def: One param asymptotic}, Definition \ref{def: High density beta large asymptotic}, and Definition \ref{def: High density beta small asymptotic}.

We formulate the above observation as the following conjecture.
\begin{conj} Let $\tbeta,$ $\ttt$, $\alpha,$ $A$ be defined as in Theorem \ref{thm: High density beta small probability} and let $\widetilde{{Q}}_{\tbeta < \ttt}^{High}$ be defined as in Definition \ref{def: High density beta small asymptotic}.
    We expect the following identities of functions:
    \begin{equation}
        \left(1 + \frac{1}{\tbeta - \ttt} \partial_s\right)\frac{1}{2\pi}\int_{\R}\int_{\R} (A-\I y)e^{-Ax} e^{\I x y} \left(\widetilde{{Q}}_{\tbeta < \ttt}^{High}(x+v)\right) dxdy = \widetilde{F}_{-\tbeta, -\ttt}^{HD}(v),
    \end{equation}
    where $\widetilde{F}_{-\tbeta, -\ttt}^{HD}$ is defined in \cite[Theorem 1.6 (1.13)]{zeng2025stationary}.
\end{conj}

\begin{conj}
Let $\tbeta,$ $\ttt$, $\alpha,$ $A$ be defined as in Theorem \ref{thm: High density beta large probability} and let $\widetilde{{Q}}_{\tbeta > \ttt}^{High}$ be defined as in Definition \ref{def: High density beta large asymptotic}. We expect the following identities of functions:
    \begin{equation}
        \left(1 + \frac{1}{\tbeta - \ttt} \partial_s\right)\frac{1}{2\pi}\int_{\R}\int_{\R} (A-\I y)e^{-Ax} e^{\I x y} \left(\widetilde{{Q}}_{\tbeta > \ttt}^{High}(x+v)\right) dxdy = \widetilde{F}_{-\tbeta, -\ttt}^{HD}(v),
    \end{equation}
    where $\widetilde{F}_{-\tbeta, -\ttt}^{HD}$ is defined in \cite[Theorem 1.6 (1.13)]{zeng2025stationary}.
\end{conj}

\begin{conj}
Let $\tbeta,$ $\ttt$, $\alpha,$ $A$ be defined as in Theorem \ref{thm: One param probability} and let $\widetilde{{Q}}_{\ttt}^{Low}$ be defined as in Definition \ref{def: One param asymptotic}. We expect the following identities of functions:
    \begin{equation}
        \int_{\R}\int_{\R} (-A+\I y)e^{-Ax} e^{\I x y} \left(\widetilde{{Q}}_{\ttt}^{Low}(x+v)\right) dxdy = \widetilde{F}_{-\tbeta}^{LD}(v) = F_{0,half}^{(\tbeta,0)}(v),
    \end{equation}
    where $\widetilde{F}_{-\tbeta}^{LD}$ is defined in \cite[Theorem 1.6 (1.15)]{zeng2025stationary} and $F_{0,half}^{(\tbeta,0)}$ is defined in \cite[Theorem 2.7]{PatrikStatExp}.
\end{conj}

\subsection{Our method and novelty}
Our goal is to derive the one-point asymptotic distribution of the half-space log-gamma polymer with stationary initial condition. 
The two-parameter stationary measure of the log-gamma polymer can be scaled to obtain the stationary measure of the half-space KPZ equation, as proved in \cite[Theorem~1.4]{LogGammaStationary}. 
Applying scaling to this stationary measure then yields the stationary measure for the half-space KPZ fixed point, see \cite[Section~4.2]{barraquand2022half}. Our result should imply the one-point distribution of half-space KPZ fixed point with stationary initial condition.

To access this universal distribution, we start with a Fredholm Pfaffian algebraic structure of the half-space log-gamma polymer described in \cite[Theorem~5.10]{imamura2022solvablemodelskpzclass}. 
We specialize the diagonal, first-row, and second-row parameters so that the first two rows reproduce the exact two-parameter stationary measure, and then apply the critical scaling to obtain asymptotics. The main technical challenge is the tuning of parameters. While half-space log-gamma models are typically defined with positive parameters, achieving stationary initial data requires tuning certain parameters to negative values, which causes the Fredholm Pfaffian to diverge. To address this, we decompose the kernel to extract the divergent contributions.

Such kernel decompositions have appeared previously; see \cite[Section~3.2.3]{PatrikStatExp} and \cite[Section~6.6]{zeng2025stationary}. 
Our decomposition in the product stationary case follows similar ideas (see Section~\ref{section: kernel decompose}). 
However, in the two-parameter setting the decomposition is substantially more involved, essentially because the log-gamma kernel in \cite[Theorem 5.10]{imamura2022solvablemodelskpzclass} is symmetric in the diagonal, first-row, and second-row parameters. 
In particular, in the High density regime $0>\beta>-t$, both the diagonal parameter $\beta$ and the first-row parameter must be tuned to negative values. 
We therefore need to extract these poles simultaneously and reorganize them so that the parameters can be analytically continued while the Fredholm Pfaffian of the remaining kernel still converges. 
A detailed comparison between our decomposition and earlier ones is given in Section~\ref{section: difference in models}.
This new decomposition is necessary for recovering the full High density regime.

Our next goal is to translate our limiting formulas \eqref{eq: One_param_formula}, \eqref{eq: Two_param_formula_positive}, \eqref{eq: K_0 version 2 param}, which involve expectations of Bessel function integrals, into exact distribution formulas. In the LPP setting, this issue is resolved by performing a shift argument as described in \eqref{eq: change to shift low} and \eqref{eq: change to shift high}. In the log-gamma case, we need to use the shift argument (for High density regime) together with the fourier inversion. This step requires uniform control over both the upper and lower tails of the scaled free energy. We note that these tail estimates may be of independent probabilistic interest. Accordingly, we provide the first exponential upper bounds for the upper and lower tail of the scaled free energy for the half-space stationary log-gamma model. 

Our strategy is to transport existing tail estimates from the full-space to the half-space geometry. This is facilitated by a distributional correspondence \cite{BarraquandShouda} that identifies the half-space point-to-line partition function with the full-space point-to-point partition function under specific parameter correspondence rules. To employ the distributional identity, one first need to embed the half-space model to a trapezoid model with specific parameter choices. Because stationary models have negative parameters that will be sent to zero under critical scaling, if one naively maps the half-space stationary model to the trapezoid model, either the trapezoid model becomes undefined or no longer exhibits the desired lower tails. To resolve this, we use the monotonicity of the inverse-gamma random variable to construct a sequence of intermediate models to transport the existing tails. However, the derivation of these bounds still entails distinct challenges:
\begin{enumerate}
    \item \textbf{The upper tail}: The stationary models are stochastically larger than its non-stationary counterparts, with the two-parameter stationary model being even larger than the product-stationary model. This hierarchy and the negativity of the parameters make it difficult to identify a dominating model that can be properly embedded to the trapezoid model. Instead, we separate the two-parameter stationary model into two components and establish control over each part. Furthermore, tail bounds for the inhomogeneous full-space log-gamma polymer are not available in the literature. While one could theoretically extract exponential upper bound of order $e^{-cx^{3/2}}$ for upper tails of inhomogeneous full-space model using the same involved steepest descent analysis as in \cite{barraquand2021fluctuations}, such approach will yield an exponentially decaying error term like $e^{-cN}$. In contrast, we provide a more direct steepest descent analysis using the kernel from \cite{imamura2022solvablemodelskpzclass} with much nicer contours and obtain a clean exponential bound that avoids this extra error term. One might expect that the asymptotic formulas \eqref{eq: one param asymptotic formula}, \eqref{eq: 2 param beta large asymptotic final}, and \eqref{Two_param_formula_negative} would provide information about the upper tail of the model. 
In the LPP setting, the upper tail can indeed be read off from such formulas, since the shift argument reduces to differentiation, and differentiating an exponentially decaying tail preserves exponential decay. 
In contrast, in the log-gamma case, applying Fourier inversion to the asymptotics obscures the known exponential upper-tail decay of the formula.
    \item \textbf{The lower tail}: Unlike the upper tail, using the homogeneous half-space model as an intermediate model for the lower tail is straightforward. The difficulty here arises from the nature of the correspondence between half-space and full-space models. Since we want the lower tail of the point-to-point partition function, knowing the point-to-line partition function, which is larger than the point-to-point partition function, seems insufficient. We resolve this by leveraging the stationarity of the model, which induces a random walk behavior with a negative drift along the down-right line. This drift ensures that the point-to-line sum is dominated by the partition function near the diagonal. Using refined random walk estimates, we are able to translate the lower tail of full-space homogeneous point-to-point partition function to the lower tail of half-space partition function on the diagonal. Our method can also derive the lower tail of point-to-point partition function of stationary model away from the diagonal. 

\end{enumerate}

While we expect the optimal upper and lower tail decay to be of the form $e^{-cx^{3/2}}$ and $e^{-cx^{3}}$ as in the case of the full-space model, we focus here on establishing the exponential bounds (linear in $x$ in the exponent) with no $N$-dependent error terms.

\subsection{Organization of Paper}
Sections~2-5 introduce the notations, define the functions appearing in the main results, and introduce the models and their Pfaffian structures. Sections~6-8 are devoted to the reformulation of the kernel, the analytic continuation of the Fredholm Pfaffian formula, and the asymptotic analysis in the product-stationary and two-parameter stationary cases when $\tbeta>\ttt$. Analogously, Sections~9, 11, and~12 treat the two-parameter stationary case when $\tbeta<\ttt$, including the kernel decomposition, analytic continuation, and asymptotic analysis. Section~10 highlights the differences and new features in the kernel decomposition introduced in Section~9. Sections~13--14 establish lower and upper tail estimates for the half-space stationary models. Sections~15--16 provide the lower and upper tail bounds required for the Fourier inversion argument. Finally, Section~17 presents numerical evaluations of the stationary log-gamma polymer and exponential LPP.

\subsection{Acknowledgment}

The authors sincerely thank their advisor, Ivan Corwin, for suggesting this problem and insightful guidance. The authors are especially grateful to Matteo Mucciconi for all the valuable discussions. This research was partially supported by Ivan Corwin’s National Science Foundation grant DMS:2246576 and Simons Investigator in Mathematics award MPS-SIM-00929852.

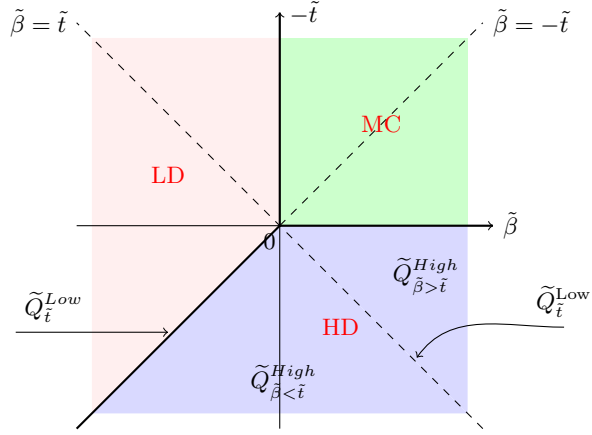
\begin{figure}
\centering
    \begin{tikzpicture}[scale=1.35]
        \fill[green!20] (0,0) -- (1.85,0) -- (1.85,1.85) -- (0,1.85) -- (0,0) -- cycle;
        \fill[red!20,opacity=0.3] (0,0) -- (0,1.85) -- (-1.85,1.85) -- (-1.85,-1.85) -- (0,0) -- cycle;
        \fill[blue!15] (0,0) -- (1.85,0) -- (1.85,-1.85) -- (0,0) -- cycle;
        \fill[blue!15] (0,0) -- (1.85,-1.85) -- (-1.85,-1.85) -- (0,0) -- cycle;
        
        \draw[thin,->] (-2,0) -- (2.1,0)node[right] {\footnotesize{$\tbeta$}};
        \draw[thin,->] (0,-2) -- (0,2.1)node[right] 
        {\footnotesize{$-\ttt$}};

        \draw[thick] (0,0) -- (0,2.1);
        \draw[thick] (0,0) -- (2.1,0);
        
        \draw[thick] (-2,-2) -- (0,0);
        \draw[thin, dashed] (0,0) -- (2,2) node[right] {\footnotesize{$\tbeta = -\ttt$}};
        \draw[thin,dashed] (2,-2) -- (-2,2) node[left] {\footnotesize{$\tbeta = \ttt$}};

        \node at (-0.1,-0.15) {\footnotesize{$0$}};
        
        \node[text = {red}] at (0.6,-1){\footnotesize{HD}};
        \node[text = {red}] at (1,1){\footnotesize{MC}};
        \node[text = {red}] at (-1.1,0.5){\footnotesize{LD}};

    
      \draw[thin,->]
        (-2.6,-1.05) -- (-1.1,-1.05)
        node[near start,above]{\footnotesize $\widetilde{Q}^{Low}_{\ttt}$};
    
      \draw[thin,->]
        (2.8,-1) node[above]{\footnotesize $\widetilde{Q}^{\mathrm{Low}}_{\ttt}$}to[out=180,in=55] (1.35,-1.3);

    \node[text = {black}] at (1.4,-0.5){\footnotesize{ $\widetilde{Q}_{\tbeta>\ttt}^{High}$}};
    \node[text = {black}] at (0,-1.55){\footnotesize{ $\widetilde{Q}_{\tbeta<\ttt}^{High}$}};
        
    \end{tikzpicture}
    \captionof{figure}{This picture describes the phase diagram of the half-space stationary log-gamma polymer model under critical scaling, which sets $t = \ttt/(\sigma N)^{1/3}$ and $\beta = \tbeta/(\sigma N)^{1/3}.$ We characterize the distribution of $\lim_{N\rightarrow \infty}\frac{\log Z(N,N) + Nf}{(\sigma N)^{1/3}}$ under three different cases. Under the High density phase, when $\tbeta > \ttt$, we have $\widetilde{Q}_{\tbeta > \ttt}^{High}$ and when $-\ttt<\tbeta < \ttt$, we have $\widetilde{Q}_{\tbeta < \ttt}^{High}$ as limiting distributions. Under the Low density phase, we obtain $\widetilde{Q}_{\ttt}^{Low}$ as the limiting distribution. In particular, we have $\widetilde{Q}_{\ttt}^{Low}$ on the line $\tbeta = \ttt$, $\ttt>0$ in the High density regime.}
    \label{phaseDiagram}
\end{figure}

\section{Basic notations}

\label{section:notation}
In this paper, we will have two kinds of limits. We consistently use $\widehat{F}$, ${F}^L$(respectively) to denote the limit $\lim_{u\rightarrow -t}F(t)$ in High density and Low density regimes, respectively. We consistently use $\widetilde{F}$, $\widetilde{F}^L$ to denote the asymptotic limit of $\widehat{F}$, ${F}^L$ under the critical scaling in the High density and Low density regimes, respectively.

We use the bracket notation to denote the scalar product on $\mathbb{L}^2((\tau,\infty))$, that is
\begin{equation}\label{ellInnerProduct}
\begin{aligned}
    &\braket{f}{g}=\braket{f(x)}{g(x)} = \int_{\tau}^{\infty} f(x)g(x)dx,\\
    &\brabarket{f}{h}{g} = \brabarket{f(x)}{h(x,y)}{g(y)} = \int_{\tau}^{\infty}\int_{\tau}^{\infty}f(x)h(x,y)g(y)dx dy,\\
    &K\ket{g}(x) = \int_{\tau}^{\infty} K(x,y) g(y)dy.
\end{aligned}
\end{equation}
We use $\ketbra{f}{g}$ to denote the outer product kernel
\begin{equation}
    \ketbra{f}{g}(x,y) = f(x)g(y).
\end{equation}
We will remove the parameters $(x,y)$ when the context is clear.
We define the contour
\begin{equation}
\begin{aligned}
    C(\delta;\theta) = \{\delta + re^{\text{sign}(r)\theta} : r\in \R\},\\
\end{aligned}
\end{equation}
where the choice of $\delta$ will be specified for each function.
We assume the positive orientation of the contour.

Throughout this paper, we define $K_{\nu}(z)$ to be the modified Bessel function as in \cite{abramowitz1965handbook}.

\section{Definition of functions}
\begin{define}[Low density phase]\label{def: one_param_finite}
    Fix any two parameters $t,\alpha \in \R_{>0}$ satisfying $t \in (0,0.5)$ and $\alpha > t.$ We define
    \begin{equation}
        \begin{aligned}
             &Q_{N,t}^{Low}(\tau):=\Pf\left( J - \widehat{\bold{K}}_L \right)\left(\widehat{C}_L -1\right)+\Pf\left(J - \widehat{\bold{K}}_L - \ketbra{\begin{array}{c} \widehat{\phi}_2^L \\
        -\widehat{\phi}_1^L
        \end{array}}{\widehat{B}_L \quad -\widehat{B}_L^{\prime}} - \ketbra{\begin{array}{c}
            \widehat{B}_L\\
            -\widehat{B}_L^{\prime}
        \end{array}}{-\widehat{\phi}_2^L \quad \widehat{\phi}_1^L }\right),
        \end{aligned}
    \end{equation}
    where Fredholm Pfaffian is taken over $\mathbb{L}^2((\tau,\infty))$. 
\end{define}

 Let $\I\R - d$ denote a vertical line going from $-d- \I\infty$ to $-d + \I\infty$ with $0<d< t$ and $\frac{1}{N}<d$. We omit the dependence of $Q_{N,t}^{Low}(\tau)$ on $d$.
We define all functions used in the above definition.

\begin{equation}
    \begin{aligned}
        G_{\alpha}(X)&:= \left(\frac{\Gamma(\alpha + X)}{\Gamma(\alpha - X)}\right)^{N-1},\quad Q(Z,W) := \Gamma(-2Z)\Gamma(-2W)\frac{\sin(\pi(Z-W))}{\sin(\pi(Z+W))},\\
        \widehat{\phi}_1^L(X)
        &:= -\Gamma(2t)G_{\alpha}(t)
        \int \limits_{\I\R - d}\frac{dZ}{2\pi\I} \int\limits_{\I\R-d} \frac{dW}{2\pi\I} Ze^{XZ}e^{\tau(t+W)}\\
        &\frac{\Gamma(-t+Z)}{\Gamma(-t-Z)}\frac{\Gamma(t+Z)}{\Gamma(t-Z)}\frac{\Gamma(-t+W+1)}{\Gamma(-t-W+1)}\frac{\Gamma(t+W)}{\Gamma(t-W)}G_\alpha(Z)G_\alpha(W)
        Q(Z,W),\\
        \widehat{\phi}_2^L(X)&:= -\Gamma(2t)G_{\alpha}(t)
        \int \limits_{\I\R - d}\frac{dZ}{2\pi\I} \int\limits_{\I\R-d} \frac{dW}{2\pi\I} e^{XZ}e^{\tau(t+W)}\\
        &\frac{\Gamma(-t+Z)}{\Gamma(-t-Z)}\frac{\Gamma(t+Z)}{\Gamma(t-Z)}
        \frac{\Gamma(-t+W+1)}{\Gamma(-t-W+1)}\frac{\Gamma(t+W)}{\Gamma(t-W)}G_\alpha(Z)G_\alpha(W)
        Q(Z,W),\\
    \widehat{B}_L(X) &:= \int \limits_{\I\R - d} \frac{dW}{2\pi\I} e^{XW}\frac{\Gamma(t+W)}{\Gamma(t-W)}\frac{\Gamma(1+t+W)}{\Gamma(1+t-W)}G_{\alpha}(W)\Gamma(-2W),\\
    \widehat{B}'_L(X) 
    &:= \int \limits_{\I\R - d} \frac{dW}{2\pi\I} We^{XW}\frac{\Gamma(t+W)}{\Gamma(t-W)}\frac{\Gamma(1+t+W)}{\Gamma(1+t-W)}G_{\alpha}(W)\Gamma(-2W),\\
    \widehat{C}_L &:= G_{\alpha}(t)\Gamma(2t)\int \limits_{\I\R -d} \frac{dW}{2\pi\I} e^{\tau(t+W)}\frac{\Gamma(t+W)^2}{\Gamma(t-W)^2} G_\alpha(W)\Gamma(-2W).
    \end{aligned}
\end{equation}

    \begin{equation}
        \widehat{\bold{K}}_L(X,Y) := \begin{pmatrix}
            \widehat{K}_L(X,Y) & -\partial_Y \widehat{K}_L(X,Y)\\
            - \partial_X \widehat{K}_L(X,Y) & \partial_X\partial_Y \widehat{K}_L(X,Y)
        \end{pmatrix}
    \end{equation} with
    \begin{equation}
    \begin{split}
        {\widehat{K}_L}(X,Y) := &\int\limits_{\mathrm{i}\mathbb{R}-d_0} \frac{dZ}{2\pi i}
\int\limits_{\mathrm{i}\mathbb{R}-d_0} \frac{dW}{2\pi i}\;
e^{XZ+YW}\frac{\Gamma(-t+Z)}{\Gamma(-t-Z)}\frac{\Gamma(t+Z)}{\Gamma(t-Z)}\frac{\Gamma(-t+W)}{\Gamma(-t-W)}\frac{\Gamma(t+W)}{\Gamma(t-W)}\\
&
G_{\alpha}(Z)\,
G_{\alpha}(W)\,
Q(Z,W).
\end{split}
\end{equation}

\begin{define}[Low density phase]\label{def: One param asymptotic}
    Fix any $\alpha,\tilde{t} \in \R_{>0}$ and $s \in \R$. We define
    \begin{equation}
        \begin{aligned}
            \widetilde{Q}_{\ttt}^{Low}(s) &:= \Pf\left( J - \widetilde{\bold{K}}_L \right)\left(\widetilde{C}_L -1\right)
            +\Pf\left( J - \widetilde{\bold{K}}_L - \ketbra{\begin{array}{c} \widetilde{\phi}_2^L \\
        -\widetilde{\phi}_1^L
        \end{array}}{\widetilde{B}_L \quad -\widetilde{B}^{\prime}_L} - \ketbra{\begin{array}{c}
            \widetilde{B}_L\\
            -\widetilde{B}'_L
        \end{array}}{-\widetilde{\phi}_2^L \quad \widetilde{\phi}_1^L }\right)
        \end{aligned}
    \end{equation}
    where Fredholm Pfaffian is taken over $\mathbb{L}^2((s, \infty)).$
\end{define}
Choose $\ttt>\delta >0$. We omit the dependence of $\widetilde{Q}_{\ttt}^{Low}$ on $\delta$.
We define all functions used in the above definition.
\begin{equation}
    \begin{aligned}
        \widetilde{C}_L := &-\int\limits_{C(\delta;\pi/3)} \frac{d\tW}{2\pi \I} \left(\frac{1}{\tW} + \frac{1}{\ttt}\right)\frac{(\tW + \ttt)}{4(\tW-\ttt )^2} e^{-\frac{\ttt^3}{3} + s\ttt + \frac{\tW^3}{3} - s\tW},\\
        \widetilde{B}_L(v) := & -\int\limits_{C(\delta;\pi/3)} \frac{d\tW}{2\pi \I} \frac{(\ttt+\tW)}{2\tW(\ttt-\tW)} e^{ \frac{\tW^3}{3} - v\tW}, \quad  \widetilde{B}'_L(v) =  \int\limits_{C(\delta;\pi/3)} \frac{d\tW}{2\pi \I}  \frac{(\ttt+\tW)}{2(\ttt-\tW)} e^{ \frac{\tW^3}{3} - v\tW}.
    \end{aligned}
\end{equation}
\begin{equation}
    \widetilde{\bold{K}}_L(u,v) := \begin{pmatrix}
        \widetilde{K}_L(u,v) & -\partial_v \widetilde{K}_L(u,v)\\
        -\partial_u\widetilde{K}_L(u,v) & \partial_u\partial_v \widetilde{K}_L(u,v),
    \end{pmatrix}
\end{equation}
where
\begin{equation}
    \begin{aligned}
        \widetilde{K}_L(u,v):=  \int\limits_{C(\delta;\pi/3)} \frac{d\tZ}{2\pi \I }\int\limits_{C(\delta;\pi/3)} \frac{d\tW}{2\pi \I} \frac{(\tZ - \tW)}{4\tZ \tW(\tZ + \tW)} e^{\frac{\tZ^3}{3} - u\tZ + \frac{\tW^3}{3} - v\tW }.
    \end{aligned}
\end{equation}

\begin{equation}
\begin{aligned}
     \widetilde{\phi}_1^L(u) := & \int\limits_{C(\delta;\pi/3)} \frac{d\tZ}{2\pi \I }\int\limits_{C(\delta;\pi/3)} \frac{d\tW}{2\pi \I} \frac{(\tZ - \tW)}{8\ttt \tW(\tZ + \tW)}\frac{(\ttt+\tW)}{(\ttt-\tW)} e^{\frac{\tZ^3}{3} - u\tZ + \frac{\tW^3}{3} - s\tW - \frac{\ttt^3}{3} + s\ttt },\\
     \widetilde{\phi}_2^L(u) := & -\int\limits_{C(\delta;\pi/3)} \frac{d\tZ}{2\pi \I }\int\limits_{C(\delta;\pi/3)} \frac{d\tW}{2\pi \I} \frac{(\tZ - \tW)}{8\ttt\tZ \tW(\tZ + \tW)}\frac{(\ttt+\tW)}{(\ttt-\tW)} e^{\frac{\tZ^3}{3} - u\tZ + \frac{\tW^3}{3} - s\tW - \frac{\ttt^3}{3} + s\ttt }.
\end{aligned}
\end{equation}

\begin{define}[High density phase with $\beta > t$]\label{def: two param beta > t}
    Fix any three parameters $t, \beta,\alpha \in \R_{>0}$ satisfying $t \in (0,0.5)$ and $\alpha,\beta > t$. We define
\begin{equation}
        \begin{aligned}
            {Q}_{N,\beta >t}^{High}(\tau) &:= \Pf\left( J - \widehat{\bold{K}} \right)\left(\widehat{C} -1\right)+\Pf\left(J - \widehat{\bold{K}} - \ketbra{\begin{array}{c} \widehat{\phi}_2 \\
        -\widehat{\phi}_1
        \end{array}}{\widehat{B} \quad -\widehat{B}^{\prime}} - \ketbra{\begin{array}{c}
            \widehat{B}\\
            -\widehat{B}^{\prime}
        \end{array}}{-\widehat{\phi}_2 \quad \widehat{\phi}_1 }\right)
        \end{aligned}
    \end{equation}
    where Fredholm Pfaffian is taken over $\mathbb{L}^2((\tau,\infty))$.
\end{define}
Let $\I\R - d$ denote a vertical line going from $-d- \I\infty$ to $-d + \I\infty$ with $0<d< t$ and $\frac{1}{N}<d$. We omit the dependence of ${Q}_{N,\beta >t}^{High}$ on $\delta$.
We define all functions used in the above theorem.

\begin{equation}
    \begin{aligned}
        G_{\alpha,\beta}(X)&:= \left(\frac{\Gamma(\beta + X)}{\Gamma(\beta - X)}\right)\left(\frac{\Gamma(\alpha + X)}{\Gamma(\alpha - X)}\right)^{N-2},\quad Q(Z,W) := \Gamma(-2Z)\Gamma(-2W)\frac{\sin(\pi(Z-W))}{\sin(\pi(Z+W))},\\
        \widehat{\phi}_1(X)
        &:= -\Gamma(2t)G_{\alpha,\beta}(t)
        \int \limits_{\I\R - d}\frac{dZ}{2\pi\I} \int\limits_{\I\R-d} \frac{dW}{2\pi\I} Ze^{XZ}e^{\tau(t+W)}\\
        &\frac{\Gamma(-t+Z)}{\Gamma(-t-Z)}\frac{\Gamma(t+Z)}{\Gamma(t-Z)}\frac{\Gamma(-t+W+1)}{\Gamma(-t-W+1)}\frac{\Gamma(t+W)}{\Gamma(t-W)}G_{\alpha,\beta}(Z)G_{\alpha,\beta}(W)
        Q(Z,W),\\
        \widehat{\phi}_2(X)&:= -\Gamma(2t)G_{\alpha,\beta}(t)
        \int \limits_{\I\R - d}\frac{dZ}{2\pi\I} \int\limits_{\I\R-d} \frac{dW}{2\pi\I} e^{XZ}e^{\tau(t+W)}\\
        &\frac{\Gamma(-t+Z)}{\Gamma(-t-Z)}\frac{\Gamma(t+Z)}{\Gamma(t-Z)}
        \frac{\Gamma(-t+W+1)}{\Gamma(-t-W+1)}\frac{\Gamma(t+W)}{\Gamma(t-W)}G_{\alpha,\beta}(Z)G_{\alpha,\beta}(W)
        Q(Z,W).
        \end{aligned}
        \end{equation}
    \begin{equation}
    \begin{aligned}
    \widehat{B}(X) &:= \int \limits_{\I\R - d} \frac{dW}{2\pi\I} e^{XW}\frac{\Gamma(t+W)}{\Gamma(t-W)}\frac{\Gamma(1+t+W)}{\Gamma(1+t-W)}G_{\alpha,\beta}(W)\Gamma(-2W),\\
    \widehat{B}'(X) 
    &:= \int \limits_{\I\R - d} \frac{dW}{2\pi\I} We^{XW}\frac{\Gamma(t+W)}{\Gamma(t-W)}\frac{\Gamma(1+t+W)}{\Gamma(1+t-W)}G_{\alpha,\beta}(W)\Gamma(-2W),\\
    \widehat{C} &:= G_{\alpha,\beta}(t)\Gamma(2t)\int \limits_{\I\R -d} \frac{dW}{2\pi\I}e^{\tau(t+W)}\frac{\Gamma(t+W)^2}{\Gamma(t-W)^2} G_{\alpha,\beta}(W)\Gamma(-2W).\\
    \end{aligned}
\end{equation}

    \begin{equation}
        \widehat{\bold{K}}(X,Y) := \begin{pmatrix}
            \widehat{K}(X,Y) & -\partial_Y \widehat{K}(X,Y)\\
            - \partial_X \widehat{K}(X,Y) & \partial_X\partial_Y \widehat{K}(X,Y)
        \end{pmatrix}
    \end{equation} with
    \begin{equation}
    \begin{split}
        {\widehat{K}}(X,Y) := &\int\limits_{\mathrm{i}\mathbb{R}-d_0} \frac{dZ}{2\pi i}
\int\limits_{\mathrm{i}\mathbb{R}-d_0} \frac{dW}{2\pi i}\;
e^{XZ+YW}\frac{\Gamma(-t+Z)}{\Gamma(-t-Z)}\frac{\Gamma(t+Z)}{\Gamma(t-Z)}\frac{\Gamma(-t+W)}{\Gamma(-t-W)}\frac{\Gamma(t+W)}{\Gamma(t-W)}\\
&
G_{\alpha,\beta}(Z)\,
G_{\alpha,\beta}(W)\,
Q(Z,W).
\end{split}
\end{equation}

\begin{rmk}
    One may ask whether the product stationary case can be obtained by taking the limit $\beta \to t$ in the above formula, since the two-parameter stationary initial condition then degenerates to the product stationary one. However, two obstacles arise. 
First, Definition~\ref{def: two param stationary condition} introduces a random shift at $(2,2)$ through $\mathcal{Z}^{t,\beta}(1,1)=X\stackrel{(d)}{=}\Gamma(\beta+t)$; under critical scaling this contribution remains nontrivial, leading to a worse result than the direct Low density formula $\widehat{Q}_{N,t}^{Low}$. 
Second, setting $\beta=t$ at finite $N$ forces
\[
A = e^{-Xu}\left(\frac{\Gamma(t-u)}{\Gamma(t+u)}\right)^2
    \left(\frac{\Gamma(\alpha-u)}{\Gamma(\alpha+u)}\right)^{N-2},
\] which is defined in \eqref{eq: low def of A}.
The denominator of $A$ contains $\Gamma(t+u)^2$. This factor cannot be simply compensated by removing the $(1,1)$ weight (which produces only one $\Gamma(t+u)$ factor), making the computation substantially more difficult. 
Finally, the Maximal current phase cannot be recovered by taking $t\to 1$, essentially due to the symmetry of the kernel.
\end{rmk}

\begin{define}[High density with $\tbeta > \ttt$]\label{def: High density beta large asymptotic}
    Fix any $\alpha \in \R_{>0}.$
    Fix any three parameters $\tilde{t},\tilde{\beta}, s\in \R$ satisfying $ 0< \tilde{t}<\tilde{\beta}.$ We define
    \begin{equation}
        \begin{aligned}
            \widetilde{Q}_{\tbeta >\ttt}^{High}(s) &:= \Pf\left( J - \widetilde{\bold{K}} \right) \left(\widetilde{C} -1\right)
            +\Pf\left( J - \widetilde{\bold{K}} - \ketbra{\begin{array}{c} \widetilde{\phi}_2 \\
        -\widetilde{\phi}_1
        \end{array}}{\widetilde{B} \quad -\widetilde{B}^{\prime}} - \ketbra{\begin{array}{c}
            \widetilde{B}\\
            -\widetilde{B}^{\prime}
        \end{array}}{-\widetilde{\phi}_2 \quad \widetilde{\phi}_1 }\right)
        \end{aligned}
    \end{equation}
    where Fredholm Pfaffian is taken over $\mathbb{L}^2((s, \infty)).$
\end{define}

We define all functions used in the above theorem. The dependence of $\widetilde{Q}_{\tbeta >\ttt}^{High}$ on $\delta$ is omitted.
\begin{equation}\label{eq: some equations beta>t 2 param}
    \begin{aligned}
        \widetilde{C} := &- \frac{\tbeta - \ttt}{\tbeta+\ttt}\int\limits_{C(\delta;\pi/3)} \frac{d\tW}{2\pi \I} \left(\frac{1}{\tW} + \frac{1}{\ttt}\right)\frac{(\tW + \ttt)}{4(\tW-\ttt )^2}\frac{(\tbeta+\tW)}{(\tbeta-\tW)} e^{-\frac{\ttt^3}{3} + s\ttt + \frac{\tW^3}{3} - s\tW},\\
        \widetilde{B}(v) := & -\int\limits_{C(\delta;\pi/3)} \frac{d\tW}{2\pi \I} \frac{(\ttt+\tW)}{2\tW(\ttt-\tW)} \frac{(\tbeta+\tW)}{(\tbeta-\tW)} e^{ \frac{\tW^3}{3} - v\tW}\\
        \widetilde{B}'(v) := &  \int\limits_{C(\delta;\pi/3)} \frac{d\tW}{2\pi \I}  \frac{(\ttt+\tW)}{2(\ttt-\tW)}\frac{(\tbeta+\tW)}{(\tbeta-\tW)} e^{ \frac{\tW^3}{3} - v\tW}.
    \end{aligned}
\end{equation}
\begin{equation}
    \widetilde{\bold{K}}(u,v) := \begin{pmatrix}
        \widetilde{K}(u,v) & -\partial_v \widetilde{K}(u,v)\\
        -\partial_u\widetilde{K}(u,v) & \partial_u\partial_v \widetilde{K}(u,v)
    \end{pmatrix},
\end{equation}
where
\begin{equation}
    \begin{aligned}
        \widetilde{K}(u,v):=  \int\limits_{C(\delta;\pi/3)} \frac{d\tZ}{2\pi \I }\int\limits_{C(\delta;\pi/3)} \frac{d\tW}{2\pi \I} \frac{(\tZ - \tW)}{4\tZ \tW(\tZ + \tW)}\frac{(\tbeta+\tW)}{(\tbeta-\tW)}\frac{(\tbeta+\tZ)}{(\tbeta-\tZ)} e^{\frac{\tZ^3}{3} - u\tZ + \frac{\tW^3}{3} - v\tW }.
    \end{aligned}
\end{equation}

\begin{equation}
\begin{aligned}
     \widetilde{\phi}_1(u) := & \frac{\tbeta - \ttt}{\tbeta+\ttt}\int\limits_{C(\delta;\pi/3)} \frac{d\tZ}{2\pi \I }\int\limits_{C(\delta;\pi/3)} \frac{d\tW}{2\pi \I} \frac{(\tZ - \tW)}{8\ttt \tW(\tZ + \tW)}\frac{(\ttt+\tW)}{(\ttt-\tW)} \frac{(\tbeta+\tW)}{(\tbeta-\tW)}\frac{(\tbeta+\tZ)}{(\tbeta-\tZ)}  \\
     &\quad e^{\frac{\tZ^3}{3} - v\tZ + \frac{\tW^3}{3} - s\tW - \frac{\ttt^3}{3} + s\ttt },\\
     \widetilde{\phi}_2(u) := & - \frac{\tbeta - \ttt}{\tbeta+\ttt}\int\limits_{C(\delta;\pi/3)} \frac{d\tZ}{2\pi \I }\int\limits_{C(\delta;\pi/3)} \frac{d\tW}{2\pi \I} \frac{(\tZ - \tW)}{8\ttt\tZ \tW(\tZ + \tW)}\frac{(\ttt+\tW)}{(\ttt-\tW)}\frac{(\tbeta+\tW)}{(\tbeta-\tW)}\frac{(\tbeta+\tZ)}{(\tbeta-\tZ)}  \\
     & \quad e^{\frac{\tZ^3}{3} - v\tZ + \frac{\tW^3}{3} - s\tW - \frac{\ttt^3}{3} + s\ttt }.
\end{aligned}
\end{equation}

\begin{define}[High density phase with $-t<\beta <t$]\label{def: two param beta < t finite}
    Fix any three parameters $t,\alpha,\beta \in \R$ satisfying $t \in (0,0.5),$ $\beta \in (-t,t)$, and $\alpha > t.$ We define
    \begin{equation}
        \begin{aligned}
            {Q}_{N,\beta <t}^{High}(\tau) &:= \frac{1}{\Pf\left(J - \widehat{\boldsymbol{\mathcal{K}}}\right)}\left(\widehat{\M}\widehat{\NN} - \widehat{\A}\widehat{\D} + \widehat{\B}\widehat{\C}\right).
        \end{aligned}
    \end{equation}
    The Fredholm Pfaffian is taken over $\mathbb{L}^2((\tau,\infty))$.
\end{define}

Let $\I\R - d$ denote a vertical line going from $-d- \I\infty$ to $-d + \I\infty$ with $\beta<d< t$ and $\frac{1}{N}<d$. We omit the dependence of ${Q}_{N,\beta <t}^{High}$ on $\delta$.
We define each function used in the above theorem. The well-definedness of $\frac{1}{\Pf\left(J - \widehat{\boldsymbol{\mathcal{K}}}\right)}$ is explained in the proof of Theorem~\ref{thm: two param beta < t finite}.

\begin{equation}\label{eq: definitions of H,F,Q}
    H_{\alpha}(Z) := \left(\frac{\Gamma(\alpha + Z)}{\Gamma(\alpha - Z)}\right)^{N-2}\!\!\!\!,\quad F(W):= \frac{\Gamma(t+W)}{\Gamma(t-W)}\frac{\Gamma(\beta + W)}{\Gamma(\beta - W)}, \quad Q(Z,W) := \Gamma(-2Z)\Gamma(-2W)\frac{\sin(\pi(Z-W))}{\sin(\pi(Z+W))}.
\end{equation}
\begin{equation}\label{eq: def of A}
\begin{aligned}
    c_0 := &\frac{\Gamma(-t-\beta)}{\Gamma(-t+\beta)}\frac{\Gamma(t-\beta)}{\Gamma(t+\beta)}H_{\alpha}(-\beta),\\
    \A_1 := &c_0 \int \limits_{\I\R - d} \frac{dW}{2\pi\I} \frac{-(\beta+W)}{(\beta - W)}e^{-\tau(\beta - W)}\frac{\Gamma(1+t+W)}{\Gamma(1+t-W)}\Gamma(-2W)F(W)H_{\alpha}(W),\\
    \A_2  :=&c_0 \int\limits_{\I\R-d}\frac{dU}{2\pi\I}\int\limits_{\I\R-d}\frac{dZ}{2\pi\I}\int\limits_{\I\R-d}\frac{dW}{2\pi\I}\frac{(Z-U)(W+\beta)}{(Z+U)(\beta - W)}e^{\tau(U+Z+W-\beta)}\frac{\Gamma(-t+Z)\Gamma(-t+W)}{\Gamma(-t-Z)\Gamma(-t-W)}\\&
    F(U)H_{\alpha}(U)\Gamma(-2U)\frac{\Gamma(1+t+U)}{\Gamma(1+t-U)}
    F(Z)H_{\alpha}(Z)F(W)H_{\alpha}(W)Q(Z,W).
\end{aligned}    
\end{equation}

\begin{equation}\label{eq: def of B}
    \begin{aligned}
        \B_1 :=& \int \limits_{\I\R - d}\frac{dZ}{2\pi\I}\int \limits_{\I\R - d}\frac{dW}{2\pi\I} \frac{(W-Z)}{(Z+W)}e^{\tau(Z+W)}\frac{\Gamma(t+Z)}{\Gamma(t-Z)}\frac{\Gamma(-t+Z)}{\Gamma(-t-Z)}\frac{\Gamma(1-\beta + Z)}{\Gamma(1-\beta - Z)}\frac{\Gamma(1+t+W)}{\Gamma(1+t-W)}\\
        &\Gamma(-2Z)\Gamma(-2W)F(W)H_{\alpha}(W)H_{\alpha}(Z) + \frac{\Gamma(\beta - t)}{\Gamma(\beta + t)}\frac{\Gamma(1-\beta + t)}{\Gamma(1-\beta - t)},\\
        \B_2 := & \int \limits_{\I\R - d}\frac{dU}{2\pi\I}\int \limits_{\I\R - d}\frac{dZ}{2\pi\I}\int \limits_{\I\R - d}\frac{dW}{2\pi\I}\int \limits_{\I\R - d}\frac{dV}{2\pi\I}\frac{(Z-U)(V-W)}{(Z+U)(V+W)}e^{\tau(Z+U+V+W)}F(Z)F(W)H_{\alpha}(Z)H_{\alpha}(W)Q(Z,W)\\
        &\frac{\Gamma(-t+Z)}{\Gamma(-t-Z)}\frac{\Gamma(-t+W)}{\Gamma(-t-W)}
        \frac{\Gamma(t+V)}{\Gamma(t-V)}\frac{\Gamma(-t+V)}{\Gamma(-t-V)}\frac{\Gamma(1-\beta + V)}{\Gamma(1-\beta - V)}\frac{\Gamma(1+t+U)}{\Gamma(1+t-U)}\Gamma(-2V)\Gamma(-2U)F(U)H_{\alpha}(U)H_{\alpha}(V).
    \end{aligned}
\end{equation}

\begin{equation}\label{eq: def of C}
    \begin{aligned}
        c_1 := & \frac{\Gamma(-t-\beta)}{\Gamma(-t+\beta)}\frac{\Gamma(t-\beta)}{\Gamma(\beta - t)}\Gamma(2t)H_{\alpha}(-\beta) H_{\alpha}(t),\quad
        \C_1 := c_1\frac{(\beta + t)}{(\beta - t)}e^{-\tau(\beta - t)},\\
        \C_2 := & c_1 \int\limits_{\I\R - d}\frac{dZ}{2\pi\I}\int\limits_{\I\R - d}\frac{dW}{2\pi\I}\frac{(Z-t)(\beta + W)}{(t+Z)(W-\beta)}e^{\tau(t+Z-\beta +W)}\frac{\Gamma(-t+Z)}{\Gamma(-t-Z)}\frac{\Gamma(-t+W)}{\Gamma(-t-W)}F(Z)F(W)H_{\alpha}(Z)H_{\alpha}(W)Q(Z,W).
    \end{aligned}
\end{equation}

\begin{equation}\label{eq: def of D}
    \begin{aligned}
        \D_1 := & \int\limits_{\I \R -d} \frac{dW}{2\pi\I}\frac{W-t}{W+t}e^{\tau(t+W)}
        \frac{\Gamma(-t+W)}{\Gamma(-t-W)}\frac{\Gamma(t+W)}{\Gamma(t-W)}\Gamma(-2W)\frac{\Gamma(1-\beta+W)}{\Gamma(1-\beta-W)}\Gamma(2t)\frac{\Gamma(\beta+t)}{\Gamma(\beta-t)}H_\alpha(t)H_\alpha(W),\\
        \D_2 := & \int\limits_{\I \R -d} \frac{dW}{2\pi \I} \int\limits_{\I \R -d} \frac{dZ}{2\pi \I} \int\limits_{\I \R -d} \frac{dV}{2\pi\I}
        \frac{(Z-t)(W-V)}{(Z+t)(V+W)}e^{\tau(V+W+Z+t)}
        \frac{\Gamma(-t+Z)}{\Gamma(-t-Z)}\frac{\Gamma(-t+W)}{\Gamma(-t-W)}
        \frac{\Gamma(t+V)}{\Gamma(t-V)}\\&\frac{\Gamma(-t+V)}{\Gamma(-t-V)}H_\alpha(V)\Gamma(-2V)
        \frac{\Gamma(1-\beta+V)}{\Gamma(1-\beta-V)}\Gamma(2t)\frac{\Gamma(\beta+t)}{\Gamma(\beta-t)}H_\alpha(t)F(Z)H_\alpha(Z)F(W)H_\alpha(W)Q(Z,W).
    \end{aligned}
\end{equation}

\begin{equation}\label{eq: def of M}
    \begin{aligned}
        \M_1 := & \Gamma(2t)\frac{\Gamma(\beta + t)}{\Gamma(\beta - t)}H_\alpha(t)\int\limits_{\I \R -d} \frac{dW}{2\pi\I}\frac{W-t}{-(W+t)}e^{\tau(t+W)}
        \frac{\Gamma(1+t+W)}{\Gamma(1+t-W)}\Gamma(-2W)F(W)H_\alpha(W),\\
        \M_2 := &\Gamma(2t)\frac{\Gamma(\beta + t)}{\Gamma(\beta - t)}H_{\alpha}(t)\int \limits_{\I\R-d} \frac{dZ}{2\pi\I}\int \limits_{\I\R-d} \frac{dW}{2\pi\I} \int \limits_{\I\R-d} \frac{dV}{2\pi\I} \frac{(Z-V)(W-t)}{(W+t)(V+Z)}e^{\tau(t + W + V+Z)}\frac{\Gamma(-t+Z)}{\Gamma(-t-Z)}\frac{\Gamma(-t+W)}{\Gamma(-t-W)}\\
        & \Gamma(-2V)\frac{\Gamma(1+t+V)}{\Gamma(1-t-V)} F(V)H_\alpha(V)F(Z)F(W)H_{\alpha}(Z)H_{\alpha}(W)Q(Z,W).
    \end{aligned}
\end{equation}

\begin{equation}\label{eq: def of N}
    \begin{aligned}
        \NN_1 :=&1+ c_0 \int\limits_{\I \R -d} \frac{dW}{2\pi\I} \frac{\Gamma(-t+W)}{\Gamma(-t-W)}\frac{\Gamma(t+W)}{\Gamma(t-W)}H_\alpha(W)\Gamma(-2W)\frac{\Gamma(1-\beta+W)}{\Gamma(1-\beta-W)}\frac{\beta + W}{\beta - W} e^{-\tau(\beta-W)},\\
        \NN_2 := &c_0 \int\limits_{\I \R -d} \frac{dW}{2\pi\I}
        \int\limits_{\I \R -d} \frac{dZ}{2\pi\I}
        \int\limits_{\I \R -d} \frac{dV}{2\pi\I} \frac{(\beta  + W)(Z -V)}{(W-\beta)(V+Z)}e^{-\tau(\beta - W - V- Z)}
        \frac{\Gamma(-t+V)}{\Gamma(-t-V)}\frac{\Gamma(t+V)}{\Gamma(t-V)}H_\alpha(V)\Gamma(-2V)
\\&\frac{\Gamma(1-\beta+V)}{\Gamma(1-\beta-V)}\frac{\Gamma(-t+W)}{\Gamma(-t-W)}\frac{\Gamma(-t+Z)}{\Gamma(-t-Z)}F(Z)F(W)H_\alpha(Z)H_\alpha(W)Q(Z,W).
    \end{aligned}
\end{equation}

\begin{equation}
    \begin{aligned}
        \widehat{\aleph}_1(X):= &\int \limits_{\I\R - d} \frac{dU}{2\pi\I}\int \limits_{\I\R - d} \frac{dZ}{2\pi\I}\int \limits_{\I\R - d} \frac{dW}{2\pi\I} \frac{(U-Z)}{(U+Z)}e^{\tau(Z+U)+XW} F(U)H_{\alpha}(U)\Gamma(-2U)\frac{\Gamma(1+t+U)}{\Gamma(1+t-U)}\\
        &\frac{\Gamma(-t+Z)}{\Gamma(-t-Z)}\frac{\Gamma(-t+W)}{\Gamma(-t-W)}F(Z)F(W)H_{\alpha}(Z)H_{\alpha}(W)Q(Z,W),\\
        \widehat{\aleph}_2(X):=&\int \limits_{\I\R - d} \frac{dU}{2\pi\I}\int \limits_{\I\R - d} \frac{dZ}{2\pi\I}\int \limits_{\I\R - d} \frac{dW}{2\pi\I} \frac{-(U-Z)W}{(U+Z)}e^{\tau(Z+U)+XW} F(U)H_{\alpha}(U)\Gamma(-2U)\frac{\Gamma(1+t+U)}{\Gamma(1+t-U)}\\
        &\frac{\Gamma(-t+Z)}{\Gamma(-t-Z)}\frac{\Gamma(-t+W)}{\Gamma(-t-W)}F(Z)F(W)H_{\alpha}(Z)H_{\alpha}(W)Q(Z,W),\\
    \end{aligned}
\end{equation}

\begin{equation}
    \begin{aligned}
        \widehat{\psi}_1(X) := &\Gamma(2t)\frac{\Gamma(\beta + t)}{\Gamma(\beta - t)}H_{\alpha}(t)\int \limits_{\I\R-d} \frac{dZ}{2\pi\I}\int \limits_{\I\R-d} \frac{dW}{2\pi\I} \frac{(Z-t)}{(Z+t)}e^{\tau(Z+t) + XW}\frac{\Gamma(-t+Z)}{\Gamma(-t-Z)}\frac{\Gamma(-t+W)}{\Gamma(-t-W)}\\
        &F(Z)F(W)H_{\alpha}(Z)H_{\alpha}(W)Q(Z,W),\\
        \widehat{\psi}_2(X) := &\Gamma(2t)\frac{\Gamma(\beta + t)}{\Gamma(\beta - t)}H_{\alpha}(t)\int \limits_{\I\R-d} \frac{dZ}{2\pi\I}\int \limits_{\I\R-d} \frac{dW}{2\pi\I} \frac{-(Z-t)W}{(Z+t)}e^{\tau(Z+t) + XW}\frac{\Gamma(-t+Z)}{\Gamma(-t-Z)}\frac{\Gamma(-t+W)}{\Gamma(-t-W)}\\
        &F(Z)F(W)H_{\alpha}(Z)H_{\alpha}(W)Q(Z,W).
    \end{aligned}
\end{equation}

\begin{equation}
    \begin{aligned}
        \widehat{\Theta}_1(X):= &\int \limits_{\I\R - d} \frac{dV}{2\pi\I}\int \limits_{\I\R - d} \frac{dZ}{2\pi\I}\int \limits_{\I\R - d} \frac{dW}{2\pi\I}\frac{(V-Z)}{(V+Z)} e^{\tau(Z+V)+XW} \frac{\Gamma(-t+V)}{\Gamma(-t-V)}\frac{\Gamma(t+V)}{\Gamma(t-V)}H_\alpha(V)\Gamma(-2V)\\
        & \frac{\Gamma(1-\beta + V)}{\Gamma(1-\beta - V)}\frac{\Gamma(-t+Z)}{\Gamma(-t-Z)}\frac{\Gamma(-t+W)}{\Gamma(-t-W)}F(Z)H_\alpha(Z)F(W)H_\alpha(W)Q(Z,W),\\
         \widehat{\Theta}_2(X):=&\int \limits_{\I\R - d} \frac{dV}{2\pi\I}\int \limits_{\I\R - d} \frac{dZ}{2\pi\I}\int \limits_{\I\R - d} \frac{dW}{2\pi\I}  \frac{W(Z- V)}{V+Z}e^{\tau(Z+V)+XW} \frac{\Gamma(-t+V)}{\Gamma(-t-V)}\frac{\Gamma(t+V)}{\Gamma(t-V)}H_\alpha(V)\Gamma(-2V)\\
        & \frac{\Gamma(1-\beta + V)}{\Gamma(1-\beta - V)}\frac{\Gamma(-t+Z)}{\Gamma(-t-Z)}\frac{\Gamma(-t+W)}{\Gamma(-t-W)}F(Z)H_\alpha(Z)F(W)H_\alpha(W)Q(Z,W).
    \end{aligned}
\end{equation}

\begin{equation}
    \begin{aligned}
        \widehat{\zeta}_1(Y) := &\Gamma(2t)\frac{\Gamma(\beta + t)}{\Gamma(\beta - t)}H_{\alpha}(t)\int \limits_{\I\R-d} \frac{dZ}{2\pi\I}\int \limits_{\I\R-d} \frac{dW}{2\pi\I} \frac{Z(W-t)}{(W+t)}e^{\tau(t + W)+YZ}\frac{\Gamma(-t+Z)}{\Gamma(-t-Z)}\frac{\Gamma(-t+W)}{\Gamma(-t-W)}\\
        &F(Z)F(W)H_{\alpha}(Z)H_{\alpha}(W)Q(Z,W),\\
        \widehat{\zeta}_2(Y) := &\Gamma(2t)\frac{\Gamma(\beta + t)}{\Gamma(\beta - t)}H_{\alpha}(t)\int \limits_{\I\R-d} \frac{dZ}{2\pi\I}\int \limits_{\I\R-d} \frac{dW}{2\pi\I} \frac{(W-t)}{(W+t)}e^{\tau(t + W)+YZ}\frac{\Gamma(-t+Z)}{\Gamma(-t-Z)}\frac{\Gamma(-t+W)}{\Gamma(-t-W)}\\
        &F(Z)F(W)H_{\alpha}(Z)H_{\alpha}(W)Q(Z,W).\\
    \end{aligned}
\end{equation}

\begin{equation}
    \begin{aligned}
        \widehat{\eta}_1(Y) := &\frac{\Gamma(t-\beta )}{\Gamma(t+\beta)}\frac{\Gamma(-t-\beta )}{\Gamma(-t+\beta)}H_{\alpha}(-\beta)\int \limits_{\I\R-d} \frac{dZ}{2\pi\I}\int \limits_{\I\R-d} \frac{dW}{2\pi\I} \frac{Z(W+\beta)}{(W-\beta)}e^{-\tau(\beta -  W)+YZ}\frac{\Gamma(-t+Z)}{\Gamma(-t-Z)}\frac{\Gamma(-t+W)}{\Gamma(-t-W)}\\
        &F(Z)F(W)H_{\alpha}(Z)H_{\alpha}(W)Q(Z,W),\\
        \widehat{\eta}_2(Y) := &\frac{\Gamma(t-\beta )}{\Gamma(t+\beta)}\frac{\Gamma(-t-\beta )}{\Gamma(-t+\beta)}H_{\alpha}(-\beta)\int \limits_{\I\R-d} \frac{dZ}{2\pi\I}\int \limits_{\I\R-d} \frac{dW}{2\pi\I} \frac{(W+\beta)}{(W-\beta)}e^{-\tau(\beta -  W)+YZ}\frac{\Gamma(-t+Z)}{\Gamma(-t-Z)}\frac{\Gamma(-t+W)}{\Gamma(-t-W)}\\
        &F(Z)F(W)H_{\alpha}(Z)H_{\alpha}(W)Q(Z,W).
    \end{aligned}
\end{equation}

\begin{equation}
    \begin{aligned}
        \widehat{\theta}_1(Y):=\!\! &\int \limits_{\I\R-d}\!\! \frac{dZ}{2\pi\I}\int \limits_{\I\R-d}\!\! \frac{dW}{2\pi\I}\!\!\int \limits_{\I\R-d} \!\!\frac{dV}{2\pi\I} \frac{(W-V)Z}{(V+W)}e^{\tau(V+W)+YZ}\frac{\Gamma(-t+Z)}{\Gamma(-t-Z)}\frac{\Gamma(-t+W)}{\Gamma(-t-W)}F(Z)F(W)H_{\alpha}(Z)H_{\alpha}(W)Q(Z,W)\\
        &\frac{\Gamma(t+V)}{\Gamma(t-V)}\frac{\Gamma(-t+V)}{\Gamma(-t-V)}\frac{\Gamma(1-\beta + V)}{\Gamma(1-\beta - V)}\Gamma(-2V)H_{\alpha}(V),\\
        \widehat{\theta}_2(Y):=\!\! &\int \limits_{\I\R-d}\!\! \frac{dZ}{2\pi\I}\int \limits_{\I\R-d}\!\! \frac{dW}{2\pi\I}\!\!\int \limits_{\I\R-d} \!\!\frac{dV}{2\pi\I} \frac{(W-V)}{(V+W)}e^{\tau(V+W)+YZ}\frac{\Gamma(-t+Z)}{\Gamma(-t-Z)}\frac{\Gamma(-t+W)}{\Gamma(-t-W)}F(Z)F(W)H_{\alpha}(Z)H_{\alpha}(W)Q(Z,W)\\
        &\frac{\Gamma(t+V)}{\Gamma(t-V)}\frac{\Gamma(-t+V)}{\Gamma(-t-V)}\frac{\Gamma(1-\beta + V)}{\Gamma(1-\beta - V)}\Gamma(-2V)H_{\alpha}(V).\\
    \end{aligned}
\end{equation}

Define the kernel
\begin{equation}
    \widehat{\boldsymbol{\mathcal{K}}}(X,Y) := \begin{pmatrix}
        \K(X,Y) & -\partial_X \K(X,Y)\\
        -\partial_Y \K(X,Y) & \partial_X\partial_Y \K(X,Y)
    \end{pmatrix},
\end{equation}
with
\begin{equation}
    \begin{aligned}
{\K}(X,Y) := &\int\limits_{\mathrm{i}\mathbb{R}-d_0} \frac{dZ}{2\pi i}
\int\limits_{\mathrm{i}\mathbb{R}-d_0} \frac{dW}{2\pi i}\;
e^{XZ+YW}\frac{\Gamma(-t+Z)}{\Gamma(-t-Z)}\frac{\Gamma(t+Z)}{\Gamma(t-Z)}\frac{\Gamma(-t+W)}{\Gamma(-t-W)}\frac{\Gamma(t+W)}{\Gamma(t-W)}\\
&
\frac{\Gamma(\beta + Z)}{\Gamma(\beta - Z)}\frac{\Gamma(\beta + W)}{\Gamma(\beta - W)}H_{\alpha}(Z)\,
H_{\alpha}(W)\,
Q(Z,W).
    \end{aligned}
\end{equation}

\begin{equation}
    \begin{aligned}
        \widehat{\A} :=&\Pf\left(J - \widehat{\boldsymbol{\mathcal{K}}}\right) \left(\A_1 + \A_2 + 1\right)- \Pf\left(J - \widehat{\boldsymbol{\mathcal{K}}} - \ketbra{\begin{array}{c} \widehat{\eta}_2 \\
        -\widehat{\eta}_1
        \end{array}}{\widehat{\aleph}_1 \quad \widehat{\aleph}_2} - \ketbra{\begin{array}{c}
            \widehat{\aleph}_1\\
            \widehat{\aleph}_2
        \end{array}}{-\widehat{\eta}_2 \quad \widehat{\eta}_1 }\right),\\
        \widehat{\B} :=&\Pf\left(J - \widehat{\boldsymbol{\mathcal{K}}}\right) \left(\B_1 + \B_2 + 1\right)- \Pf\left(J - \widehat{\boldsymbol{\mathcal{K}}} - \ketbra{\begin{array}{c} \widehat{\theta}_2 \\
        -\widehat{\theta}_1
        \end{array}}{\widehat{\aleph}_1 \quad \widehat{\aleph}_2} - \ketbra{\begin{array}{c}
            \widehat{\aleph}_1\\
            \widehat{\aleph}_2
        \end{array}}{-\widehat{\theta}_2 \quad \widehat{\theta}_1 }\right),\\
        \widehat{\C} :=&\Pf\left(J - \widehat{\boldsymbol{\mathcal{K}}}\right) \left(\C_1 + \C_2 + 1\right)- \Pf\left(J - \widehat{\boldsymbol{\mathcal{K}}} - \ketbra{\begin{array}{c} \widehat{\eta}_2 \\
        -\widehat{\eta}_1
        \end{array}}{\widehat{\psi}_1 \quad \widehat{\psi}_2} - \ketbra{\begin{array}{c}
            \widehat{\psi}_1\\
            \widehat{\psi}_2
        \end{array}}{-\widehat{\eta}_2 \quad \widehat{\eta}_1 }\right),\\
        \end{aligned}
        \end{equation}
        \begin{equation}
        \begin{aligned}
         \widehat{\D} :=&\Pf\left(J - \widehat{\boldsymbol{\mathcal{K}}}\right) \left(\D_1 + \D_2 + 1\right)- \Pf\left(J - \widehat{\boldsymbol{\mathcal{K}}} - \ketbra{\begin{array}{c} \widehat{\theta}_2 \\
        -\widehat{\theta}_1
        \end{array}}{\widehat{\psi}_1 \quad \widehat{\psi}_2} - \ketbra{\begin{array}{c}
            \widehat{\psi}_1\\
            \widehat{\psi}_2
        \end{array}}{-\widehat{\theta}_2 \quad \widehat{\theta}_1 }\right),\\
    \end{aligned}
\end{equation}

\begin{equation}
    \begin{aligned}
        \widehat{\M} :=&\Pf\left(J - \widehat{\boldsymbol{\mathcal{K}}}\right) \left(\M_1 + \M_2 - 1\right) + \Pf\left(J - \widehat{\boldsymbol{\mathcal{K}}} - \ketbra{\begin{array}{c} \widehat{\zeta}_2 \\
        -\widehat{\zeta}_1
        \end{array}}{\widehat{\aleph}_1 \quad \widehat{\aleph}_2} - \ketbra{\begin{array}{c}
            \widehat{\aleph}_1\\
            \widehat{\aleph}_2
        \end{array}}{-\widehat{\zeta}_2 \quad \widehat{\zeta}_1 }\right),\\
        \widehat{\NN} :=&\Pf\left(J - \widehat{\boldsymbol{\mathcal{K}}}\right) \left(\NN_1 + \NN_2 -1\right)+ \Pf\left(J - \widehat{\boldsymbol{\mathcal{K}}} - \ketbra{\begin{array}{c} \widehat{\eta}_2 \\
        -\widehat{\eta}_1
        \end{array}}{\widehat{\Theta}_1 \quad \widehat{\Theta}_2} - \ketbra{\begin{array}{c}
            \widehat{\Theta}_1\\
            \widehat{\Theta}_2
        \end{array}}{-\widehat{\eta}_2 \quad \widehat{\eta}_1 }\right).\\
    \end{aligned}
\end{equation}

\begin{rmk}
One may also ask what happens to our formula in the special cases $\beta = t$ and $\beta = -t$. We discuss them separately.

When $\beta = -t$, the log-gamma polymer model explodes due to the inverse-Gamma random weight at $(2,2)$. Thus, our formula also diverges.
When $\beta = t$, our kernel decomposition breaks down. In the kernel decomposition in section \ref{section: 2 param decompose}, we removed the poles at $Z=-\beta$ and $W=-\beta$, but if $\beta=t$ these poles should instead remain inside the double contours and merge into a double pole at $t$. Consequently, the decomposition of $\boldsymbol{\check{\mathcal{K}}}$ into $X_i$ and $Y_i$ for $i\in\{1,2,3,4\}$ is no longer valid, and all terms must be recomputed and regrouped to extract the limit.
Moreover, there remains a random shift at $(2,2)$. Although the two-parameter stationary condition degenerates to the product stationary case at $\beta=t$, the partition function still retains this random shift, whose contribution is nontrivial under critical scaling. As a result, the outcome is worse than the Low density formula $\widehat{Q}_{N,t}^{\mathrm{Low}}$.
\end{rmk}

\begin{define}[High density phase with $-\ttt<\tbeta <\ttt$]\label{def: High density beta small asymptotic}
    Fix any $\alpha \in \R$ and any three parameters $\tilde{t}, \tbeta, s \in \R$ satisfying $\tilde{t}>0,$ $\ttt> \tilde{\beta} >-\ttt.$ We define
    \begin{equation}
    \begin{aligned}
        \widetilde{{Q}}_{\tbeta < \ttt}^{High}(s)&:= \frac{1}{\Pf\left(J - \widetilde{\boldsymbol{\mathcal{K}}}\right)}\left(\widetilde{\M}\widetilde{\NN} - \widetilde{\A}\widetilde{\D} + \widetilde{\B}\widetilde{\C}\right).
    \end{aligned}
    \end{equation}
   The Fredholm Pfaffian is taken over $\mathbb{L}^2((s,\infty))$.
\end{define} 
We choose $\delta$ satisfying $\tbeta<\delta <\ttt$. The contours for all functions defined below are $C(\delta;\pi/3)$. The dependence of $\widetilde{{Q}}_{\tbeta < \ttt}^{High}$ on $\delta$ is omitted. The well-definedness of $\frac{1}{\Pf\left(J - \widetilde{\boldsymbol{\mathcal{K}}}\right)}$ is explained in the proof of Theorem~\ref{thm: High density beta small asymptotic}.

\begin{equation}
\begin{aligned}
    \widetilde{\A}_1  :&=\int\limits_{C(\delta;\pi/3)} \frac{d\tW}{2\pi \I} \frac{( \ttt + \tW)}{2\tW(\ttt-\tW )} e^{\frac{\tbeta^3}{3} - s\tbeta + \frac{\tW^3}{3} - s\tW},\\
    \widetilde{\A}_2  :&=\int\limits_{C(\delta;\pi/3)} \frac{d\tU}{2\pi \I}\int\limits_{C(\delta;\pi/3)} \frac{d\tW}{2\pi \I}\int\limits_{C(\delta;\pi/3)} \frac{d\tZ}{2\pi \I} \frac{( \tU - \tZ)(\ttt + \tU)(\tbeta + \tU)}{(\tU + \tZ)(\ttt - \tU)(\tbeta - \tU)}\\
    &\frac{( \tZ - \tW)(\tbeta + \tZ)}{8\tU \tZ \tW( \tZ + \tW)(\tbeta - \tZ)}e^{\frac{\tbeta^3}{3} - s\tbeta + \frac{\tU^3}{3} - s\tU +\frac{\tZ^3}{3} - s\tZ+ \frac{\tW^3}{3} - s\tW}.
\end{aligned}    
\end{equation}

\begin{equation}
\begin{aligned}
    \widetilde{\B}_1  :&=\int\limits_{C(\delta;\pi/3)} \frac{d\tW}{2\pi \I}\int\limits_{C(\delta;\pi/3)} \frac{d\tZ}{2\pi \I} \frac{( \ttt + \tW)( \tbeta + \tW)(\tW-\tZ)}{4\tW\tZ(\ttt-\tW )( \tbeta - \tW)(\tW+\tZ)} e^{\frac{\tZ^3}{3} - s\tZ + \frac{\tW^3}{3} - s\tW} + \frac{\tbeta + \ttt}{\tbeta - \ttt}\\
    \widetilde{\B}_2  :&=\int\limits_{C(\delta;\pi/3)} \frac{d\tU}{2\pi \I}\int\limits_{C(\delta;\pi/3)} \frac{d\tW}{2\pi \I}\int\limits_{C(\delta;\pi/3)} \frac{d\tV}{2\pi \I}\int\limits_{C(\delta;\pi/3)} \frac{d\tZ}{2\pi \I} \frac{( \tZ - \tU)(\ttt + \tU)(\tbeta + \tU)}{( \tZ + \tU)(\ttt - \tU)(\tbeta - \tU)}\\
    &\frac{( \tZ - \tW)(\tV - \tW)(\tbeta + \tZ)( \tbeta + \tW)}{16 \tV \tU \tZ \tW( \tZ + \tW)(\tV + \tW)(\tbeta - \tZ)( \tbeta - \tW)}e^{\frac{\tU^3}{3} - s\tU+ \frac{\tV^3}{3} - s\tV +\frac{\tZ^3}{3} - s\tZ+ \frac{\tW^3}{3} - s\tW}
\end{aligned}    
\end{equation}

\begin{equation}
\begin{aligned}
    \widetilde{\C}_1  :&=\frac{1}{2\ttt}
 e^{\frac{\tbeta^3}{3} - s\tbeta -\frac{\ttt^3}{3} + s\ttt }\\
     \widetilde{\C}_2 :&=\frac{1}{2\ttt}\frac{\tbeta -\ttt}{\tbeta + \ttt} \int\limits_{C(\delta;\pi/3)} \frac{d\tW}{2\pi \I}\int\limits_{C(\delta;\pi/3)} \frac{d\tZ}{2\pi \I} \frac{( \ttt + \tZ)(\tbeta + \tZ)(\tZ - \tW)}{4\tZ\tW(\ttt - \tZ)(\tbeta - \tZ)(\tZ + \tW)}e^{\frac{\tbeta^3}{3} - s\tbeta  +\frac{\tZ^3}{3} - s\tZ+ \frac{\tW^3}{3} - s\tW- \frac{\ttt^3}{3} + s\ttt}
     \end{aligned}    
\end{equation}

\begin{equation}
\begin{aligned}
    \widetilde{\D}_1  :&=\int\limits_{C(\delta;\pi/3)} \frac{d\tW}{2\pi \I} \frac{( \ttt + \tW)( \tbeta - \ttt)}{4\tW\ttt(\ttt-\tW )(\tbeta + \ttt)} e^{-\frac{\ttt^3}{3} + s\ttt + \frac{\tW^3}{3} - s\tW}\\
    \widetilde{\D}_2  :&=\int\limits_{C(\delta;\pi/3)} \frac{d\tV}{2\pi \I}\int\limits_{C(\delta;\pi/3)} \frac{d\tW}{2\pi \I}\int\limits_{C(\delta;\pi/3)} \frac{d\tZ}{2\pi \I} \frac{( \ttt+ \tZ)(\tW - \tV)(\tbeta + \tZ)(\tbeta + \tW)}{(\ttt - \tZ)(\tW + \tV)(\tbeta - \tZ)(\tbeta - \tW)}\\
    &\frac{( \tZ - \tW)(\tbeta -\ttt )}{16 \ttt \tV \tZ \tW( \tZ + \tW)(\tbeta +\ttt) }e^{\frac{\tV^3}{3} - s\tV - \frac{\ttt^3}{3} + s\ttt +\frac{\tZ^3}{3} - s\tZ+ \frac{\tW^3}{3} - s\tW}
\end{aligned}    
\end{equation}

\begin{equation}
    \begin{aligned}
        \widetilde{\M}_1 := &-\frac{(\tbeta - \ttt)}{2\ttt(\tbeta + \ttt)}e^{-\frac{\ttt^3}{3} + s\ttt} \int\limits_{C(\delta;\pi/3)} \frac{d\tW}{2\pi\I} e^{\frac{\tW^3}{3} - s\tW}\frac{(\tW + \ttt)^2}{(\tW - \ttt)^2}\frac{(\tbeta +\tW)}{(\tbeta - \tW)}\frac{1}{2\tW},\\
        \widetilde{\M}_2 := & -\frac{(\tbeta - \ttt)}{2\ttt(\tbeta + \ttt)}e^{-\frac{\ttt^3}{3} + s\ttt}\int\limits_{C(\delta;\pi/3)} \frac{d\tV}{2\pi \I}\int\limits_{C(\delta;\pi/3)} \frac{d\tZ}{2\pi \I}\int\limits_{C(\delta;\pi/3)} \frac{d\tW}{2\pi \I} \frac{(\tV - \tZ)(\tW + \ttt)}{(\ttt -\tW)(\tV + \tZ)}\\
        &e^{\frac{\tV^3}{3} - s\tV + \frac{\tZ^3}{3} - s\tZ + \frac{\tW^3}{3}-s\tW}
        \frac{(\tbeta + \tV)}{(\tbeta-\tV)}\frac{(\tbeta + \tW)}{(\tbeta-\tW)}\frac{(\tbeta + \tZ)}{(\tbeta-\tZ)}
        \frac{(\ttt+\tV)}{(\ttt - \tV)}\frac{(\tZ-\tW)}{(\tZ+\tW)}\frac{1}{8\tV\tZ\tW}
    \end{aligned}
\end{equation}

\begin{equation}
    \begin{aligned}
        \widetilde{\NN}_1 := &1-e^{\frac{\tbeta^3}{3} - s\tbeta} \int\limits_{C(\delta;\pi/3)} \frac{d\tW}{2\pi \I}e^{\frac{\tW^3}{3} - s\tW} \frac{(\tbeta - \tW)}{(\tbeta + \tW)}\frac{1}{2\tW},\\
        \widetilde{\NN}_2 := & \,e^{\frac{\tbeta^3}{3} - s\tbeta} \int\limits_{C(\delta;\pi/3)} \frac{d\tV}{2\pi \I}\int\limits_{C(\delta;\pi/3)} \frac{d\tZ}{2\pi \I}\int\limits_{C(\delta;\pi/3)} \frac{d\tW}{2\pi \I} e^{\frac{\tZ^3}{3} - s\tZ + \frac{\tW^3}{3} - s\tW + \frac{\tV^3}{3} - s\tV}\\
        &\frac{(\tbeta + \tZ)}{(\tbeta - \tZ)}\frac{(\tZ - \tV)}{(\tZ+ \tV)} \frac{(\tZ-\tW)}{(\tZ+ \tW)}\frac{1}{8\tZ\tV\tW}.
    \end{aligned}
\end{equation}

The following:

\begin{equation}
    \begin{aligned}
        \widetilde{\aleph}_1(u):= -&\int\limits_{C(\delta;\pi/3)} \frac{d\tU}{2\pi \I}\int\limits_{C(\delta;\pi/3)} \frac{d\tW}{2\pi \I}\int\limits_{C(\delta;\pi/3)} \frac{d\tZ}{2\pi \I} \frac{(\tU-\tZ)(\ttt+\tU)(\tbeta + \tU)(\tbeta + \tZ)}{(\tU+\tZ)(\ttt-\tU)(\tbeta - \tU)(\tbeta - \tZ)}\\
    &\frac{(\tbeta + \tW)( \tZ - \tW)}{8  \tU \tZ \tW(\tbeta - \tW) ( \tZ + \tW)}e^{\frac{\tU^3}{3} - s\tU  +\frac{\tZ^3}{3} - s\tZ+ \frac{\tW^3}{3} - u\tW}\\
        \widetilde{\aleph}_2(u):= -&\int\limits_{C(\delta;\pi/3)} \frac{d\tU}{2\pi \I}\int\limits_{C(\delta;\pi/3)} \frac{d\tW}{2\pi \I}\int\limits_{C(\delta;\pi/3)} \frac{d\tZ}{2\pi \I} \frac{(\tU-\tZ)(\ttt+\tU)(\tbeta + \tU)(\tbeta + \tZ)}{(\tU+\tZ)(\ttt-\tU)(\tbeta - \tU)(\tbeta - \tZ)}\\
    &\frac{(\tbeta + \tW)( \tZ - \tW)}{8  \tU \tZ(\tbeta - \tW) ( \tZ + \tW)}e^{\frac{\tU^3}{3} - s\tU  +\frac{\tZ^3}{3} - s\tZ+ \frac{\tW^3}{3} - u\tW}
    \end{aligned}
\end{equation}

\begin{equation}
    \begin{aligned}
        \widetilde{\psi}_1(u):= &\int\limits_{C(\delta;\pi/3)} \frac{d\tW}{2\pi \I}\int\limits_{C(\delta;\pi/3)} \frac{d\tZ}{2\pi \I} \frac{(\tbeta-\ttt)(\tZ + \ttt)(\tbeta + \tW)(\tbeta + \tZ)(\tZ - \tW)}{8\ttt \tZ \tW(\tbeta+\ttt)(\tZ - \ttt)(\tbeta - \tW)(\tbeta - \tZ)(\tZ + \tW)}\\
    & e^{\frac{\tZ^3}{3} - s\tZ  - \frac{\ttt^3}{3} + s\ttt+ \frac{\tW^3}{3} - u\tW}\\
        \widetilde{\psi}_2(u):= &\int\limits_{C(\delta;\pi/3)} \frac{d\tW}{2\pi \I}\int\limits_{C(\delta;\pi/3)} \frac{d\tZ}{2\pi \I} \frac{(\tbeta-\ttt)(\tZ + \ttt)(\tbeta + \tW)(\tbeta + \tZ)(\tZ - \tW)}{8\ttt \tZ (\tbeta+\ttt)(\tZ - \ttt)(\tbeta - \tW)(\tbeta - \tZ)(\tZ + \tW)}\\
    & e^{\frac{\tZ^3}{3} - s\tZ  - \frac{\ttt^3}{3} + s\ttt+ \frac{\tW^3}{3} - u\tW}
    \end{aligned}
\end{equation}

\begin{equation}
    \begin{aligned}
        \widetilde{\Theta}_1(u):= -&\int\limits_{C(\delta;\pi/3)} \frac{d\tV}{2\pi \I}\int\limits_{C(\delta;\pi/3)} \frac{d\tW}{2\pi \I}\int\limits_{C(\delta;\pi/3)} \frac{d\tZ}{2\pi \I} \frac{(\tV-\tZ)(\tbeta + \tW)(\tbeta + \tZ)(\tZ - \tW)}{8\tV \tZ \tW(\tV+\tZ)(\tbeta - \tW)(\tbeta - \tZ)(\tZ + \tW)}\\
    &e^{\frac{\tV^3}{3} - s\tV  +\frac{\tZ^3}{3} - s\tZ+ \frac{\tW^3}{3} - u\tW}\\
        \widetilde{\Theta}_2(u):= -&\int\limits_{C(\delta;\pi/3)} \frac{d\tV}{2\pi \I}\int\limits_{C(\delta;\pi/3)} \frac{d\tW}{2\pi \I}\int\limits_{C(\delta;\pi/3)} \frac{d\tZ}{2\pi \I} \frac{(\tV-\tZ)(\tbeta + \tW)(\tbeta + \tZ)(\tZ - \tW)}{8\tV \tZ (\tV+\tZ)(\tbeta - \tW)(\tbeta - \tZ)(\tZ + \tW)}\\
    &e^{\frac{\tV^3}{3} - s\tV  +\frac{\tZ^3}{3} - s\tZ+ \frac{\tW^3}{3} - u\tW}\\
    \end{aligned}
\end{equation}

\begin{equation}
    \begin{aligned}
        \widetilde{\zeta}_1(v):= &\frac{\tbeta - \ttt}{\tbeta + \ttt}\int\limits_{C(\delta;\pi/3)} \frac{d\tW}{2\pi \I}\int\limits_{C(\delta;\pi/3)} \frac{d\tZ}{2\pi \I} \frac{(\ttt+\tW)(\tbeta + \tW)(\tbeta + \tZ)(\tZ - \tW)}{8\ttt  \tW(\ttt-\tW)(\tbeta - \tW)(\tbeta - \tZ)(\tZ + \tW)}\\
    & e^{\frac{\tZ^3}{3} - v\tZ  - \frac{\ttt^3}{3} + s\ttt+ \frac{\tW^3}{3} - s\tW}\\
        \widetilde{\zeta}_2(v):= -&\frac{\tbeta - \ttt}{\tbeta + \ttt}\int\limits_{C(\delta;\pi/3)} \frac{d\tW}{2\pi \I}\int\limits_{C(\delta;\pi/3)} \frac{d\tZ}{2\pi \I} \frac{(\ttt+\tW)(\tbeta + \tW)(\tbeta + \tZ)(\tZ - \tW)}{8\ttt \tZ \tW(\ttt-\tW)(\tbeta - \tW)(\tbeta - \tZ)(\tZ + \tW)}\\
    & e^{\frac{\tZ^3}{3} - v\tZ  - \frac{\ttt^3}{3} + s\ttt+ \frac{\tW^3}{3} - s\tW}\\
    \end{aligned}
\end{equation}

\begin{equation}
    \begin{aligned}
        \widetilde{\eta}_1(v) := & e^{\frac{\tbeta^3}{3} - s\tbeta} \int\limits_{C(\delta;\pi/3)} \frac{d\tZ}{2\pi \I}\int\limits_{C(\delta;\pi/3)} \frac{d\tW}{2\pi \I} \frac{\tZ(\tbeta - \tW)}{(\tW + \tbeta)}e^{\frac{\tZ^3}{3}-v\tZ + \frac{\tW^3}{3}-s\tW}\frac{(\tbeta+\tZ)}{(\tbeta - \tZ)}\frac{(\tbeta+\tW)}{(\tbeta - \tW)}\frac{(\tZ-\tW)}{(\tZ+\tW)}\frac{1}{4\tZ\tW},\\
        \widetilde{\eta}_2(v) := & -e^{\frac{\tbeta^3}{3} - s\tbeta} \int\limits_{C(\delta;\pi/3)} \frac{d\tZ}{2\pi \I}\int\limits_{C(\delta;\pi/3)} \frac{d\tW}{2\pi \I} \frac{(\tbeta - \tW)}{(\tW + \tbeta)}e^{\frac{\tZ^3}{3}-v\tZ + \frac{\tW^3}{3}-s\tW}\frac{(\tbeta+\tZ)}{(\tbeta - \tZ)}\frac{(\tbeta+\tW)}{(\tbeta - \tW)}\frac{(\tZ-\tW)}{(\tZ+\tW)}\frac{1}{4\tZ\tW},\\
    \end{aligned}
\end{equation}

\begin{equation}
    \begin{aligned}
        \widetilde{\theta}_1(v) := & -\int\limits_{C(\delta;\pi/3)} \frac{d\tV}{2\pi \I}\int\limits_{C(\delta;\pi/3)} \frac{d\tZ}{2\pi \I}\int\limits_{C(\delta;\pi/3)} \frac{d\tW}{2\pi \I}\frac{\tZ(\tV-\tW)}{(\tV+\tW)}e^{\frac{\tV^3}{3}-s\tV + \frac{\tW^3}{3} - s\tW + \frac{\tZ^3}{3} - v\tZ}\\
        &\frac{(\tbeta + \tZ)}{(\tbeta - \tZ)}\frac{(\tbeta + \tW)}{(\tbeta - \tW)}\frac{(\tZ-\tW)}{(\tZ+\tW)}\frac{1}{8\tV\tZ\tW},\\
        \widetilde{\theta}_2(v) := & \int\limits_{C(\delta;\pi/3)} \frac{d\tV}{2\pi \I}\int\limits_{C(\delta;\pi/3)} \frac{d\tZ}{2\pi \I}\int\limits_{C(\delta;\pi/3)} \frac{d\tW}{2\pi \I}\frac{(\tV-\tW)}{(\tV+\tW)}e^{\frac{\tV^3}{3}-s\tV + \frac{\tW^3}{3} - s\tW + \frac{\tZ^3}{3} - v\tZ}\\
        &\frac{(\tbeta + \tZ)}{(\tbeta - \tZ)}\frac{(\tbeta + \tW)}{(\tbeta - \tW)}\frac{(\tZ-\tW)}{(\tZ+\tW)}\frac{1}{8\tV\tZ\tW}.
    \end{aligned}
\end{equation}

Define the kernel
\begin{equation}
    \widetilde{\boldsymbol{\mathcal{K}}}(u,v) := \begin{pmatrix}
        \widetilde{\K}(u,v) & -\partial_v \widetilde{\K}(u,v)\\
        -\partial_u \widetilde{\K}(u,v) & \partial_u\partial_v \widetilde{\K}(u,v)
    \end{pmatrix},
\end{equation}
with
\begin{equation}
    \begin{aligned}
{\widetilde{K}}(u,v) := &\int\limits_{C(\delta;\pi/3)} \frac{d\tZ}{2\pi i}
\int\limits_{C(\delta;\pi/3)} \frac{d\tW}{2\pi i}e^{\frac{\tZ^3}{3} - u\tZ + \frac{\tW^3}{3} - v\tW}
\frac{(\tbeta + \tZ)}{(\tbeta - \tZ)}\frac{(\tbeta + \tW)}{(\tbeta - \tW)}\frac{(\tZ - \tW)}{(\tZ+ \tW)}\frac{1}{4\tZ \tW}.
    \end{aligned}
\end{equation}

\begin{equation}
    \begin{aligned}
        \widetilde{\A} :=&\Pf\left(J - \widetilde{\boldsymbol{\mathcal{K}}}\right) \left(\widetilde{\A}_1 + \widetilde{\A}_2 + 1\right)- \Pf\left(J - \widetilde{\boldsymbol{\mathcal{K}}} - \ketbra{\begin{array}{c} \widetilde{\eta}_2 \\
        -\widetilde{\eta}_1
        \end{array}}{\widetilde{\aleph}_1 \quad \widetilde{\aleph}_2} - \ketbra{\begin{array}{c}
            \widetilde{\aleph}_1\\
            \widetilde{\aleph}_2
        \end{array}}{-\widetilde{\eta}_2 \quad \widetilde{\eta}_1 }\right),\\
        \widetilde{\B} :=&\Pf\left(J - \widetilde{\boldsymbol{\mathcal{K}}}\right) \left(\widetilde{\B}_1 + \widetilde{\B}_2 + 1\right)- \Pf\left(J - \widetilde{\boldsymbol{\mathcal{K}}} - \ketbra{\begin{array}{c} \widetilde{\theta}_2 \\
        -\widetilde{\theta}_1
        \end{array}}{\widetilde{\aleph}_1 \quad \widetilde{\aleph}_2} - \ketbra{\begin{array}{c}
            \widetilde{\aleph}_1\\
            \widetilde{\aleph}_2
        \end{array}}{-\widetilde{\theta}_2 \quad \widetilde{\theta}_1 }\right),\\
        \widetilde{\C} :=&\Pf\left(J - \widetilde{\boldsymbol{\mathcal{K}}}\right) \left(\widetilde{\C}_1 + \widetilde{\C}_2 + 1\right)- \Pf\left(J - \widetilde{\boldsymbol{\mathcal{K}}} - \ketbra{\begin{array}{c} \widetilde{\eta}_2 \\
        -\widetilde{\eta}_1
        \end{array}}{\widetilde{\psi}_1 \quad \widetilde{\psi}_2} - \ketbra{\begin{array}{c}
            \widetilde{\psi}_1\\
            \widetilde{\psi}_2
        \end{array}}{-\widetilde{\eta}_2 \quad \widetilde{\eta}_1 }\right),\\
         \widetilde{\D} :=&\Pf\left(J - \widetilde{\boldsymbol{\mathcal{K}}}\right) \left(\widetilde{\D}_1 + \widetilde{\D}_2 + 1\right)- \Pf\left(J - \widetilde{\boldsymbol{\mathcal{K}}} - \ketbra{\begin{array}{c} \widetilde{\theta}_2 \\
        -\widetilde{\theta}_1
        \end{array}}{\widetilde{\psi}_1 \quad \widetilde{\psi}_2} - \ketbra{\begin{array}{c}
            \widetilde{\psi}_1\\
            \widetilde{\psi}_2
        \end{array}}{-\widetilde{\theta}_2 \quad \widetilde{\theta}_1 }\right),\\
    \end{aligned}
\end{equation}

\begin{equation}
    \begin{aligned}
        \widetilde{\M} :=&\Pf\left(J - \widetilde{\boldsymbol{\mathcal{K}}}\right) \left(\widetilde{\M}_1 + \widetilde{\M}_2 + 1\right)- \Pf\left(J - \widetilde{\boldsymbol{\mathcal{K}}} - \ketbra{\begin{array}{c} \widetilde{\zeta}_2 \\
        -\widetilde{\zeta}_1
        \end{array}}{\widetilde{\aleph}_1 \quad \widetilde{\aleph}_2} - \ketbra{\begin{array}{c}
            \widetilde{\aleph}_1\\
            \widetilde{\aleph}_2
        \end{array}}{-\widetilde{\zeta}_2 \quad \widetilde{\zeta}_1 }\right),\\
        \widetilde{\NN} :=&-\Pf\left(J - \widetilde{\boldsymbol{\mathcal{K}}}\right) \left(\widetilde{\NN}_1 + \widetilde{\NN}_2\right)+ \Pf\left(J - \widetilde{\boldsymbol{\mathcal{K}}} - \ketbra{\begin{array}{c} \widetilde{\eta}_2 \\
        -\widetilde{\eta}_1
        \end{array}}{\widetilde{\Theta}_1 \quad \widetilde{\Theta}_2} - \ketbra{\begin{array}{c}
            \widetilde{\Theta}_1\\
            \widetilde{\Theta}_2
        \end{array}}{-\widetilde{\eta}_2 \quad \widetilde{\eta}_1 }\right),\\
    \end{aligned}
\end{equation}
where Fredholm Pfaffian is taken over $\mathbb{L}^2((s,\infty)).$

\section{Product stationary and Two-parameter stationary half-space log-gamma polymer models}
Recall the partition function defined via the recurrence relation in \eqref{def:recurrence}. We introduce another equivalent definition of the partition function via weight collection along all the up-right paths.
\begin{define}
    Let $A_0,A_1,A_2,\dots \in \R$ be a sequence of parameters such that $A_0 + A_i >0$ for all $i\geq 1$ and $A_i + A_j>0$ for all $i > j \geq 1.$ Define the octant $\D := \{ (i,j) \in \Z^2 | 1 \leq j \leq i\}$. Let $(\omega_{i,j})_{(i,j)  \in \D}$ be a family of independent random variables such that $\omega_{i,j} \sim \text{Gamma}^{-1}(A_i + A_j)$ for $i > j$ and $\omega_{i,i} \sim\text{Gamma}^{-1}(A_0 + A_i)$. The partition function of the half-space log-gamma polymer is defined as
    \begin{equation}
        Z(N,M) = \sum_{\pi:(1,1)\to (N,M)} \prod_{(i,j)\in\pi} \omega_{i,j},
    \end{equation}
where the sum is taken over up-right paths from $(1,1)$ to $(N,M)$ in $\D.$ We call $\log Z(N,M)$ the free energy of the model.
\end{define}

We invoke the symmetry property of the half-space log-gamma polymer model in the specific setting of our model. This symmetry arises from the fact that the law of the partition function is governed by the the Whittaker process. A more general statement and its proof can be found in \cite[Lemma~2.7]{LogGammaStationary}.
\begin{lemma}\label{lem: symmetry}
    For any $N \in \Z_{\geq 2},$ the law of $Z(N,N)$ is invariant with respect to permutations of the parameters $A_0,A_1,\dots, A_N$.
\end{lemma}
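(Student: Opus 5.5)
The invariance will come from the identification of the law of $Z(N,N)$ with a marginal of the half-space Whittaker measure, together with the symmetry of Whittaker functions in their spectral variables. The proof reduces to a Laplace-type transform identity. Fix $N\geq 2$ and the parameters $A_0,A_1,\dots,A_N$ satisfying the positivity constraints. By the geometric RSK correspondence in half-space (O'Connell--Sepp\"al\"ainen--Zygouras, Barraquand--Borodin--Corwin--Wheeler), the vector of partition functions on the symmetric triangle of size $N$ is pushed forward to a shape $(z_1,\dots,z_N)$ with $z_1 = Z(N,N)$. This shape has the half-space Whittaker measure
\begin{equation*}
\frac{1}{\mathcal{N}(A)}\, e^{-1/z_N}\,\psi_{(A_1,\dots,A_N)}(z)\,\mathcal{T}_{A_0}(z)\,\prod_{i}\frac{dz_i}{z_i}.
\end{equation*}
Here $\psi$ is the $\mathfrak{gl}_N$ Whittaker function and $\mathcal{T}_{A_0}$ is a boundary factor depending on $A_0$. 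The normalizing constant is $\prod_{i}\Gamma(A_0+A_i)\prod_{i<j}\Gamma(A_i+A_j)$, which is manifestly symmetric in $A_0,\dots,A_N$.

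The first step is symmetry in $A_1,\dots,A_N$. This follows at once because $\psi_\lambda$ is invariant under permutations of $\lambda$, a classical property of Whittaker functions visible from the Mellin--Barnes or Givental integral representation. Since the normalization is symmetric as well, the joint law of $z$, and hence of $z_1$, is symmetric in $(A_1,\dots,A_N)$.

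The second step is to include $A_0$, which is the main obstacle, because $A_0$ enters through the boundary factor rather than the spectral variables. I would first compute $\E[Z(N,N)^{-s}]$, or $\E[e^{-uZ(N,N)^{-1}}]$, via the Whittaker Plancherel theory. One can use either the Cauchy--Littlewood-type identity of Baik--Barraquand--Corwin--Suidan, which integrates $\psi$ against $e^{-1/z_N}$ and the boundary factor, or the Bump--Stade identity. The resulting formula involves only $\prod_{0\le i<j\le N}\Gamma(A_i+A_j)$-type products. Concretely, the Mellin transform of $z_1$ should take the form
\begin{equation*}
\int \frac{d\lambda}{(2\pi\I)^N}\,\frac{\prod_{i,j}\Gamma(A_i-\lambda_j)\cdots}{\prod\cdots}
\end{equation*}
in which $A_0$ appears on the same footing as the $A_i$. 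An alternative is to absorb the diagonal: extend to $N+1$ spectral variables by interpreting $A_0$ as an extra row whose weights are trivial, using the limiting relation that a point-to-point half-space partition function with $A_0$ equals a degenerate full half-space model of size $N+1$. Full symmetry in $N+1$ variables then follows from the first step.

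Since Mellin transforms determine the law of the positive random variable $Z(N,N)$, symmetry of the transform in $(A_0,\dots,A_N)$ yields the lemma. Alternatively, one may cite \cite[Lemma~2.7]{LogGammaStationary} directly, where exactly this argument appears in general form.
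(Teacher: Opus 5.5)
The paper gives no argument of its own here. It cites \cite[Lemma~2.7]{LogGammaStationary} and attributes the symmetry to the Whittaker-process description of the law, so your closing sentence is exactly the paper's proof. The self-contained argument before it, however, only proves the easy part. Step~1 (invariance under permutations of $A_1,\dots,A_N$) is fine. Moves involving $A_0$, though, cannot be read off the joint shape measure, because that measure is genuinely not symmetric in $A_0$. For $N=2$, $Z(2,2)=\omega_{1,1}\omega_{2,1}\omega_{2,2}$ is a product of independent $\Gamm(A_0+A_1)$, $\Gamm(A_1+A_2)$, $\Gamm(A_0+A_2)$ variables, while the second shape coordinate is, up to a convention-dependent constant, $\omega_{2,1}\sim\Gamm(A_1+A_2)$. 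So the $A_0$-symmetry is a property of the $z_1$-marginal alone, and Step~2 is where it must be proved. There you only assert that the Mellin transform of $z_1$ ``should take the form'' of an integral with $A_0$ on the same footing as the $A_i$, which restates the lemma. The identities you name do not supply it: Bump--Stade-type identities, with or without the boundary factor, integrate $\psi_\lambda\mathcal{T}_{A_0}$ against $e^{-s/z_N}$. They therefore give the normalization and information about $z_N$, whose law need not involve $A_0$ at all (see the example above). They do not give the Laplace or Mellin transform of $z_1=Z(N,N)$; that would need an actual Plancherel computation with the boundary factor, which is missing.

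The ``extra row'' alternative also does not work. A half-space model of size $N+1$ still carries its own boundary parameter, so Step~1 applied to it never places a boundary parameter among the spectral ones. You also do not exhibit a degeneration realizing $Z(N,N)$ from a larger model with $A_0$ among its spectral parameters. The natural degeneration runs the wrong way: sending a spectral parameter $B_1\to+\infty$ in a size-$(N+1)$ model forces the leading paths to visit exactly $(1,1),(2,1)$ in the first row. This gives $B_1^{2}Z^{(N+1)}(N+1,N+1)\to Z^{(N)}(N,N)$ with the same boundary parameter, which transports symmetry from size $N+1$ down to size $N$ but cannot create it.

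Besides citing, a short way to close the gap is the Fredholm Pfaffian formula of \cite[Theorem~5.10]{imamura2022solvablemodelskpzclass} (specialized in Theorem~\ref{thm: Pfaffian structure}), provided it is proved without appeal to the lemma. Its kernel depends on the parameters only through $\prod_{i=0}^{N}\frac{\Gamma(A_i+Z)\Gamma(A_i+W)}{\Gamma(A_i-Z)\Gamma(A_i-W)}$, which is symmetric, and $\tau\mapsto\E[\exp(-e^{-\tau}Z(N,N))]$ determines the law. This gives the symmetry on an open set of parameters. It then extends to the whole admissible set $\{A_a+A_b>0 \text{ for } a\neq b\}$, which is convex and permutation invariant, because $\E[g(Z(N,N))]$ is real-analytic in $(A_0,\dots,A_N)$ for bounded $g$. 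This holds since each inverse-gamma density is holomorphic in its shape parameter $a$ on $\{\mathrm{Re}\,a>0\}$.
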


Special realizations of parameters lead to product stationary model and two-parameter stationary model.

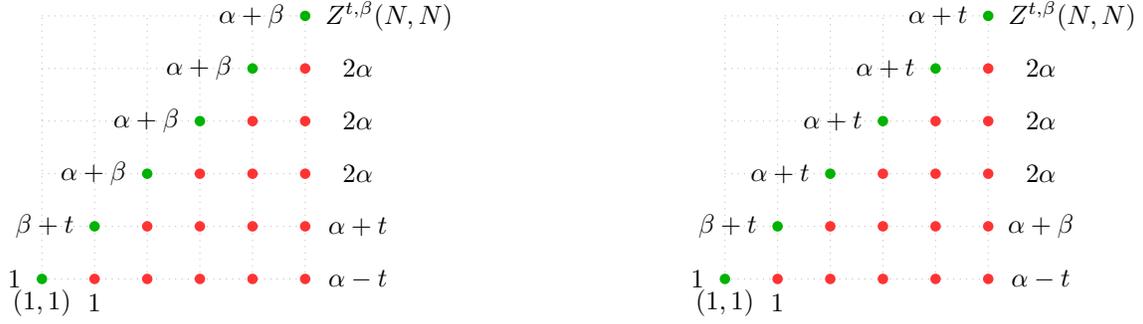
\begin{figure}
    \centering

\begin{minipage}{0.45\textwidth}
\centering

\begin{tikzpicture}[
  scale=0.7,
  grid/.style={gray!50, dotted},
  redpt/.style={circle, fill=red!80, draw=none, inner sep=1.4pt},
  grnpt/.style={circle, fill=green!70!black, draw=none, inner sep=1.4pt},
  baseline=(current bounding box.center),
  every node/.style={font=\small}
]

\def\N{5} 

\foreach \x in {0,...,\N} {
  \draw[grid] (\x,0) -- (\x,\N);
}
\foreach \y in {0,...,\N} {
  \draw[grid] (0,\y) -- (\N,\y);
}

\foreach \x in {1,...,\N} {
  \foreach \y in {0,...,\numexpr\x-1\relax} {
    \node[redpt] at (\x,\y) {};
  }
}

\foreach \k in {0,...,\N} {
  \node[grnpt] at (\k,\k) {};
}

\node[anchor=east] at (-0.2,0) {$1$};
\node[anchor=south] at (0,-0.9) {$(1,1)$};

\node[anchor=north] at (1,-0.1) {$1$};

\node[anchor=east] at (8,5) {$Z^{t,\beta}(N,N)$};

\foreach \k in {2,...,\numexpr\N\relax} {
  \node[anchor=east] at (\k-0.2,\k) {$\alpha+\beta$};
}

\node[anchor=east] at (0.8,1) {$\beta + t$};
\node at (6,0) {$\alpha - t$};
\node at (6,1) {$\alpha + t$};
\node at (6,2) {$2\alpha$};
\node at (6,3) {$2\alpha$};
\node at (6,4) {$2\alpha$};

\end{tikzpicture}
\end{minipage}
\hfill
\begin{minipage}{0.45\textwidth}
\centering
\begin{tikzpicture}[
  scale=0.7,
  grid/.style={gray!50, dotted},
  redpt/.style={circle, fill=red!80, draw=none, inner sep=1.4pt},
  grnpt/.style={circle, fill=green!70!black, draw=none, inner sep=1.4pt},
  baseline=(current bounding box.center),
  every node/.style={font=\small}
]

\def\N{5} 

\foreach \x in {0,...,\N} {
  \draw[grid] (\x,0) -- (\x,\N);
}
\foreach \y in {0,...,\N} {
  \draw[grid] (0,\y) -- (\N,\y);
}

\foreach \x in {1,...,\N} {
  \foreach \y in {0,...,\numexpr\x-1\relax} {
    \node[redpt] at (\x,\y) {};
  }
}

\foreach \k in {0,...,\N} {
  \node[grnpt] at (\k,\k) {};
}

\node[anchor=east] at (-0.2,0) {$1$};

\node[anchor=south] at (0,-0.9) {$(1,1)$};
\node[anchor=north] at (1,-0.1) {$1$};
\node[anchor=east] at (8,5) {$Z^{t,\beta}(N,N)$};

\foreach \k in {2,...,\numexpr\N\relax} {
  \node[anchor=east] at (\k-0.2,\k) {$\alpha+t$};
}

\node[anchor=east] at (0.8,1) {$\beta + t$};
\node at (6,0) {$\alpha - t$};
\node at (6,1) {$\alpha + \beta$};
\node at (6,2) {$2\alpha$};
\node at (6,3) {$2\alpha$};
\node at (6,4) {$2\alpha$};

\end{tikzpicture}
\end{minipage}
\caption{Both figures are half-space stationary two-parameter log-gamma polymer model with weights described in Definition \ref{def: two-param stationary model}. The equality in distribution is justified by Lemma \ref{lem: symmetry}. }
\label{fig: two-parameter stationary figure}
\end{figure}

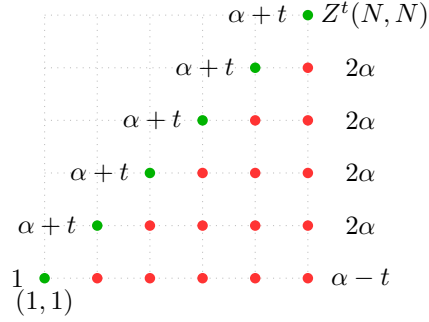
\begin{figure}
    \centering
\begin{tikzpicture}[
  scale=0.7,
  grid/.style={gray!50, dotted},
  redpt/.style={circle, fill=red!80, draw=none, inner sep=1.4pt},
  grnpt/.style={circle, fill=green!70!black, draw=none, inner sep=1.4pt},
  baseline=(current bounding box.center),
  every node/.style={font=\small}
]

\def\N{5} 

\foreach \x in {0,...,\N} {
  \draw[grid] (\x,0) -- (\x,\N);
}
\foreach \y in {0,...,\N} {
  \draw[grid] (0,\y) -- (\N,\y);
}

\foreach \x in {1,...,\N} {
  \foreach \y in {0,...,\numexpr\x-1\relax} {
    \node[redpt] at (\x,\y) {};
  }
}

\foreach \k in {0,...,\N} {
  \node[grnpt] at (\k,\k) {};
}

\node[anchor=east] at (-0.2,0) {$1$};

\node[anchor=south] at (0,-0.9) {$(1,1)$};

\node[anchor=east] at (7.5,5) {$Z^{t}(N,N)$};

\foreach \k in {2,...,\numexpr\N\relax} {
  \node[anchor=east] at (\k-0.2,\k) {$\alpha+t$};
}

\node[anchor=east] at (0.8,1) {$\alpha + t$};
\node at (6,0) {$\alpha - t$};
\node at (6,1) {$2\alpha$};
\node at (6,2) {$2\alpha$};
\node at (6,3) {$2\alpha$};
\node at (6,4) {$2\alpha$};

\end{tikzpicture}
\caption{Half-space product stationary log-gamma polymer model with weights described in Definition \ref{def: product stationary model}.}
\label{fig: product stationary figure}
\end{figure}

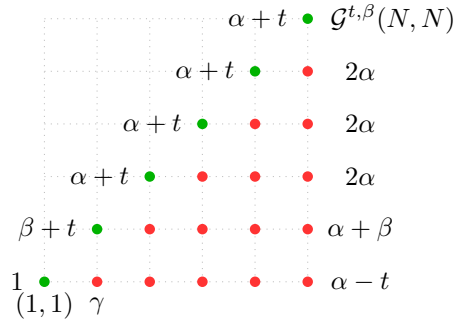
\begin{figure}
    \centering
\begin{tikzpicture}[
  scale=0.7,
  grid/.style={gray!50, dotted},
  redpt/.style={circle, fill=red!80, draw=none, inner sep=1.4pt},
  grnpt/.style={circle, fill=green!70!black, draw=none, inner sep=1.4pt},
  baseline=(current bounding box.center),
  every node/.style={font=\small}
]

\def\N{5} 

\foreach \x in {0,...,\N} {
  \draw[grid] (\x,0) -- (\x,\N);
}
\foreach \y in {0,...,\N} {
  \draw[grid] (0,\y) -- (\N,\y);
}

\foreach \x in {1,...,\N} {
  \foreach \y in {0,...,\numexpr\x-1\relax} {
    \node[redpt] at (\x,\y) {};
  }
}

\foreach \k in {0,...,\N} {
  \node[grnpt] at (\k,\k) {};
}

\node[anchor=east] at (-0.2,0) {$1$};

\node[anchor=south] at (0,-0.9) {$(1,1)$};
\node[anchor=north] at (1,-0.1) {$\gamma$};

\node[anchor=east] at (8,5) {$\mathcal{G}^{t,\beta}(N,N)$};

\foreach \k in {2,...,\numexpr\N\relax} {
  \node[anchor=east] at (\k-0.2,\k) {$\alpha+t$};
}

\node[anchor=east] at (0.8,1) {$\beta + t$};
\node at (6,0) {$\alpha - t$};
\node at (6,1) {$\alpha + \beta$};
\node at (6,2) {$2\alpha$};
\node at (6,3) {$2\alpha$};
\node at (6,4) {$2\alpha$};

\end{tikzpicture}
\caption{This model connects the product stationary case and the two-parameter stationary case. Let $\gamma$ denote the weight at $(2,1)$. If $\gamma = 1,$ i.e., a constant weight $1$, then this model becomes the two-parameter stationary model. If $\gamma = \Gamm(\alpha -t)$ and $\beta = \alpha$, then this model becomes the product stationary model.}
\label{fig: intermediate model}
\end{figure}

\begin{define}\label{def: product stationary model}
    Consider the half-space geometry $\mathcal{D}$. We define the following weights:
    \begin{equation} \label{OneParamApprox}
  \omega_{i, j} = \begin{cases}
1, & \textrm{if } i=j=1, \\

    \mathrm{Gamma}^{-1}\left( \alpha + t \right), &\textrm{if } i=j\geq 2,\\
    \mathrm{Gamma}^{-1}\left( \alpha -t \right), &\textrm{if } j=1, i\geq 2, \\
    \mathrm{Gamma}^{-1}(2\alpha), &\textrm{otherwise}.
  \end{cases}
\end{equation} Let $Z^{t}(N,M)$ denote the partition function from $(1,1)$ to $(N,M)$. We refer to this as the half-space log-gamma polymer model with product stationary initial condition (Figure \ref{fig: product stationary figure}). 
\end{define}
One can easily check that
\begin{equation}
    Z^{t}(1+\cdot,1) / Z^{t}(1,1) \stackrel{(d)}{=} \mathcal{T}_{-t, t}(\cdot)
\end{equation}
where $\mathcal{T}_{-t, t}$ is defined in \eqref{def: stationary process}. Moreover, $Z^t(N,M)$ satisfies the same recurrence relations as in \eqref{def:recurrence}. Hence, $Z^t(N,N) \stackrel{(d)}{=} \mathcal{Z}^t(N,N)$ in Definition \ref{def: product stationary Z}.

\begin{define}\label{def: two-param stationary model}
    Consider the half-space geometry $\mathcal{D}$. We define the following weights:
    \begin{equation} \label{OneParamApprox}
  \omega_{i, j} = \begin{cases}
1, & \textrm{if } i=j=1, \\
1, &\textrm{if } i = 2,j=1,\\
\mathrm{Gamma}^{-1}\left( \beta + t \right), &\textrm{if } i=j= 2,\\
\mathrm{Gamma}^{-1}\left(\alpha +t \right), &\textrm{if } i>j= 2,\\
    \mathrm{Gamma}^{-1}\left( \alpha + \beta \right), &\textrm{if } i=j\geq 3,\\
    \mathrm{Gamma}^{-1}\left( \alpha -t \right), &\textrm{if } j=1, i\geq 3, \\
    \mathrm{Gamma}^{-1}(2\alpha), &\textrm{otherwise},
  \end{cases}
\end{equation}
where $\alpha >0,$ $t\in (0,\alpha)$, $\beta \in (-t,\infty)$. Let $Z^{t,\beta}(N,M)$ denote the partition function from $(1,1)$ to $(N,M)$. We refer to this as the half-space log-gamma polymer model with two-parameter stationary initial condition (Figure \ref{fig: two-parameter stationary figure}). 
\end{define}
One can also check that
\begin{equation}
    Z^{t,\beta}(2+\cdot,2) / Z^{t,\beta}(2,2) \stackrel{(d)}{=} \mathcal{T}_{\beta, t}(\cdot)
\end{equation}
where $\mathcal{T}_{\beta, t}$ is defined in \eqref{def: stationary process}.
The fact that the first two rows encode the two-parameter stationary initial condition results in an index shift from $Z^{t,\beta}(N,N)$ to $\mathcal{Z}^{t,\beta}(N-1,N-1)$ which is introduced in Definition \ref{def: two param stationary condition}.

\begin{define}\label{def: most general stationary model}
    Consider the half-space geometry $\mathcal{D}$. We define the following weights:
    \begin{equation} \label{OneParamApprox}
  \omega_{i, j} = \begin{cases}
1, & \textrm{if } i=j=1, \\
\gamma, &\textrm{if } i = 2,j=1,\\
\mathrm{Gamma}^{-1}\left( \beta + t \right), &\textrm{if } i=j= 2,\\
\mathrm{Gamma}^{-1}\left( \beta +\alpha  \right), &\textrm{if } i>j= 2,\\
    \mathrm{Gamma}^{-1}\left( \alpha + t \right), &\textrm{if } i=j\geq 3,\\
    \mathrm{Gamma}^{-1}\left( \alpha -t \right), &\textrm{if } j=1, i\geq 3, \\
    \mathrm{Gamma}^{-1}(2\alpha), &\textrm{otherwise},
  \end{cases}
\end{equation} 
where $\alpha >0,$ $t\in (0,\alpha)$, $\beta \in (-t,\infty)$. Let $\mathcal{G}^{t,\beta}(N,M)$ denote the partition function from $(1,1)$ to $(N,M)$ under the above model. This model is shown in Figure \ref{fig: intermediate model}.
\end{define}
 If $\gamma = 1$, then by Lemma \ref{lem: symmetry}, $\mathcal{G}^{t,\beta}(N,N) \stackrel{(d)}{=} Z^{t,\beta}(N,N)$ for all $N \geq 1$. If instead, $\gamma = \mathrm{Gamma}^{-1}\left( \alpha - t \right)$ and $\beta = \alpha$, then $\mathcal{G}^{t,\beta}(N,N) \stackrel{(d)}{=} Z^{t}(N,N)$ for all $N \geq 1$.

By \cite[Remark 1.7]{LogGammaStationary}, it is known that if $\beta = t$, $\mathcal{T}_{\beta, t} \stackrel{(d)}{=}\mathcal{T}_{-t,t}$, i.e., the product stationary measure. In terms of half-space log-gamma polymer model, we see that $Z^{t,t}(2+N,2)/Z^{t,t}(2,2) \stackrel{(d)}{=}\mathcal{T}_{-t,t}$. However, we make it clear that $Z^{t,t}(N,N)$ and $\mathcal{Z}^t(N,N)$ are not equal in distribution due to the fact that the weight at $(2,2)$ contributes non-trivially to $Z^{t,t}(N,N)$ (even in the limit) while $\mathcal{Z}^t(N,N)$ has no such issue as the weight at $(1,1)$ is just $1$. Therefore, it is better to only compute the distribution of $Z^{t}(N,N)$. On the other hand, when $\beta = -t,$ the model $Z^{t,\beta}$ is not defined because of the random variable $\Gamm(\beta + t)$ at $(2,2)$. As a result, we will only  analyze distributions of $Z^{t,\beta}$ when $\beta \neq t,-t$. 
To preview the analysis, $Z^{t,\beta}$ with $\beta >t$ and  $Z^t$ will be analyzed together. The regime $-t<\beta <t$ for $Z^{t,\beta}$, however, is handled separately and requires a different method.

To obtain the distribution of $Z^{t,\beta}(N,N)$, we start with the half-space log-gamma polymer model with the introduction of a new first-row parameter $u$.
\begin{define}\label{def: approximation model}
Consider the half-space geometry $\mathcal{D}$. We define the following weights:
\begin{equation} \label{eq: two param beta large approx}
  \omega_{i, j} = \begin{cases}
\mathrm{Gamma}^{-1}\left( \beta + t \right), &\textrm{if } i=j= 2,\\
\mathrm{Gamma}^{-1}\left( \beta + u \right), &\textrm{if } i = 2,j=1,\\
\mathrm{Gamma}^{-1}\left( \beta +\alpha  \right), &\textrm{if } i>j= 2,\\
    \mathrm{Gamma}^{-1}\left( \alpha + t \right), &\textrm{if } i=j\geq 3,\\
    \mathrm{Gamma}^{-1}\left( \alpha +u \right), &\textrm{if } j=1, i\geq 3, \\
    \mathrm{Gamma}^{-1}\left( u + t \right), & \textrm{if } i=j=1, \\
    \mathrm{Gamma}^{-1}(2\alpha), &\textrm{otherwise}.
  \end{cases}
\end{equation}
Let $Z^{u}_{\beta > t}(N,M)$ ($Z^{u}_{\beta < t}(N,M)$) denote the partition function from $(1,1)$ to $(N,M)$ under the above model when $\beta >t$ (respectively, $\beta < t$). This is shown in Figure \ref{fig: Stationary Model Approx}.
\end{define}

\section{Pfaffian Structure of the half-space log-gamma model}
Our goal is to
identify the exact distribution of the polymer free energy under the stationary
initial condition. The following theorem, established in \cite[Theorem 5.10]{imamura2022solvablemodelskpzclass}, gives a
Fredholm Pfaffian expression for the Laplace transform of the partition function. We adapt it to our model \eqref{eq: two param beta large approx}.
This representation is the starting point for our analysis.

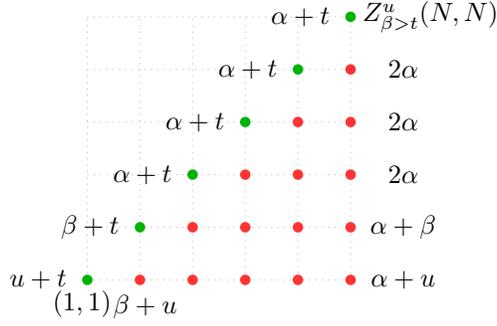
\begin{figure}
\centering
\begin{tikzpicture}[
  scale=0.7,
  grid/.style={gray!50, dotted},
  redpt/.style={circle, fill=red!80, draw=none, inner sep=1.4pt},
  grnpt/.style={circle, fill=green!70!black, draw=none, inner sep=1.4pt},
  baseline=(current bounding box.center),
  every node/.style={font=\small}
]

\def\N{5} 

\foreach \x in {0,...,\N} {
  \draw[grid] (\x,0) -- (\x,\N);
}
\foreach \y in {0,...,\N} {
  \draw[grid] (0,\y) -- (\N,\y);
}

\foreach \x in {1,...,\N} {
  \foreach \y in {0,...,\numexpr\x-1\relax} {
    \node[redpt] at (\x,\y) {};
  }
}

\foreach \k in {0,...,\N} {
  \node[grnpt] at (\k,\k) {};
}

\node[anchor=east] at (-0.2,0) {$u+t$};

\node[anchor=south] at (-0.1,-0.9) {$(1,1)$};
\node[anchor=north] at (1.1,-0.1) {$\beta + u$};

\node[anchor=east] at (8,5) {$Z^u_{\beta >t}(N,N)$};

\foreach \k in {2,...,\numexpr\N\relax} {
  \node[anchor=east] at (\k-0.2,\k) {$\alpha+t$};
}

\node[anchor=east] at (0.8,1) {$\beta + t$};
\node at (6,0) {$\alpha +u$};
\node at (6,1) {$\alpha + \beta$};
\node at (6,2) {$2\alpha$};
\node at (6,3) {$2\alpha$};
\node at (6,4) {$2\alpha$};

\end{tikzpicture}
\caption{log-gamma polymer model as described in $\eqref{eq: two param beta large approx}$.}
\label{fig: Stationary Model Approx}
\end{figure}

\begin{thm}\label{thm: Pfaffian structure}
Fix $N \in \Z_{\geq 3}.$ Let $t\in (\frac{1}{N},\frac12),$ $\alpha> t$, $\beta>\frac{1}{N}$ and $t>u > \frac{1}{N}.$ 
Consider the half-space log-gamma polymer partition function
\(
Z(N,N)
\)
with boundary parameter $\beta$, first row parameter $u$, second row parameter $t$, and bulk parameters \(\alpha\).
Choose a real number \(d>0\) satisfying
\begin{equation}
\frac1{N}< d < \min\{1/2,u,\beta,t,\alpha\}.
\end{equation}
Then for every \(\tau\in\mathbb{R}\) one has the identity
\begin{equation}\label{eq: starting point pfaffian}
\mathbb{E}\left[e^{-e^{-\tau+\log Z(N,N)}}\right]
= 
\Pf(J-{\bold{\check{K}}})_{\mathbb{L}^2(\tau,\infty)},
\end{equation}
where  $\bold{\check{K}}$ is a $2 \times 2$ matrix kernel
\begin{equation}
\bold{\check{K}}(X,Y)=
\begin{pmatrix}
\check{K}(X,Y) & -\partial_Y \check{K}(X,Y) \\
-\partial_X \check{K}(X,Y) & \partial_X\partial_Y \check{K}(X,Y)
\end{pmatrix}.
\end{equation}
The scalar function \(\check{K}\) is given by the double contour integral
\begin{equation}\label{eq: central kernel K}
\check{K}(X,Y)
=
\int\limits_{\mathrm{i}\mathbb{R}-d} \frac{dZ}{2\pi i}
\int\limits_{\mathrm{i}\mathbb{R}-d} \frac{dW}{2\pi i}\;
e^{XZ+YW}\;
\frac{\Gamma(u+Z)}{\Gamma(u-Z)}
\frac{\Gamma(u+W)}{\Gamma(u-W)}
\frac{\Gamma(t+Z)}{\Gamma(t-Z)}
\frac{\Gamma(t+W)}{\Gamma(t-W)}
G_{\alpha,\beta}(Z)\,
G_{\alpha,\beta}(W)\,
Q(Z,W),
\end{equation}
with
\begin{equation}\label{eq: def of G}
G_{\alpha, \beta}(Z)
=
\left(
\frac{\Gamma(\beta+Z)}{\Gamma(\beta-Z)}\right)\left(
\frac{\Gamma(\alpha+Z)}{\Gamma(\alpha-Z)}\right)^{N-2},\quad Q(Z,W) = \Gamma(-2Z)\Gamma(-2W)\,
\frac{\sin \bigl(\pi(Z-W)\bigr)}{\sin \bigl(\pi(Z+W)\bigr)}.
\end{equation}
\end{thm}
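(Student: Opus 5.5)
This statement is a specialization of \cite[Theorem 5.10]{imamura2022solvablemodelskpzclass}, so the plan is to match our model to the general half-space log-gamma model of that theorem and check that its hypotheses hold. That theorem treats a half-space polymer with parameters $A_0$ (boundary) and $A_1,\dots,A_N$ (rows). Its weights are $\Gamm(A_i+A_j)$ off the diagonal and $\Gamm(A_0+A_i)$ on the diagonal. It gives
\[
\E\left[e^{-e^{-\tau}Z(N,N)}\right]=\Pf(J-\bold{K})_{\mathbb{L}^2(\tau,\infty)},
\]
with a kernel of exactly the form \eqref{eq: central kernel K}. In the general kernel, each $Z$ and $W$ integrand carries the product $\prod_{i=1}^N \Gamma(A_i+Z)/\Gamma(A_i-Z)$ together with the boundary factor $\Gamma(A_0+Z)/\Gamma(A_0-Z)$, and the cross term $Q(Z,W)$ involves $\Gamma(-2Z)\Gamma(-2W)\sin(\pi(Z-W))/\sin(\pi(Z+W))$.

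\textbf{Step 1: parameter assignment.} Reading off Definition \ref{def: approximation model}, the natural choice is $A_1=u$, $A_2=t$, $A_0=\beta$ and $A_i=\alpha$ for $i\ge 3$. This reproduces the corner weight $\Gamm(u+t)$ at $(1,1)$, the weights $\Gamm(\beta+u)$ at $(2,1)$, $\Gamm(\beta+t)$ at $(2,2)$ and $\Gamm(\alpha+u)$ on row one. It does \emph{not} literally reproduce the remaining weights: row two would carry $\Gamm(\alpha+t)$ rather than $\Gamm(\alpha+\beta)$, and the diagonal from $(3,3)$ on would carry $\Gamm(\alpha+\beta)$ rather than $\Gamm(\alpha+t)$. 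The figure indicates exactly this swap of $t$ and $\beta$ in the roles of boundary and second-row parameter (compare Figure \ref{fig: two-parameter stationary figure}). By Lemma \ref{lem: symmetry}, in its general form \cite[Lemma 2.7]{LogGammaStationary}, the law of $Z(N,N)$ is invariant under permuting $A_0,\dots,A_N$. So I will swap $A_0\leftrightarrow A_2$, which identifies the law of $Z^u_{\beta>t}(N,N)$ with the model having boundary $\beta$ and rows $(u,t,\alpha,\dots,\alpha)$, or with its image under the swap. Either way the parameter multiset is $\{\beta,u,t,\alpha^{(N-2)}\}$.

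\textbf{Step 2: read off the kernel.} Substituting these parameters, the kernel factors become $\Gamma(u+Z)/\Gamma(u-Z)$, $\Gamma(t+Z)/\Gamma(t-Z)$ and $(\Gamma(\alpha+Z)/\Gamma(\alpha-Z))^{N-2}$, plus $\Gamma(\beta+Z)/\Gamma(\beta-Z)$ from the boundary. The last two are grouped into $G_{\alpha,\beta}$, and the same holds in $W$. This is exactly \eqref{eq: central kernel K}.

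\textbf{Step 3: contour conditions.} The main obstacle is checking the hypotheses of \cite[Theorem 5.10]{imamura2022solvablemodelskpzclass}: the contour $\I\R-d$ must separate the relevant poles, and the integrals must converge absolutely.
\begin{itemize}
\item The poles of $\Gamma(A+Z)$ lie at $Z=-A-k$. Requiring $d<\min\{u,\beta,t,\alpha\}$ keeps all of them to the left of the contour.
\item $\Gamma(-2Z)$ has poles at $Z=k/2$ with $k\ge 0$, and $d>0$ keeps these to the right.
\item For $\sin(\pi(Z+W))$, we have $\Re(Z+W)=-2d\in(-1,0)$ because $d<1/2$, so no zero is crossed.
\item For convergence along vertical lines, Stirling gives $|\Gamma(A+Z)/\Gamma(A-Z)|\approx|\Im Z|^{2\Re Z}=|\Im Z|^{-2d}$ per factor. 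With $N$ such factors this gives decay $|\Im Z|^{-2Nd}$, while $\Gamma(-2Z)$ contributes growth $|\Im Z|^{2d-1/2}e^{-\pi|\Im Z|}$ and the sine ratio stays bounded. The $e^{-\pi|\Im Z|}$ factor is then compensated in the product with the sines, and the assumption $d>1/N$ ensures the net power is integrable, including after the $\partial_X,\partial_Y$ derivatives that add factors of $Z$ and $W$. This is where $1/N<d$ is used, and it should be checked carefully, since the exponential factors must cancel precisely.
\end{itemize}

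Finally, $\E[e^{-e^{-\tau+\log Z}}]=\E[e^{-e^{-\tau}Z}]$ trivially, which gives \eqref{eq: starting point pfaffian}. The positivity conditions $A_i+A_j>0$ hold because $u,t,\beta,\alpha>0$, so every weight in the model is well defined.
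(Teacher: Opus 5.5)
Your proposal follows the paper's route: the identity is a direct specialization of \cite[Theorem 5.10]{imamura2022solvablemodelskpzclass} with parameter multiset $\{\beta,u,t,\alpha^{(N-2)}\}$, and the permutation invariance of Lemma~\ref{lem: symmetry} identifies it with the model of Definition~\ref{def: approximation model}, which corresponds literally to $A_0=t$, $A_1=u$, $A_2=\beta$. There are two harmless slips. First, with $A_0=\beta$, $A_1=u$, $A_2=t$ the weights at $(1,1)$ and $(2,1)$ are $\Gamm(\beta+u)$ and $\Gamm(u+t)$, not the reverse. Second, the sine ratio is not bounded; only its product with $\Gamma(-2Z)\Gamma(-2W)$ is $O(e^{-\pi|\mathrm{Im}(Z+W)|})$, up to polynomial factors. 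Hence along $\mathrm{Im}\,Z\approx-\mathrm{Im}\,W$ integrability comes from the polynomial decay of the $N+1$ (not $N$) Gamma ratios, and this is where $d>1/N$ is used.
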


\section{Reformulation of kernel for High density phase when $\beta >t$}
To extract the exact distribution of the half-space log-gamma free energy under the stationary initial conditions, we need to remove the effect of the random variable at $(1,1)$ and then take the limit of $u\rightarrow -t$. 

\subsection{Removing the weight at $(1,1)$}

Recall that $\omega_{1,1} \stackrel{(d)}{=} \text{Gamma}^{-1}(u+t)$. For $\beta >t$, we let  $Z^{u,-}_{\beta >t}$ be the partition function with the weight at $(1,1)$ being replaced by a constant $1$. Similarly, we define $Z^{u,-}_{\beta < t}$ for $\beta <t$. Since every polymer path from $(1,1)$ to $(N,N)$ must go through $(1,1)$, it follows that
\begin{equation}
Z^{u}_{\beta >t} = \omega_{1,1}Z^{u,-}_{\beta >t}, \quad Z^{u}_{\beta <t} = \omega_{1,1}Z^{u,-}_{\beta <t}.
\end{equation}
For any $x>0,$ we rewrite the Laplace transform of $Z^{u}_{\beta >t}(N,N)$ as
\begin{equation}
\E[e^{-xZ^{u}_{\beta >t}(N,N)}] = \E[e^{-xZ^{u,-}_{\beta >t}(N,N) \omega_{1,1}}].
\end{equation}
Given the independence of $\omega_{1,1}$ and $Z^{u,-}_{\beta >t}(N,N)$, we first integrate $\omega_{1,1}$ to get:
\begin{equation}\label{eq:shift}
\Gamma(u+t)\E[e^{-xZ^{u,-}_{\beta >t}(N,N) \omega_{1,1}}] = \E\bigg[2(xZ_{\beta >t}^{u,-}(N,N))^{\frac{u+t}{2}}K_{-u-t}\left(2\sqrt{x Z_{\beta >t}^{u,-}(N,N)}\right)\bigg]
\end{equation}
where $K_{-\nu}$ is the modified Bessel function \cite{abramowitz1965handbook} defined to be 
\begin{equation}
 2c^{\nu/2}K_{-\nu}(2\sqrt{c}) = \int\limits_0^{\infty} e^{-cy-y^{-1}}y^{-\nu -1} dy \quad \text{ for } c>0.
\end{equation}
\begin{rmk}
Theorem \ref{thm: Pfaffian structure} provides a Fredholm Pfaffian representation for the LHS of \eqref{eq:shift}. When we take the $u \rightarrow -t$ limit, the factor $\Gamma(u+t)$ on the RHS of \eqref{eq:shift} diverges to infinity, while the expectation on the RHS of \eqref{eq:shift} admits a finite and nontrivial limit from which the exact distribution of the stationary half-space log-gamma free energy can be extracted. Consequently, the Fredholm Pfaffian on the LHS of \eqref{eq:shift} must vanish at a compensating rate so that its product with $\Gamma(u+t)$ converges to a well-defined limit. This observation motivates the kernel decomposition introduced below.
\end{rmk}

\subsection{Kernel Decomposition}\label{section: kernel decompose}
Recall the definition of $G_{\alpha,\beta}$ in \eqref{eq: def of G}.
We define the following functions for further decomposition of the kernel.
\begin{equation}\label{eq: low def of A}
    \begin{aligned}
        A(X) := e^{-Xu}\frac{\Gamma(t-u)}{\Gamma(t+u)}G_{\alpha, \beta}(-u),\quad
        B(Y) := \int \limits_{\I\R - d} \frac{dW}{2\pi\I} e^{YW}\frac{\Gamma(t+W)}{\Gamma(t-W)}G_{\alpha,\beta}(W)\Gamma(-2W)\frac{\Gamma(1-u+W)}{\Gamma(1-u-W)}.
    \end{aligned}
\end{equation}

\begin{lemma}
For any $N\in\Z_{\geq 3},$ $t\in (\frac{1}{N},\frac{1}{2}),$ $t >u > \frac{1}{N}$, and $\alpha,\beta>t$,
we have the following decomposition of kernel:
    \begin{equation}
        {\bold{\check{K}}}(X,Y) = \bold{K}(X,Y) + {R}(X,Y).
    \end{equation}
We define
    \begin{equation}
        \bold{K}(X,Y) := \begin{pmatrix}
            K(X,Y) & -\partial_Y K(X,Y)\\
            - \partial_X K(X,Y) & \partial_X\partial_Y K(X,Y)
        \end{pmatrix}
    \end{equation} with
    \begin{equation}
        {K}(X,Y) := \int\limits_{\mathrm{i}\mathbb{R}-d} \frac{dZ}{2\pi i}
\int\limits_{\mathrm{i}\mathbb{R}-d} \frac{dW}{2\pi i}\;
e^{XZ+YW}\;
G_{\alpha,\beta}(Z)\,
G_{\alpha,\beta}(W)\,
Q(Z,W)
    \end{equation}
where $d$ satisfies $u<d <t$,
and define
\begin{equation}
    \begin{aligned}
        {R}(X,Y) = \begin{pmatrix}
A(X)B(Y)-A(Y)B(X) & -A(X)B'(Y)+A'(Y)B(X) \\
-A'(X)B(Y)+A(Y)B'(X) & A'(X)B'(Y)-A'(Y)B'(X)
\end{pmatrix}
    \end{aligned}
\end{equation}
where ${R}$ captures poles at $Z = -u$ and $W = -u.$
\end{lemma}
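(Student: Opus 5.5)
The plan is to obtain the decomposition by a contour shift in the double integral \eqref{eq: central kernel K}, collecting the residues at the poles of $\Gamma(u+Z)$ and $\Gamma(u+W)$ at $Z=-u$ and $W=-u$. In Theorem \ref{thm: Pfaffian structure} the contours are $\I\R-d$ with $\frac1N<d<u$. I would move both contours to $\I\R-d'$ with $u<d'<t$, which is possible since $u<t$. The first step is to list the singularities of the integrand in the strip $-d'\le \mathrm{Re}\,Z\le -d$:
\begin{itemize}
\item $\Gamma(t\pm Z)$, $\Gamma(\beta\pm Z)$ and $\Gamma(\alpha\pm Z)$ have no poles there, because $d'<t<\min\{\alpha,\beta\}$.
\item $\Gamma(-2Z)$ has no pole there, because $d'<1/2$.
\item $1/\sin(\pi(Z+W))$ has no pole, because $\mathrm{Re}(Z+W)\in(-2d',-2d)\subset(-1,0)$.
\item The only pole is the simple pole of $\Gamma(u+Z)$ at $Z=-u$, and symmetrically of $\Gamma(u+W)$ at $W=-u$.
\end{itemize}
The horizontal connecting segments vanish because the integrand decays on vertical lines. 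Concretely, $|\Gamma(a+iy)/\Gamma(a'-iy)|\sim|y|^{a+a'-1}$ by Stirling, $\Gamma(-2Z)$ decays like $e^{-\pi|y|}$, and the sine ratio is bounded. So $G_{\alpha,\beta}$ is at most polynomial and is beaten by the exponential decay, and $e^{XZ}$ is bounded on the strip.

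The steps are as follows.
\begin{enumerate}
\item Shift the $Z$ contour first, with $W$ fixed on $\I\R-d$. This gives the integral over the new $Z$ contour plus the residue at $Z=-u$. With the positive orientation, moving the line leftward past the pole produces a term $+\mathrm{Res}_{Z=-u}$.
\item Compute that residue. The factor $\Gamma(u+Z)/\Gamma(u-Z)$ contributes $1/\Gamma(2u)$, and $Q(-u,W)$ contributes $\Gamma(2u)$, so $\Gamma(2u)$ cancels. What remains is
\begin{equation*}
e^{-Xu}\frac{\Gamma(t-u)}{\Gamma(t+u)}G_{\alpha,\beta}(-u)=A(X),
\end{equation*}
times an integrand in $W$.
\item Simplify the $W$-factor
\begin{equation*}
\frac{\Gamma(u+W)}{\Gamma(u-W)}\frac{\sin(\pi(-u-W))}{\sin(\pi(-u+W))}.
\end{equation*}
By the reflection formula $\Gamma(x)\sin(\pi x)=\pi/\Gamma(1-x)$ it equals $\Gamma(1-u+W)/\Gamma(1-u-W)$. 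This produces exactly the integrand of $B(Y)$.
\item Shift the $W$ contour in the same way. This gives the antisymmetric counterpart $-A(Y)B(X)$.
\item Check that the joint residue at $Z=W=-u$ vanishes. It is multiplied by $\sin(\pi(Z-W))=0$, so no cross term appears. This also uses that $A$ itself does not depend on a contour.
\end{enumerate}

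The scalar identity then reads $\check K = K + A(X)B(Y)-A(Y)B(X)$. Here $K$ is the integral over the shifted contours $\I\R-d'$, with the factors from \eqref{eq: central kernel K} retained; I read the stated $K$ this way, taking its $d$ to mean the new $d'$. Differentiating under the integral sign with respect to $X$ and $Y$ gives the other three matrix entries of $R$, with the signs matching the pattern of $\bold{\check K}$, since differentiation commutes with the residue extraction.

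I expect the main obstacle to be rigor rather than algebra. One must verify the integrability and decay uniformly enough to justify the contour shifts and the differentiation under the integral. The most delicate point is $G_{\alpha,\beta}$, which grows like $|y|^{(N-1)(2\,\mathrm{Re}Z)}$-type powers that must be dominated by $\Gamma(-2Z)$. The hypothesis $d>\frac1N$ is presumably what keeps these power counts favorable, and I would check that carefully.
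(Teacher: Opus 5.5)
Your proposal is correct and follows the same argument as the paper. Both shift the contours across $-u$ and identify the residues at $Z=-u$ and $W=-u$ as $A(X)B(Y)$ and $-A(Y)B(X)$; your reflection-formula step is exactly what turns $\frac{\Gamma(u+W)}{\Gamma(u-W)}\frac{\sin(\pi(u+W))}{\sin(\pi(u-W))}$ into the factor $\frac{\Gamma(1-u+W)}{\Gamma(1-u-W)}$ of $B$.

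One correction to your analytic side remarks. The ratio $\sin(\pi(Z-W))/\sin(\pi(Z+W))$ is bounded only for fixed $W$, which suffices for the horizontal segments. Along $\mathrm{Im}Z=-\mathrm{Im}W$ it grows like $e^{2\pi|\mathrm{Im}Z|}$ and cancels the decay of $\Gamma(-2Z)\Gamma(-2W)$. So the joint integrability needed for Fubini and for the derivative entries comes from the polynomial decay $|\mathrm{Im}Z|^{-2d}$ of each ratio $\Gamma(c+Z)/\Gamma(c-Z)$ on $\mathrm{Re}Z=-d$; these ratios, including $G_{\alpha,\beta}$, decay rather than grow. This is where the lower bound $d>1/N$ matters, for the $\partial_X\partial_Y$ entry.
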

\begin{proof}
    We show the $11-$entry as an example. The overall idea is that since $u$ will be sent to $-t<0$, the poles $z = -u$ and $w= -u$ have to cross the contour. Therefore, we isolate these two poles via direct evaluation of the residue and store them in the matrix ${R}$.
    We see that
    \begin{equation}
        \begin{aligned}
            &\text{Res}(Z=-u, \text{ integrand of }\eqref{eq: central kernel K}) = A(X)B(Y)\\
            &\text{Res}(W=-u, \text{ integrand of }\eqref{eq: central kernel K}) = -A(Y)B(X).
        \end{aligned}
    \end{equation}
    The remaining kernel is written as $K(X,Y)$ which contains poles $\{-t - i, -\alpha -i, -\beta -i, -u -1 -i : i \in \Z_{\geq 0}\}$ for both $Z$ and $W$, which is the reason for choosing $t>d_0 >u$.
\end{proof}

Recall that $J(x,y)= \Id_{x=y}\begin{pmatrix}
    0 & 1\\
    -1 & 0
\end{pmatrix}.$
We have that
\begin{equation}
    \begin{aligned}
        J^{-1}{R}= \ketbra{\X_1}{\Y_1} +  \ketbra{\X_2}{\Y_2},
    \end{aligned}
\end{equation}
where
\begin{equation}
    \X_1 = \begin{pmatrix}
        A^{\prime} \\
        A
    \end{pmatrix}, \quad \Y_1 = \begin{pmatrix}
        B & -B^{\prime}
    \end{pmatrix}, \quad \X_2 = \begin{pmatrix}
        B^{\prime} \\
        B
    \end{pmatrix}, \quad \Y_2 = \begin{pmatrix}
        -A & A^{\prime}
    \end{pmatrix}.
\end{equation}
We define $\G = J^{-1}\bold{K}$ and perform the following reformulation of Fredholm Pfaffian
\begin{equation}\label{eq: PfSimple}
    \begin{aligned}
        \mathrm{Pf}(J-{\bold{\check{K}}})^2 &= \det(\Id - J^{-1}{\bold{\check{K}}})\\
        &= \det\left(\Id - \G -\ketbra{\X_1}{\Y_1} - \ketbra{\X_2}{\Y_2}\right)\\
        &= \det(\Id - \G)\det\left(\Id -(\Id - \G)^{-1}\ketbra{\X_1}{\Y_1} - (\Id - \G)^{-1}\ketbra{\X_2}{\Y_2}\right)\\
        &= \det(\Id - \G)\det\left(\Id - \begin{pmatrix}
           (\Id -\G)^{-1}\ket{\X_1} & (\Id -\G)^{-1}\ket{\X_2}
        \end{pmatrix}\begin{pmatrix}
            \Y_1\\
            \Y_2
        \end{pmatrix}\right)\\
        &= \det(\Id - \overline{G})\det\left(\Id - \begin{pmatrix}
            \bra{\Y_1}(\Id -\G)^{-1}\ket{\X_1} & \bra{\Y_1}(\Id -\G)^{-1}\ket{\X_2}  \\
            \bra{\Y_2}(\Id -\G)^{-1}\ket{\X_1} & \bra{\Y_2}(\Id -\G)^{-1}\ket{\X_2} 
        \end{pmatrix}\right)\\
        &= \det(\Id - \G)(1- \bra{\Y_1}(\Id - \G)^{-1}\ket{\X_1})^2.
    \end{aligned}
\end{equation}
Since $\bold{K}$ is antisymmetric, we see that
\begin{equation}
    \begin{aligned}
        \bra{\Y_1}(\Id - \overline{G})^{-1}\ket{\X_1} = \bra{\Y_2}(\Id - \overline{G})^{-1}\ket{\X_2}, \quad \bra{\Y_1}(\Id - \overline{G})^{-1}\ket{\X_2} = \bra{\Y_2}(\Id - \overline{G})^{-1}\ket{\X_1} = 0,
    \end{aligned}
\end{equation}
which justifies the last equality in \eqref{eq: PfSimple}. The fifth equality follows from $\det(\Id - AB) = \det(\Id - BA).$
To simplify our notation, we define $$\phi_1 = (\G\X_1)_1,\quad \phi_2 = (\G\X_1)_2.$$

\begin{lemma}\label{lem: difference of pfaffians}
Fix any $t\in (\frac{1}{N},\frac12),$ $\alpha,\beta > t,$ and $ t>u > \frac{1}{N}.$
The Fredholm Pfaffian can be reformulated into
    \begin{equation}\label{eq: Pfreformulate}
    \begin{aligned}
         &\Pf(J-{\bold{\check{K}}}) = \Pf(J-\bold{K})\left( 1- \braket{B}{A^{\prime}} + \braket{B^{\prime}}{A}\right) - \Pf(J - \bold{K})\\
        &+ \mathrm{Pf}\left(J - \bold{K} - \ketbra{\begin{array}{c}
            \phi_2\\
            -\phi_1 
        \end{array}}
        {B \quad -B^{\prime}} - \ketbra{\begin{array}{c}
            B\\
            -B^{\prime} 
        \end{array}}{-\phi_2 \quad \phi_1}\right).
    \end{aligned}
    \end{equation}
\end{lemma}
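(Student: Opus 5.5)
The plan is to start from the computation \eqref{eq: PfSimple}, which gives $\Pf(J-\bold{\check K})^2=\det(\Id-\G)\,(1-\bra{\Y_1}(\Id-\G)^{-1}\ket{\X_1})^2$. Taking square roots, I expect
\[
\Pf(J-\bold{\check K})=\Pf(J-\bold K)\bigl(1-\bra{\Y_1}(\Id-\G)^{-1}\ket{\X_1}\bigr).
\]
The sign is fixed by continuity: both sides are analytic in the parameters and agree as the rank-two perturbation is scaled to zero, for instance by replacing $R$ with $\epsilon R$ and letting $\epsilon\to0$. Next I expand $(\Id-\G)^{-1}=\Id+\G+\G(\Id-\G)^{-1}\G$, or more economically $(\Id-\G)^{-1}=\Id+\G(\Id-\G)^{-1}$. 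Then
\[
\bra{\Y_1}\X_1\rangle=\braket{B}{A'}-\braket{B'}{A},
\]
which produces the explicit factor $1-\braket{B}{A'}+\braket{B'}{A}$. The remainder is $-\Pf(J-\bold K)\,\bra{\Y_1}(\Id-\G)^{-1}\G\ket{\X_1}$, and $\G\X_1$ has components $(\phi_1,\phi_2)$ by definition.

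The goal is then to recognise
\[
\Pf(J-\bold K)\bigl(1-\bra{\Y_1}(\Id-\G)^{-1}\ket{\G\X_1}\bigr)
\]
as a Fredholm Pfaffian of a rank-two antisymmetric perturbation of $\bold K$. Apply the same identity as in \eqref{eq: PfSimple} in reverse, with $\X_1$ replaced by $\G\X_1$. Writing $J^{-1}\widetilde R=\ketbra{\G\X_1}{\Y_1}+\ketbra{\X_2'}{\Y_2'}$ for the antisymmetrised partner, and using $J^{-1}=-J$, one computes that $\widetilde R$ is exactly the kernel
\[
\ketbra{\begin{array}{c}\phi_2\\-\phi_1\end{array}}{B\quad -B'}+\ketbra{\begin{array}{c}B\\-B'\end{array}}{-\phi_2\quad\phi_1}.
\]
This kernel is antisymmetric. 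The cross terms $\bra{\Y_1}(\Id-\G)^{-1}\ket{\X_2'}$ and the like vanish, and the diagonal ones coincide, by the antisymmetry of $\bold K$, just as in the argument after \eqref{eq: PfSimple}. The squared-Pfaffian identity then gives
\[
\Pf(J-\bold K-\widetilde R)=\Pf(J-\bold K)\bigl(1-\bra{\Y_1}(\Id-\G)^{-1}\ket{\G\X_1}\bigr),
\]
again with the sign fixed by deformation. Writing $-\bra{\Y_1}(\Id-\G)^{-1}\G\ket{\X_1}$ as $\bigl(1-\bra{\Y_1}(\Id-\G)^{-1}\G\ket{\X_1}\bigr)-1$ produces the $-\Pf(J-\bold K)$ term, and summing gives \eqref{eq: Pfreformulate}.

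The main obstacle is the bookkeeping in the reverse step. One has to check that the pair $\ketbra{\G\X_1}{\Y_1}$ together with its partner really comes from an antisymmetric $\widetilde R$ with the stated entries. This requires using that $\G=J^{-1}\bold K$ intertwines correctly, so that the partner of $\G\X_1$ is $\X_2$ acted on appropriately, and that the needed vanishing and equality of the cross inner products still hold. Another point to settle is that all traces are finite, so that the Fredholm determinants and Pfaffians are defined. This needs the exponential decay of $A,B,\phi_i$ on $(\tau,\infty)$, which follows from the contour choice $u<d<t$ and $\frac1N<d$ giving decay of $e^{XZ}$ with $\mathrm{Re}\,Z=-d$.
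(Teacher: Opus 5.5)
Your algebra is correct, and it actually replaces the step the paper takes from \cite[Lemma 6.10]{zeng2025stationary}. Put $a=J\X_1$ and $b=J\X_2$; then $R=ab^{T}-ba^{T}$. Replacing $a$ by $J\G\X_1=(\phi_2,-\phi_1)^{T}$ in the same formula gives exactly the rank-two kernel in \eqref{eq: Pfreformulate}. (So the partner ket stays $\X_2$; only the bra changes, to $(-\phi_2,\ \phi_1)$.) The identity $\Pf\bigl(J-\mathbf{K}-(ab^{T}-ba^{T})\bigr)=\Pf(J-\mathbf{K})\bigl(1-b^{T}(J-\mathbf{K})^{-1}a\bigr)$ then handles both your forward and reverse steps at once. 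It uses $(J-\mathbf{K})^{-1}J=(\Id-\G)^{-1}$ and the antisymmetry of $(J-\mathbf{K})^{-1}$, which gives the vanishing cross terms.

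The gap is that every step needs $(\Id-\G)^{-1}$ to exist. This includes \eqref{eq: PfSimple} itself, and your sign argument at $\epsilon=0$, which needs $\Pf(J-\mathbf{K})\neq0$. Invertibility is equivalent to $\det(\Id-\G)=\Pf(J-\mathbf{K})^{2}\neq0$. Nothing in the hypotheses guarantees this for a given $N\ge3$ and $\tau\in\R$; the paper only establishes $\|\mathbf{K}\|<1$ for $N$ large. As written, you prove \eqref{eq: Pfreformulate} only off the zero set of $\Pf(J-\mathbf{K})$.

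Removing this restriction is the actual content of the paper's proof. It uses $\|\mathbf{K}\|<1$ for $N\ge N_0$ to write the resolvent as a Neumann series and shows real-analyticity in $N$. It then notes that the right-hand side of \eqref{eq: Pfreformulate} contains no resolvent, so the identity continues to all $N\ge3$.

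Your framework admits a simpler repair via a coupling constant. Replace $\mathbf{K}$ by $z\mathbf{K}$, so $\phi_i$ becomes $z\phi_i$ while $A$, $B$ and $R$ are unchanged. Both sides of \eqref{eq: Pfreformulate} are then entire in $z$, by Hadamard's bound and the exponential decay you already invoke. Your argument proves equality for $|z|$ small, where $\|z\G\|<1$ and $\Pf(J-z\mathbf{K})\neq0$. The identity theorem then gives $z=1$ for every $N$ and $\tau$ at once, without treating $N$ as a real variable.
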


\begin{proof}
By \cite[Lemma $5.13$]{imamura2022solvablemodelskpzclass} and \eqref{eq: upper bound of K}, we know that $||\bold{K}|| < 1$ when $N$ is sufficiently large, which implies that $(\Id - J^{-1}\bold{K})^{-1}$ can be viewed as a series expansion. We choose $N_0 \in \Z_{\geq 3}$ large enough such that for all $N\geq N_0$, $||\bold{K}|| <1$. 
By further decomposing the formula in \eqref{eq: PfSimple}, we get that
    \begin{equation}\label{eq: analytic 1}
    \begin{aligned}
        &\Pf(J-{\bold{\check{K}}})
         = \Pf(J-\bold{K})\left( 1 - \braket{\Y_1}{\X_1} - \brabarket{\Y_1}{(\Id-\G)^{-1}}{\G\X_1}\right)\\
         &=\Pf(J-\bold{K})\left( 1 - \braket{B}{A^{\prime}} + \braket{B^{\prime}}{A}  - \brabarket{(B,\, -B^{\prime})}{(\Id - J^{-1}\bold{K})^{-1}}{\begin{pmatrix}
            \phi_1\\\phi_2
        \end{pmatrix}}\right):= \mu(N)\\
    \end{aligned}
    \end{equation}
    for $N \in \Z_{\geq N_0}$. Now, we view $\mu(N)$ as a function of $N\in \R_{>N_0}$. We want to show that $\mu(N)$ is a real-analytic function for $N\in (N_0,\infty)$.
    It suffices to justify that we may differentiate with respect to $N$ under the contour integrals. We use ${K}$ as an example. Let $H(Z,W) = \frac{\Gamma(u+Z)}{\Gamma(u-Z)}
            \frac{\Gamma(u+W)}{\Gamma(u-W)}
            \frac{\Gamma(t+Z)}{\Gamma(t-Z)}
            \frac{\Gamma(t+W)}{\Gamma(t-W)}            \frac{\Gamma(\alpha-Z)}{\Gamma(\alpha+Z)}\frac{\Gamma(\alpha-W)}{\Gamma(\alpha+W)},$ and $T_0(\alpha, Z,W) = \frac{\Gamma(\alpha + Z)}{\Gamma(\alpha - Z)}\frac{\Gamma(\alpha + W)}{\Gamma(\alpha - W)}.$ Then 
    \begin{equation}\label{eq: kernel temp}
        \begin{aligned}
            \frac{\partial}{\partial N}{K}(X,Y)
            =
            \int\limits_{\mathrm{i}\mathbb{R}-d} \frac{dZ}{2\pi i}
            \int\limits_{\mathrm{i}\mathbb{R}-d} \frac{dW}{2\pi i}\;
            e^{XZ+YW}H(Z,W)
            \frac{\partial}{\partial N}\left(T_0(\alpha, Z,W)\right)^N\,
            Q(Z,W),
        \end{aligned}
    \end{equation}
    where 
    \begin{equation}
        \frac{\partial}{\partial N}\left(T_0(\alpha,Z,W)\right)^N = \log\left(T_0(\alpha, Z,W)\right)e^{N\log (T_0(\alpha, Z, W))}.
    \end{equation}
    Consider the change of variables $W = -d + \I w,$ and $Z = -d + \I v$. Then \eqref{eq: bound gamma} implies that for any $m \in\Z_{>0}$,
    \begin{equation}
        \bigg|\frac{\partial^m}{\partial N^m}\left(T_0(\alpha,Z,W)\right)^N \bigg|\leq \frac{2d\left(\log((1+|v|)(1+|w|))\right)^m}{((1+|v|)(1+|w|))^{2dN}},
    \end{equation}
    hence the integrand in \eqref{eq: kernel temp} is bounded by an $N$-independent integrable function of $(v,w)$:
    \begin{equation}
        \begin{aligned}
            Ce^{-d(X+Y)-4d}e^{-|v+w|\pi}\frac{2d\left(\log((1+|v|)(1+|w|))\right)^m}{((1+|v|)(1+|w|))^{2d(N_0+1)}}(1+|2v|)^{2d-\frac{1}{2}}(1+|2w|)^{2d-\frac{1}{2}},
        \end{aligned}
    \end{equation}
    for some constant $C>0$. Since $\int_{\R} \left(\log x\right)^m x^{-2} dx < \infty$ for any fixed $m$, this bound is integrable in $(v,w)$. This bound uses the same argument as in \eqref{eq: UBofKernel}. Therefore, by dominated convergence, we may pass $\partial/\partial N$ inside the $Z,W$-integrals in \eqref{eq: kernel temp}. This proves that ${K}(X,Y)$ (hence $\mu(N)$) depends real-analytically on $N\in \R_{>N_0}$.
    By \cite[Lemma 6.10]{zeng2025stationary}, we can rewrite the bracket term as the difference of two Fredholm Pfaffians as follows:
    \begin{equation}\label{eq: analytic 2}
        \begin{aligned}
            &\mu(N)= \Pf(J-\bold{K})\left( 1- \braket{B}{A^{\prime}} + \braket{B^{\prime}}{A}\right) - \Pf(J-\bold{K})\brabarket{(B,\, -B^{\prime})}{(\Id - J^{-1}\bold{K})^{-1}}{\begin{pmatrix}
            \phi_1\\\phi_2
        \end{pmatrix}} \\
        &= \text{RHS of }\eqref{eq: Pfreformulate} =: \nu(N).\\
        \end{aligned}
    \end{equation}
Since $\eqref{eq: Pfreformulate}$ does not require the well-definedness of $(\Id - J^{-1}K)^{-1}$, we have $\nu(N)$ as the natural analytic continuation of $\mu(N)$ to $N \in \R_{\geq 3}.$ Combining \eqref{eq: analytic 1} and \eqref{eq: analytic 2} gives the equality of two formulas in \eqref{eq: Pfreformulate} for all $N\in \Z_{\geq 3}$.
\end{proof}

\begin{rmk}\label{modifed}
    Lemma \ref{lem: difference of pfaffians} is designed to remove the resolvent $(\Id - \G)^{-1}$ because it is difficult to show its limit under the critical scaling exists. We also have a slightly modified version of \eqref{eq: Pfreformulate}. By \cite[Remark 6.13]{zeng2025stationary}, we know that for any constant $c_0$
    \begin{equation}
        \begin{aligned}
            \mathrm{Pf}(J - \bold{K})&c_0 \brabarket{(B,\, -B^{\prime})}{(\Id - J^{-1}\bold{K})^{-1}}{\begin{pmatrix}
            \phi_1\\\phi_2
        \end{pmatrix}}\\
            &=
            \mathrm{Pf}(J-\bold{K}) - \mathrm{Pf}\left(J - \bold{K} - c_0\ket{\begin{array}{c}
                 \phi_2 \\
                 -\phi_1
            \end{array}}\bra{B \quad -B^{\prime}} - c_0\ket{\begin{array}{c}
                 B \\
                 -B^{\prime}
            \end{array}}\bra{-\phi_2 \quad \phi_1}\right).
        \end{aligned}
    \end{equation}
\end{rmk}

\section{Analytic continuation of two parameter stationary formula when $\beta > t$}
We first analyze the $u\rightarrow -t$ limit of the central kernel $\mathbf{K}$ and its Fredholm Pfaffian.
\begin{lemma}\label{lemma: ptws_convergence_one_param}
Fix any $\varepsilon \ll 1.$
 For any $\alpha> t,$ $t \in (\frac{1}{N},\frac{1}{2}),$ and $\frac{1}{2}>\beta > t$, $X,Y \in \R,$ the kernel $\bold{K}(X,Y)$ is analytic for $u \in (-t-\varepsilon, -t+\varepsilon)$ with the following limit:
    \begin{equation}
        \lim_{u\rightarrow -t} \bold{K}(X,Y) = \widehat{\bold{K}}(X,Y).
    \end{equation}
\end{lemma}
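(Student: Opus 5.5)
The plan is to prove analyticity directly from the double contour integral representation of $K(X,Y)$. Here the integrand is understood to carry the factors $\frac{\Gamma(u+Z)}{\Gamma(u-Z)}\frac{\Gamma(u+W)}{\Gamma(u-W)}\frac{\Gamma(t+Z)}{\Gamma(t-Z)}\frac{\Gamma(t+W)}{\Gamma(t-W)}$ inherited from $\check K$, with the residues at $Z=-u$ and $W=-u$ removed into $R$. I would fix the contour $\I\R-d$ once and for all, independently of $u$, show that the integrand is holomorphic in $u$ with a $u$-uniform integrable majorant, and then obtain both analyticity and the limit by dominated convergence. The limit is simply the integrand evaluated at $u=-t$, which reproduces $\widehat{K}$: its factor $\frac{\Gamma(-t+Z)}{\Gamma(-t-Z)}$ is exactly the $u\to-t$ specialization.

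\textbf{Step 1: the pole structure allows a fixed contour.} After the residues at $-u$ are extracted, the remaining $Z$-poles of the integrand are at $-t-i$, $-\alpha-i$, $-\beta-i$ and $-u-1-i$ for $i\in\Z_{\geq0}$. The poles of $\Gamma(-2Z)$ lie at $Z\in\frac12\Z_{\geq0}$. The factor $1/\Gamma(u-Z)$ is entire, so it contributes no poles. The contour separates $-u$ (now to its right) from the other poles. For $u\in(-t-\varepsilon,-t+\varepsilon)$ we have $-u\approx t>d$, and $-u-1\approx t-1<-\tfrac12<-d$, because $t<\tfrac12$. Hence no pole crosses $\I\R-d$ as $u$ ranges over this window. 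The same holds for the strip $u\in(-t+\varepsilon,d)$ joining it to the original range, since $-u-1<-1+t<-d$ there. Consequently the contour deformation used in the kernel decomposition is not disturbed, and $K$ given by the integral is the analytic continuation in $u$.

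\textbf{Step 2: a uniform bound.} This is the main technical point. Write $Z=-d+\I v$ and $W=-d+\I w$. By Stirling, uniformly for $u$ in a compact set,
\begin{equation*}
\left|\frac{\Gamma(u+Z)}{\Gamma(u-Z)}\right|\leq C(1+|v|)^{-2d}.
\end{equation*}
The analogous bound holds for the $t$, $\beta$ and $\alpha$ ratios, the last giving $(1+|v|)^{-2d(N-2)}$. The estimate \eqref{eq: bound gamma} already used in the proof of Lemma \ref{lem: difference of pfaffians} gives these. The factor $\frac{\sin(\pi(Z-W))}{\sin(\pi(Z+W))}$ is bounded by $Ce^{-\pi|v+w|}$-type growth control, since $\mathrm{Re}(Z+W)=-2d\notin\Z$. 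The factor $\Gamma(-2Z)$ has real part of its argument equal to $2d>0$ and is controlled by $(1+|2v|)^{2d-1/2}e^{-\pi|v|}$. Together with $|e^{XZ+YW}|=e^{-d(X+Y)}$, this yields the same majorant as in \eqref{eq: UBofKernel}, now uniform in $u$. The entries $\partial_X K$, $\partial_Y K$ and $\partial_X\partial_Y K$ only add factors $Z$, $W$ or $ZW$, of polynomial size $(1+|v|)(1+|w|)$, which the decay $(1+|v|)^{-2dN}$ absorbs for $N$ large since $d>1/N$. The only care needed is that the Stirling constants remain uniform as $u+Z$ approaches no pole, which Step 1 guarantees.

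\textbf{Step 3: conclusion.} For each fixed $(Z,W)$ on the contours, the integrand is holomorphic in $u$ on a complex neighborhood of $(-t-\varepsilon,-t+\varepsilon)$. With the uniform majorant from Step 2, Morera's theorem combined with Fubini shows that each entry of $\mathbf{K}(X,Y)$ is analytic in $u$. Dominated convergence then gives $\lim_{u\to-t}\mathbf{K}(X,Y)=\widehat{\mathbf{K}}(X,Y)$ entrywise, which is the claim.
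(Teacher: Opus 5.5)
Your proposal is correct and follows the paper's proof: a fixed contour $\I\R-d$ that no $u$-dependent pole crosses for $u$ near $-t$, a $u$-uniform Stirling majorant of the form \eqref{eq: UBofKernel}, and dominated convergence. Your Morera/Fubini step and your remark on the interval of $u$ joining the original range make explicit what the paper leaves implicit. One small slip in Step 2: the ratio $\sin(\pi(Z-W))/\sin(\pi(Z+W))$ is not itself bounded by $Ce^{-\pi|v+w|}$, only by $Ce^{\pi|v-w|-\pi|v+w|}$. It is the $e^{-\pi(|v|+|w|)}$ decay of $\Gamma(-2Z)\Gamma(-2W)$ that absorbs the $e^{\pi|v-w|}$ growth and leaves the factor $e^{-\pi|v+w|}$ in the final majorant.
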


\begin{proof}
    It suffices to discuss the case of $K(X,Y)$ as the convergence of other entries follow analogously. Because of the kernel decomposition, the poles $Z = -u$ and $W=-u$ that were present in the kernel ${\bold{\check{K}}}$ no longer exist in the kernel $\bold{K}$. The remaining $u$-dependent poles are $Z,W = -u-i$ for $i \geq 1$, which stay to the left of the contour $-d+\I \R$ for $u \in (-t-\varepsilon, -t+\varepsilon)$.

    The integrand is analytic in $u$ as $\Gamma$ functions are analytic on $\mathbb{C} \setminus \{0,-1, \cdots \}$. To apply dominated convergence theorem, we need to bound the integrand of $K(X,Y)$ uniformly for all $u \in (-t-\varepsilon, -t+\varepsilon)$. Recall the Stirling approximation for $\Gamma$ function
    \begin{equation}
    |\Gamma(a+iu)| = \sqrt{2\pi}|u|^{a-\frac12}e^{-a-\frac{|u|\pi}{2}}(1+\mathcal{O}(|u|^{-1})).
    \end{equation}
    Let $W = -d + \I w$ and $Z = -d + \I v$. Then there exists some constant $C>0$ such that for all $u\in (-t-\varepsilon,-t+\varepsilon)$,
    \begin{equation}\label{eq: bound gamma}
\left| \frac{\Gamma(u+Z)}{\Gamma(u-Z)}\right| = \left| \frac{\Gamma(u-d+\I v)}{\Gamma(u+d-\I v)}\right| \leq C(1+ |v|)^{-2d}.
\end{equation}
Moreover, since $ \left| \sin(\pi z)\right| \leq \frac{1}{2} |e^{-\I \pi z}|(|e^{2\I\pi z}|+1)$, we have that
\begin{equation}
\label{eq:sin-bound}
\begin{aligned}
\bigl|\sin\bigl(\pi \I(v-w)\bigr)\bigr|
\le \tfrac12\, e^{\pi(v-w)}\left(e^{-2\pi(v-w)} + 1\right)  \le C_1\left(e^{-\pi(v-w)} + e^{\pi(v-w)}\right).
\end{aligned}
\end{equation}
for some $C_1 > 0$. Additionally, for some $C_2 >0,$
\begin{equation}
    \left| \frac{1}{\sin(\pi( -2d+\I(v+w)))}\right| \leq C_2 e^{-\pi|v+w|}.
\end{equation} Thus, we know that
\begin{equation}
    \left| \frac{\sin(\pi(Z-W))}{\sin(\pi(Z+W))} \right|\leq C_3e^{-\pi|v+w|+\pi|v-w|}
\end{equation}
for some $C_3 > 0$.
Therefore, there exists some constant $C_4 >0$ such that the integrand is upper bounded by
\begin{equation}\label{eq: UBofKernel}
\begin{aligned}
         C_4 e^{-d(X+Y)} (1+|v|)^{-2d(N+1)}(1+|w|)^{-2d(N+1)}(1+|2v|)^{2d - \frac12 }(1+|2w|)^{2d - \frac12 }e^{-4d-(|v+w|)\pi}
\end{aligned}
\end{equation}
for all $u \in (-t-\varepsilon,-t+\varepsilon)$. In the cases of $\partial_XK(X,Y), \partial_YK(X,Y), \partial_X\partial_YK(X,Y)$, the upper bound will be multiplied by additional $|v|,|w|,|vw|$. But since $d > \frac{1}{N}$, all the upper bounds are integrable in $u$ and $v$.
\end{proof}

\begin{lemma}\label{lemma:pfaffian_convergence}
Fix any $\tau \in \R$ and any $\varepsilon \ll 1.$  For any $t \in (\frac{1}{N},\frac{1}{2}),$ $\alpha> t,$ and $\beta > t$, we have that $\Pf\left(J-\bold{K}\right)$ is analytic for $u \in (-t-\varepsilon, -t + \varepsilon)$ with the following well-defined limit:
    \begin{equation}
        \lim_{u\rightarrow -t} \Pf(J-\bold{K})_{\mathbb{L}^2(\tau,\infty)} = \Pf(J-\widehat{\bold{K}})_{\mathbb{L}^2(\tau,\infty)}.
    \end{equation}
\end{lemma}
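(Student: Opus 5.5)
The plan is to expand the Fredholm Pfaffian as its series and apply dominated convergence, using the kernel bounds already established in the proof of Lemma~\ref{lemma: ptws_convergence_one_param}. By definition,
\begin{equation*}
\Pf(J-\bold{K})_{\mathbb{L}^2(\tau,\infty)} = \sum_{n\geq 0}\frac{(-1)^n}{n!}\int_{(\tau,\infty)^n}\Pf\big(\bold{K}(x_i,x_j)\big)_{i,j=1}^n\,dx_1\cdots dx_n .
\end{equation*}
Pointwise in $(x_1,\dots,x_n)$, each Pfaffian is a polynomial in the entries $\bold{K}(x_i,x_j)$. By Lemma~\ref{lemma: ptws_convergence_one_param}, these entries are analytic in $u\in(-t-\varepsilon,-t+\varepsilon)$ and converge to $\widehat{\bold{K}}(x_i,x_j)$ as $u\to -t$. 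So the task reduces to a uniform-in-$u$ summable bound for the series.

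\textbf{Step 1: a uniform entrywise bound.} Integrate the majorant \eqref{eq: UBofKernel} over $v,w$. Since $d>1/N$, the resulting integral is finite, and it gives
\[
|K(X,Y)|,\ |\partial_X K(X,Y)|,\ |\partial_Y K(X,Y)|,\ |\partial_X\partial_Y K(X,Y)| \le C e^{-d(X+Y)}
\]
for all $X,Y\in\R$ and all $u\in(-t-\varepsilon,-t+\varepsilon)$, with $C$ independent of $u$. For this I must check that the constant in \eqref{eq: bound gamma} is uniform for $u$ in a compact neighbourhood of $-t$. This holds because the contour $-d+\I\R$ stays a fixed distance from the poles $-u-i$, $i\geq 1$, and from the zeros of $1/\Gamma(u-Z)$. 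It requires $\varepsilon$ small enough that $-u-1<-d$, which is fine since $d<t<1/2$.

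\textbf{Step 2: Hadamard's inequality.} Write $\Pf(M)^2=\det M$ and conjugate by $\mathrm{diag}(e^{-dx_i/2},\cdot)$, so that every entry of the $2n\times 2n$ matrix is bounded by $C e^{-d x_i/2}e^{-d x_j/2}$, uniformly in $u$. Hadamard's inequality then gives
\[
\big|\Pf(\bold{K}(x_i,x_j))\big| \le (2n)^{n/2} C^n \prod_i e^{-d x_i}.
\]
The right-hand side is integrable over $(\tau,\infty)^n$, with integral $(e^{-d\tau}/d)^n$. The resulting series $\sum_n (2n)^{n/2}C^n e^{-nd\tau}d^{-n}/n!$ converges, since $n^{n/2}/n!$ decays superexponentially.

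\textbf{Step 3: dominated convergence and analyticity.} Apply dominated convergence, first inside each $n$-fold integral and then to the sum over $n$; this yields the limit $\Pf(J-\widehat{\bold{K}})$. For analyticity in $u$, extend $u$ to a small complex disc around $-t$. The Stirling bounds in Step 1 remain uniform there, since $\mathrm{Re}\,u$ varies only slightly. Each term of the series is then analytic, by Morera's theorem combined with Fubini. The series converges uniformly on the disc, so the sum is analytic.

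The main obstacle is Step 1: making the Gamma-ratio estimates uniform in $u$ near $-t$, including when $u$ is complex. This matters especially for the factor $\Gamma(u+Z)/\Gamma(u-Z)$, which now has $\mathrm{Re}(u+Z)<0$. I would use the reflection formula to move to positive real part before invoking Stirling. The decay $(1+|v|)^{-2d}$ then comes out with a constant depending only on the distance from the contour to the poles, and that distance is bounded below uniformly.
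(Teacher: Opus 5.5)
Your proposal is correct and follows the same route as the paper: a uniform-in-$u$ bound $Ce^{-d(X+Y)}$ on all four kernel entries, obtained from the integrand majorant \eqref{eq: UBofKernel}, followed by Hadamard's bound and dominated convergence on the Fredholm Pfaffian series. Your extra step, extending $u$ to a small complex disc and using Morera's theorem with uniform convergence, makes explicit the analyticity claim that the paper's proof leaves implicit.
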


\begin{proof}
    From \eqref{eq: UBofKernel}, we see that for all $u \in (-t-\varepsilon, -t+\varepsilon),$
    \begin{equation}\label{eq: upper bound of K}
        |K(X,Y)| \leq \frac{C}{2Nd}e^{-d(X+Y)}
    \end{equation}
    where $C>0$ is a constant independent of $X,Y$. Using the same argument, we get that for all $X,Y \geq \tau,$
    \begin{equation}
    |K(X,Y)|, |\partial_XK(X,Y)|,|\partial_YK(X,Y)|,|\partial_X\partial_YK(X,Y)|\leq \frac{C}{2Nd}e^{-d(X+Y)}.
    \end{equation}
    By Hadamard's bound and dominated convergence theorem, we conclude the limit.
\end{proof}

\begin{lemma}\label{lem: limit of the constant for one param}
Fix any $\tau \in \R$ and any $\varepsilon \ll 1.$  For any $t \in (\frac{1}{N},\frac{1}{2}),$ $\alpha >t,$ and $\beta > t$, the following expression is analytic for $u \in (-t-\varepsilon, -t + \varepsilon)$ with a well-defined limit:
    \begin{equation}\label{eq: special_C}
    \begin{split}
        &\lim_{u\rightarrow -t} \Gamma(u+t)\left(1 - \braket{B}{A^{\prime}} + \braket{B^{\prime}}{A}\right) = \widehat{C}.\\
    \end{split}
    \end{equation}
\end{lemma}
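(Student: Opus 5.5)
The plan is to evaluate the two inner products $\braket{B}{A^{\prime}}$ and $\braket{B^{\prime}}{A}$ in closed form as single contour integrals. Then I continue analytically in $u$, tracking the one pole that crosses the contour $\I\R-d$ as $u$ moves from $(\frac1N,t)$ down to a neighbourhood of $-t$. The point is that $\Gamma(u+t)\cdot 1$ diverges, so the constant $1$ must be cancelled exactly by a residue term.

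\textbf{Step 1: closed form.} Write $A(X)=c(u)e^{-Xu}$ with $c(u)=\frac{\Gamma(t-u)}{\Gamma(t+u)}G_{\alpha,\beta}(-u)$. Then $A'=-uA$, and so
\[
-\braket{B}{A'}+\braket{B'}{A}=c(u)\int_{\tau}^{\infty}dX\int_{\I\R-d}\frac{dW}{2\pi\I}(u+W)e^{X(W-u)}\frac{\Gamma(t+W)}{\Gamma(t-W)}G_{\alpha,\beta}(W)\Gamma(-2W)\frac{\Gamma(1-u+W)}{\Gamma(1-u-W)}.
\]
For $u>0$ we have $\mathrm{Re}(W-u)=-d-u<0$ on the contour. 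Fubini applies, by the Stirling bounds \eqref{eq: bound gamma} together with $d>\frac1N$. The $X$-integral equals $e^{\tau(W-u)}/(u-W)$. This gives
\[
1-\braket{B}{A'}+\braket{B'}{A}=1+c(u)\,I_{d}(u),\qquad I_d(u):=\int_{\I\R-d}\frac{dW}{2\pi\I}\frac{u+W}{u-W}e^{\tau(W-u)}\frac{\Gamma(t+W)}{\Gamma(t-W)}\frac{\Gamma(1-u+W)}{\Gamma(1-u-W)}G_{\alpha,\beta}(W)\Gamma(-2W).
\]
The inner products themselves stop converging once $u<-d$. From here on I work only with this closed-form expression, which is the object that gets continued.

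\textbf{Step 2: analytic continuation past the pole $W=u$.} The integrand of $I_d(u)$ is analytic in $u$ apart from the simple pole at $W=u$. The poles of $\Gamma(1-u+W)$ sit at $W=u-1-i$ and stay left of the contour for $u$ near $-t$, since $t<\frac12$. The poles of $\Gamma(t+W)$ and $G_{\alpha,\beta}$ do not depend on $u$ and stay left of the contour because $d<t$. Using the same Stirling bounds as in Lemma \ref{lemma: ptws_convergence_one_param}, uniformly in $u$, the integral is analytic in $u$ away from $u=-d$.

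As $u$ decreases through $-d$, the pole at $W=u$ crosses the contour from right to left. The continuation for $u<-d$ is therefore $I_d(u)$ minus the residue at $W=u$, with the sign fixed by orientation. That residue equals
\[
-2u\,\frac{\Gamma(t+u)}{\Gamma(t-u)}\frac{\Gamma(1)}{\Gamma(1-2u)}G_{\alpha,\beta}(u)\Gamma(-2u).
\]
Multiplying by $c(u)$ gives
\[
-2u\frac{\Gamma(-2u)}{\Gamma(1-2u)}\,G_{\alpha,\beta}(u)G_{\alpha,\beta}(-u)=1,
\]
because $G_{\alpha,\beta}(u)G_{\alpha,\beta}(-u)=1$ and $-2u\Gamma(-2u)=\Gamma(1-2u)$. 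So for $u\in(-t-\varepsilon,-t+\varepsilon)$ the continued expression is exactly $c(u)I_d(u)$, with the $1$ cancelled. I expect the careful bookkeeping of this sign and orientation to be the main point to check. If the sign came out wrong, the limit would diverge like $2\Gamma(u+t)$, so the requirement that a finite limit exists serves as a consistency check.

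\textbf{Step 3: the limit.} We have $\Gamma(u+t)c(u)=\Gamma(t-u)G_{\alpha,\beta}(-u)\to\Gamma(2t)G_{\alpha,\beta}(t)$, which is analytic near $u=-t$. In the integrand, at $u=-t$,
\[
\frac{W-t}{-t-W}\cdot\frac{\Gamma(1+t+W)}{\Gamma(1+t-W)}=\frac{W-t}{-(t+W)}\cdot\frac{(t+W)\Gamma(t+W)}{(t-W)\Gamma(t-W)}=\frac{\Gamma(t+W)}{\Gamma(t-W)}.
\]
Also $e^{\tau(W-u)}\to e^{\tau(t+W)}$. So the integrand converges pointwise to that of $\widehat C$. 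Dominated convergence, with the uniform bound $C(1+|w|)^{-2dN+\dots}$ coming from \eqref{eq: bound gamma}, gives $\lim_{u\to-t}\Gamma(u+t)(1-\braket{B}{A'}+\braket{B'}{A})=\widehat C$. The same bounds give analyticity on $(-t-\varepsilon,-t+\varepsilon)$.
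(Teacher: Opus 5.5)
Your proposal is correct and takes the same route as the paper. You reduce $-\langle B|A'\rangle+\langle B'|A\rangle$ to a single $W$-integral over $\I\R-d$, subtract the residue at the pole $W=u$ that crosses the contour (which equals $1$ after multiplying by the prefactor and so cancels the constant), and pass to $u\to-t$ by dominated convergence; your orientation check and the identities $G_{\alpha,\beta}(u)G_{\alpha,\beta}(-u)=1$ and $-2u\Gamma(-2u)=\Gamma(1-2u)$ simply spell out the paper's ``simple computation''. One harmless side remark is inaccurate: shifting the contour of $B$ shows it decays like $e^{-d'X}$ for any $d'<t$, so the inner products converge for all $u>-t$, not only $u>-d$. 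This does not affect your argument, which works with the continued closed form.
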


\begin{proof}
    We have
    \begin{equation}
    \begin{split}
            &- \braket{B}{A'} + \braket{B'}{A} \\
            &= \int\limits_{\I \R - d} \frac{dW}{2\pi\I}  e^{-\tau(u-W)} \frac{u+W}{u-W}\frac{\Gamma(t-u)}{\Gamma(t+u)} G_{\alpha,\beta}(-u)G_{\alpha,\beta}(W)\frac{\Gamma(t+W)}{\Gamma(t-W)} \Gamma(-2W)\frac{\Gamma(1-u+W)}{\Gamma(1-u-W)}.
    \end{split}
    \end{equation}
    Notice that a new pole $W = u$ is introduced due to the inner product. The expression is not analytic when we take $u \rightarrow -t$ as the pole will cross the contour. Thus, we move the contour to enclose $u$ and then subtract the residue of the extra pole at $W=u$ so that the expression is analytic in $u \in (-t-\varepsilon,-t+\varepsilon)$. By simple computation, we find that $\text{Res}(W=u) = 1$. Let $W = -d + \I w$. The integrand is upper bounded by the following
    \begin{equation}\label{eq: special_C upper bound}
    Ce^{-d \tau} (1+|w|)^{-2d(N+1)}(1+|2w|)^{2d-\frac12}e^{-2d - |w|\pi}
    \end{equation}
    for all $u \in (-t-\varepsilon,-t+\varepsilon)$ and for some constant $C>0$. By dominated convergence theorem, we get \eqref{eq: special_C}.
\end{proof}

\begin{lemma}\label{lem: limits of function for one param}
Fix any $\tau \in \R$ and any $\varepsilon \ll 1.$ For any $t \in (\frac{1}{N},\frac{1}{2}),$ $\alpha >t$, and $\beta > t$, the following expression is analytic for $u \in (-t-\varepsilon, -t + \varepsilon)$ with a well-defined limit:
    \begin{equation}
        \begin{aligned}
            \lim_{u\rightarrow -t} \Gamma(u+t)\phi_1(X) = \widehat{\phi}_1(X)&, \quad \lim_{u\rightarrow -t} \Gamma(u+t)\phi_2(X) = \widehat{\phi}_2(X),\\
            \quad \lim_{u\rightarrow -t}B(X) = \widehat{B}(X)&, \quad \lim_{u\rightarrow -t}B'(X) = \widehat{B}'(X).
        \end{aligned}
    \end{equation}
\end{lemma}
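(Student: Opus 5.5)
Throughout, the mechanism is the same as in Lemmas \ref{lemma: ptws_convergence_one_param}--\ref{lem: limit of the constant for one param}. We write each object as an explicit contour integral over $\I\R-d$ with $u<d<t$ and identify which $u$-dependent poles cross the contour as $u$ moves from positive values to near $-t$. We then show that, after accounting for these, the integrand is analytic in $u\in(-t-\varepsilon,-t+\varepsilon)$ and dominated uniformly by an integrable function built from \eqref{eq: bound gamma}, \eqref{eq:sin-bound} and the bound \eqref{eq: UBofKernel}. Dominated convergence then gives both analyticity and the limit.

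\textbf{The functions $B$ and $B'$.} The integrand of $B$ contains $\frac{\Gamma(1-u+W)}{\Gamma(1-u-W)}$, whose poles $W=u-1-i$, $i\ge 0$, lie at real part $\le u-1<-1+\varepsilon-t$. These stay to the left of $-d$ for $u$ near $-t$. The remaining factors are $\Gamma(t\pm W)$, $G_{\alpha,\beta}$ and $\Gamma(-2W)$, which do not depend on $u$. At $u=-t$ the factor becomes $\frac{\Gamma(1+t+W)}{\Gamma(1+t-W)}$, which reproduces $\widehat{B}$. For the domination, Stirling gives $|\Gamma(1-u+W)/\Gamma(1-u-W)|\le C(1+|w|)^{-2d}$ uniformly in $u$. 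This yields an integrable bound of the form $Ce^{-dX}(1+|w|)^{-2dN}(1+|2w|)^{2d-1/2}e^{-\pi|w|}$. The extra factor $W$ appearing in $B'$ is harmless because of the factor $e^{-\pi|w|}$ coming from $\Gamma(-2W)$.

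\textbf{The functions $\phi_i$.} We compute $\phi=(J^{-1}\mathbf K\mathcal X_1)$ explicitly. Since $A(Y)=e^{-Yu}c(u)$ with $c(u)=\frac{\Gamma(t-u)}{\Gamma(t+u)}G_{\alpha,\beta}(-u)$, the $Y$-integral of $e^{YW}A(Y)$ over $(\tau,\infty)$ equals $c(u)\,e^{\tau(W-u)}/(u-W)$, valid because $\mathrm{Re}(W-u)<0$. This gives a double integral in $Z,W$. The prefactor satisfies $\Gamma(u+t)c(u)=\Gamma(t-u)G_{\alpha,\beta}(-u)\frac{\Gamma(u+t)}{\Gamma(t+u)}$, which is analytic and tends to $\Gamma(2t)G_{\alpha,\beta}(t)$. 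This is exactly the prefactor in $\widehat\phi_i$, and it explains why the limit is taken of $\Gamma(u+t)\phi_i$ rather than of $\phi_i$.

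\textbf{Main obstacle.} The obstacle is the new pole at $W=u$ created by the factor $1/(u-W)$, which crosses the contour as $u\to-t$. Following Lemma \ref{lem: limit of the constant for one param}, we first deform the $W$-contour to the right of $u$ while $u>0$ and subtract the residue. Here the residue is proportional to $K$-type functions evaluated at $W=u$ and carries the factor $\Gamma(-2u)\frac{\sin\pi(Z-u)}{\sin\pi(Z+u)}$. We expect it to cancel against the $\mathbf K$-part, or to be recombined so that the resulting representation has contour passing between $u$ and $-u$. After this step, the combination $\frac{1}{u-W}\cdot(\text{$u$-dependent factors})$ should reorganize into the factor $\frac{\Gamma(-t+W+1)}{\Gamma(-t-W+1)}$ appearing in $\widehat\phi_i$. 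Indeed, $\frac{1}{u-W}\frac{\Gamma(-u+W)}{\Gamma(-u-W)}$-type identities, via $\Gamma(x+1)=x\Gamma(x)$, produce this shift by $1$. The same reorganization should also produce the factors $\frac{\Gamma(-t\pm Z)}{\Gamma(-t\mp Z)}$ in the $Z$-variable.

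\textbf{Domination and conclusion.} Once the representation is fixed, we check that the poles at $-u$, $-u-1,\dots$ in $Z$ no longer obstruct the limit: they were removed by the decomposition in the $\mathbf K$-part. We then bound the integrand uniformly in $u$ by
\[
Ce^{-dX}(1+|v|)^{-2dN}(1+|w|)^{-2dN}e^{-\pi|v+w|}(1+|2v|)^{2d-\frac12}(1+|2w|)^{2d-\frac12},
\]
times $|v|$ for $\phi_1$. This is integrable since $d>1/N$. Dominated convergence then gives analyticity in $u$ and the stated limits.
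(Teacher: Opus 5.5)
Your treatment of $B$ and $B'$ is fine. So are the $Y$-integration $\int_\tau^\infty e^{Y(W-u)}\,dY=e^{\tau(W-u)}/(u-W)$, the prefactor $\Gamma(u+t)c(u)=\Gamma(t-u)G_{\alpha,\beta}(-u)$, and the final domination bound.

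The gap is the paragraph you call the main obstacle. You assert three things: that $1/(u-W)$ creates a pole at $W=u$ crossing the contour, that its residue carries $\Gamma(-2u)\frac{\sin\pi(Z-u)}{\sin\pi(Z+u)}$, and that this residue should cancel or recombine, which you only \emph{expect}. In fact there is no such pole.

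The $W$-integrand of $K$ already contains $\frac{\Gamma(u+W)}{\Gamma(u-W)}$, not $\frac{\Gamma(-u+W)}{\Gamma(-u-W)}$. The combination $\partial_XK\,A'-\partial_X\partial_YK\,A$ produces the factor $-(u+W)$. So after the $Y$-integration the $W$-dependence is
\[
\frac{u+W}{u-W}\cdot\frac{\Gamma(u+W)}{\Gamma(u-W)}=\frac{\Gamma(1+u+W)}{\Gamma(1+u-W)}.
\]
The simple zero of $1/\Gamma(u-W)$ at $W=u$ cancels $1/(u-W)$, so the residue you wrote down is zero. The poles $W=-1-u-i$ of $\Gamma(1+u+W)$ stay to the left of $\I\R-d$ for every $u\in(-t-\varepsilon,d)$.

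Nothing needs reorganizing in $Z$ either. The factors $\frac{\Gamma(-t\pm Z)}{\Gamma(-t\mp Z)}$ are just $\frac{\Gamma(u\pm Z)}{\Gamma(u\mp Z)}$ at $u=-t$. Their poles $-u-i$ stay on their respective sides of the contour, since $-u>-d$ and $-u-1<-d$.

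This is exactly the observation the paper's proof rests on. It contrasts with the constant $1-\langle B|A'\rangle+\langle B'|A\rangle$, whose $W$-integrand carries $\frac{\Gamma(1-u+W)}{\Gamma(1-u-W)}$ and so does have a genuine pole at $W=u$.

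As written, your analyticity claim depends on an unproved cancellation. Had you subtracted a nonzero residue, the limit would contain a term absent from $\widehat{\phi}_i$. Also, literally moving the $W$-contour to the right of $u>0$ would cross the pole of $\Gamma(-2W)$ at $W=0$, which you do not account for.

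The fix is to replace that paragraph by the explicit representation, obtained for $u>0$:
\[
\begin{aligned}
\Gamma(u+t)\phi_1(X)={}&-\Gamma(t-u)G_{\alpha,\beta}(-u)\int\limits_{\I\R-d}\frac{dZ}{2\pi\I}\int\limits_{\I\R-d}\frac{dW}{2\pi\I}\,Z\,e^{XZ+\tau(W-u)}\frac{\Gamma(u+Z)}{\Gamma(u-Z)}\frac{\Gamma(t+Z)}{\Gamma(t-Z)}\\
&\times\frac{\Gamma(1+u+W)}{\Gamma(1+u-W)}\frac{\Gamma(t+W)}{\Gamma(t-W)}G_{\alpha,\beta}(Z)G_{\alpha,\beta}(W)Q(Z,W).
\end{aligned}
\]
The representation for $\phi_2$ is the same without the factor $Z$. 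These integrands are manifestly analytic in $u$ on $(-t-\varepsilon,-t+\varepsilon)$, and your bound dominates them uniformly. Setting $u=-t$ then gives exactly $\widehat{\phi}_1$ and $\widehat{\phi}_2$.
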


\begin{proof}
    Let us first prove $\lim_{u\rightarrow -t} \Gamma(u+t)\phi_1(X) = \widehat{\phi}_1(X)$ and the rest follows analogously. We have
\begin{equation}
    \begin{aligned}
        \Gamma(u+t){\phi}_1(X)
        &= \Gamma(t-u)G_{\alpha,\beta}(-u)
        \int \limits_{\I\R - d}\frac{dZ}{2\pi\I} \int\limits_{\I\R-d} \frac{dW}{2\pi\I} Z(t-W)e^{XZ}e^{-\tau(u-W)}\frac{\Gamma(u+Z)}{\Gamma(u-Z)}\frac{\Gamma(t+Z)}{\Gamma(t-Z)}\\&G_{\alpha,\beta}(Z)
        \frac{\Gamma(u+W)}{\Gamma(u-W+1)}\frac{\Gamma(t+W)}{\Gamma(t-W)}G_{\alpha,\beta}(W)
        \Gamma(-2Z) \Gamma(-2W)\frac{\sin(\pi(Z-W))}{\sin(\pi(Z+W))}.
        \end{aligned}
        \end{equation}
Unlike Lemma \ref{lem: limit of the constant for one param} where a new pole $W = u$ is created by taking inner product with $e^{-uY}$, the pole $W = u$ in this case gets absorbed into the term $\Gamma(u-W)$ and thus no new pole is introduced. Let $W = -d+\I w$, $Z = -d + \I v$ and do the same analysis of upper bounds as in \eqref{eq: special_C upper bound}. The integrand is bounded by
\begin{equation}
\begin{aligned}
         C e^{-dX} |vw|(1+|v|)^{-2d(N+1)}(1+|w|)^{-2d(N+1)-1}(1+|2v|)^{2d - \frac12 }(1+|2w|)^{2d - \frac12 }e^{-4d-(|v+w|)\pi}
\end{aligned}
\end{equation} for all $u \in (-t-\varepsilon, -t+ \varepsilon)$, which allows us to apply dominated convergence theorem to get desired limiting functions.
\end{proof}

Recall remark \ref{modifed} for the reason of placing $\Gamma(u+t)$ inside the Pfaffian in the next lemma.
\begin{lemma}\label{lem: diff of Pfaffian converge}
Fix any $\tau \in \R$ and any $\varepsilon \ll 1.$ For any $t \in (\frac{1}{N},\frac{1}{2}),$ $\alpha >t,$ and $\beta >t$, the following expression is analytic for $u \in (-t-\varepsilon, -t + \varepsilon)$ with a well-defined limit:
    \begin{equation}\label{eq: diff of Pfaffian equation}
    \begin{split}
        &\lim_{u\rightarrow -t}- \mathrm{Pf}\left(J - \bold{K} - \Gamma(u+t)\ketbra{\begin{array}{c}
        \phi_2\\
        -\phi_1 
        \end{array}}
        {B \quad -B^{\prime}} - \Gamma(u+t)\ketbra{\begin{array}{c}
            B\\
            -B^{\prime} 
        \end{array}}{-\phi_2 \quad \phi_1}\right)\\
        &=-\mathrm{Pf}\left(J - \widehat{\bold{K}} - \ketbra{\begin{array}{c}
            \widehat{\phi}_2\\
            -\widehat{\phi}_1 
        \end{array}}
        {\widehat{B} \quad -\widehat{B}^{\prime}} - \ketbra{\begin{array}{c}
            \widehat{B}\\
            -\widehat{B}^{\prime} 
        \end{array}}{-\widehat{\phi}_2 \quad \widehat{\phi}_1}\right)
    \end{split}
    \end{equation}
    where the Fredholm Pfaffian is taken over $\mathbb{L}^2((\tau,\infty))$.
\end{lemma}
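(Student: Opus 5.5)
The plan is to prove pointwise convergence and analyticity of the perturbed kernel entrywise, then pass to the Fredholm Pfaffian by a Hadamard-type bound and dominated convergence, as in Lemma \ref{lemma:pfaffian_convergence}. Write the perturbed kernel as $\bold{K}+\Gamma(u+t)\,\bold{P}_u$, where $\bold{P}_u$ is the finite-rank part built from $\phi_1,\phi_2,B,B'$. Since $\Gamma(u+t)$ multiplies only the $\phi_i$, the rank-one pieces have entries of the form $\Gamma(u+t)\phi_i(X)B(Y)$ or $\Gamma(u+t)\phi_i(X)B'(Y)$, up to sign and transposition.

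First, by Lemma \ref{lemma: ptws_convergence_one_param}, $\bold{K}$ is analytic in $u\in(-t-\varepsilon,-t+\varepsilon)$ and converges to $\widehat{\bold{K}}$. By Lemma \ref{lem: limits of function for one param}, the quantities $\Gamma(u+t)\phi_i$, $B$ and $B'$ are analytic in $u$ and converge to $\widehat{\phi}_i$, $\widehat{B}$ and $\widehat{B}'$. So every entry of the perturbed kernel is analytic in $u$ and converges pointwise to the corresponding entry of the limiting kernel on the right of \eqref{eq: diff of Pfaffian equation}.

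Second, I need uniform exponential bounds for $X,Y\ge\tau$, with constants independent of $u$ in the window. The bound \eqref{eq: upper bound of K} already gives $|K|$ and its derivatives $\le Ce^{-d(X+Y)}$. For the finite-rank parts, the integrand estimates in the proof of Lemma \ref{lem: limits of function for one param} carry the factor $e^{XZ}$ with $\Re Z=-d$. This gives $|\Gamma(u+t)\phi_i(X)|\le Ce^{-dX}$, and similarly $|B(Y)|,|B'(Y)|\le Ce^{-dY}$, using the Stirling bound \eqref{eq: bound gamma} and $d>1/N$ for integrability in $w$. Hence every entry of the full $2\times2$ kernel is bounded by $Ce^{-d(X+Y)}$ uniformly in $u$.

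Third, expand the Fredholm Pfaffian as its series $\sum_n \frac{(-1)^n}{n!}\int_{(\tau,\infty)^n}\Pf[\cdot]$. Hadamard's bound for Pfaffians of $2n\times 2n$ matrices with entries bounded by $Ce^{-d(x_i+x_j)}$ gives termwise bounds $(2n)^{n/2}C^n\prod_i e^{-2dx_i}$. This is summable after integration and independent of $u$. Dominated convergence then yields the limit, and uniform convergence of a series of analytic functions gives analyticity in $u$.

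The main obstacle is the uniform bound on $\Gamma(u+t)\phi_i$. Before taking the limit, the factor $\Gamma(u+t)$ blows up as $u\to-t$, so one must use the rewritten form from Lemma \ref{lem: limits of function for one param}. In that form $\Gamma(u+t)$ is absorbed, leaving $\Gamma(t-u)G_{\alpha,\beta}(-u)$ and $\frac{\Gamma(u+W)}{\Gamma(u-W+1)}$. One then checks that no pole of the $W$-integrand crosses the contour $\I\R-d$ as $u$ varies near $-t$; in particular the pole at $W=u$ is cancelled by $1/\Gamma(u-W+1)$. Given that, the domination is routine.
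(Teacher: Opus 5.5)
Your proposal is correct and follows the same route as the paper: uniform $e^{-d(X+Y)}$ bounds, valid for $X,Y\ge\tau$ and all $u$ in the window, on the entries of $\mathbf{K}$ and on the rank-one terms built from $\Gamma(u+t)\phi_i$, $B$, $B'$; pointwise convergence from Lemmas \ref{lemma: ptws_convergence_one_param} and \ref{lem: limits of function for one param}; then Hadamard's bound and dominated convergence. You are slightly more careful than the paper, which states its bound for $\phi_i$ rather than for $\Gamma(u+t)\phi_i$, the quantity that actually enters the kernel; for the analyticity claim, note that the same Stirling estimates hold for $u$ in a thin complex neighbourhood of the interval, so the uniformly convergent Pfaffian series is holomorphic there.
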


\begin{proof}
    From the proof of Lemma \ref{lemma: ptws_convergence_one_param}, we have that there exists $C>0$ such that for all $u \in (-t-\varepsilon, -t+\varepsilon)$,
    \begin{equation}
    |K(X,Y)|, |\partial_XK(X,Y)|,|\partial_YK(X,Y)|,|\partial_X\partial_YK(X,Y)|\leq \frac{C}{2Nd}e^{-d(X+Y)}.
    \end{equation}
    for some $C > 0$ independent of $X,Y$. To apply the Hadamard's bound and dominated convergence theorem, we need the same type of upper bounds for
    \begin{equation}
    \phi_2(U)B(V), \phi_2(U)B'(V), \phi_1(U)B(V),\phi_1(U)B'(V)
    \end{equation}
    where $(U,V) = (X,Y)$ or $(Y,X)$. From the proof of Lemma \ref{lemma: ptws_convergence_one_param}, we have the following upper bounds for all $u \in (-t-\varepsilon, -t + \varepsilon)$,
    \begin{equation}
    \phi_1(U), \phi_2(U) \leq \frac{C_1}{2Nd}e^{-dU}, \quad
    B(V), B'(V)\leq \frac{C_2}{2Nd}e^{-dV}
    \end{equation}
    for some $C_1, C_2 > 0$ independent of $U,V$. Lastly, by Lemma \ref{lemma: ptws_convergence_one_param} and Lemma \ref{lem: limits of function for one param}, we know that the Pfaffian kernels converges pointwise to their desired limits.
\end{proof}

\begin{proof}[Proof of Theorem~\ref{thm: one_param_finite}]
We set $\beta  = \alpha$ and $\gamma = \Gamm(\alpha - u)$. We will use the notation $Z^{u,-}_{\alpha >t}(N,N)$. Under this case, we have $Z^{u,-}_{\alpha >t}(N,N) \stackrel{(d)}{\Rightarrow} Z^t(N,N)$ when $u\rightarrow -t$. Reproving Lemmas \ref{lem: difference of pfaffians}, \ref{lemma: ptws_convergence_one_param}, \ref{lemma:pfaffian_convergence}, \ref{lem: limit of the constant for one param}, \ref{lem: limits of function for one param}, and \ref{lem: diff of Pfaffian converge} with $\beta = \alpha$ shows that RHS of \eqref{eq: One_param_formula} is the analytic continuation of the LHS of \eqref{eq:shift}.
To prove the theorem, we simply need to show the following limit for the RHS of \eqref{eq:shift}:
\begin{equation}\label{eq: final limit for low density}
    \begin{aligned}
        \lim_{u\rightarrow -t}\E\bigg[ 2\left(e^{-\tau}Z^{u,-}_{\alpha >t}(N,N)\right)^{\frac{u+t}{2}}K_{-u-t}\left( 2
            \sqrt{{Z^{u,-}_{\alpha >t}(N,N)e^{-\tau}}}\right) \bigg] = \E\bigg[ 2K_0\left( 2
            {e^{\left(\log Z^{t}(N,N)-\tau\right)/2} }\right)  \bigg].
    \end{aligned}
\end{equation}
Let $\xi = e^{-\tau}.$
Then using $e^{-y} \leq y^{-1}$, we get
\begin{equation}\label{eq: u to -t step 1}
    \begin{aligned}
        &\E\left[ \int_0^\infty e^{-\xi Z^{u,-}_{\alpha >t}(N,N)x}x^{-(u+t)-1}e^{-x^{-1}}dx\right]\\
        &\leq \E\left[ \int_{0}^{\infty}(\xi Z^{u,-}_{\alpha >t}(N,N)x)^{-1}x^{-(u+t)-1}e^{-x^{-1}} dx\right]
     = \E\left[ (\xi Z^{u,-}_{\alpha >t}(N,N))^{-1}\right]\int_{0}^{\infty} x^{-2-(u+t)}e^{-x^{-1}} dx.
\end{aligned}
\end{equation} for all $u\in (-t-\varepsilon,-t+\varepsilon)$. Let $Y_1 \sim \Gamm(\alpha -t)$, $Y_2 \sim \Gamm(\alpha + t)$, $Y_i\sim \Gamm(2\alpha)$ for $i = 3,\dots 2N-4$. For all $u \in (-t-\varepsilon,-t+\varepsilon),$ we have
\begin{equation}\label{eq: u to -t step 3}
    \begin{aligned}
        \E\left[ (\xi Z^{u,-}_{\alpha >t}(N,N))^{-1}\right] \leq \xi^{-1}\E\left[  Y_1^{-1}\right]\E\left[  Y_2^{-1}\right]\E\left[  Y_3^{-1}\right]^{4-2N} < \infty
    \end{aligned}
\end{equation}
because $Z^{u,-}_{\alpha >t}(N,N) \geq \prod_{i = 1}^{2N-2} Y_i$, which is product of weights over a single up-right path. Hence, 
we can apply the dominated convergence theorem to get \eqref{eq: final limit for low density}.
\end{proof}

\begin{proof}[Proof of Theorem~\ref{thm: two param beta > t}]
We consider the case when $\gamma = 1$ and $Z^{u,-}_{\beta >t} \stackrel{(d)}{\Rightarrow } Z^{t,\beta}$ when $u\rightarrow -t.$ Again, by Lemmas \ref{lem: difference of pfaffians}, \ref{lemma: ptws_convergence_one_param}, \ref{lemma:pfaffian_convergence}, \ref{lem: limit of the constant for one param}, \ref{lem: limits of function for one param}, and \ref{lem: diff of Pfaffian converge}, we found the analytic continuation of $\Gamma(u+t)\Pf(J-\check{\bold{K}})_{\mathbb{L}^2(\tau,\infty)}$ and prove the $u\rightarrow -t$ limit is RHS of \eqref{eq: Two_param_formula_positive}.
Since the weight at $(2,1)$ is $1$, we isolate the random variable $\omega_{2,1}$ from the expectation and get
\begin{equation}\label{eq: two param beta large pre limit}
    \begin{aligned}
        &\E\bigg[ 2(e^{-\tau}Z^{t,\beta}(N,N)\omega_{2,1})^{\frac{u+t}{2}}K_{-u-t}\left( 2
        {e^{\left(\log Z^{u,-}_{\beta >t}(N,N)\omega_{2,1}-\tau\right)/2} }\right)  \bigg]\\
        &= \frac{1}{\Gamma(\beta - t)}\int_{0}^{\infty} \E\bigg[ \int_{0}^{\infty} e^{-e^{-\tau} Z^{u,-}_{\beta >t}(N,N)wx}e^{-x^{-1}}x^{-(u+t)-1} dx \bigg]w^{t-\beta-1} e^{-w^{-1}}dw.
    \end{aligned}
\end{equation}
We simply need to show that
\begin{equation}
    \lim_{u\rightarrow -t} \text{RHS of }\eqref{eq: two param beta large pre limit} = \frac{1}{\Gamma(\beta - t)}\E \bigg[ \int_{0}^{\infty} 2K_0\left( 2\sqrt{e^{-\tau} Z^{t,\beta}(N,N) w} \right) w^{t-\beta-1} e^{-w^{-1}}dw\bigg].
\end{equation}
This can be justified by the same arguments as in \eqref{eq: u to -t step 1} and \eqref{eq: u to -t step 3}. The only difference is that 
\begin{equation}
    \begin{aligned}
        &\int_{0}^{\infty} \E\bigg[ \int_{0}^{\infty} e^{-\xi Z^{u,-}_{\beta >t}(N,N)wx}e^{-x^{-1}}x^{-(u+t)-1} dx \bigg]w^{t-\beta-1} e^{-w^{-1}}dw\\
        &\leq \int_{0}^{\infty} \E\bigg[{(-\xi Z^{u,-}_{\beta >t}(N,N))^{-1}}  \bigg]\left(\int_{0}^{\infty} {e^{-x^{-1}}x^{-(u+t)-2}} dx\right) w^{t-\beta-2} e^{-w^{-1}}dw<\infty.
    \end{aligned}
\end{equation}
\end{proof}

\section{Asymptotic analysis for product stationary and two-parameter stationary formulas when $\tbeta > \ttt$}
In this section, we perform steepest descent method for the asymptotic limit of \eqref{eq: Two_param_formula_positive} with $\beta$ parameter involved. The case when $\beta = \alpha$ with no scaling is easier and can be proved by nearly identical arguments. Therefore, we omit the detailed proof for \eqref{eq: one param asymptotic formula}.
We recall the notations for the digamma function and polygamma functions:
\begin{equation}
    \psi(z) = \frac{1}{\Gamma(z)} \frac{\mathrm{d}}{\mathrm{d}z}\Gamma(z),
\qquad \qquad
\psi^{(n)}(z) = \frac{\mathrm{d}^n}{\mathrm{d}z^n}\psi(z).
\end{equation}
We define the following two constants:
\begin{equation}
    f = 2\psi(\alpha), \quad \sigma =  -\psi^{(2)}(\alpha),
\end{equation}
Given the mean $-Nf$ and the fluctuation order $N^{-1/3},$ we consider the following scaling of parameters near the critical value $0$:
\begin{equation}\label{eq: essential scaling}
\begin{aligned}
    \beta = \frac{\tbeta}{\Nsigma}, \quad t = \frac{\ttt}{(\sigma N)^{1/3}}, &\quad Z = \frac{-\tZ}{(\sigma N)^{1/3}}, \quad W =  \frac{-\tW}{(\sigma N)^{1/3}},\\
    X = -Nf + (\sigma N)^{1/3}u, \quad Y &= -Nf + (\sigma N)^{1/3}v, \quad \tau = -Nf + (\sigma N)^{1/3}s.
\end{aligned}
\end{equation}

\begin{define}\label{def:contour}
    For any $a \in \R$ and $\theta \in [0,2\pi)$, we define the following contours on the complex plane:
    \begin{equation}
    V(a;\theta;d) =\left( \{a+re^{\I\theta}: r > 0\} \cup \{a-re^{-\I\theta}: r \leq 0\}\right) \cap \{z \in \CC: |\text{Re}(z)| \leq d\},
    \end{equation}
    \begin{equation}
    L(a;b) = \{a + k\I: |k| \geq b\}.
    \end{equation}
\end{define}

In the following lemmas, we start to use $(u,v)$ as new variables for the asymptotics. We write
\begin{equation}
    \begin{aligned}
        &\partial_u\partial_v\widehat{K}(u,v) = (\sigma N)^{2/3}\partial_X \partial_Y\widehat{K}(X,Y),\\
        &\widehat{K}(u,v) = \widehat{K}(X,Y) = \widehat{K}(-Nf+(\sigma N)^{1/3}u, -Nf+(\sigma N)^{1/3}v).
    \end{aligned}
\end{equation}

\begin{lemma}\label{lem: limit and upper tail of kernel}
    Choose any $r>0.$ Fix any $\alpha > 0$, $\tbeta > \ttt,$ $\ttt >0.$ The following limit holds uniformly over $u,v \in [-r,r]$:
    \begin{equation}
    \begin{aligned}
        &\lim_{N\rightarrow \infty} \widehat{K}(X,Y)  = \widetilde{K}(u,v), \quad \lim_{N\rightarrow \infty} -(\sigma N)^{1/3}\partial_X\widehat{K}(X,Y)  = -\partial_u\widetilde{K}(u,v)\\
        &\lim_{N\rightarrow \infty} -(\sigma N)^{1/3}\partial_Y\widehat{K}(X,Y)  = -\partial_v\widetilde{K}(u,v), \quad \lim_{N\rightarrow \infty} (\sigma N)^{2/3}\partial_X\partial_Y\widehat{K}(X,Y)  = \partial_u\partial_v\widetilde{K}(u,v)
    \end{aligned}
    \end{equation}
    Moreover, fix any $\tilde{\delta}\in(0,\ttt)$, there exists a constant $C$ independent of $u,v$ such that the following upper bounds hold for all $u,v\geq -r$:
    \begin{equation}
        \begin{aligned}
            |\widehat{K}(u,v)|, \,|\partial_u\widehat{K}(u,v)|,\, |\partial_v\widehat{K}(u,v)|, \,|\partial_u\partial_v\widehat{K}(u,v)| \leq Ce^{-\tilde{\delta}(u+v)}.
        \end{aligned}
    \end{equation}
\end{lemma}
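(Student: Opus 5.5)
We follow the standard steepest descent route for $\widehat{K}$. Substitute the scaling \eqref{eq: essential scaling}: $Z=-\tZ/(\sigma N)^{1/3}$, $W=-\tW/(\sigma N)^{1/3}$, $X=-Nf+(\sigma N)^{1/3}u$, $Y=-Nf+(\sigma N)^{1/3}v$. The contour $\I\R-d$ maps to a vertical line through $\tilde\delta=d(\sigma N)^{1/3}$, and we choose $d=\tilde\delta(\sigma N)^{-1/3}$ with $0<\tilde\delta<\ttt$. This is admissible: it lies between the poles at $Z,W=-t$ (which have moved to $\tZ,\tW=\ttt$) and at $0$, it stays to the left of $-\beta$ since $\tbeta>\ttt$, and $d>1/N$ for large $N$. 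The integrand factors as $e^{N(\log\Gamma(\alpha+Z)-\log\Gamma(\alpha-Z)) - NfZ}$ times rational-type gamma ratios. Taylor expanding gives
\[
(N-2)\log\tfrac{\Gamma(\alpha+Z)}{\Gamma(\alpha-Z)}+XZ=\tfrac{\tZ^3}{3}-u\tZ+O(N^{-1/3}),
\]
because the linear terms cancel by the choice $f=2\psi(\alpha)$ and the cubic coefficient is $\tfrac{1}{3}\psi^{(2)}(\alpha)\cdot(-\tZ)^3/(\sigma N)=\tZ^3/3$. The remaining factors have the following local limits:
\begin{itemize}
\item $\frac{\Gamma(-t+Z)\Gamma(t+Z)}{\Gamma(-t-Z)\Gamma(t-Z)}\to\frac{(\ttt+\tZ)(\ttt-\tZ)}{(\ttt-\tZ)(\ttt+\tZ)}=1$, and the same for $W$;
\item $\frac{\Gamma(\beta+Z)}{\Gamma(\beta-Z)}\to\frac{\tbeta+\tZ}{\tbeta-\tZ}$;
\item $Q(Z,W)\,dZ\,dW\to\frac{(\tZ-\tW)}{4\tZ\tW(\tZ+\tW)}\,d\tZ\,d\tW$, using $\Gamma(-2Z)\approx (\sigma N)^{1/3}/(2\tZ)$ and $\sin(\pi(Z-W))/\sin(\pi(Z+W))\approx(\tZ-\tW)/(\tZ+\tW)$.
\end{itemize}
Here the Jacobian $(\sigma N)^{-2/3}$ cancels the two $\Gamma(-2\cdot)$ blowups. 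This reproduces $\widetilde K$. The derivative entries pick up factors $Z=-\tZ(\sigma N)^{-1/3}$ and $W$, which are exactly absorbed by the prefactors $(\sigma N)^{1/3}$ and $(\sigma N)^{2/3}$.

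To make this rigorous, first deform $\I\R-d$ (before scaling) into a steepest-descent contour. Near the origin, take the image of $V(-\tilde\delta(\sigma N)^{-1/3};\pi/3;\epsilon)$, i.e. the rays of $C(\tilde\delta;\pi/3)$ truncated at real part $\epsilon$. Join these to the vertical pieces $L(\cdot;b)$ of Definition \ref{def:contour}. The deformation crosses no poles, since all poles of the integrand lie on the real axis at $-t-i,-\beta-i,-\alpha-i$ and at $\tfrac{k}{2}$, and the sine ratio decays on the connecting pieces. Split each integral into three parts:
\begin{enumerate}
\item Local part, $|\tZ|,|\tW|\le N^{\epsilon'}$: uniform convergence by the Taylor expansion with uniform error over $u,v\in[-r,r]$.
\item Moderate part on the $\pi/3$ rays: here $\mathrm{Re}\,\log\frac{\Gamma(\alpha+Z)}{\Gamma(\alpha-Z)}-\psi(\alpha)2Z\le -c|Z|^3$, which gives the bound $e^{-cN|Z|^3}$ and hence decay like $e^{-cN^{3\epsilon'}}$.
\item Vertical tails: the Stirling bound \eqref{eq: bound gamma} gives $|\Gamma(\alpha+Z)/\Gamma(\alpha-Z)|^{N}$ strictly smaller than its value on the ray endpoints, so these contribute $e^{-cN}$.
\end{enumerate}
Dominated convergence then yields the limits.

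For the exponential bound with $u,v\ge -r$, keep the same contours. The factor $e^{-u\tZ}$ has modulus $e^{-u\,\mathrm{Re}\tZ}$, and $\mathrm{Re}\,\tZ\ge\tilde\delta$ on $C(\tilde\delta;\pi/3)$ as well as on its deformed version (the ray part moves to the right and the vertical parts lie at $\mathrm{Re}\,\tZ\ge\tilde\delta$). Hence $|e^{-u\tZ}|\le e^{-\tilde\delta u}$ when $u\ge0$. For $u\in[-r,0]$ the factor is bounded by $e^{r\,\mathrm{Re}\tZ}$, which is absorbed by the cubic decay. The remaining integral is bounded uniformly in $N$ by the same three-part estimates. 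The rational factors are bounded on the contour because it stays at distance $\ge\min(\tilde\delta,\ttt-\tilde\delta,\tbeta-\tilde\delta)$ from the singularities $0,\ttt,\tbeta$.

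The main obstacle is the uniform control of the moderate and tail regions. This means verifying that the chosen contour is genuinely a descent path for $\mathrm{Re}(\log\Gamma(\alpha+Z)-\log\Gamma(\alpha-Z)-2\psi(\alpha)Z)$ for all $\alpha>0$, and that the factor $1/\sin(\pi(Z+W))$ stays bounded where $Z+W$ approaches $0$. The latter is handled by the shift $\mathrm{Re}(Z+W)\le-2\tilde\delta(\sigma N)^{-1/3}$, with the resulting $N^{1/3}$ growth compensated by the $\Gamma(-2\cdot)$ normalization. I would first try the descent computation via the series $\log\Gamma(\alpha+Z)-\log\Gamma(\alpha-Z)=2\sum_{k\text{ odd}}\psi^{(k-1)}(\alpha)Z^k/k!$ near $0$, combined with monotonicity of $|\Gamma(\alpha+\I y-x)/\Gamma(\alpha-\I y+x)|$ in $|y|$ along the vertical pieces.
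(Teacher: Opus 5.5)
Your proposal is correct and follows essentially the same route as the paper. Both arguments rewrite $\widehat K$ through $h(Z)=\log\Gamma(\alpha+Z)-\log\Gamma(\alpha-Z)-fZ$, deform to a $V\cup L$ steepest-descent contour, and use the cubic Taylor expansion at the origin together with monotonicity of $|\Gamma(\alpha+Z)/\Gamma(\alpha-Z)|$ along vertical lines to make the off-critical parts exponentially small. The $e^{-\tilde\delta(u+v)}$ bound then comes from $\mathrm{Re}\,\tZ\ge\tilde\delta$ on the contour; your local/moderate/tail split is, if anything, more careful than the paper's sketch.

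One notational slip: in the $Z$-plane the local piece should be $V(-\tilde\delta(\sigma N)^{-1/3};2\pi/3;\epsilon)$, as in the paper, since the image of $C(\tilde\delta;\pi/3)$ under $Z=-\tZ/(\sigma N)^{1/3}$ opens to the left.
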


\begin{proof}
    After change of variables, we have that 
    \begin{equation}
    \begin{aligned}
        \widehat{K}(u,v) =
        &\int\limits_{\I\R - d} \frac{dZ}{2\pi\I} \int\limits_{\I\R - d} \frac{dW}{2\pi\I} e^{N(h(Z)+h(W)) + \sigma N^{1/3}(uZ + vW)} H(t,\alpha,Z,W)
    \end{aligned}
\end{equation}
where $h$ is defined as the following function on the complex plane:
\begin{equation}
    h(Z) = \log \Gamma(\alpha+Z) - \log \Gamma(\alpha-Z) -fZ,
\end{equation}
and 
\begin{equation}
\begin{aligned}
    H(t,\alpha,Z,W)=&\frac{\Gamma(-t+Z)}{\Gamma(-t-Z)}\frac{\Gamma(t+Z)}{\Gamma(t-Z)} \frac{\Gamma(\beta+Z)}{\Gamma(\beta-Z)}\frac{\Gamma(-t+W)}{\Gamma(-t-W)}\frac{\Gamma(t+W)}{\Gamma(t-W)}\frac{\Gamma(\beta + W)}{\Gamma(\beta-W)}\\
    &\Gamma(-2Z) \Gamma(-2W)\frac{\sin(\pi(Z-W))}{\sin(\pi(Z+W))}\left(\frac{\Gamma(\alpha-Z)}{\Gamma(\alpha+Z)}\frac{\Gamma(\alpha-W)}{\Gamma(\alpha+W)}\right)^{2}.
\end{aligned}
\end{equation}
Notice that 
\begin{equation}
h(0) = h'(0) = h''(0) = 0 \quad \text{ and } \quad h'''(0) = 2\psi^{(2)}(\alpha).
\end{equation} 
Recall that if $z \in \C$, then $\text{Re} \{ \log(z)\} = \log|z|$. Thus
\begin{equation}
\text{Re}\{h(Z)\} = \log\left|\frac{\Gamma(\alpha+Z)}{\Gamma(\alpha-Z)}\right| - f\text{Re}\{Z\}.
\end{equation}
Given the product representation of the Gamma function
\begin{equation}
\Gamma(z) = \frac{1}{z} \prod_{n=1}^\infty (1+\frac{z}{n})^{-1}(1+\frac{1}{n})^{z},
\end{equation}
we have that
\begin{equation}
\begin{split}
    \text{Re}\{h(-d+iy)\}& = \log \left|\frac{(\alpha+d-iy)}{(\alpha-d+iy)}\right| \left| \prod_{n=1}^\infty \frac{(n + \alpha+d-iy)}{(n+\alpha-d+iy)}\right|  \left|\left(\frac{n+1}{n}\right)^{-2d}\right| + fd.
\end{split}
\end{equation}
Now we take the derivative with respect to $y$ and get that
\begin{equation}
\begin{split}
    \frac{d}{dy}\text{Re}\{h(-d+iy)\}& = \frac{d}{dy}\log \left|\frac{(\alpha+d-iy)}{(\alpha-d+iy)} \prod_{n=1}^\infty \frac{(n + \alpha+d-iy)}{(n+\alpha-d+iy)}\right| \\
    &= \frac{d}{dy} \sum_{n=0}^\infty \log|n+\alpha+d-iy| - \log|n+\alpha-d+iy|\\
    &= \sum_{n=0}^\infty \frac{-4d(n+\alpha)y}{((n+\alpha-d)^2 + y^2)((n+\alpha+d)^2 + y^2)}.
\end{split}
\end{equation}
Because the above derivative is positive when $y < 0$ and negative when $y>0$, we deduce that $\text{Re}\{h(Z)\}$ attains its maximum at $Z = -d$ on the contour $-d + i\R$.
Additionally, for $x \in \R$, 
\begin{equation}
\frac{d}{dx} h(x) = \psi(\alpha+x) + \psi(\alpha - x) - 2\psi(\alpha). 
\end{equation}
Because $\psi(x)$ is strictly concave for $x > 0$, we know that $\frac{d}{dx}h(x) < 0$. We see that for all $x \in (-d,0),$ $h(x) < 0$ and therefore, for all $z = x+\I\R$, $\text{Re}(h(z)) < h(x)$. Then we can see that for $a,b \in \R_{+}$ small satisfying $b>a$, the integration contour $V(-a;2\pi/3;b) \cup L(-b; \sqrt{3}(b-a))$ is a steepest descent path for $h(Z)$.  

Thus, fix any $\delta \in (0, \ttt)$, $b > \ttt,$ and set $a = \frac{\delta}{(\sigma N)^{1/3}}$. By Cauchy's theorem, we can
deform the contour $\I\R - d$ into $V(\frac{-\delta}{(\sigma N)^{1/3}};2\pi/3;b) \cup L(-b; \sqrt{3}(b-\frac{\delta}{(\sigma N)^{1/3}}))$ without crossing any poles. Given the descent path, we know that for $(Z,W)$ on $L(-b; \sqrt{3}(b-\frac{\delta}{(\sigma N)^{1/3}}))$, there exists some $\varepsilon>0$ such that $h(Z)+h(W) < -\varepsilon < 0$. Let $L = L(-b; \sqrt{3}(b-\frac{\delta}{(\sigma N)^{1/3}}))$ for shorter notation. Then we have
\begin{equation}
\begin{split}
    &\left|\int\limits_{L} \frac{dZ}{2\pi\I} \int\limits_{L} \frac{dW}{2\pi\I} e^{N\left(h(Z) + h(W)\right) + (\sigma N)^{1/3}(uZ + vW) }H(t,\alpha, Z,W)\right|\\
    &\leq Ce^{-\varepsilon N}\int\limits_{L} \frac{dZ}{2\pi\I} \int\limits_{L} \frac{dW}{2\pi\I} e^{N\left(h(Z) + h(W) + \varepsilon\right) + (\sigma N)^{1/3}(uZ + vW) }H(t,\alpha, Z,W),
\end{split}
\end{equation} 
where the double integrals are bounded due to the exponential decay in $\exp(N(h(Z)+h(W)+\varepsilon))$ on $L\times L$. So only the integrals over $V(-\delta/{(\sigma N)^{1/3}};2\pi/3;b)$ contribute to the limit.

We apply the change of variables
$Z = -\frac{\tZ}{(\sigma N)^{1/3}},$ $W = -\frac{\tW}{(\sigma N)^{1/3}}$. Since $e^{{\tZ}^{3}/3+{\tW}^{3}/3-u\tZ-v\tW}$ decays exponentially 
along the directions $\pm\pi/3$ and $H(t,\alpha,Z,W)$ exhibits at most polynomial 
growth in $\tZ,\tW$, the absolute value of the integrand is bounded by an integrable 
function, allowing the contours to be extended to infinity. Hence, we see that
\begin{equation}
\begin{aligned}
    &\widehat{K}(u,v) = \widetilde{K}(u,v)\left(1 + \mathcal{O}\!\left((\sigma N)^{-1/3}\right)\right),\quad 
        (\sigma N)^{1/3}\partial_X\widehat{K}(u,v) = \partial_u\widetilde{K}(u,v)\left(1 + \mathcal{O}\!\left((\sigma N)^{-1/3}\right)\right),\\
        &(\sigma N)^{1/3}\partial_Y\widehat{K}(u,v) = \partial_v\widetilde{K}(u,v)\left(1 + \mathcal{O}\!\left((\sigma N)^{-1/3}\right)\right),\,\,\,(\sigma N)^{2/3}\partial_X\partial_Y\widehat{K}(u,v) = \partial_u\widetilde{K}(u,v)\left(1 + \mathcal{O}\!\left((\sigma N)^{-1/3}\right)\right).
    \end{aligned}
\end{equation}
\end{proof}

\begin{lemma}\label{lem: converge of Pfaffian 2param beta > t}
    Choose any $\alpha>0,$ $\ttt >0,$ $\tbeta > \ttt$, and $s \in \R$ and set $\tau = -Nf + (\sigma N)^{1/3}s.$ We have the following limit of the Fredholm Pfaffian:
    \begin{equation}\label{lem: limit of pf}
        \lim_{N\rightarrow \infty} \Pf\left( J - \widehat{\bold{K}} \right)_{\mathbb{L}^2(\tau, \infty)} = \Pf\left( J - \widetilde{\bold{K}} \right)_{\mathbb{L}^2(s,\infty)}.
    \end{equation}
\end{lemma}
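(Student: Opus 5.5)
The plan is the standard one for Fredholm Pfaffians. Expand both sides as series and pass to the limit term by term, using dominated convergence once the variables are rescaled. After the change of variables $X=-Nf+(\sigma N)^{1/3}u$ and $Y=-Nf+(\sigma N)^{1/3}v$, the space $\mathbb{L}^2((\tau,\infty))$ becomes $\mathbb{L}^2((s,\infty))$. The Jacobian $(\sigma N)^{1/3}$ is absorbed by conjugating the kernel with $\mathrm{diag}(1,(\sigma N)^{1/3})$. Under this conjugation the rescaled matrix kernel has entries
\[
\widehat K(u,v),\quad -(\sigma N)^{1/3}\partial_Y\widehat K,\quad -(\sigma N)^{1/3}\partial_X\widehat K,\quad (\sigma N)^{2/3}\partial_X\partial_Y\widehat K,
\]
while $J$ is unchanged and the Pfaffian is invariant because the conjugation has determinant one per point. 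Writing out the expansion gives
\[
\Pf(J-\widehat{\bold K})_{\mathbb{L}^2(\tau,\infty)}=\sum_{n\ge0}\frac{(-1)^n}{n!}\int_{(s,\infty)^n}\Pf\bigl[\widehat{\bold K}^{\mathrm{resc}}(u_i,u_j)\bigr]_{i,j=1}^n\,du_1\cdots du_n,
\]
and the analogous expansion holds with $\widetilde{\bold K}$.

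\textbf{Step 1 (pointwise convergence).} By Lemma \ref{lem: limit and upper tail of kernel}, each entry of $\widehat{\bold K}^{\mathrm{resc}}$ converges to the corresponding entry of $\widetilde{\bold K}$ uniformly on compacts $[-r,r]^2$. Hence each $n$-fold integrand converges pointwise on $(s,\infty)^n$.

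\textbf{Step 2 (uniform domination).} Take $r=|s|$, so that $u,v\ge s\ge -r$. The second part of Lemma \ref{lem: limit and upper tail of kernel} then gives a bound of the form $Ce^{-\tilde\delta(u+v)}$ on all four entries, with $C$ independent of $N$ for $N$ large. Hadamard's inequality for the $2n\times2n$ Pfaffian, $|\Pf(M)|\le\det(M)^{1/2}$ with $\det M$ bounded by the product of row norms, yields
\[
\bigl|\Pf[\widehat{\bold K}^{\mathrm{resc}}(u_i,u_j)]\bigr|\le (2n)^{n/2}C^n\prod_{i=1}^n e^{-\tilde\delta u_i}.
\]
This holds after factoring $e^{-\tilde\delta u_i}$ out of row $i$ and column $i$ and taking a square root. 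The right side is integrable on $(s,\infty)^n$, with integral $(2n)^{n/2}C^n(e^{-\tilde\delta s}/\tilde\delta)^n$. Dividing by $n!$ gives a summable series in $n$, since $n^{n/2}/n!$ decays superexponentially.

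\textbf{Step 3 (conclusion).} Dominated convergence, applied first inside each $n$-fold integral and then to the sum over $n$, gives the claimed limit. The limiting Pfaffian is well-defined by the same bounds.

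\textbf{Main obstacle.} The real work is the uniform-in-$N$ exponential bound on the tail region where $u,v$ are large, i.e.\ the second half of Lemma \ref{lem: limit and upper tail of kernel}. I take that bound as given. If it were only proved for $u,v$ in compacts, I would obtain it by the same contour deformation: on $V(-\delta/(\sigma N)^{1/3};2\pi/3;b)$ one has $\mathrm{Re}(\tZ)\ge\delta$ after rescaling, so $|e^{-u\tZ}|\le e^{-\delta u}$ for $u\ge0$. I would then bound the remaining factor uniformly, using the descent property of $h$ and the polynomial growth of $H$. Since $\tbeta>\ttt>\delta$, no poles at $\tbeta$ or $\ttt$ are crossed by the contour.
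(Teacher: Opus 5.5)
Your proposal is correct and is essentially the paper's own proof. You expand the Fredholm Pfaffian after the rescaling $X=-Nf+(\sigma N)^{1/3}u$, which produces exactly the $(\sigma N)^{1/3}$ and $(\sigma N)^{2/3}$ factors on the derivative entries. You then dominate the $n$-th term by Hadamard's bound, using the uniform exponential estimates of Lemma \ref{lem: limit and upper tail of kernel}, and conclude by dominated convergence together with the pointwise kernel limits.

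Two harmless slips. First, factoring $e^{-\tilde\delta u_i}$ out of both rows and both columns attached to the point $u_i$ gives $\prod_i e^{-2\tilde\delta u_i}$ after the square root, as in the paper, not $\prod_i e^{-\tilde\delta u_i}$. Second, the determinant-one, $J$-preserving conjugation is by $\mathrm{diag}((\sigma N)^{-1/6},(\sigma N)^{1/6})$ applied to $(\sigma N)^{1/3}\widehat{\bold K}$. Conjugation by $\mathrm{diag}(1,(\sigma N)^{1/3})$ is only the net effect after absorbing the Jacobian, though the rescaled kernel you obtain is correct.
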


\begin{proof}
Let $\rho = (\sigma N)^{-1/3}.$ We have
    \begin{equation}
        \begin{aligned}
            &\Pf\left( J - \widehat{\bold{K}} \right)_{\mathbb{L}^2(\tau, \infty)}\\
            &= \sum_{n = 0}^{\infty} \frac{(-1)^n}{n!}\int_{s}^{\infty} \cdots \int_{s}^{\infty} \Pf\left(\begin{pmatrix}
                \widehat{K} & -\rho^{-1}\partial_Y \widehat{K}\\
                -\rho^{-1}\partial_X \widehat{K} & \rho^{-2}\partial_X\partial_Y \widehat{K}
            \end{pmatrix}(u_i,u_j)\right)_{i,j = 1}^n \prod_{i = 1}^n du_i\\
            &\leq \sum_{n = 0}^{\infty} \frac{(-1)^n}{n!}\int_{s}^{\infty} \cdots \int_{s}^{\infty} (2n)^{n/2}C^n\prod_{i = 1}^n e^{-2\delta u_i} \prod_{i = 1}^n du_i < \infty.\\
        \end{aligned}
    \end{equation}
    By dominated convergence theorem and the limit of the kernel in Lemma \ref{lem: limit and upper tail of kernel}, we get the convergence of Fredholm Pfaffian.
\end{proof}

\begin{lemma}\label{lem: limit of functions 2param beta >t}
    Choose any $r >0.$ For any fixed $\alpha,\beta >0,$ $\ttt>0,$ the following limit holds uniformly over $u \in [-r,r],$
    \begin{equation}
        \begin{aligned}
            &\lim_{N\rightarrow \infty} (\sigma N)^{-1/3}\widehat{C} = \widetilde{C},\quad
            \lim_{N\rightarrow \infty} \widehat{B}(X) = \widetilde{B}(u), \quad \lim_{N\rightarrow \infty} (\sigma N)^{1/3}\widehat{B}^{\prime}(X) = \widetilde{B}^{\prime}(u), \\ &\lim_{N\rightarrow \infty} \widehat{\phi}_1(X) = \widetilde{\phi}_1(u), \quad \lim_{N\rightarrow \infty} (\sigma N)^{-1/3}\widehat{\phi}_2(X) = \widetilde{\phi}_2(u).
        \end{aligned}
    \end{equation}
\end{lemma}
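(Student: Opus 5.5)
We will prove each limit by the same steepest descent scheme used in Lemma \ref{lem: limit and upper tail of kernel}, applied to the one-, two-fold contour integrals defining $\widehat{C}$, $\widehat{B}$, $\widehat{B}'$, $\widehat{\phi}_1$, $\widehat{\phi}_2$. Each integrand contains the factor $G_{\alpha,\beta}(W)=\frac{\Gamma(\beta+W)}{\Gamma(\beta-W)}\left(\frac{\Gamma(\alpha+W)}{\Gamma(\alpha-W)}\right)^{N-2}$. Combined with $e^{XW}$ or $e^{\tau(t+W)}$, it can be written as $e^{Nh(W)+(\sigma N)^{1/3}vW}$ times an $N$-independent factor built from Gamma ratios with arguments of order $(\sigma N)^{-1/3}$. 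The prefactors $G_{\alpha,\beta}(t)\Gamma(2t)$ are treated the same way: $G_{\alpha,\beta}(t)=e^{Nh(t)+Nft}\frac{\Gamma(\beta+t)}{\Gamma(\beta-t)}(\cdots)^{-2}$, and $e^{\tau t}$ combines with $e^{Nft}$ so that the exponent becomes $Nh(t)+(\sigma N)^{1/3}st\to -\ttt^3/3+s\ttt$. This is exactly the scalar exponential appearing in $\widetilde{C}$, $\widetilde{\phi}_1$, $\widetilde{\phi}_2$.

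\textbf{Step 1 (contour deformation).} Deform each $\I\R-d$ into $V(-\delta(\sigma N)^{-1/3};2\pi/3;b)\cup L(-b;\sqrt3(b-\delta(\sigma N)^{-1/3}))$ with $0<\delta<\ttt$. This must cross no poles. The poles at $W=-t-k$ and $-\beta-k$ lie left of $-\ttt(\sigma N)^{-1/3}$, the pole at $W=0$ from $\Gamma(-2W)$ lies to the right, and $\Gamma(1\pm t\pm W)$ contributes poles only at distance about $1$. On the $L$ pieces, the bound $\operatorname{Re}h\le-\varepsilon$ from the proof of Lemma \ref{lem: limit and upper tail of kernel} gives an $e^{-\varepsilon N}$ contribution. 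This dominates any polynomial growth in $N^{1/3}$ coming from prefactors such as $\Gamma(2t)\sim(\sigma N)^{1/3}/(2\ttt)$, $\Gamma(-2W)$ and $\Gamma(t\pm W)$.

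\textbf{Step 2 (local scaling).} On $V$, substitute $W=-\tW(\sigma N)^{-1/3}$ and $Z=-\tZ(\sigma N)^{-1/3}$. Taylor expansion gives $Nh(W)\to \tW^3/3$, using $h'''(0)=2\psi^{(2)}(\alpha)=-2\sigma$ and the sign convention $W=-\tW\rho$. Each Gamma ratio linearizes: $\frac{\Gamma(t+W)}{\Gamma(t-W)}\to\frac{\ttt-\tW}{\ttt+\tW}$, $\frac{\Gamma(\beta+W)}{\Gamma(\beta-W)}\to\frac{\tbeta-\tW}{\tbeta+\tW}$, $\Gamma(-2W)\approx\frac{(\sigma N)^{1/3}}{2\tW}$, $\frac{\Gamma(1+t+W)}{\Gamma(1+t-W)}\to1$, and $\frac{\sin\pi(Z-W)}{\sin\pi(Z+W)}\to\frac{\tZ-\tW}{\tZ+\tW}$.

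\textbf{Step 3 (power counting).} Count the powers of $(\sigma N)^{1/3}$ from $dW$, $\Gamma(-2W)$, $\Gamma(2t)$, and the extra factors $Z$ or $W$. This produces the normalizations in the statement: the factor $(\sigma N)^{-1/3}$ on $\widehat{C}$ and $\widehat\phi_2$, and $(\sigma N)^{1/3}$ on $\widehat B'$. It also fixes the constants $\frac12,\frac14,\frac18$ and the orientation, which becomes reversed because $C(\delta;\pi/3)$ is the image of $V$ under $W\mapsto -W$. The factor $\frac{\tbeta-\ttt}{\tbeta+\ttt}$ arises from the limit of $\frac{\Gamma(\beta+t)}{\Gamma(\beta-t)}\Gamma(2t)$ together with the ratio $\frac{\Gamma(\beta-u)}{\Gamma(\beta+u)}$ at $u=-t$. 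For $\widehat C$, the squared ratio $\Gamma(t+W)^2/\Gamma(t-W)^2$ and the residue bookkeeping from Lemma \ref{lem: limit of the constant for one param} give the double pole $(\tW-\ttt)^{-2}$ structure. The residue term $1$ is killed by the $(\sigma N)^{-1/3}$ normalization.

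\textbf{Step 4 (dominated convergence).} Along the $\pm\pi/3$ rays, $e^{\tW^3/3-u\tW}$ decays, the rational factors grow at most polynomially, and uniformly in $N$ on $|\tW|\le b(\sigma N)^{1/3}$ we have $\operatorname{Re}(Nh(W))\le -c|\tW|^3$. Together these give an integrable majorant, uniform in $u\in[-r,r]$, so dominated convergence yields uniform convergence.

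The main obstacle is this uniform majorant on the part of $V$ where $|\tW|$ is not small compared with $N^{1/3}$. There the Taylor expansion fails, and one must instead use the global descent property of $h$ along $V$ established in the proof of Lemma \ref{lem: limit and upper tail of kernel}. The Gamma prefactors $\Gamma(2t)$ and $\Gamma(-2W)$ must also be controlled uniformly, since they blow up like $N^{1/3}$. I expect to handle this by splitting $V$ at $|\tW|=N^{\epsilon}$: use Taylor bounds inside and the $e^{-cN|W|^3}$-type descent bound outside.
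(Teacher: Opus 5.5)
Your plan follows the paper's route. The paper proves this lemma with a one-line appeal to the steepest-descent argument of Lemma \ref{lem: limit and upper tail of kernel}. Your Steps 1, 3 and 4 spell out what that appeal leaves implicit:
\begin{itemize}
\item the wedge deformation, with the $L$ pieces discarded via $\operatorname{Re}h\le-\varepsilon$;
\item the prefactor exponent $Nh(t)+(\sigma N)^{1/3}st\to-\ttt^3/3+s\ttt$;
\item the power counting that gives the normalizations and the constants $\frac12,\frac14,\frac18$;
\item the dominated-convergence majorant.
\end{itemize}

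\textbf{Error in Step 2.} The Gamma-ratio linearizations are inverted. You used $\Gamma(\epsilon)\sim\epsilon^{-1}$ correctly for $\Gamma(-2W)$, but not for the ratios. With $W=-\tW(\sigma N)^{-1/3}$ one has $t\pm W=(\ttt\mp\tW)(\sigma N)^{-1/3}$, so
\[
\frac{\Gamma(t+W)}{\Gamma(t-W)}\to\frac{\ttt+\tW}{\ttt-\tW},\qquad
\frac{\Gamma(\beta+W)}{\Gamma(\beta-W)}\to\frac{\tbeta+\tW}{\tbeta-\tW},
\]
not the reciprocals you wrote.

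Taken literally, your Step 2 turns $\Gamma(t+W)^2/\Gamma(t-W)^2$ into $(\ttt-\tW)^2/(\ttt+\tW)^2$ and puts the $\beta$-pole at $\tW=-\tbeta$. You would then not arrive at $\widetilde C$, $\widetilde B$, $\widetilde B'$, $\widetilde\phi_1$, $\widetilde\phi_2$. It also contradicts your own Step 3, where you correctly use the double pole at $\tW=\ttt$ and $\frac{\Gamma(\beta+t)}{\Gamma(\beta-t)}\to\frac{\tbeta-\ttt}{\tbeta+\ttt}$. That last limit is exactly the correct rule evaluated at $\tW=-\ttt$. Once the rule is fixed, the argument goes through.

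\textbf{Two smaller points in Step 3.}
\begin{itemize}
\item $\widehat C$ as defined is a pure contour integral. The constant $1$ was already absorbed by the $W=u$ residue in Lemma \ref{lem: limit of the constant for one param}, and the double pole comes directly from $\Gamma(t+W)^2/\Gamma(t-W)^2$. So there is no residue term for the normalization to kill.
\item The factor $\frac{\tbeta-\ttt}{\tbeta+\ttt}$ comes from $\frac{\Gamma(\beta+t)}{\Gamma(\beta-t)}$ alone. The ratio $\frac{\Gamma(\beta-u)}{\Gamma(\beta+u)}$ at $u=-t$ is that same factor, not an additional one. Separately, $\Gamma(2t)$ supplies $\frac{(\sigma N)^{1/3}}{2\ttt}$.
\end{itemize}
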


\begin{proof}
    The convergences follow the same steepest descent method as in the proof of Lemma \ref{lem: limit and upper tail of kernel}.
\end{proof}

\begin{lemma}\label{lem: 1param pfaffian diff}
    Choose any $\alpha > 0,$ $\ttt>0$, $\tbeta > \ttt$, and any $s\in \R$ with $\tau = -Nf + (\sigma N)^{1/3}s.$ We have the following limit:
    \begin{equation}
        \begin{aligned}
            \lim_{N\rightarrow \infty} &-\mathrm{Pf}\left(J - \widehat{\bold{K}} - (\sigma N)^{-1/3}\ketbra{\begin{array}{c}
            \widehat{\phi}_2\\
            -\widehat{\phi}_1 
        \end{array}}
        {\widehat{B} \quad -\widehat{B}^{\prime}} - (\sigma N)^{-1/3}\ketbra{\begin{array}{c}
            \widehat{B}\\
            -\widehat{B}^{\prime} 
        \end{array}}{-\widehat{\phi}_2 \quad \widehat{\phi}_1}\right)\\
        &=-\mathrm{Pf}\left(J - \widetilde{\bold{K}} - \ketbra{\begin{array}{c}
            \widetilde{\phi}_2\\
            -\widetilde{\phi}_1 
        \end{array}}
        {\widetilde{B} \quad -\widetilde{B}^{\prime}} - \ketbra{\begin{array}{c}
            \widetilde{B}\\
            -\widetilde{B}^{\prime} 
        \end{array}}{-\widetilde{\phi}_2 \quad \widetilde{\phi}_1}\right).
        \end{aligned}
    \end{equation}
\end{lemma}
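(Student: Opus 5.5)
The plan follows the proof of Lemma \ref{lem: converge of Pfaffian 2param beta > t}, now applied to the perturbed kernel. First we pass to the rescaled variables $X=-Nf+(\sigma N)^{1/3}u$, $Y=-Nf+(\sigma N)^{1/3}v$. The Fredholm Pfaffian over $\mathbb{L}^2((\tau,\infty))$ becomes a Pfaffian over $\mathbb{L}^2((s,\infty))$.

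The $2\times 2$ blocks must be conjugated consistently. The $(1,2)$ and $(2,1)$ entries pick up $(\sigma N)^{1/3}$, and the $(2,2)$ entry picks up $(\sigma N)^{2/3}$, exactly as for $\widehat{\bold{K}}$. For the rank-two perturbations, the rescaled entries are products such as
\[
(\sigma N)^{-1/3}\widehat{\phi}_2(X)\,\widehat{B}(Y), \qquad (\sigma N)^{-1/3}\widehat{\phi}_1(X)\,(\sigma N)^{1/3}\widehat{B}'(Y),
\]
together with their transposed analogues, each multiplied by the Jacobian factor. We then check, entry by entry, that the powers of $(\sigma N)^{1/3}$ combine into the normalizations of Lemma \ref{lem: limit of functions 2param beta >t}:
\[
\widehat{B}\to\widetilde{B},\quad (\sigma N)^{1/3}\widehat{B}'\to\widetilde{B}',\quad \widehat{\phi}_1\to\widetilde{\phi}_1,\quad (\sigma N)^{-1/3}\widehat{\phi}_2\to\widetilde{\phi}_2 .
\]
By that lemma and Lemma \ref{lem: limit and upper tail of kernel}, every entry of the rescaled perturbed kernel converges pointwise, uniformly on compacts, to the corresponding entry of $\widetilde{\bold{K}}$ plus the rank-two terms built from $\widetilde{\phi}_i,\widetilde{B},\widetilde{B}'$.

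Next we need a dominating bound to apply dominated convergence to the Fredholm series. Lemma \ref{lem: limit and upper tail of kernel} already gives $|\widehat{K}|,|\partial\widehat{K}|,|\partial\partial\widehat{K}|\le Ce^{-\tilde\delta(u+v)}$ for $u,v\ge s$. We will prove analogous one-variable tail bounds, uniformly in $N$ and for $u\ge s$:
\[
|\widehat{B}(u)|,\ |(\sigma N)^{1/3}\widehat{B}'(u)|\le Ce^{-\tilde\delta u},\qquad |\widehat{\phi}_1(u)|,\ |(\sigma N)^{-1/3}\widehat{\phi}_2(u)|\le Ce^{-\tilde\delta u}.
\]
The argument is the steepest-descent proof of Lemma \ref{lem: limit and upper tail of kernel}. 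We deform to the contour $V(-\delta(\sigma N)^{-1/3};2\pi/3;b)\cup L$, with $\delta<\ttt$ so that no pole at $\pm t$ or $\pm\beta$ is crossed. On this contour $|e^{(\sigma N)^{1/3}uZ}|\le e^{-\delta u}$ for $u\ge 0$, and in general $|e^{(\sigma N)^{1/3}uZ}|\le e^{-\delta u}e^{c|u|\,|\tZ|}$, which is absorbed by the cubic decay along the rays. The functions $\widehat{\phi}_i$ also contain the fixed variable $W$ paired with $\tau$ (i.e. with $s$), together with the prefactor $\Gamma(2t)G_{\alpha,\beta}(t)$. The $W$-integral is a bounded constant depending only on $s$. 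The prefactor must be checked to scale correctly: $\Gamma(2t)\sim(\sigma N)^{1/3}/(2\ttt)$, and $G_{\alpha,\beta}(t)e^{\tau t}\to e^{-\ttt^3/3+s\ttt}$ times $(\tbeta+\ttt)/(\tbeta-\ttt)$. This growth is exactly what the $(\sigma N)^{-1/3}$ normalization is meant to cancel.

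With these bounds, every entry of the rescaled kernel is bounded by $Ce^{-\tilde\delta(u_i+u_j)}$. Hadamard's inequality then bounds the $n$-th term of the Pfaffian series by $(2n)^{n/2}C^n\prod_i e^{-\tilde\delta u_i}$, which is summable after integrating over $(s,\infty)^n$ and dividing by $n!$. Dominated convergence then gives the stated limit, just as in Lemma \ref{lem: converge of Pfaffian 2param beta > t}.

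The main obstacle is the bookkeeping of the $(\sigma N)^{\pm1/3}$ factors. We must confirm that each entry of the perturbation, after conjugation by $\mathrm{diag}(1,(\sigma N)^{1/3})$ and multiplication by the Jacobian, carries exactly the normalization under which Lemma \ref{lem: limit of functions 2param beta >t} gives a finite limit. In particular, the entries involving $\widehat{\phi}_1\widehat{B}'$ and $\widehat{\phi}_2\widehat{B}$ must not leave a stray divergent power. If a stray factor appears, the remedy is the scalar freedom of Remark \ref{modifed}: the constant in front of the rank-two term can be redistributed between the $\phi$ and $B$ factors without changing the Pfaffian, so that each factor converges separately.
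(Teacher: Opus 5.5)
Your proposal is correct and matches the paper's proof: pointwise limits from Lemma \ref{lem: limit and upper tail of kernel} and Lemma \ref{lem: limit of functions 2param beta >t}, one-variable exponential bounds (the paper's Lemma \ref{lem: upper tail for phi and C}), and Hadamard's bound with dominated convergence as in Lemma \ref{lem: converge of Pfaffian 2param beta > t}. The feared stray power does not occur, since conjugating by $\mathrm{diag}(1,(\sigma N)^{1/3})$ leaves only the normalized factors $(\sigma N)^{-1/3}\widehat{\phi}_2$, $\widehat{\phi}_1$, $\widehat{B}$, $(\sigma N)^{1/3}\widehat{B}'$ in every entry; also, the constant is $\Gamma(\beta+t)/\Gamma(\beta-t)\to(\tbeta-\ttt)/(\tbeta+\ttt)$, not its reciprocal, which does not affect the argument.
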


\begin{proof}
    By Lemma \ref{lem: limit and upper tail of kernel}, Lemma \ref{lem: limit of functions 2param beta >t} and Lemma \ref{lem: upper tail for phi and C}, we get the pointwise limits and upper bounds of each function and the matrix kernel $\widehat{\mathbf{K}}$. To upgrade to the convergence of Fredholm Pfaffian, we use upper bounds $e^{-\delta u_i}$, Hadamard's bound on kernels and the dominated convergence theorem as explained in Lemma \ref{lem: converge of Pfaffian 2param beta > t}.
\end{proof}

\begin{proof}[Proof of Theorem~\ref{thm: One param asymptotic}]
We first set $\beta = \alpha$ and do not apply any scaling to $\alpha$. We then reprove Lemmas~\ref{lem: limit and upper tail of kernel}, \ref{lem: converge of Pfaffian 2param beta > t}, \ref{lem: limit of functions 2param beta >t}, and \ref{lem: 1param pfaffian diff} in this setting, which yields the corresponding limiting objects $\widetilde{C}_L$, $\widetilde{\phi}_1^L$, $\widetilde{\phi}_2^L$, $\widetilde{B}_L$,
$\widetilde{\mathbf{K}}_L$.
\end{proof}

We will take the scaling limits of the following proposition \cite[Proposition E.1.]{borodin2015height} to uniquely determine the distributions of $\frac{\log Z^{t}(N,N)+Nf}{(\sigma N)^{1/3}}$ and $\frac{\log Z^{t,\beta}(N,N)+Nf}{(\sigma N)^{1/3}}$.
\begin{prop}
    Let $R$ be a random variable and $\sigma$ be a strictly positive constant such that there exists some $\delta > 0$ and $\E[e^{-{\delta R/\sigma}}] < \infty$. Let
    \begin{equation}
    Q(x,\sigma) = \E[2\sigma K_0(2e^{(R-x)/(2\sigma)})].
    \end{equation}
    Then, we can express the cumulative distribution function of $R$ as
    \begin{equation}
    \PP(R \leq r) = \frac{1}{2\sigma^2\pi \I} \int\limits_{-\delta + i\R} \frac{d \xi}{\Gamma(-\xi)\Gamma(-\xi + 1)} \int\limits_{\R} dx e^{x \xi /\sigma} Q(x+r,\sigma).
    \end{equation}
\end{prop}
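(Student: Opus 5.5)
The plan is to compute the inner $x$-integral in closed form through a Mellin transform of $K_0$, and then recognize the remaining $\xi$-integral as a Laplace-type inversion of the indicator $\Id_{R\le r}$.

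\textbf{Step 1 (Mellin transform of $K_0$).} From the integral representation of $K_{-\nu}$ recalled before Remark~6.1, with $\nu=0$, we have $2K_0(2\sqrt c)=\int_0^\infty e^{-cy-y^{-1}}y^{-1}\,dy$. Multiplying by $c^{s-1}$ and integrating first in $c$ (Tonelli, since everything is positive for real $s$), then in $y$, gives
\begin{equation*}
\int_0^\infty c^{s-1}\,2K_0(2\sqrt c)\,dc=\Gamma(s)^2,\qquad \mathrm{Re}(s)>0 .
\end{equation*}
For complex $s$ the same identity holds by absolute convergence.

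\textbf{Step 2 (the $x$-integral).} Fix $\xi$ with $\mathrm{Re}\,\xi=-\delta<0$. In $\int_{\R}e^{x\xi/\sigma}\,2\sigma K_0\big(2e^{(R-x-r)/(2\sigma)}\big)\,dx$ I substitute $x=R-r-\sigma y$ and then $c=e^{y}$. By Step~1 with $s=-\xi$, this equals $\sigma^2\,\Gamma(-\xi)^2\,e^{\xi(R-r)/\sigma}$. The absolute value of the integrand is bounded by the same expression with $\xi$ replaced by $-\delta$, whose expectation is $\sigma^2\Gamma(\delta)^2e^{\delta r/\sigma}\E[e^{-\delta R/\sigma}]<\infty$ by hypothesis. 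Fubini therefore lets me exchange $\E$ and $\int dx$, which gives
\begin{equation*}
\int_{\R}dx\, e^{x\xi/\sigma}Q(x+r,\sigma)=\sigma^2\,\Gamma(-\xi)^2\,\E\big[e^{\xi(R-r)/\sigma}\big].
\end{equation*}

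\textbf{Step 3 (reduction to an indicator).} Since $\Gamma(-\xi)^2/(\Gamma(-\xi)\Gamma(1-\xi))=1/(-\xi)$, the right-hand side of the proposition becomes
\begin{equation*}
\frac{1}{2\pi\I}\int_{-\delta+\I\R}\frac{d\xi}{-\xi}\,\E\big[e^{\xi a}\big],\qquad a=(R-r)/\sigma .
\end{equation*}
For fixed $a\neq0$ the standard contour argument gives $\frac{1}{2\pi\I}\int_{-\delta+\I\R}e^{\xi a}\frac{d\xi}{-\xi}=\Id_{a<0}$. For $a>0$ I close to the left, where there are no poles. For $a<0$ I close to the right, picking up the pole at $\xi=0$, whose residue gives $+1$ with the clockwise orientation. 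Both integrals are understood as symmetric limits $\lim_{T\to\infty}\int_{|\mathrm{Im}\,\xi|\le T}$.

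\textbf{Step 4 (main obstacle: exchanging $\E$ with the $\xi$-integral).} This is the step I expect to be delicate, because the $\xi$-integral is only conditionally convergent: the integrand has size $1/|\xi|$. I would truncate to $|\mathrm{Im}\,\xi|\le T$, where Fubini is trivially valid. I then need a bound, uniform in $T$ and $a$, of the form
\begin{equation*}
\Big|\frac{1}{2\pi\I}\int_{-\delta-\I T}^{-\delta+\I T}e^{\xi a}\frac{d\xi}{-\xi}\Big|\le C\,(1+e^{-\delta a}).
\end{equation*}
I would obtain this by writing the truncated integral as $e^{-\delta a}$ times a Dirichlet-type integral $\int_{-T}^{T}e^{\I ya}(\delta-\I y)^{-1}dy$, which is bounded uniformly in $T$ and $a$ by the usual sine-integral estimate. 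Since $\E[e^{-\delta R/\sigma}]<\infty$, dominated convergence applies as $T\to\infty$ and yields $\E[\Id_{R<r}]$, up to the value on $\{R=r\}$. At continuity points of the law of $R$ this equals $\PP(R\le r)$. For general $r$ I would conclude by right-continuity in $r$ of both sides, or by noting that the symmetric limit gives $\tfrac12$ on the atom and checking that this is consistent with how the formula is subsequently used.
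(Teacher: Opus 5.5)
Your argument is correct at every $r$ with $\Pb(R=r)=0$, and it follows the standard route. The paper gives no proof of its own and simply quotes \cite[Proposition E.1]{borodin2015height}. Its identity (E.2), invoked later in the paper, is exactly your Step 2 rewritten through $\int_{\R}e^{u\xi/\sigma}\Pb(R\le u+r)\,du=\sigma\,\E[e^{\xi(R-r)/\sigma}]/(-\xi)$, followed by Laplace inversion. Your Step 4 bounds the truncated integral by $e^{-\delta a}$ times a Dirichlet-type integral that is bounded uniformly in $T$ and $a$. Together with $\E[e^{-\delta R/\sigma}]<\infty$, this correctly supplies the dominated convergence that the conditionally convergent $\xi$-integral needs.

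The one step that does not work is the extension to atoms ``by right-continuity in $r$ of both sides''. The right-hand side is not right-continuous. Your own computation shows that, with symmetric truncation, it equals $\Pb(R<r)+\tfrac12\Pb(R=r)$; moreover, only the symmetric truncation makes the $\xi$-integral converge on the event $\{R=r\}$. So the displayed identity is genuinely false at atoms of the law of $R$: for $R\equiv r$ the left side is $1$ and the right side is $\tfrac12$. No argument can establish it there.

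You should state the conclusion at continuity points of the law of $R$, which are all but countably many $r$ and still determine the law. That is also all the paper uses: it identifies limiting distribution functions only almost everywhere via $\mathbb{L}^2$ Fourier inversion, and a distribution function is determined by its values at continuity points.
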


\begin{proof}[Proof of Theorem~\ref{thm: One param probability}]
For simpler notation, let $\X_N = \frac{\log Z^{t}(N,N) + Nf}{(\sigma N)^{1/3}}.$ We define $F_N(r) := \Pb\left(\X_N \leq r\right).$
By Theorem \ref{thm:lower_tail_prodstat}, we know that for any $\epsilon>0,$ there exists $N_0 \in \mathbb{N}$, $C>0$ such that for all $N\geq N_0$ and $x>0$,
\begin{equation}
    \Pb\left( \X_N\leq x \right) \leq Ce^{-(1-\epsilon)2\ttt x}
\end{equation}
By \cite[(E.2)]{borodin2015height}, we get that for any fixed $N$ and for all $\xi\in\CC$ satisfying $\text{Re}(w) <0,$
\begin{equation}
    \begin{aligned}
        \int\limits_{\R} \, e^{v w}Q_{N,t}^{Low}(v+\tau) dv = \int\limits_{\R} F_N(u+\tau)e^{u w}\Gamma(-w +1)\Gamma(-w) du,
    \end{aligned}
\end{equation}
where $F(u) = \Pb\left( \log(Z^t(N,N)) \leq u\right)$. Then applying the scaling
\begin{equation}
    \tau = r(\sigma N)^{1/3} - Nf, \quad v =x\sigma N^{1/3}, \quad w = \xi/\sigma N^{1/3},\quad u = z\sigma N^{1/3}.
\end{equation}
with $\xi = -A+\I y$ gives
\begin{equation}\label{eq: fourier two param beta large}
\begin{aligned}
\int\limits_{\R}e^{-Ax}e^{\I x y}\frac{ {Q}_{N,t}^{Low}((x+r)\sigma N^{1/3} - Nf)}{\Gamma(-\xi/(\sigma N^{1/3})) \Gamma(-\xi/(\sigma N^{1/3}) + 1)} dx &= \int\limits_{\R} F_N(z+r) e^{-Az} e^{\I z y} dz.\\
\end{aligned}
\end{equation}
Since the sequence of random variables $\left(\X_N\right)_{N \in \mathbb{N}}$ is tight by Theorem \ref{thm:lower_tail_prodstat} and Theorem \ref{thm:two_para_upper_tail}, we know that by Prohorov's theorem, 
there exists a subsequence ${N_k}$ and a limiting random variable $\X$ with CDF $\widetilde{F}$ such that 
\begin{equation}
    \widetilde{F}(z) = \lim_{N\rightarrow \infty} F_{N_k}(z).
\end{equation}
for all continuity points of $\widetilde{F}$.
The limiting distribution also admits the same lower tail behavior, i.e., there exists $C>0,$ such that for all $x>0,$
\begin{equation}
    \Pb\left(\X \leq -x\right)\leq Ce^{-2\ttt x}.
\end{equation}
We recall that $\delta$ is chosen with $0<\delta<\ttt$ for the steepest-descent contours in Lemma \ref{lem: limit and upper tail of kernel}.
By the lower tail and upper tail of $Q_{N,t}^{Low}$ provided in Corollary \ref{cor: 1param and 2param beta large lower tails} and Lemma \ref{lem: 1param 2param beta large upper tail Q}, we see that for any $\epsilon >0,$ there exists $c,C >0$, $N_0$ and $s_0 >0$ such that for all $N\geq N_0$,
\begin{equation}
    \begin{aligned}
        &(\sigma N)^{-1/3} Q_{N,t}^{Low}((x+r)(\sigma N)^{1/3} - Nf)\leq C|x|e^{-(1-\epsilon)2\ttt |x|} \quad \text{for all } x<0 ,\\
        &(\sigma N)^{-1/3} Q_{N,t}^{Low}((x+r)(\sigma N)^{1/3} - Nf)\leq Ce^{(\ttt-\delta) |x|} \quad \text{ for all }x>x_0.
    \end{aligned}
\end{equation}
Then fix some $\epsilon \ll 1$ such that $(1-\epsilon)2\ttt > \ttt - \delta$. We need to pick $A$ such that $(1-\epsilon)2\ttt >A > \ttt - \delta$ because $e^{-Ax}$ needs to deay faster than the right tail $e^{(\ttt - \delta)}|x|$ for $x>x_0$ and grows slower than the left tail $|x|e^{-(1-\epsilon) 2\ttt|x|}$ for $x<0$.
By the dominated convergence theorem, we can take the subsequential limit of \eqref{eq: fourier two param beta large} and obtain
\begin{equation}
    \begin{aligned}
        \int\limits_{\R}(A-\I y)e^{-Ax}e^{\I x y} \widetilde{{Q}}_{ \ttt}^{Low}(x+r) dx = \int\limits_{\R}  \widetilde{F}(z+r) e^{-Az} e^{\I z y} dz.
    \end{aligned}
\end{equation}
Since $\widetilde{F}(z + r)e^{-Az} \in \mathbb{L}^2(\R)$, we can apply the Fourier inversion to obtain
\begin{equation}\label{eq: limit CDF two param beta large}
    \widetilde{F}(r) = \frac{1}{2\pi}\int_{\R}\int_{\R} (A-\I y)e^{-A{x}} e^{\I {x} y} \widetilde{{Q}}_{\ttt}^{Low}(x+r)dxdy \quad \text{a.s.}
\end{equation}
Since the limiting function is a CDF, it is uniquely characterized by the RHS of \eqref{eq: limit CDF two param beta large}. Then by Prohorov's theorem, we see that the full sequence $\left(\frac{\log Z(N,N) + Nf}{(\sigma N)^{1/3}}\right)_{N\geq N_0}$ converges weakly to $\X$.
\end{proof}

\begin{proof}[Proof of Theorem~\ref{thm: High density beta large asymptotic}]
This theorem is a direct consequence of Lemmas \ref{lem: limit and upper tail of kernel}, \ref{lem: converge of Pfaffian 2param beta > t}, \ref{lem: limit of functions 2param beta >t}, \ref{lem: 1param pfaffian diff}.
\end{proof}

\begin{proof}[Proof of Theorem~\ref{thm: High density beta large probability}]

Fix any $\tbeta$, $\ttt$ with $\tbeta > \ttt.$
By the tightness proved in Theorem \ref{thm:lower_2para} and Theorem \ref{thm:two_para_upper_tail}, we know that there exists a limiting random variable $\mathcal{X}_{\tbeta}$ such that $\frac{\log Z^{t,\beta}(N,N) + Nf}{(\sigma N)^{1/3}}$ converges weakly along a subsequence. Define $\widetilde{F}_{\tbeta}(r) = \Pb(\X_{\tbeta} \leq r).$

We can use the same argument as in the proof of Theorem \ref{thm: One param probability} since ${Q}_{N,\beta > t}^{High}$ and ${Q}_{N,t}^{Low}$ have the same lower tail and upper tail by Corollary \ref{cor: 1param and 2param beta large lower tails} and Lemma \ref{lem: 1param 2param beta large upper tail Q}. Fix some $1\gg \epsilon >0$ such that $(1-2\epsilon)2\ttt > \ttt - \delta$. Notice that in the steepest–descent analysis, the parameter $\delta$ can be chosen arbitrarily close to $\ttt$. Then we need to choose $A$ such that $(1-\epsilon)2\ttt>A>\ttt- \delta>0$. By dominated convergence theorem, we obtain the limit
\begin{equation}\label{eq: fourierIdentity}
\begin{aligned}
    &\frac{(A - \I y)}{2\pi}\int_{\R}\widetilde{Q}_{\tbeta > \ttt}^{High}(x + r - u)e^{-Ax}e^{\I x y} dx\\
    &\quad =  \frac{1}{2\pi}\int_{\R} \int_{0}^{\infty} \widetilde{F}_{\tbeta}(x+r-u)(\tbeta - \ttt)e^{-(\tbeta - \ttt)u}e^{-Ax}e^{\I x y}du dx,
\end{aligned}
\end{equation}
Under the scaling, $(\sigma N)^{-1/3}\Gamm((\tbeta - \ttt)/(\sigma N)^{1/3})$ converges to $\text{Exp}(\tbeta - \ttt)$.
Applying the fourier inversion and the shift argument for removing an independent exponential random variable, which can be found in \cite[Lemma 3.3]{PatrikStatExp}, to the RHS of \eqref{eq: fourierIdentity} yields
\begin{equation}\label{eq: twoParamShift}
\begin{aligned}
    &\left(1 + \frac{1}{\tbeta - \ttt} \partial_{r}\right)\frac{1}{2\pi}\int_{\R}\int_{\R} \int_{0}^{\infty} \widetilde{F}_{\tbeta}(x+r-u)(\tbeta - \ttt)e^{-(\tbeta - \ttt)u}e^{-Ax}e^{\I x y}du dx dy\\
    &=\left(1 + \frac{1}{\tbeta - \ttt} \partial_{r}\right)\Pb\left(\X_{\tbeta} + \text{Exp}(\tbeta - \ttt) \leq r\right) = \Pb\left(\X_{\tbeta} \leq r\right).
\end{aligned}
\end{equation}
Since the LHS of \eqref{eq: twoParamShift} uniquely characterizes the limiting distribution, the subsequential weak convergence can be upgraded to full sequence convergence.
By \eqref{eq: fourierIdentity} and \eqref{eq: twoParamShift}, we get the following identity under the condition $\tbeta >\ttt$:
\begin{equation}\label{eq: dis equal}
\begin{aligned}
     \Pb\left(\X_{\tbeta} \leq r\right) = \left(1 + \frac{1}{\tbeta - \ttt} \partial_{r}\right)\frac{1}{2\pi}\int_{\R}\int_{\R}(A - \I y)\widetilde{Q}_{\tbeta > \ttt}^{High}(x + r)e^{-Ax}e^{\I x y} dx dy.
\end{aligned}
\end{equation}
\end{proof}

\section{Two-parameter stationary case when $\beta <t$}
Recall $Z^{t,\beta}(N,N)$ in Definition \ref{def: two-param stationary model} and $Z^{u}_{\beta < t}(N,N)$ in Definition \ref{def: approximation model}.
\subsection{Pfaffian decomposition for the two-parameter stationary model when $\beta <t$ }\label{section: 2 param decompose}
By Theorem \ref{thm: Pfaffian structure}, we have the following Fredholm Pfaffian identity for $\frac{1}{N} < u,\beta < t$:
\begin{equation}\label{eq: two param beta small starting pfaffian}
\mathbb{E}\left[e^{-e^{-\tau }Z^{u}_{\beta < t}(N,N)}\right]
= 
\Pf(J-\boldsymbol{\check{\mathcal{K}}})_{\mathbb{L}^2((\tau,\infty))},
\end{equation}
where  $\boldsymbol{\check{\mathcal{K}}}$ is a $2 \times 2$ matrix kernel
\begin{equation}
\boldsymbol{\check{\mathcal{K}}}(X,Y)=
\begin{pmatrix}
\check{\K}(X,Y) & -\partial_Y \check{\K}(X,Y) \\
-\partial_X \check{\K}(X,Y) & \partial_X\partial_Y \check{\K}(X,Y)
\end{pmatrix}
\end{equation} and $\check{\K}$ is the same function as defined in \eqref{eq: central kernel K}.
Using the same argument as before, our goal is to find the analytic continuation of \eqref{eq: two param beta small starting pfaffian}.
We temporarily set $u \in (1/N,t)$ for the following kernel decomposition. The contour $-d+ \I\R$ is the vertical line from $-d - \I\infty$ to $-d + \I\infty$ such that $\max\{u,\beta,0\} <d <t$. Note that $u,\beta$ will be tuned to negative values. We define the following auxiliary functions.
Recall definitions of $H_{\alpha}(x),$ $F(x),$ $Q(x,y)$ in \eqref{eq: definitions of H,F,Q}.

\begin{equation}
    \begin{aligned}
    \mathsf{A}(X) &:= F(-u)H_{\alpha}(-u)e^{-Xu},\quad\quad 
    \mathsf{A}'(X) := (-u)F(-u)H_{\alpha}(-u)e^{-Xu}.\\
    \mathsf{B}(Y) &:= \!\!\!\int\limits_{\I\R - d} \!\!\!\frac{dW}{2\pi \I } e^{YW} F(W)H_{\alpha}(W)\Gamma(-2W)\frac{\Gamma(1-u+W)}{\Gamma(1-u-W)},\\
    \mathsf{B}'(Y) &:= \!\!\!\int\limits_{\I\R - d}\!\!\! \frac{dW}{2\pi \I }  We^{YW} F(W)H_{\alpha}(W)\Gamma(-2W)\frac{\Gamma(1-u+W)}{\Gamma(1-u-W)}.\\
    \mathsf{C}(X) &:= \frac{\Gamma(t-\beta)}{\Gamma(t+\beta)}\frac{\Gamma(u-\beta)}{\Gamma(u+\beta)}\left( \frac{\Gamma(\alpha-\beta)}{\Gamma(\alpha+\beta)} \right)^{N-2}\!\!\!e^{-X\beta},\quad
    \mathsf{C}'(X) := -\beta\frac{\Gamma(t-\beta)}{\Gamma(t+\beta)}\frac{\Gamma(u-\beta)}{\Gamma(u+\beta)}\left( \frac{\Gamma(\alpha-\beta)}{\Gamma(\alpha+\beta)} \right)^{N-2}\!\!\!e^{-X\beta}.\\
   \mathsf{D}(Y) &:= \!\!\!\int\limits_{\I\R - d}\!\!\! \frac{dW}{2\pi \I }e^{YW} \frac{\Gamma(t+W)}{\Gamma(t-W)}\frac{\Gamma(u+W)}{\Gamma(u-W)}H_{\alpha}(W)\Gamma(-2W)\frac{\Gamma(1-\beta+W)}{\Gamma(1-\beta-W)}+ e^{-u Y}H_{\alpha}(-u)\frac{\Gamma(t-u)}{\Gamma(t+u)}\frac{\Gamma(1-\beta-u)}{\Gamma(1-\beta+u)},\\
    \mathsf{D}'(Y) &:= \int\limits_{\I\R - d} \frac{dW}{2\pi \I }We^{YW} H_{\alpha}(W)\Gamma(-2W)\frac{\Gamma(t+W)}{\Gamma(t-W)}\frac{\Gamma(u+W)}{\Gamma(u-W)} H_{\alpha}(W)\frac{\Gamma(1-\beta+W)}{\Gamma(1-\beta-W)}\\
    &\quad -ue^{-u Y}\frac{\Gamma(t-u)}{\Gamma(t+u)}H_{\alpha}(-u)\frac{\Gamma(1-\beta-u)}{\Gamma(1-\beta+u)}.
    \end{aligned}
\end{equation}
Let
\begin{equation}
\boldsymbol{{\mathcal{K}}}(X,Y)=
\begin{pmatrix}
{\K}(X,Y) & -\partial_Y {\K}(X,Y) \\
-\partial_X {\K}(X,Y) & \partial_X\partial_Y {\K}(X,Y)
\end{pmatrix},
\end{equation}
where
\begin{equation}
    \begin{aligned}
        {\K}(X,Y) = &\int\limits_{\mathrm{i}\mathbb{R}-d} \frac{dZ}{2\pi i}
\int\limits_{\mathrm{i}\mathbb{R}-d} \frac{dW}{2\pi i}\;
e^{XZ+YW}\;
\frac{\Gamma(u+Z)}{\Gamma(u-Z)}
\frac{\Gamma(u+W)}{\Gamma(u-W)}\frac{\Gamma(\beta+Z)}{\Gamma(\beta-Z)}
\frac{\Gamma(\beta+W)}{\Gamma(\beta-W)}\\
&\frac{\Gamma(t+Z)}{\Gamma(t-Z)}
\frac{\Gamma(t+W)}{\Gamma(t-W)}
H_{\alpha}(Z)\,
H_{\alpha}(W)\,
Q(Z,W).
    \end{aligned}
\end{equation}

We define
\begin{equation}\label{eq: two param beta small decomposition}
\begin{aligned}
    &X_1 = \left(\begin{array}{c}
         \mathsf{A}' \\
         \mathsf{A}
    \end{array}\right), \quad Y_1 = \left( \mathsf{B} \quad -\mathsf{B}'\right), \quad X_2 = \left(\begin{array}{c}
         \mathsf{B}'\\
         \mathsf{B}
    \end{array}\right), \quad Y_2 = \left( -\mathsf{A} \quad \mathsf{A}'\right),\\
    &X_3 = \left(\begin{array}{c}
         \mathsf{C}' \\
         \mathsf{C}
    \end{array}\right), \quad Y_3 = \left( \mathsf{D} \quad -\mathsf{D}'\right), \quad X_4 = \left(\begin{array}{c}
         \mathsf{D}' \\
         \mathsf{D}
    \end{array}\right), \quad Y_4 = \left( -\mathsf{C}\quad \mathsf{C}'\right).
\end{aligned}
\end{equation}

We know that
\begin{equation}\label{eq: two param beta small decomposition for kernel}
    \begin{aligned}
        J^{-1}\boldsymbol{\check{\mathcal{K}}} = J^{-1}{\boldsymbol{\mathcal{K}}} + \ketbra{X_1}{Y_1} + \ketbra{X_2}{Y_2} + \ketbra{X_3}{Y_3} + 
        \ketbra{X_4}{Y_4}.
    \end{aligned}
\end{equation}

At this stage of the decomposition, we subtract from $\boldsymbol{\mathcal{K}}$ the contributions of the poles at $Z = -\beta,-u$ and $W = -\beta, -u$. Corresponding residues have already been evaluated and stored in $X_i,Y_j$. Now, $\boldsymbol{\mathcal{K}}$ contain poles $\{-u-1 - i, -\beta -1-i, -t-i, - \alpha - i : i \in \Z_{\geq 0}\}$. Then $\boldsymbol{\mathcal{K}}$ can be analytically extended to $\beta \in (-t,t)$ since $-\beta -1 < t-1 < -d$.
Then we decompose the kernel as follows. Let $\G = J^{-1}{\boldsymbol{\mathcal{K}}}$. We have
\begin{equation}
    \begin{aligned}
        &\Pf\left(J - \boldsymbol{\check{\mathcal{K}}}\right)^2 = \det\left(\Id - J^{-1}\boldsymbol{\check{\mathcal{K}}}\right)=
        \det\left(\Id - J^{-1}{\boldsymbol{\mathcal{K}}} - \ketbra{X_1}{Y_1} - \ketbra{X_2}{Y_2} - \ketbra{X_3}{Y_3} - \ketbra{X_4}{Y_4}\right)\\
        &= \det(\Id - \overline{G})
        \det\left(\Id -  \ketbra{(\Id -\overline{G})^{-1}X_1}{Y_1} -  \ketbra{(\Id -\overline{G})^{-1}X_2}{Y_2} - \ketbra{(\Id -\overline{G})^{-1}X_3}{Y_3} - \ketbra{(\Id -\overline{G})^{-1}X_4}{Y_4}\right)\\
        &= \det(\Id- \overline{G})
        \det\left(\Id - \begin{pmatrix}
            \bra{Y_1}\\
            \bra{Y_2}\\
            \bra{Y_3}\\
            \bra{Y_4}
        \end{pmatrix}\begin{pmatrix}
            \ket{(\Id -\overline{G})^{-1}X_1} & \ket{(\Id -\overline{G})^{-1}X_2}&\ket{(\Id -\overline{G})^{-1}X_3}&\ket{(\Id -\overline{G})^{-1}X_4}        
        \end{pmatrix}\right)\\
        &=  \det(\Id- \overline{G})\\
        &\det\left(\Id - \begin{pmatrix}
            \braket{Y_1}{(\Id - \overline{G})^{-1}X_1} & \braket{Y_1}{(\Id - \overline{G})^{-1}X_2} & \braket{Y_1}{(\Id - \overline{G})^{-1}X_3} & \braket{Y_1}{(\Id - \overline{G})^{-1}X_4}\\
            \braket{Y_2}{(\Id - \overline{G})^{-1}X_1} & \braket{Y_2}{(\Id - \overline{G})^{-1}X_2} & \braket{Y_2}{(\Id - \overline{G})^{-1}X_3} & \braket{Y_2}{(\Id - \overline{G})^{-1}X_4}\\
            \braket{Y_3}{(\Id - \overline{G})^{-1}X_1} & \braket{Y_3}{(\Id - \overline{G})^{-1}X_2} & \braket{Y_3}{(\Id - \overline{G})^{-1}X_3} & \braket{Y_3}{(\Id - \overline{G})^{-1}X_4}\\
            \braket{Y_4}{(\Id - \overline{G})^{-1}X_1} & \braket{Y_4}{(\Id - \overline{G})^{-1}X_2} & \braket{Y_4}{(\Id - \overline{G})^{-1}X_3} & \braket{Y_4}{(\Id - \overline{G})^{-1}X_4}\\
        \end{pmatrix} \right)\\
        &=  \det(\Id- \overline{G})\\
        &\det\left(\Id - \begin{pmatrix}
            \braket{Y_1}{(\Id - \overline{G})^{-1}X_1} & 0 & \braket{Y_1}{(\Id - \overline{G})^{-1}X_3} & \braket{Y_1}{(\Id - \overline{G})^{-1}X_4}\\
            0 & \braket{Y_1}{(\Id - \overline{G})^{-1}X_1} & \braket{Y_2}{(\Id - \overline{G})^{-1}X_3} & \braket{Y_2}{(\Id - \overline{G})^{-1}X_4}\\
            \braket{Y_3}{(\Id - \overline{G})^{-1}X_1} & \braket{Y_3}{(\Id - \overline{G})^{-1}X_2} & \braket{Y_3}{(\Id - \overline{G})^{-1}X_3} & 0\\
            \braket{Y_4}{(\Id - \overline{G})^{-1}X_1} & \braket{Y_4}{(\Id - \overline{G})^{-1}X_2} & 0 & \braket{Y_4}{(\Id - \overline{G})^{-1}X_4}\\
        \end{pmatrix} \right).\\
    \end{aligned}
\end{equation}
By the anti-symmetry of the kernel $\boldsymbol{{\mathcal{K}}}$, we have 
\begin{equation}
    \braket{Y_1}{(\Id - \overline{G})^{-1}X_2} = \braket{Y_2}{(\Id - \overline{G})^{-1}X_1} = \braket{Y_3}{(\Id - \overline{G})^{-1}X_4} = \braket{Y_4}{(\Id - \overline{G})^{-1}X_3} = 0,
\end{equation}
as well as the following identities:
\begin{equation}
    \begin{aligned}
        &\M = 1-\brabarket{Y_1}{(\Id - \overline{G})^{-1}}{X_1} = 1-\brabarket{Y_2}{(\Id - \overline{G})^{-1}}{X_2},\\
        &\A = \brabarket{Y_1}{(\Id - \overline{G})^{-1}}{X_3} = \brabarket{Y_4}{(\Id - \overline{G})^{-1}}{X_2},\\
        &\B = \brabarket{Y_1}{(\Id - \overline{G})^{-1}}{X_4} = -\brabarket{Y_3}{(\Id - \overline{G})^{-1}}{X_2},\\
        &\C = \brabarket{Y_2}{(\Id - \overline{G})^{-1}}{X_3} = -\brabarket{Y_4}{(\Id - \overline{G})^{-1}}{X_1},\\
        &\D = \brabarket{Y_2}{(\Id - \overline{G})^{-1}}{X_4} = \brabarket{Y_3}{(\Id - \overline{G})^{-1}}{X_1},\\
        &\NN = 1-\brabarket{Y_3}{(\Id - \overline{G})^{-1}}{X_3} = 1-\brabarket{Y_4}{(\Id - \overline{G})^{-1}}{X_4}.\\
    \end{aligned}
\end{equation}

Then the previous equation becomes the following:
\begin{equation}
    \begin{aligned}
        &\det(\Id - \overline{G})
        \det\begin{pmatrix}
            \M & 0 & \A & \B\\
            0 & \M & \C & \D\\
            \D & -\B & \NN & 0\\
            -\C & \A & 0 & \NN\\
        \end{pmatrix}\\
        &= \det(\Id - \overline{G})\M^2 \det\left( \begin{pmatrix}
            \NN & 0\\
            0 & \NN
        \end{pmatrix} - \M^{-1}\begin{pmatrix}
            \D & -\B\\
            -\C & \A
        \end{pmatrix}\begin{pmatrix}
            \A & \B \\
            \C & \D
        \end{pmatrix} \right)\\
        &= \det(\Id - \overline{G})\M^2 \det\left( \begin{pmatrix}
            \NN & 0\\
            0 & \NN
        \end{pmatrix} - \M^{-1}\begin{pmatrix}
            \D\A-\B\C & 0\\
            0 & -\C\B+\A\D
        \end{pmatrix}\right)\\
        &= \det(\Id - \overline{G}) \left(\M\NN - \A\D +\B\C\right)^2.\\
    \end{aligned}
\end{equation}
Therefore,
\begin{equation}
\begin{aligned}
    &\Pf(J - \boldsymbol{\check{\mathcal{K}}}) = \Pf(J - {\boldsymbol{\mathcal{K}}})\left(\M\NN - \A\D+\B\C\right).
\end{aligned}
\end{equation}

Our next goal is to find the $u\rightarrow -t$ limit of the following identity:
\begin{equation}
    \begin{aligned}
        \Gamma(u+t)\Pf(J-\boldsymbol{\check{\mathcal{K}}})&=
        \Pf(J - {\boldsymbol{\mathcal{K}}})\Gamma(u+t)\left(\M\NN - \A\D+\B\C\right).\\
    \end{aligned}
\end{equation}

\begin{define}
We define the scalar functions $\aleph_i, \psi_i, \Theta_i,\zeta_i, \eta_i, \theta_i$ for $i \in \{1,2\}$ in terms of the following identities. For $X,Y \in \R$,
\begin{equation}
\begin{aligned}
    &\bra{Y_1}\G(X)= \bra{{\aleph}_1(X) \quad {\aleph}_2(X)}, \quad \bra{Y_2}\G(X) = \bra{{\psi}_1(X) \quad {\psi}_2(X)}, \quad \bra{Y_3}\G(X) = \bra{{\Theta}_1(X) \quad {\Theta}_2(X)},\\
&\G \ket{X_1}(Y) = \ket{\begin{array}{c}
        {\zeta}_1(Y)\\
        {\zeta}_2(Y)
    \end{array}},
    \quad\quad
    \G \ket{X_3}(Y) = \ket{\begin{array}{c}
        {\eta}_1(Y)\\
        {\eta}_2(Y)
    \end{array}}, \quad\quad \G \ket{X_4}(Y) = \ket{\begin{array}{c}
        {\theta}_1(Y)\\
        {\theta}_2(Y)
    \end{array}}.
\end{aligned} 
\end{equation}
\end{define}

The next lemma explains the existence of $\left(\Pf(J - \widehat{\boldsymbol{\mathcal{K}}})\right)^{-1}$ in \eqref{Two_param_formula_negative}. Multiplying each of $\M,$ $\NN,$ $\A$, $\B$, $\C$, $\D$ with $\Pf(J - \widehat{\boldsymbol{\mathcal{K}}})$ converts them into the difference of two Fredholm Pfaffians.
\begin{lemma}\label{lem: prelimit two param formula}
Fix any $N\in\Z_{\geq 3}.$ For any $t\in (\frac{1}{N},\frac12),$ $\alpha > t$, $\beta \in (-t,t),$ and $ t>u > \frac{1}{N},$
we have
    \begin{equation}\label{eq: M and NN}
    \begin{aligned}
        \Pf\left( J - \boldsymbol{\mathcal{K}} \right)\Gamma(u+t)\M
        = &\Pf\left( J - \boldsymbol{\mathcal{K}} \right)\Gamma(u+t)\left(1 - \braket{Y_1}{X_1}- \brabarket{Y_1}{\G}{X_1}\right)\\
        &- \Pf\left( J - \boldsymbol{\mathcal{K}} \right) + \Pf\left( J - {\boldsymbol{\mathcal{K}}} - \Gamma(u+t)\ketbra{\begin{array}{c} \zeta_2 \\
        -\zeta_1
        \end{array}}{\aleph_1 \quad \aleph_2} - \Gamma(u+t)\ketbra{\begin{array}{c}
            \aleph_1\\
            \aleph_2
        \end{array}}{-\zeta_2 \quad \zeta_1 } \right).\\
        \Pf\left( J - \boldsymbol{\mathcal{K}} \right)\NN
        = &\Pf\left( J - \boldsymbol{\mathcal{K}} \right)\left(1 - \braket{Y_3}{X_3}- \brabarket{Y_3}{\G}{X_3}\right)\\
        &- \Pf\left( J - \boldsymbol{\mathcal{K}} \right) + \Pf\left( J - {\boldsymbol{\mathcal{K}}} - \ketbra{\begin{array}{c} \eta_2 \\
        -\eta_1
        \end{array}}{\Theta_1 \quad \Theta_2} - \ketbra{\begin{array}{c}
            \Theta_1\\
            \Theta_2
        \end{array}}{-\eta_2 \quad \eta_1 } \right).
    \end{aligned}
    \end{equation}

    \begin{equation}\label{eq:A,B,C,D}
    \begin{aligned}
        \Pf\left( J - \boldsymbol{\mathcal{K}} \right)\A
        = &\Pf\left( J - \boldsymbol{\mathcal{K}} \right)\left( \braket{Y_1}{X_3}+ \brabarket{Y_1}{\G}{X_3}\right)\\
        &+ \Pf\left( J - \boldsymbol{\mathcal{K}} \right) - \Pf\left( J - {\boldsymbol{\mathcal{K}}} - \ketbra{\begin{array}{c} \eta_2 \\
        -\eta_1
        \end{array}}{\aleph_1 \quad \aleph_2} - \ketbra{\begin{array}{c}
            \aleph_1\\
            \aleph_2
        \end{array}}{-\eta_2 \quad \eta_1 } \right),\\
        \Pf\left( J - \boldsymbol{\mathcal{K}} \right)\B
        = &\Pf\left( J - \boldsymbol{\mathcal{K}} \right)\left(\braket{Y_1}{X_4} + \brabarket{Y_1}{\G}{X_4}\right)\\
        &+ \Pf\left( J - \boldsymbol{\mathcal{K}} \right) - \Pf\left( J - {\boldsymbol{\mathcal{K}}} - \ketbra{\begin{array}{c} \theta_2 \\
        -\theta_1
        \end{array}}{\aleph_1 \quad \aleph_2} - \ketbra{\begin{array}{c}
            \aleph_1\\
            \aleph_2
        \end{array}}{-\theta_2 \quad \theta_1 } \right),\\
        \Pf\left( J - \boldsymbol{\mathcal{K}} \right)\C
        = &\Pf\left( J - \boldsymbol{\mathcal{K}} \right)\Gamma(u+t)\left(\braket{Y_2}{X_3} + \brabarket{Y_2}{\G}{X_3}\right)\\
        &+ \Pf\left( J - \boldsymbol{\mathcal{K}} \right) - \Pf\left( J - {\boldsymbol{\mathcal{K}}} - \Gamma(u+t)\ketbra{\begin{array}{c} \eta_2 \\
        -\eta_1
        \end{array}}{\psi_1 \quad \psi_2} - \Gamma(u+t)\ketbra{\begin{array}{c}
            \psi_1\\
            \psi_2
        \end{array}}{-\eta_2 \quad \eta_1 } \right),\\
        \Pf\left( J - \boldsymbol{\mathcal{K}} \right)\D
        = &\Pf\left( J - \boldsymbol{\mathcal{K}} \right)\Gamma(u+t)\left(\braket{Y_2}{X_4} + \brabarket{Y_2}{\G}{X_4}\right)\\
        &+ \Pf\left( J - \boldsymbol{\mathcal{K}} \right) - \Pf\left( J - {\boldsymbol{\mathcal{K}}} - \Gamma(u+t)\ketbra{\begin{array}{c} \theta_2 \\
        -\theta_1
        \end{array}}{\psi_1 \quad \psi_2} - \Gamma(u+t)\ketbra{\begin{array}{c}
            \psi_1\\
            \psi_2
        \end{array}}{-\theta_2 \quad \theta_1 } \right).
    \end{aligned}
    \end{equation}
  
\end{lemma}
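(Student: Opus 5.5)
Each identity in Lemma \ref{lem: prelimit two param formula} should follow from the same two ingredients used in Lemma \ref{lem: difference of pfaffians}. The first is a Neumann expansion of the resolvent,
\[
(\Id-\G)^{-1}=\Id+\G(\Id-\G)^{-1}.
\]
The second is the rank-two perturbation identity of \cite[Lemma 6.10]{zeng2025stationary}, in the scaled form of Remark \ref{modifed}. Consider a generic entry, say $\A=\brabarket{Y_1}{(\Id-\G)^{-1}}{X_3}$. Writing $(\Id-\G)^{-1}=\Id+(\Id-\G)^{-1}\G$ gives
\[
\A=\braket{Y_1}{X_3}+\brabarket{Y_1}{(\Id-\G)^{-1}}{\G X_3}.
\]
Expanding once more isolates $\brabarket{Y_1}{\G}{X_3}$, and the remainder is $\brabarket{Y_1\G}{(\Id-\G)^{-1}}{\G X_3}$. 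By the definitions of $\aleph_i$ and $\eta_i$, this remainder equals
\[
\brabarket{(\aleph_1,\ \aleph_2)}{(\Id-J^{-1}\boldsymbol{\mathcal{K}})^{-1}}{\begin{pmatrix}\eta_1\\ \eta_2\end{pmatrix}}.
\]
Multiplying by $\Pf(J-\boldsymbol{\mathcal{K}})$ and applying \cite[Lemma 6.10]{zeng2025stationary} converts $\Pf(J-\boldsymbol{\mathcal{K}})$ times this bilinear form into
\[
\Pf(J-\boldsymbol{\mathcal{K}})-\Pf\big(J-\boldsymbol{\mathcal{K}}-\text{(rank two)}\big),
\]
where the rank-two term is built from $\ketbra{\eta_2,\,-\eta_1}{\aleph_1\ \aleph_2}$ and its antisymmetric partner. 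I must check carefully that the antisymmetric completion has exactly the sign pattern displayed. This depends on the antisymmetry $\boldsymbol{\mathcal{K}}^T=-\boldsymbol{\mathcal{K}}$ and on the relation between $\G X$ and $J\G X$. Those are the same facts that gave the vanishing of the off-diagonal brackets, for example $\braket{Y_1}{(\Id-\G)^{-1}X_2}=0$.

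I will repeat this verbatim for $\B,\C,\D,\M,\NN$, with the pairs $(\aleph,\theta)$, $(\psi,\eta)$, $(\psi,\theta)$, $(\aleph,\zeta)$ and $(\Theta,\eta)$. For $\M$ and $\NN$ the leading term is $1-\braket{Y_i}{X_i}-\cdots$, so the overall signs of the two Pfaffian terms flip relative to those for $\A$ through $\D$. The factor $\Gamma(u+t)$ appears in the $\M$, $\C$ and $\D$ identities because $Y_2$ and $X_1$ carry the $u$-residue data $\mathsf A$, which will blow up like $\Gamma(u+t)^{-1}$ as $u\to -t$. Following Remark \ref{modifed}, I will insert the constant $c_0=\Gamma(u+t)$ into the rank-two perturbation rather than outside it. This is a purely algebraic rescaling and is valid for any constant $c_0$.

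The identities are first obtained for $N\geq N_0$ with $N_0$ large. For such $N$ the bound on $|\K|$ from the proof of Lemma \ref{lemma:pfaffian_convergence} (the analogue of \eqref{eq: upper bound of K}, with the extra $\beta$ and $u$ gamma factors, which are harmless because $d>\max\{u,\beta,0\}$) gives $\|J^{-1}\boldsymbol{\mathcal{K}}\|<1$. Hence the resolvent is a convergent Neumann series and every bracket above is finite, because $\mathsf A,\mathsf B,\mathsf C,\mathsf D$ and their derivatives decay like $e^{-dX}$ on $(\tau,\infty)$. The right-hand sides involve only Fredholm Pfaffians and finite inner products. So, exactly as in the proof of Lemma \ref{lem: difference of pfaffians}, I will show that both sides are real-analytic in $N\in(N_0,\infty)$, by differentiating under the contour integrals with the dominated bounds \eqref{eq: bound gamma}--\eqref{eq: UBofKernel}. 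The right-hand side then furnishes the analytic continuation, and the identity extends to all $N\geq 3$.

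The main obstacle is the sign and index bookkeeping. I have to verify that each bracket $\brabarket{Y_i\G}{(\Id-\G)^{-1}}{\G X_j}$ matches the specific rank-two antisymmetric perturbation with the vectors in the stated order (for example $\eta_2,-\eta_1$ against $\aleph_1,\aleph_2$). To settle this, I will first write $\bra{Y_i}\G=\bra{Y_i}J^{-1}\boldsymbol{\mathcal{K}}$ and $\G\ket{X_j}$ explicitly in components. Then I will check the $2\times 2$ block of \cite[Lemma 6.10]{zeng2025stationary} against one case, namely $\A$, by expanding both sides to first order in the perturbation.
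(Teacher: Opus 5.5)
Your algebraic skeleton is the paper's: the second-order expansion $(\Id-\G)^{-1}=\Id+\G+\G(\Id-\G)^{-1}\G$, the rank-two identity of \cite[Lemma 6.10]{zeng2025stationary} with the constant $\Gamma(u+t)$ inserted as in Remark~\ref{modifed}, and continuation in $N$ out of the Neumann regime. The gap is your justification that every bracket is finite ``because $\AAA,\BBB,\CCC,\DDD$ and their derivatives decay like $e^{-dX}$''. Only $\BBB$ and the contour part of $\DDD$ decay like $e^{-dX}$. The others do not: $\AAA\propto e^{-uX}$, $\DDD$ carries the explicit residue term $\propto e^{-uY}$, and $\CCC\propto e^{-\beta X}$ \emph{grows} for $\beta<0$, which the lemma allows. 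As a result, $\braket{Y_2}{X_3}$ and $\braket{Y_3}{X_3}$ contain $\int_\tau^\infty e^{-(u+\beta)X}\,dX$ with a nonzero coefficient. This integral diverges whenever $u+\beta<0$, for example with $u$ near $1/N$ and $\beta$ near $-t$, both of which the hypotheses permit. In that range $\C$ and $\NN$ are not defined as resolvent brackets at all, and no choice of $d$ or $N_0$ repairs this, since these scalars involve neither a contour nor $N$ in the exponent. Your Neumann-series argument therefore proves the identities only where everything converges, e.g.\ for $\beta\in(0,t)$.

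The missing step is exactly what the paper's proof adds to the argument of Lemma~\ref{lem: difference of pfaffians}: analytic continuation in $\beta$ to all of $(-t,t)$. Start from the identities for $\beta>0$. Evaluate the elementary $X$-integrals in the scalar terms $\braket{Y_i}{X_j}$ and $\brabarket{Y_i}{\G}{X_j}$ in closed form. The resulting factor $e^{-\tau(u+\beta)}/(u+\beta)$ is harmless because $\CCC$ carries $1/\Gamma(u+\beta)$. Factors $1/(\beta-W)$ are harmless if you take $d>\max\{u,|\beta|\}$, so that $W=\beta$ stays to the right of the contour. Then use that $\K$ and its derivatives are bounded by $Ce^{-d(X+Y)}$, and all the $\G$-smoothed functions $\aleph_i,\psi_i,\Theta_i,\zeta_i,\eta_i,\theta_i$ by $Ce^{-dX}$, locally uniformly in $\beta\in(-t,t)$. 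Every Fredholm Pfaffian and every evaluated scalar on the right-hand side is then analytic in $\beta$ there, so the identities persist by analytic continuation. This is also the real reason for expanding the resolvent twice rather than once: the rank-two perturbations then contain only decaying smoothed vectors, while the non-decaying pieces $\CCC$ and $e^{-uY}$ are confined to explicitly computable scalars. Finally, if you carry $c_0=\Gamma(u+t)$ through consistently, the $\C$ and $\D$ identities come out with $\Pf(J-\boldsymbol{\mathcal{K}})\Gamma(u+t)\C$ and $\Pf(J-\boldsymbol{\mathcal{K}})\Gamma(u+t)\D$ on the left. This matches the $\M$ identity and how $\widehat{\C},\widehat{\D}$ are used later; the displayed statement omits this factor.
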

\begin{proof}
    The RHS of the above equations are analytic continuations of the LHS for all $N \in \R_{\geq 3}$. The proof follows the same arguments as in Lemma \ref{lem: difference of pfaffians}. We see that $\Pf\left( J - \boldsymbol{\mathcal{K}} \right)(\M\NN - \A\D + \B\C)$ can be analytically extended to $\beta \in (-t,t)$ because for all $\beta \in (-t,t)$, we have the following upper bounds for each function:
    \begin{equation}
        \begin{aligned}
            &|\K(X,Y)|, \, |\partial_Y\K(X,Y)|,\, |\partial_X\K(X,Y)|, \, |\partial_X\partial_Y\K(X,Y)|Ce^{-d(X+Y)}\\
            &|\aleph_i(X)|, \,|\psi_i(X)|,\, |\Theta_i(X)|,\,|\zeta_i(X)|, \,|\eta_i(X)|, \,|\theta_i(X)| \leq Ce^{-dX}
        \end{aligned}
    \end{equation}
    for all $X,Y \geq \tau$ and for some $C>0$ independent of $X,Y$. Therefore, all Fredholm Pfaffians and $\braket{\cdot}{\cdot}$ (after evaluation) are analytic for $\beta \in (-t,t)$.
\end{proof}

To preview the analysis, section \ref{section: analytic continuation} will be devoted to prove the $u\rightarrow -t$ limit of all formulas on the RHS of \eqref{eq: M and NN} and \eqref{eq:A,B,C,D}.

\section{Difference in kernel decompositions between log-gamma polymer and Geometric Last Passage Percolation}\label{section: difference in models}
We now explain the intuition behind this kernel decomposition and why, in the log–gamma case, a substantially more complicated analysis is required. We begin with the geometric last passage model, where the decomposition is simpler. Let $\mathbf{1}$ denote the matrix kernel with all entries $1$. For example, according to \cite[(6.37)]{zeng2025stationary}, we have
\begin{equation}
    \begin{aligned}
    \begin{pmatrix}
        K_{11}^{geo} & K_{12}^{geo}\\
        K_{21}^{geo} & K_{22}^{geo}
    \end{pmatrix}(k,\ell) \approx \begin{pmatrix}
        s^{k+\ell} & s^k r^\ell\\
        s^\ell r^k & r^{k+\ell}
    \end{pmatrix} + \mathcal{O}((\mathring{t})^{k+\ell})\mathbf{1},
    \end{aligned} 
\end{equation}
where $s<1,$ $r \in (0,1/s),$ and $\mathring{t} \rightarrow 1/s$ to achieve the stationary geometric LPP.
For the log-gamma case,
\begin{equation}
    \begin{aligned}
    \begin{pmatrix}
        \check{\K} & -\partial_Y \check{\K}\\
        -\partial_X \check{\K} & \partial_X\partial_Y \check{\K}
    \end{pmatrix}(X,Y) \approx \begin{pmatrix}
        e^{-t(X+Y)} & -te^{-t(X+Y)}\\
        -te^{-t(X+Y)} & t^2e^{-t(X+Y)}
    \end{pmatrix} + \mathcal{O}(e^{-u(X+Y)})\mathbf{1} + \mathcal{O}(e^{-\beta(X+Y)})\mathbf{1}.
    \end{aligned}
\end{equation}
The parameters are related by
\begin{equation}
    \beta = - \log r, \quad t = -\log s, \quad u = -\log\mathring{t}
\end{equation}
and the ranges correspond via
\begin{equation}
    \begin{aligned}
        s<1 &\quad\Leftrightarrow \quad t > 0, \\
        \frac{1}{s}>r>s &\quad\Leftrightarrow \quad -t< \beta < t.
    \end{aligned}
\end{equation}
In the geometric model we take the limit $\mathring{t}\rightarrow 1/s$, while in the log–gamma model we take $u\rightarrow -t.$ In both cases, the terms $\mathcal{O}((\mathring{t})^{k+\ell})\mathbf{1}$ and $\mathcal{O}(e^{-u(X+Y)})\mathbf{1}$ become exploding terms that obstruct the convergence of the Fredholm Pfaffian.  The key simplifying feature of the geometric kernel is its non-symmetric kernel structure. Specifically, $K_{11}^{geo}$ decays exponentially, while $K_{22}^{geo}$ grows exponentially. Since $r$ cannot exceed $1/s$, the growth in $K_{22}^{geo}$ is precisely compensated by the decay in $K_{11}^{geo}$. This imbalance allows the explosive term $r^{k},r^{\ell}$ within the kernel to be controlled.

The situation changes fundamentally in the log–gamma case. The kernel is fully symmetric: permuting the parameters $t,\beta, u$ does not change the kernel. Consequently, we can no longer use the decay in one entry to compensate for the growth in another. Instead, all four entries of the kernel either decay or grow simultaneously. Thus, the potential exponential growth from $\mathcal{O}(e^{-\beta(X+Y)})\mathbf{1}$ and $\mathcal{O}(e^{-u(X+Y)})\mathbf{1}$ must be isolated explicitly to guarantee convergence of the Fredholm Pfaffian. Concretely, in \eqref{eq: two param beta small decomposition} , $\mathsf{A},\mathsf{A}'$ collect the $u-$poles and $\mathsf{C},\mathsf{C}'$ collect the $\beta-$poles. As a result, The rank-two representations
\begin{equation}
    \mathcal{O}(e^{-u(X+Y)})\mathbf{1} = \ketbra{X_1}{Y_1} + \ketbra{X_2}{Y_2}, \quad \mathcal{O}(e^{-\beta(X+Y)})\mathbf{1} = \ketbra{X_3}{Y_3} + \ketbra{X_4}{Y_4}
\end{equation} make these divergent contributions explicit and allow them to be factored out of the Pfaffian. 
 
This analysis also clarifies why such a decomposition, \eqref{eq: two param beta small decomposition} and \eqref{eq: two param beta small decomposition for kernel}, is unnecessary in the two-parameter stationary case when $\beta > t$ as $\mathcal{O}(e^{-\beta(X+Y)})\mathbf{1}$ decays exponentially and can remain in the kernel without affecting Pfaffian convergence.

\section{Analytic continuation of two parameter stationary formula when $\beta <t$}\label{section: analytic continuation}

\subsection{Analytic continuation of kernel and Fredholm Pfaffian}
The following two lemmas prove limits of the central kernel, i.e., $\boldsymbol{\mathcal{K}}$, and $\Pf\left(J - \boldsymbol{\mathcal{K}}\right)$.
\begin{lemma}\label{lemma:ptws_convergence}
Fix any $\varepsilon \ll 1.$
For any $t \in (\frac{1}{N},\frac{1}{2}),$ $\alpha> t,$ $\beta \in (-t,0],$ $X,Y \in \R,$ the kernel $\boldsymbol{\K}(X,Y)$ is analytic for $u \in (-t-\varepsilon, -t+\varepsilon)$ with the following limit:
    \begin{equation}
        \lim_{u\rightarrow -t} {\boldsymbol{\mathcal{K}}}(X,Y) = \widehat{\boldsymbol{\mathcal{K}}}(X,Y).
    \end{equation}
\end{lemma}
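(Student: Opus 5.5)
The plan follows the proof of Lemma \ref{lemma: ptws_convergence_one_param}, with the extra care that $\beta$ may now be non-positive. It suffices to treat the scalar entry $\K(X,Y)$; the entries $\partial_X\K$, $\partial_Y\K$ and $\partial_X\partial_Y\K$ are handled the same way, the integrand merely picking up factors $Z$, $W$ or $ZW$.

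\textbf{Pole locations.} By construction in \eqref{eq: two param beta small decomposition for kernel}, the residues at $Z,W\in\{-u,-\beta\}$ have been removed into the rank-one pieces $\ketbra{X_i}{Y_i}$. The remaining $u$-dependent and $\beta$-dependent poles of the integrand of $\K$ are therefore at $-u-1-i$ and $-\beta-1-i$ for $i\ge 0$, together with $-t-i$ and $-\alpha-i$. The contour is $\I\R-d$ with $\max\{u,\beta,0\}<d<t$ (after continuation we only need $0<d<t$ and $d>1/N$).
\begin{itemize}
\item For $u\in(-t-\varepsilon,-t+\varepsilon)$ we have $-u-1<t+\varepsilon-1<-d$, since $t<\tfrac12$ and $\varepsilon\ll1$.
\item For $\beta\in(-t,0]$ we have $-\beta-1<t-1<-d$.
\end{itemize}
Hence no pole crosses the contour as $u$ moves through this interval. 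Away from these poles the integrand is a product of meromorphic Gamma ratios, so it is analytic in $u$ pointwise on the contour.

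\textbf{Uniform domination.} Parametrize $Z=-d+\I v$ and $W=-d+\I w$, and apply Stirling's formula as in \eqref{eq: bound gamma}.
\begin{itemize}
\item Each ratio $\Gamma(c+Z)/\Gamma(c-Z)$ with $c\in\{u,\beta,t\}$ is bounded by $C(1+|v|)^{-2d}$, uniformly for $u$ near $-t$ and fixed $\beta$. Stirling only needs $c\pm Z$ to stay a fixed distance from the poles, which holds on the shifted line.
\item $H_\alpha(Z)$ contributes $(1+|v|)^{-2d(N-2)}$.
\item $\Gamma(-2Z)$ contributes $(1+|2v|)^{2d-1/2}e^{-\pi|v|}$.
\item The ratio of sines satisfies the bound $C_3e^{-\pi|v+w|+\pi|v-w|}$ from \eqref{eq:sin-bound}.
\end{itemize}
Combining these gives a bound of the form \eqref{eq: UBofKernel}:
\begin{equation*}
C e^{-d(X+Y)}(1+|v|)^{-2d(N+1)}(1+|w|)^{-2d(N+1)}(1+|2v|)^{2d-\frac12}(1+|2w|)^{2d-\frac12}e^{-\pi|v+w|}.
\end{equation*}
The exponential factors $e^{-\pi|v|}e^{-\pi|w|}e^{\pi|v-w|}$ leave $e^{-\pi|v+w|}$, and the polynomial decay $2d(N+1)>2$ (from $d>1/N$) makes this integrable in $(v,w)$. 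The same holds after multiplying by $|v|$, $|w|$ or $|vw|$ for the derivative entries.

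\textbf{Conclusion.} With this $u$-independent integrable dominant, differentiation in $u$ (or Morera's theorem on complex $u$ in a small neighborhood) passes under the integral, which gives analyticity. Dominated convergence then gives the limit $u\to-t$. In the limit the factor $\Gamma(u+Z)/\Gamma(u-Z)$ becomes $\Gamma(-t+Z)/\Gamma(-t-Z)$, and likewise for $W$, which yields exactly $\widehat{\boldsymbol{\mathcal{K}}}$.

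\textbf{Main obstacle.} The delicate step is checking that the Stirling bound for $\Gamma(u+Z)/\Gamma(u-Z)$ stays uniform when $u$ is negative and close to $-t$. The concern is that $u+Z=u-d+\I v$ passes near the removed pole $0$ at small $|v|$. The removed poles are exactly the pole at $Z=-u$ and its $\Gamma$-counterpart $u+Z=0$, so the only remaining zeros of $u+Z$ that matter are at $-u-1-i$, which stay at positive distance from the line. Thus on compact $v$-ranges the ratio is continuous and bounded, and Stirling controls the large-$|v|$ range.
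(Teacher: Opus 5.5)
Your proposal is correct and follows the paper's route: the paper proves this lemma by deferring to the proof of Lemma~\ref{lemma: ptws_convergence_one_param}. That proof checks that the poles $-u-1-i$ and $-\beta-1-i$ stay to the left of $\I\R-d$, bounds the integrand uniformly in $u$ via Stirling as in \eqref{eq: UBofKernel}, and concludes by dominated convergence. Your ``main obstacle'' is not a real difficulty: on the contour $u+Z$ has real part $u-d\in(-1,0)$, bounded away from the poles of $\Gamma$, which is exactly the conclusion you reach.
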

\begin{proof}
    The proof is completely analogous to the proof of Lemma \ref{lemma: ptws_convergence_one_param}.
\end{proof}

\begin{lemma}\label{lem: convergence of two param pfaffian finite}
Fix any $\tau \in \R$ and any $\varepsilon \ll 1.$ For any  $t \in (\frac{1}{N},\frac{1}{2}),$ $\alpha> t,$ $\beta \in (-t,t),$ we have that $\Pf(J-{\boldsymbol{\mathcal{K}}})$ is analytic for $u \in (-t-\varepsilon, -t + \varepsilon)$ with the following well-defined limit:
    \begin{equation}
        \lim_{u\rightarrow -t} \Pf(J-{\boldsymbol{\mathcal{K}}})_{\mathbb{L}^2(\tau,\infty)} = \Pf(J-{\widehat{\boldsymbol{\mathcal{K}}}})_{\mathbb{L}^2(\tau,\infty)}.
    \end{equation}
\end{lemma}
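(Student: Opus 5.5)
The plan mirrors the proof of Lemma~\ref{lemma:pfaffian_convergence}: we get uniform exponential bounds on the four entries of $\boldsymbol{\mathcal{K}}$ for $u$ near $-t$, and then pass to the limit in the Fredholm Pfaffian series by Hadamard's inequality and dominated convergence. Analyticity in $u$ comes along the way, because each term of the series is analytic and the series converges locally uniformly.

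\textbf{Step 1 (uniform kernel bounds).} Write $Z=-d+\I v$, $W=-d+\I w$ with $\max\{\beta,0\}<d<t$ and $d>1/N$. After the kernel decomposition of Section~\ref{section: 2 param decompose}, the only $u$- and $\beta$-dependent poles of the integrand of $\K$ lie at $-u-1-i$ and $-\beta-1-i$. For $u\in(-t-\varepsilon,-t+\varepsilon)$ and $\beta\in(-t,t)$ these sit left of $-d$, since $-u-1<t+\varepsilon-1<-d$ when $t<1/2$ and $\varepsilon$ is small; the same holds for $\beta$. The poles at $-t-i$ and $-\alpha-i$ are also to the left. Hence no pole crosses the contour as $u\to-t$.

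We then bound each factor of the integrand:
\begin{itemize}
\item By Stirling, $|\Gamma(a+Z)/\Gamma(a-Z)|\le C(1+|v|)^{-2d}$ for $a\in\{u,\beta,t,\alpha\}$, uniformly for $u$ in a compact set avoiding the poles. For $a=u$ near $-t$, $\Gamma(u\pm Z)$ stays away from its poles because $\mathrm{Re}(u\pm Z)$ is bounded away from nonpositive integers on the contour.
\item $|\Gamma(-2Z)|\lesssim(1+|2v|)^{2d-1/2}e^{-\pi|v|}$.
\item The sine ratio is bounded by $C e^{-\pi|v+w|+\pi|v-w|}$, as in \eqref{eq:sin-bound}.
\end{itemize}
Multiplying, we recover a bound of the same form as \eqref{eq: UBofKernel}:
\[
|\text{integrand}|\le C e^{-d(X+Y)}(1+|v|)^{-2d(N+1)}(1+|w|)^{-2d(N+1)}(1+|2v|)^{2d-1/2}(1+|2w|)^{2d-1/2}e^{-\pi|v+w|}.
\]
The derivatives $\partial_X$, $\partial_Y$ and $\partial_X\partial_Y$ contribute extra factors $|Z|$, $|W|$, $|ZW|$, which this bound still absorbs. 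Integrating gives
\[
|\K|,\ |\partial_X\K|,\ |\partial_Y\K|,\ |\partial_X\partial_Y\K|\le C e^{-d(X+Y)}
\]
uniformly in $u$, together with pointwise convergence to $\widehat{\boldsymbol{\mathcal{K}}}$ (Lemma~\ref{lemma:ptws_convergence}). That lemma is stated for $\beta\le0$, but the same proof covers $\beta\in(-t,t)$ provided $d>\beta$.

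\textbf{Step 2 (series).} Expand
\[
\Pf(J-\boldsymbol{\mathcal{K}})=\sum_n\frac{(-1)^n}{n!}\int_{(\tau,\infty)^n}\Pf\bigl[\boldsymbol{\mathcal{K}}(x_i,x_j)\bigr]\,dx.
\]
Hadamard's bound controls the $n$-th Pfaffian by $(2n)^{n/2}C^n\prod_i e^{-2dx_i}$, uniformly in $u$. This bound is summable after integration over $(\tau,\infty)^n$. Dominated convergence then gives the limit, and uniform convergence of analytic summands gives analyticity in $u$.

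\textbf{Main obstacle.} The only delicate point is checking uniformity of the Gamma-ratio bounds for $u$ near the negative value $-t$, including the factors $\Gamma(u-Z)$, and confirming no pole crosses $-d+\I\R$. This rests on $t<1/2$, so that $-u-1<-d$ persists. Once that is verified, the argument follows Lemma~\ref{lemma:pfaffian_convergence} verbatim.
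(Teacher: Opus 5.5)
Your proposal is correct and follows the same route as the paper. The paper records the uniform bound $|\K|,|\partial_X\K|,|\partial_Y\K|,|\partial_X\partial_Y\K|\le \frac{C}{2Nd}e^{-d(X+Y)}$ for $u\in(-t-\varepsilon,-t+\varepsilon)$ (obtained as in \eqref{eq: UBofKernel}) and then applies Hadamard's bound and dominated convergence as in Lemma~\ref{lemma:pfaffian_convergence}. Your extra checks make explicit what the paper leaves implicit. These are that no pole crosses $-d+\I\R$ (the poles at $-u$ and $-\beta$ are still poles of the integrand but stay to the right, while those at $-u-1-i$, $-\beta-1-i$ stay to the left), and that Lemma~\ref{lemma:ptws_convergence} extends to $\beta\in(0,t)$ once $d>\beta$.
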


\begin{proof}
The proof is completely analogous to the proof of Lemma \ref{lemma:pfaffian_convergence}. Specifically, there exists $C>0$ independent of $X,Y,N$ such that for all $u \in (-t-\varepsilon, -t+\varepsilon)$,
\begin{equation}
    |\K(X,Y)|, |\partial_X\K(X,Y)|,|\partial_Y\K(X,Y)|,|\partial_X\partial_Y\K(X,Y)|\leq \frac{C}{2Nd}e^{-d(X+Y)}
    \end{equation}
    for all $X,Y \geq \tau.$
    Then we apply Hadamard's bound.
\end{proof}

\subsection{Analytic continuation of $\M$}
For the following Lemma, recall definitions of $H_{\alpha},$ $F$, and $Q$ in \eqref{eq: definitions of H,F,Q} and definitions of $\A_i,\B_i,\C_i,\D_i,\M_i,\NN_i$ for $i=1,2$ in \eqref{eq: def of A}, \eqref{eq: def of B}, \eqref{eq: def of C}, \eqref{eq: def of D}, \eqref{eq: def of M}, \eqref{eq: def of N}.

This lemma proves limits of all constant terms in Lemma \ref{lem: prelimit two param formula}.
\begin{lemma}\label{lem: finite limit for constants}
Fix any $\tau \in \R$ and any $\varepsilon \ll 1.$ For any fixed $t \in (\frac{1}{N},\frac{1}{2}),$ $\alpha> t,$ $\beta \in (-t,t),$ the following expressions are analytic for $u \in (-t-\varepsilon, -t + \varepsilon)$ with the well-defined limits:
    \begin{equation}
    \begin{aligned}
        &\lim_{u\rightarrow -t} \Gamma(u+t)\left(1 - \braket{\mathsf{B}}{\mathsf{A}'} + \braket{\mathsf{B}'}{\mathsf{A}}\right) = \M_1.\\
        &\lim_{u\rightarrow -t} \Gamma(u+t)\left(- \brabarket{\mathsf{B}}{\partial_X \K}{\mathsf{A}'} + \brabarket{\mathsf{B}'}{\K}{\mathsf{A}'} + \brabarket{\mathsf{B}}{\partial_X \partial_Y \K}{\mathsf{A}} - \brabarket{\mathsf{B}'}{\partial_Y \K}{\mathsf{A}}\right) = \M_2.\\
        &\lim_{u\rightarrow -t} 1 - \braket{\DDD}{\CCC'} + \braket{\DDD'}{\CCC} = \NN_1.\\
        &\lim_{u\rightarrow -t} - \brabarket{\DDD}{\partial_X \K}{\CCC'} + \brabarket{\DDD'}{\K}{\CCC'} + \brabarket{\DDD}{\partial_X \partial_Y \K}{\CCC} - \brabarket{\DDD'}{\partial_Y \K}{\CCC} = \NN_2.\\
        &\lim_{u\rightarrow -t} \braket{\BBB}{\CCC'} - \braket{\BBB'}{\CCC} = \A_1.\\
        &\lim_{u\rightarrow -t}  \brabarket{\BBB}{\partial_X \K}{\CCC'} - \brabarket{\BBB'}{\K}{\CCC'} - \brabarket{\BBB}{\partial_X \partial_Y \K}{\CCC} + \brabarket{\BBB'}{\partial_Y \K}{\CCC} = \A_2.\\
        &\lim_{u\rightarrow -t}  \braket{\BBB}{\DDD'} - \braket{\BBB'}{\DDD} = \B_1.\\
        &\lim_{u\rightarrow -t}  \brabarket{\BBB}{\partial_X \K}{\DDD'} - \brabarket{\BBB'}{\K}{\DDD'} - \brabarket{\BBB}{\partial_X \partial_Y \K}{\DDD} + \brabarket{\BBB'}{\partial_Y \K}{\DDD} = \B_2.\\
        &\lim_{u\rightarrow -t} \Gamma(u+t) \left( - \braket{\AAA}{\CCC'} + \braket{\AAA'}{\CCC}\right) = \C_1.\\
        &\lim_{u\rightarrow -t} \Gamma(u+t)\left(- \brabarket{\AAA}{\partial_X \K}{\CCC'} + \brabarket{\AAA'}{\K}{\CCC'} + \brabarket{\AAA}{\partial_X \partial_Y \K}{\CCC} - \brabarket{\AAA'}{\partial_Y \K}{\CCC}\right) = \C_2.\\
        &\lim_{u\rightarrow -t} \Gamma(u+t) \left(- \braket{\AAA}{\DDD'} + \braket{\AAA'}{\DDD}\right) = \D_1.\\
        &\lim_{u\rightarrow -t} \Gamma(u+t) \left(- \brabarket{\AAA}{\partial_X \overline{L}}{\DDD'} + \brabarket{\AAA'}{\overline{L}}{\DDD'} + \brabarket{\AAA}{\partial_X \partial_Y \overline{L}}{\DDD} - \brabarket{\AAA'}{\partial_Y \overline{L}}{\DDD}\right) = \D_2.\\
    \end{aligned}
    \end{equation}
\end{lemma}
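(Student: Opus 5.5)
Each of the twelve limits is handled by the template of Lemma~\ref{lem: limit of the constant for one param} and Lemma~\ref{lem: limits of function for one param}: write the bracket as an explicit contour integral, isolate any pole that crosses the contour as $u\to -t$, and apply dominated convergence.

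\textbf{Step 1: collapse the $X$-integrations.} Every inner product $\braket{\cdot}{\cdot}$ over $(\tau,\infty)$ pairs an exponential with an exponential. The factors $\mathsf{A},\mathsf{A}'$ carry $e^{-Xu}$, $\mathsf{C},\mathsf{C}'$ carry $e^{-X\beta}$, and $\mathsf{B},\mathsf{D},\K$ carry $e^{XW}$, $e^{XZ}$ in their contour representations. I will use Fubini, justified by the bounds $e^{-d(X+Y)}$ of Lemma~\ref{lem: convergence of two param pfaffian finite} with $\mathrm{Re}(W)=-d<-\max\{u,\beta,0\}$, to evaluate $\int_\tau^\infty e^{X(W-u)}dX = -e^{\tau(W-u)}/(W-u)$. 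The combination $-\braket{\mathsf{B}}{\mathsf{A}'}+\braket{\mathsf{B}'}{\mathsf{A}}$ then produces the factor $\frac{u+W}{u-W}e^{-\tau(u-W)}$, exactly as in Lemma~\ref{lem: limit of the constant for one param}.

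The double brackets with $\K$ are handled the same way. Integrating over $X$ and $Y$ turns $e^{XZ+YW}$ paired with $\mathsf{A}$ and $\mathsf{B}$ into rational factors such as $\frac{(Z-V)}{(Z+V)}$ and $\frac{(W-t)}{(W+t)}$, once $u=-t$ is substituted. This yields the triple and quadruple integrals $\M_2,\NN_2,\dots,\D_2$. For $\mathsf{D}$, which contains an extra explicit residue term $e^{-uY}(\cdots)$, I treat that term separately; it produces the constant pieces, e.g.\ the additive constant in $\B_1$.

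\textbf{Step 2: locate the poles that cross the contour.} The collapsed integrands acquire new poles at $W=u$ from pairing with $\mathsf{A}$, and at $W=\beta$ or $Z=\beta$ from pairing with $\mathsf{C}$. As $u\to -t$ the pole $W=u$ crosses $\I\R-d$. As in Lemma~\ref{lem: limit of the constant for one param}, I first deform the contour to pass to the left of $u$, or equivalently subtract the residue there.
\begin{itemize}
\item In $\M_1$ this residue is $1$. Multiplied by $\Gamma(u+t)$, it cancels against the ``$1$'' in $1-\braket{\cdot}{\cdot}$, and what remains is the integral with the factor $\Gamma(u+t)\mathsf{A}$.
\item $\Gamma(u+t)F(-u)=\Gamma(u+t)\frac{\Gamma(t-u)\Gamma(\beta-u)}{\Gamma(t+u)\Gamma(\beta+u)}$ is regular at $u=-t$, since $\Gamma(u+t)/\Gamma(t+u)=1$. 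It converges to $\Gamma(2t)\frac{\Gamma(\beta+t)}{\Gamma(\beta-t)}$, which is the prefactor of $\M_1$, $\C_i$ and $\D_i$.
\item For the brackets involving $\mathsf{C}$, the pole at $\beta$ stays on the correct side because $\beta<d$ throughout. The prefactor $\frac{\Gamma(u-\beta)}{\Gamma(u+\beta)}\to\frac{\Gamma(-t-\beta)}{\Gamma(-t+\beta)}$ is finite for $\beta\in(-t,t)$, which gives $c_0$ and $c_1$.
\end{itemize}
In the double brackets, any residue at $Z=u$ or $W=u$ cancels because the $\Gamma(u\pm\cdot)$ factors in $\K$ absorb it, as observed in Lemma~\ref{lem: limits of function for one param}. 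So no residue subtraction is needed there beyond what is built into the limiting formulas.

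\textbf{Step 3: domination and the main obstacle.} With the contours fixed at $\I\R-d$, where $\max\{\beta,0\}<d<t$ and $d>1/N$, the Stirling estimate \eqref{eq: bound gamma} and the sine bounds from Lemma~\ref{lemma: ptws_convergence_one_param} give an integrable majorant, uniform for $u\in(-t-\varepsilon,-t+\varepsilon)$, of the form
\[
\prod_j (1+|w_j|)^{-2d(N+1)}(1+|2w_j|)^{2d-\frac12}\,e^{-\pi|v+w|}.
\]
The rational factors $\frac{1}{Z+V}$, $\frac{1}{W+t}$, $\frac{1}{W-\beta}$ stay bounded away from their singularities on these contours. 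Analyticity in $u$ then follows from analyticity of the integrand together with this uniform bound, and the limits follow by dominated convergence.

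I expect the main obstacle to be Step~2 for the mixed terms $\C_i$ and $\D_i$. There the $u$-pole and the $\beta$-pole interact, $\mathsf{D}$ carries its own explicit $u$-residue term, and the sign bookkeeping is delicate. To resolve it, I will first verify the residue structure of $\D_1$ and $\B_1$ by hand, checking that the residues at $W=u$ cancel between the contour part and the explicit term of $\mathsf{D}$. The remaining items follow by the same computation.
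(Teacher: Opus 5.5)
This is essentially the paper's argument. You evaluate the $X$/$Y$-integrals in closed form while $u>0$, then remove the $W=u$ pole that crosses $\I\R-d$: its residue $1$ cancels the constant in $\M_1$, and the same residue coming from the explicit $e^{-uY}$ term of $\DDD$ gives the additive constant in $\B_1$. You then absorb $1/(u-W)$ into $\Gamma(u-W)$ in the $\K$-brackets and finish with dominated convergence.

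There are two small corrections. First, in $\D_1$ nothing needs to cancel. The contour part of $\DDD$ has no pole at $W=u$, because $\Gamma(u+W)/\Gamma(u-W)$ vanishes there. The explicit part drops out identically, because it and $\AAA$ are both multiples of $e^{-uX}$. Second, the condition that keeps the genuine $W=\beta$ pole of $\NN_1$ to the right of the contour is $\beta>-d$, not $\beta<d$; so take $|\beta|<d<t$.
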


\begin{proof}
We need to multiply $\Gamma(u+t)$ to all terms that contain $\mathsf{A}$ and $\mathsf{A}'$.
We prove $\M$ as an example. When $u >0$, direct evaluation gives
    \begin{equation}
    \begin{split}
            &-\braket{Y_1}{X_1} = - \braket{\BBB}{\AAA'} + \braket{\BBB'}{\AAA} \\
            &=\!\!\! \int\limits_{\I \R - d} \!\!\! \frac{dW}{2\pi\I}   \frac{e^{-\tau(u-W)}(u+W)}{(u-W)}\frac{\Gamma(t-u)}{\Gamma(t+u)}\frac{\Gamma(\beta-u)}{\Gamma(\beta+u)} 
            H_\alpha(-u)H_\alpha(W)F(W) \Gamma(-2W)\frac{\Gamma(1-u+W)}{\Gamma(1-u-W)}.
    \end{split}
    \end{equation}
    Similar to the product stationary case in Lemma \ref{lem: limit of the constant for one param}, a new pole $W = u$ is introduced to both integrands due to the inner product. The expression is not analytic when we take $u \rightarrow -t$ as the pole will cross the contour. Thus, we move the contour to enclose $u$ and then subtract the residue of the extra pole at $W=u$ so that the expression is analytic in $u \in (-t-\varepsilon,-t+\varepsilon)$. By direct computation, we find that $\text{Res}(W=u) = 1$. Let $W = -d+\I w$. Using the following upper bound on the integrand
    \begin{equation}\label{eq: some upper bound example}
    Ce^{-d \tau} (1+|w|)^{-2d(N+1)}(1+|2w|)^{2d-\frac12}e^{-2d - |w|\pi}
    \end{equation}
    for all $u \in (-t-\varepsilon,-t+\varepsilon)$, we conclude that
    \begin{equation}
    \begin{split}
            &\lim_{u\rightarrow -t} \Gamma(u+t)\left(1 - \braket{\BBB}{\AAA^{\prime}} + \braket{\BBB^{\prime}}{\AAA}\right)=\M_1.\\
    \end{split}
    \end{equation}
    We remark that $\braket{\BBB}{\DDD'} - \braket{\BBB'}{\DDD}$ also involves the $W=u$ pole that is intentionally added and needs to be removed.
    It is clear that
    \begin{equation}\label{eq: M2 expansion}
        -\brabarket{Y_1}{\G}{X_1} = - \brabarket{\mathsf{B}}{\partial_X \K}{\mathsf{A}'} + \brabarket{\mathsf{B}'}{\K}{\mathsf{A}'} + \brabarket{\mathsf{B}}{\partial_X \partial_Y \K}{\mathsf{A}} - \brabarket{\mathsf{B}'}{\partial_Y \K}{\mathsf{A}}.
    \end{equation}
    We show the limit of the first term in \eqref{eq: M2 expansion}. For simpler notation, we set
    \begin{equation}
        \begin{aligned}
            B(X) = \int\limits_{\I\R -d} \frac{dV}{2\pi\I} e^{XV}\text{Int}_B(V), \quad -\partial_X \K(X,Y) = \!\!\!\int\limits_{\I\R-d}\!\!\! \frac{dZ}{2\pi\I}\!\!\!\int\limits_{\I\R-d} \!\!\!\frac{dW}{2\pi\I} (-Z)e^{XZ}e^{YW}\frac{\Gamma(u+W)}{\Gamma(u-W)}\text{Int}_{\K}(Z,W),
        \end{aligned}
    \end{equation}
    where $\text{Int}_B(V)$ denote the remaining integrand of $B$ and $\text{Int}_{\K}(Z,W)$ denote the remaining integrand of $\K$.
    Then we first assume $u >0$ and compute the integrals:
    \begin{equation}\label{eq: one term in M2}
        \begin{aligned}
            &-\brabarket{\mathsf{B}}{\partial_X \K}{\mathsf{A}'}\\
            &= -uF(-u)H_{\alpha}(-u)\!\!\!\int\limits_{\I\R -d}\!\!\! \frac{dV}{2\pi\I} \!\!\!\int\limits_{\I\R-d}\!\!\! \frac{dZ}{2\pi\I}\!\!\!\int\limits_{\I\R-d} \!\!\!\frac{dW}{2\pi\I}\text{Int}_B(V)(-Z)\int_{0}^{\infty}e^{X(V+Z)}dX\int_{0}^{\infty}e^{Y(W-u)} dY\frac{\Gamma(u+W)}{\Gamma(u-W)}\text{Int}_{\K}(Z,W)\\
            &= -uF(-u)H_{\alpha}(-u)\!\!\!\int\limits_{\I\R -d}\!\!\! \frac{dV}{2\pi\I} \!\!\!\int\limits_{\I\R-d}\!\!\! \frac{dZ}{2\pi\I}\!\!\!\int\limits_{\I\R-d} \!\!\!\frac{dW}{2\pi\I}\text{Int}_B(V)(-Z)\frac{e^{\tau(V+Z)}}{-(V+Z)}\frac{e^{\tau(W-u)}}{(u-W)}\frac{\Gamma(u+W)}{\Gamma(u-W)}\text{Int}_{\K}(Z,W),
        \end{aligned}
    \end{equation}
    where integrals of $X$ and $Y$ are well-defined because $\text{Re}(V+Z) = -2d$ and $\text{Re}(W-u) = -d-u$. Let $V = -d+\I v,$ $Z = -d+\I z$, and $W = -d+\I w$.
    After careful estimation of the integrand as in Lemma \ref{lemma: ptws_convergence_one_param} \eqref{eq: UBofKernel}, we get the following upper bound:
    \begin{equation}\label{eq: M2 one term upper bound}
    \begin{aligned}
        Ce^{-(3d+u)\tau-6d}\left((1+|z|)(1+|w|)(1+|v|)\right)^{-2d(N+1)}
        \left((1+|2z|)(1+|2w|)(1+|2v|)\right)^{2d-\frac{1}{2}}e^{-\pi(|v| + |z+w|)}.
    \end{aligned}
    \end{equation}
    The term $1/(u-W)$ in the last step of \eqref{eq: one term in M2} does not introduce a $W=u$ pole because it is canceled by the Gamma function $(u-W)\Gamma(u-W) = \Gamma(1+u-W)$. Using the upper bound \eqref{eq: M2 one term upper bound} and dominated convergence theorem, we get the $u\rightarrow -t$ limit of one term. The limit $\lim_{u\rightarrow -t} -\Gamma(u+t)\brabarket{Y_1}{\G}{X_1} = \M_2$ can be proved by applying above arguments to the other three terms.
\end{proof}

The following lemma provides limits of all Pfaffian terms in Lemma \ref{lem: prelimit two param formula}.
\begin{lemma}\label{lem: u to -t limits of Pfaffians diff}
Fix any $\tau \in \R$ and any $\varepsilon \ll 1.$ For any $\alpha> t,$ $t \in (\frac{1}{N},\frac{1}{2}),$ and $\beta \in (-t,t)$, the following expression is analytic for $u \in (-t-\varepsilon, -t + \varepsilon)$ with a well-defined limit:
    \begin{equation}\label{eq:1}
    \begin{split}
        &\lim_{u\rightarrow -t} \mathrm{Pf}\left(J - \boldsymbol{\mathcal{K}} - A\right)
        =\mathrm{Pf}\left(J - \widehat{\boldsymbol{\mathcal{K}}} - \widehat{A}\right).
    \end{split}
    \end{equation}
    where $A$ is one of the following matrix kernels:
    \begin{equation}
        \begin{aligned}
        &\ketbra{\begin{array}{c} \Gamma(u+t)\zeta_2 \\
        -\Gamma(u+t)\zeta_1
        \end{array}}{\aleph_1 \quad \aleph_2} + \ketbra{\begin{array}{c}
            \aleph_1\\
            \aleph_2
        \end{array}}{-\Gamma(u+t)\zeta_2 \quad \Gamma(u+t)\zeta_1 },\\
        &\ketbra{\begin{array}{c} \eta_2 \\
        -\eta_1
        \end{array}}{\Theta_1 \quad \Theta_2} + \ketbra{\begin{array}{c}
            \Theta_1\\
            \Theta_2
        \end{array}}{-\eta_2 \quad \eta_1 },\\
        &\ketbra{\begin{array}{c} \eta_2 \\
        -\eta_1
        \end{array}}{\aleph_1 \quad \aleph_2} + \ketbra{\begin{array}{c}
            \aleph_1\\
            \aleph_2
        \end{array}}{-\eta_2 \quad \eta_1 },\\
        &\ketbra{\begin{array}{c} \theta_2 \\
        -\theta_1
        \end{array}}{\aleph_1 \quad \aleph_2} + \ketbra{\begin{array}{c}
            \aleph_1\\
            \aleph_2
        \end{array}}{-\theta_2 \quad \theta_1 },\\
        &\ketbra{\begin{array}{c} \eta_2 \\
        -\eta_1
        \end{array}}{\Gamma(u+t)\psi_1 \quad \Gamma(u+t)\psi_2} + \ketbra{\begin{array}{c}
            \Gamma(u+t)\psi_1\\
            \Gamma(u+t)\psi_2
        \end{array}}{-\eta_2 \quad \eta_1 },\\
        &\ketbra{\begin{array}{c} \theta_2 \\
        -\theta_1
        \end{array}}{\Gamma(u+t)\psi_1 \quad \Gamma(u+t)\psi_2} + \ketbra{\begin{array}{c}
            \Gamma(u+t)\psi_1\\
            \Gamma(u+t)\psi_2
        \end{array}}{-\theta_2 \quad \theta_1 },\\
        \end{aligned}
    \end{equation}
    and $\widehat{A}$ represents its corresponding limit under $u\rightarrow -t$. In particular, $\widehat{\psi_i}, \widehat{\zeta_i}, \widehat{\aleph_i},\widehat{\theta_i},\widehat{\eta_i},\widehat{\Theta_i}$ represents the limit of $\Gamma(u+t)\psi_i,\Gamma(u+t)\zeta_i,{\aleph_i},{\theta_i},{\eta_i},{\Theta_i}$  for $i \in \{1,2\}$.
\end{lemma}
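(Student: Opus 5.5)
The plan follows the template of Lemma \ref{lem: diff of Pfaffian converge}, applied to each of the six rank-perturbed kernels $A$ in turn. I expand each Fredholm Pfaffian $\mathrm{Pf}(J-\boldsymbol{\mathcal{K}}-A)_{\mathbb{L}^2(\tau,\infty)}$ as the usual series of $2n\times 2n$ Pfaffians of the $2\times2$ matrix kernel $\boldsymbol{\mathcal{K}}+A$. The argument then has two ingredients, and together they give both analyticity in $u$ and the limit:

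\begin{enumerate}
\item a pointwise limit as $u\to -t$ of every scalar function entering $A$;
\item an $X,Y$-exponentially decaying bound, uniform for $u\in(-t-\varepsilon,-t+\varepsilon)$, on every entry of $\boldsymbol{\mathcal{K}}+A$.
\end{enumerate}

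Analyticity of each term of the series follows from analyticity of the integrands in $u$ together with the uniform domination. Hadamard's bound, $|\mathrm{Pf}(M)|\le \prod\|\text{rows}\|$, then gives summable domination $\frac{(2n)^{n/2}C^n}{n!}\prod_i e^{-2d x_i}$ on $(\tau,\infty)^n$. This yields uniform convergence of the series, hence analyticity, and dominated convergence passes to the limit term by term.

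For the kernel part, Lemmas \ref{lemma:ptws_convergence} and \ref{lem: convergence of two param pfaffian finite} already provide both the limit and the bound $\frac{C}{2Nd}e^{-d(X+Y)}$ on all four entries. It remains to treat the rank-one pieces, namely $\aleph_i,\Theta_i,\eta_i,\theta_i$ and the $\Gamma(u+t)$-rescaled $\zeta_i,\psi_i$. Each is, by definition, a bracket such as $\bra{Y_1}\G$ or $\G\ket{X_3}$. I would first compute each explicitly for $u>0$ by performing the $X$ (or $Y$) integral over $(\tau,\infty)$, as in \eqref{eq: one term in M2}. This produces a triple (or double) contour integral with factors like $e^{\tau(V+Z)}/(V+Z)$, or $e^{\tau(W-u)}/(u-W)$ for the $\mathsf{A}$-terms. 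The formulas match $\widehat{\aleph}_i$, etc.\ after setting $u=-t$.

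The next step is to check, for each function, whether a pole in $u$ crosses the contour $\I\R-d$ as $u\to -t$. For the terms built from $\mathsf{A},\mathsf{A}'$ (i.e.\ $\zeta_i$ and $\psi_i$), the new factor $1/(u-W)$ is absorbed by $\Gamma(u-W)$ into $\Gamma(1+u-W)$, exactly as remarked after \eqref{eq: one term in M2}, so no pole crosses. The prefactor $F(-u)=\frac{\Gamma(t-u)}{\Gamma(t+u)}\cdots$ vanishes like $1/\Gamma(t+u)$, which is precisely compensated by multiplying by $\Gamma(u+t)$; this is why $\Gamma(u+t)$ is placed on $\zeta_i,\psi_i$ only. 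The $\mathsf{B},\mathsf{D}$ terms carry $\Gamma(1-u+W)/\Gamma(1-u-W)$, whose poles at $W=u-1-k$ stay left of the contour, together with the explicit residue term $e^{-uY}\cdots$ in $\mathsf{D}$. That residue term is analytic in $u$, and its pairing with $\G$ gives an $e^{\tau(\cdot)}$-type factor whose real part in the exponent is negative along the contour. The $\mathsf{C}$ terms depend on $\beta$ only, and for $\beta\in(-t,t)$ the remaining $\beta$-poles $-\beta-1-k$ lie left of $-d$.

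With the contours fixed, I bound the integrands with the Stirling estimate \eqref{eq: bound gamma}, the sine-ratio bound, and $d>1/N$, exactly as in \eqref{eq: UBofKernel} and \eqref{eq: M2 one term upper bound}. This gives $|\text{function}(X)|\le Ce^{-dX}$ uniformly in $u$ near $-t$. Dominated convergence then gives the pointwise limits $\widehat{\aleph}_i$, etc.

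The main obstacle is the uniform exponential decay for the $\mathsf{A}$-containing functions $\Gamma(u+t)\zeta_i$ and $\Gamma(u+t)\psi_i$. Since $u\to -t<0$, the factor $e^{-Xu}$ in $\mathsf{A}$ grows, and one must verify that after integrating against $\G$ the output decays in the free variable. The danger is that the $e^{\tau(W-u)}$ factor, whose real part $-d-u$ becomes positive near $u=-t$, only multiplies a constant, while the free variable keeps the $e^{XZ}$ or $e^{YW}$ with real part $-d$. I would check this carefully term by term, noting that $\tau$ is fixed so any $e^{(t-d)\tau}$ factor is harmless. With these bounds in hand, the Hadamard argument from Lemma \ref{lem: converge of Pfaffian 2param beta > t} concludes all six cases simultaneously.
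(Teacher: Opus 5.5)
Your proposal is correct and follows essentially the same route as the paper's proof. You compute each pairing with $\G$ as an explicit contour integral for $u>0$, note that $1/(u-W)$ is absorbed into $\Gamma(1+u-W)$ so no pole crosses, let $\Gamma(u+t)$ compensate the $1/\Gamma(t+u)$ in $F(-u)$, and then conclude from uniform $Ce^{-dX}$ bounds, the kernel estimates, Hadamard's bound and dominated convergence.

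One minor slip: for the $e^{-uY}$ residue term in $\mathsf{D}$, as for $\mathsf{A}$, one has $\mathrm{Re}(W-u)=-d-u>0$ near $u=-t$, so that exponent is not negative there. The $Y$-integral for that part of $\theta_i$ therefore diverges and must be read as the continuation of the $u>0$ formula, as your last paragraph correctly says. Its contribution vanishes in the limit because of the prefactor $\Gamma(t-u)/\Gamma(t+u)$.
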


\begin{proof}
    We prove the limit of $\zeta_2$ and limits of all other functions follow similarly. By definition,
    \begin{equation}
        \begin{aligned}
        \G\ket{X_1} = \ket{\begin{array}{c}
                \zeta_1\\
                \zeta_2
            \end{array}} = \begin{pmatrix}
            \partial_X\K & -\partial_X\partial_Y \K\\
            \K & -\partial_Y\K
        \end{pmatrix}\ket{\begin{array}{c}
                \AAA'\\
                \AAA
            \end{array}} = \ket{\begin{array}{c}
                \partial_X \K \AAA' - \partial_X\partial_Y \K \AAA\\
                \K\AAA' - \partial_Y \K \AAA
            \end{array}}.
        \end{aligned}
    \end{equation}
    All four terms $\partial_X \K \AAA',$ $- \partial_X\partial_Y \K \AAA$, $\K\AAA'$, and $- \partial_Y \K \AAA$ can be evaluated in a similar way. We compute one term to give an example. We assume $u>0$ temporarily to compute the inner product $\K \AAA'$ and then take $u\rightarrow -t$.
    \begin{equation}
        \begin{aligned}
            &\Gamma(u+t)\K \AAA' =\Gamma(u+t)\K \ket{\AAA'}\\
            &= \Gamma(u+t)(-u)F(-u)H_{\alpha}(-u)\!\!\!\int\limits_{\I\R-d}\!\!\! \frac{dZ}{2\pi\I}\!\!\!\int\limits_{\I\R-d} \!\!\!\frac{dW}{2\pi\I} e^{XZ}\int_{\tau}^{\infty}e^{Y(W-u)}dY\frac{\Gamma(u+Z)}{\Gamma(u-Z)}\frac{\Gamma(u+W)}{\Gamma(u-W)}\text{Int}_{\K}(Z,W)\\
            &=  \Gamma(u+t)(-u)F(-u)H_{\alpha}(-u)\!\!\!\int\limits_{\I\R-d}\!\!\! \frac{dZ}{2\pi\I}\!\!\!\int\limits_{\I\R-d} \!\!\!\frac{dW}{2\pi\I} e^{XZ}e^{-\tau(u-W)}\frac{\Gamma(u+Z)}{\Gamma(u-Z)}\frac{\Gamma(u+W)}{\Gamma(1+u-W)}\text{Int}_{\K}(Z,W)\\
            &\rightarrow  t\Gamma(2t)\frac{\Gamma(\beta + t)}{\Gamma(\beta - t)}H_{\alpha}(t)\!\!\!\int\limits_{\I\R-d}\!\!\! \frac{dZ}{2\pi\I}\!\!\!\int\limits_{\I\R-d} \!\!\!\frac{dW}{2\pi\I} e^{XZ}e^{\tau(t+W)}\frac{\Gamma(-t+Z)}{\Gamma(-t-Z)}\frac{\Gamma(-t+W)}{\Gamma(1-t-W)}\text{Int}_{\K}(Z,W),
        \end{aligned}
    \end{equation}
    where $\text{Int}_{\K}(Z,W) = F(Z)F(W)H_{\alpha}(Z)H_{\alpha}(W)Q(Z,W)$ and we used $(u-W)\Gamma(u-W) = \Gamma(1+u-W)$ in the second equality. In the last step, the limit is justified by dominated convergence theorem and an upper bound similar to \eqref{eq: UBofKernel}, \eqref{eq: some upper bound example}, and \eqref{eq: M2 one term upper bound}. Combining all four terms, we see that $\lim_{u\rightarrow -t} \Gamma(u+t)\G \ket{X_1} = \ket{\begin{array}{c}
                \widehat{\zeta}_1\\
                \widehat{\zeta}_2
            \end{array}}.$ It is clear that there exists $C>0$ independent of $X$ such that for all $u \in (-t-\epsilon, -t+\epsilon)$ and $X\geq \tau,$
    \begin{equation}
    |\Gamma(u+t)\zeta_1(X)| \leq Ce^{-dX}, \quad |\Gamma(u+t)\zeta_2(X)|\leq Ce^{-dX}.
    \end{equation}
    Without showing the details, we also claim that there exists $C>0$ independent of $X$ such that for all $u \in (-t-\epsilon, -t+\epsilon)$, $X\geq \tau,$ and $i \in \{1,2\}$,
    \begin{equation}\label{eq: u to -t upper bounds}
    \begin{aligned}
    &|\Gamma(u+t)\psi_i(X)|\leq Ce^{-dX},\,\,\,
    |\aleph_i(X)| \leq Ce^{-dX}, \,\,\, |\theta_2(X)|\leq Ce^{-dX},\,\,\, |\eta_i(X)| \leq Ce^{-dX}, \,\,\, |\Theta_2(X)|\leq Ce^{-dX}.\\
    \end{aligned}
    \end{equation}
    Consequently, the convergence of Fredholm Pfaffian follows from \eqref{eq: u to -t upper bounds}, Lemma \ref{lemma:ptws_convergence}, Lemma \ref{lem: convergence of two param pfaffian finite}, dominated convergence theorem and Hadamard's bound.
\end{proof}

\begin{proof}[Proof of Theorem~\ref{thm: two param beta < t finite}]
Recall that $\beta \in (-t,t)$. 
By Lemmas \ref{lem: prelimit two param formula}, \ref{lemma:ptws_convergence}, \ref{lem: convergence of two param pfaffian finite}, \ref{lem: finite limit for constants}, and \ref{lem: u to -t limits of Pfaffians diff}, we showed the following limit exists provided that $\Pf\left(J - \widehat{\boldsymbol{{\mathcal{K}}}}\right) \neq 0,$
\begin{equation}
    \lim_{u\rightarrow -t} \Gamma(u+t)\Pf\left(J - \boldsymbol{\check{\mathcal{K}}}\right) = \frac{1}{\Pf\left(J - \widehat{\boldsymbol{{\mathcal{K}}}}\right)}\left(\widehat{\M}\widehat{\NN} - \widehat{\A}\widehat{\D} + \widehat{\B}\widehat{\C}\right).
\end{equation}
Assume temporarily $\beta > t$. By isolating the random variable at $(1,1)$ and $(2,1)$, we get
\begin{equation}\label{eq: expansion of K0}
\begin{aligned}
    &\Gamma(u+t)\mathbb{E}\left[e^{-e^{-\tau }Z^{u,-}_{\beta < t}(N,N)\omega_{1,1}} \right] = \E\bigg[2(\tau Z_{\beta <t}^{u,-}(N,N))^{\frac{u+t}{2}}K_{-u-t}\left(2\sqrt{\tau Z_{\beta <t}^{u,-}(N,N)}\right)\bigg]\\
    & = \frac{1}{\Gamma(\beta - t)}\int_{0}^{\infty}\E\bigg[2(\tau Z_{\beta <t}^{u,-}(N,N)w)^{\frac{u+t}{2}}K_{-u-t}\left(2\sqrt{\tau Z_{\beta <t}^{u,-}(N,N)}w\right)\bigg] w^{t-\beta-1} e^{-w^{-1}}dw.\\
\end{aligned}
\end{equation}
We want to analytically extend \eqref{eq: expansion of K0} to $\beta \in (-t,t)$ and take $u\rightarrow -t$.
We need to show the following convergence as $u\rightarrow -t$ when $\beta \in (-t,t)$:
\begin{equation}\label{eq: two param beta small prelimit}
    \begin{aligned}
        \frac{1}{\Gamma(\beta - t)}\int_{0}^{\infty} &\E\bigg[ \int_{0}^{\infty} e^{-\tau Z^{u,-}_{\beta<t}(N,N)wx}e^{-x^{-1}}x^{-(u+t)-1} dx \bigg]w^{t-\beta-1} e^{-w^{-1}}dw\\
        &\rightarrow \frac{1}{\Gamma(\beta - t)}\E \bigg[ \int_{0}^{\infty} 2K_0\left( 2\sqrt{e^{-\tau} Z^{t,\beta}(N,N) w} \right) w^{t-\beta-1} e^{-w^{-1}}dw\bigg].
    \end{aligned}
\end{equation}
The difficulty in this case is that $w^{t-\beta -1}e^{-w^{-1}}$ is no longer a probability density function and is itself no longer integrable when $\beta \in (-t,t)$. Let $\xi = e^{-\tau}.$
By using $e^{-x} \leq x^{-1}$, we see that for all $u \in (-t-\varepsilon, -t+\varepsilon),$
\begin{equation}
    \begin{aligned}
        \text{LHS of } \eqref{eq: two param beta small prelimit} \leq  \E\bigg[\frac{1}{\xi Z^{u,-}_{\beta<t}(N,N)} \bigg]\left(\int_{0}^{\infty} e^{-x^{-1}}x^{-(u+t)-2} dx \right)\int_{0}^{\infty}w^{t-\beta-2} e^{-w^{-1}}dw <\infty
    \end{aligned}
\end{equation}
since $t<1/2$ and $\beta\in (-t,t)$ implies that $-2<t-\beta -2 < 2t-2<-1$.
Let $Y_i\sim \Gamm(2\alpha)$ for $i = 1,\dots, 2N-3$. By Lemma \ref{lem:stochastic_dominance}, we get for all $u \in (-t-\varepsilon,-t+\varepsilon)$, $Z^{u,-}_{\beta <t} \geq_{\mathrm{st}} \prod_{i =1}^{2N-3} Y_i$ and
\begin{equation}
    \begin{aligned}
        \E \bigg[ \left(Z^{u,-}_{\beta<t}(N,N)\right)^{-1}\bigg]  \leq \E \bigg[ \frac{1}{ Y_1Y_2\cdots Y_{2N-2}}\bigg]\leq \E \bigg[ Y_1^{-1}\bigg]^{2N-3}.
    \end{aligned}
\end{equation}
Hence, \eqref{eq: two param beta small prelimit} is proved by dominated convergence theorem.

By similar arguments as in Lemma \ref{lem:upper tail for modified kernel 2param}, there exists $\tau_0 >0$ such that for all $\tau \geq \tau_0$, $\Pf(J-\widehat{\boldsymbol{\mathcal{K}}})$ is close to $1$. We know that $\Pf(J-\widehat{\boldsymbol{\mathcal{K}}})$ is analytic on $\R$ and strictly positive on $(\tau_0,\infty)$. Hence $\Pf(J-\widehat{\boldsymbol{\mathcal{K}}})$ can only have isolated zeroes. By the proof above, we have the equality in \eqref{Two_param_formula_negative} provided that $\Pf(J-\widehat{\boldsymbol{\mathcal{K}}})_{\mathbb{L}^2(\tau,\infty)} \neq 0$. Notice that ${Q}_{N,\beta <t}^{High}(\tau)$ is an increasing function ($K_0$ is a decreasing function) in $\tau$ for all $\tau \in \R$ and hence is bounded by some constant on $(-\infty, \tau_0]$. Thus, any singularity of the RHS of \eqref{Two_param_formula_negative} is removable and we can extend the equality of \eqref{Two_param_formula_negative} to $\R$. 
\end{proof}

\section{Asymptotic analysis of two parameter stationary formula when $\tbeta < \ttt$}
\subsection{Asymptotic analysis of kernel and Fredholm Pfaffian}
In this section, we consider the critical scaling as in \ref{eq: essential scaling}.
\begin{lemma}\label{lem: 2 param kernel beta small limit, upper bound and pfaffian}
    Fix any $r>0.$ Fix any $\alpha > 0$, $\ttt >0,$ $\tbeta \in (-\ttt,\ttt)$. The following limits hold uniformly for $u,v \in (-r,r)$:
    \begin{equation}
    \begin{aligned}
        &\lim_{N\rightarrow \infty} \widehat{\K}(X,Y)  = \widetilde{\K}(u,v), \quad \lim_{N\rightarrow \infty} -(\sigma N)^{1/3}\partial_X\widehat{\K}(X,Y)  = -\partial_u\widetilde{\K}(u,v)\\
        &\lim_{N\rightarrow \infty} -(\sigma N)^{1/3}\partial_Y\widehat{\K}(X,Y)  = -\partial_v\widetilde{\K}(u,v), \quad \lim_{N\rightarrow \infty} (\sigma N)^{2/3}\partial_X\partial_Y\widehat{\K}(X,Y)  = \partial_u\partial_v\widetilde{\K}(u,v)
    \end{aligned}
    \end{equation}
    Fix any $\delta \in(\max\{\tbeta,0\},\ttt)$. There exists a constant $C$ independent of $u,v$ such that the following upper bounds hold for all $u,v\geq -r$:
    \begin{equation}\label{eq: 2 param K upper bound}
        \begin{aligned}
            |\widehat{\K}(u,v)|, \,|\partial_u\widehat{\K}(u,v)|,\, |\partial_v\widehat{\K}(u,v)|, \,|\partial_u\partial_v\widehat{\K}(u,v)| \leq Ce^{-\delta(u+v)}.
        \end{aligned}
    \end{equation}
    As a result, we have the convergence of the Fredholm Pfaffian 
    \begin{equation}
        \lim_{N\rightarrow \infty} \Pf\left( J - \widehat{\boldsymbol{\mathcal{K}}} \right)_{\mathbb{L}^2(\tau, \infty)} = \Pf\left( J - \widetilde{\boldsymbol{\mathcal{K}}} \right)_{\mathbb{L}^2(s,\infty)}.
    \end{equation}
    under the scaling $\tau = -Nf + (\sigma N)^{1/3}s$.
\end{lemma}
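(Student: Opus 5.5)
The plan follows the steepest descent argument of Lemma \ref{lem: limit and upper tail of kernel} and the Hadamard/dominated convergence argument of Lemma \ref{lem: converge of Pfaffian 2param beta > t}. The one new feature is the factor $\frac{\Gamma(\beta+Z)}{\Gamma(\beta-Z)}$ with $\beta$ possibly negative. After the decomposition in Section \ref{section: 2 param decompose}, the kernel $\widehat{\K}$ no longer has the poles at $Z,W=-\beta$. Its remaining singularities near the origin are at $Z,W=t$ (from $\Gamma(-t-Z)^{-1}$-type factors, which are actually zeros/poles of the ratio $\Gamma(-t+Z)/\Gamma(-t-Z)$) and at $Z=\beta$ (from $1/\Gamma(\beta-Z)$, which is a zero). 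The poles to worry about are therefore at $Z=-t$ from $\Gamma(t+Z)$ and at $Z=t$ from $\Gamma(-t+Z)$. On the original contour $\I\R-d$ with $\max\{\beta,0\}<d<t$, the latter pole sits to the right and the former to the left.

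\textbf{Contour deformation.} Write $\widehat{\K}(X,Y)$ with $X=-Nf+(\sigma N)^{1/3}u$ and $Y=-Nf+(\sigma N)^{1/3}v$ as
\[
\int\int e^{N(h(Z)+h(W))+(\sigma N)^{1/3}(uZ+vW)}H(Z,W)\,\frac{dZ\,dW}{(2\pi\I)^2},
\]
with $h(Z)=\log\Gamma(\alpha+Z)-\log\Gamma(\alpha-Z)-fZ$ as before, and $H$ now carrying $F$, the $\Gamma(\pm t\pm\cdot)$ ratios, $Q$, and two correcting powers of $\Gamma(\alpha\mp\cdot)$. Fix $\delta\in(\max\{\tbeta,0\},\ttt)$ and deform $\I\R-d$ to
\[
V\bigl(-\delta(\sigma N)^{-1/3};2\pi/3;b\bigr)\cup L\bigl(-b;\sqrt3(b-\delta(\sigma N)^{-1/3})\bigr).
\]
In the critical scaling $\beta=\tbeta(\sigma N)^{-1/3}$, $t=\ttt(\sigma N)^{-1/3}$, the deformation moves the contour between $-d$ and $-\delta(\sigma N)^{-1/3}$. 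Since $\delta>\tbeta$, $\delta<\ttt$, and $\delta>0$, no pole of the integrand ($\pm t$, $-\beta-1-i$, $-t-i$, $-\alpha-i$, and the $\Gamma(-2Z)$ pole at $0$) is crossed. On $L\times L$, the descent property established in the proof of Lemma \ref{lem: limit and upper tail of kernel} gives a contribution of size $e^{-\varepsilon N}$.

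\textbf{Local limit.} On the local piece, substitute $Z=-\tZ(\sigma N)^{-1/3}$ and $W=-\tW(\sigma N)^{-1/3}$. Taylor expansion gives $Nh(Z)\to\tZ^3/3$, because $h'''(0)=2\psi^{(2)}(\alpha)=-2\sigma$. Each Gamma ratio becomes rational:
\[
\frac{\Gamma(\beta+Z)}{\Gamma(\beta-Z)}\to\frac{\tbeta+\tZ}{\tbeta-\tZ},\qquad \frac{\Gamma(t+Z)}{\Gamma(t-Z)}\frac{\Gamma(-t+Z)}{\Gamma(-t-Z)}\to1,\qquad \Gamma(-2Z)\Gamma(-2W)\frac{\sin\pi(Z-W)}{\sin\pi(Z+W)}\to\frac{(\tZ-\tW)(\sigma N)^{2/3}}{4\tZ\tW(\tZ+\tW)}.
\]
The factor $(\sigma N)^{2/3}$ is absorbed by the Jacobian $dZ\,dW$. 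The result is exactly the integrand of $\widetilde{\K}$ on $C(\delta;\pi/3)$. Dominated convergence applies: $e^{\Re(\tZ^3/3-u\tZ)}$ decays along the rays at angle $\pm\pi/3$, and $H$ grows at most polynomially. Together these give convergence uniformly for $u,v\in(-r,r)$. The derivatives contribute extra factors $\tZ,\tW$, which are handled in the same way.

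\textbf{Upper bound and Pfaffian.} On the contour, $\Re(-u\tZ)=-u\delta+\ldots$, so pulling out $e^{-\delta(u+v)}$ leaves an integrand bounded uniformly in $u,v\ge -r$. One uses that for $u\ge -r$, $e^{-(u+r)\Re(\tZ-\delta)}\le1$ on the rays, where $\Re\tZ\ge\delta$. The same bound holds on the prelimit local contour, while the $L$-part is exponentially small times $e^{-b(X+Y)}$. This gives \eqref{eq: 2 param K upper bound}. For the Fredholm Pfaffian, expand it as a series over $(s,\infty)^n$. Hadamard's bound for $2n\times2n$ Pfaffians gives entries bounded by $(2n)^{n/2}C^n\prod_i e^{-2\delta u_i}$, which is summable, so dominated convergence with the pointwise limits yields $\Pf(J-\widetilde{\boldsymbol{\mathcal{K}}})_{\mathbb{L}^2(s,\infty)}$.

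\textbf{Main obstacle.} The delicate step is the uniform bound when $\tbeta<0$. Here $\frac{\tbeta+\tZ}{\tbeta-\tZ}$ has its pole at $\tZ=\tbeta<0$, but the contour must still satisfy $\Re\tZ=\delta>0$. One must check that the prelimit factor $\Gamma(\beta+Z)/\Gamma(\beta-Z)$ stays bounded, with at most polynomial growth, on the whole deformed contour uniformly in $N$. I would handle this with the Stirling bound \eqref{eq: bound gamma} away from the origin and the explicit rational approximation near it.
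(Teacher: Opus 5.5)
This is essentially the paper's argument: the same steepest-descent deformation onto the wedge with vertex $-\delta(\sigma N)^{-1/3}$, $\max\{\tbeta,0\}<\delta<\ttt$, followed by Hadamard's bound and dominated convergence for the Fredholm Pfaffian. One correction to your pole bookkeeping, which does not change the conclusion: the integrand of $\widehat{\K}$ still contains $\Gamma(\beta+Z)$, and its pole at $Z=-\beta$ (that is, $\tZ=\tbeta$) is exactly what the condition $\delta>\tbeta$ keeps you from crossing; conversely, $\frac{\Gamma(-t+Z)\Gamma(t+Z)}{\Gamma(-t-Z)\Gamma(t-Z)}=\frac{\Gamma(1-t+Z)\Gamma(1+t+Z)}{\Gamma(1-t-Z)\Gamma(1+t-Z)}$ has no poles at $Z=\pm t$ at all.
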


\begin{proof}
    We apply the same steepest descent analysis as in Lemma~\ref{lem: limit and upper tail of kernel}. The only difference is that the contour has to satisfy $\max\{\tbeta,0\}<\delta < \ttt$ since the pole $W = -\beta$ and $Z = -\beta$ are not included in $\widehat{\K}.$ The convergence of the Fredholm Pfaffian then follows from the bound \eqref{eq: 2 param K upper bound} using arguments in Lemma~\ref{lem: converge of Pfaffian 2param beta > t}.
\end{proof}

\subsection{Asymptotic analysis of ${\M}$, ${\NN}$, ${\A}$, ${\B}$, ${\C}$, ${\D}$, $\widehat{\aleph}$, $\widehat{\psi}$, $\widehat{\Theta}$, $\widehat{\zeta}$, $\widehat{\eta}$, $\widehat{\theta}$}

\begin{lemma}\label{lem: limits of constant terms for 2 param beta small}
    Fix any $s\in \R$ and consider the scaling $\tau = -Nf + (\sigma N)^{1/3}s.$ For any fixed $\alpha >0,$ $\ttt>0,$ $\tbeta \in (-\ttt,\ttt)$, the following limit holds
    \begin{equation}
        \begin{aligned}
            &\lim_{N\rightarrow \infty} (\sigma N)^{-1/3}\M_1 = \widetilde{\M}_1, \quad \lim_{N\rightarrow \infty} (\sigma N)^{-1/3}\M_2 = \widetilde{\M}_2, \quad \lim_{N\rightarrow \infty} \NN_1 = \widetilde{\NN}_1, \quad \lim_{N\rightarrow \infty} \NN_2 = \widetilde{\NN}_2,\\
            &\lim_{N\rightarrow \infty} \A_1 = \widetilde{\A}_1, \quad \lim_{N\rightarrow \infty} \A_2 = \widetilde{\A}_2,\quad
            \lim_{N\rightarrow \infty} \B_1 = \widetilde{\B}_1, \quad \lim_{N\rightarrow \infty} \B_2 = \widetilde{\B}_2,\\
            &\lim_{N\rightarrow \infty} (\sigma N)^{-1/3}\C_1 = \widetilde{\C}_1, \quad \lim_{N\rightarrow \infty} (\sigma N)^{-1/3}\C_2 = \widetilde{\C}_2,\quad
            \lim_{N\rightarrow \infty} (\sigma N)^{-1/3}\D_1 = \widetilde{\D}_1, \quad \lim_{N\rightarrow \infty} (\sigma N)^{-1/3}\D_2 = \widetilde{\D}_2.\\
        \end{aligned}
    \end{equation}
\end{lemma}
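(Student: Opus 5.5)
The plan is to run the same steepest descent argument as in Lemma~\ref{lem: limit and upper tail of kernel}, applied term by term to each of the twelve constants $\M_i,\NN_i,\A_i,\B_i,\C_i,\D_i$, $i=1,2$. Each is an explicit one-, two-, three- or four-fold contour integral over $\I\R-d$, with $\max\{\beta,0\}<d<t$, multiplied by prefactors built from $\Gamma$, $H_\alpha(t)$ and $H_\alpha(-\beta)$. In every integrand the $N$-dependence sits in $H_\alpha$, and the $\tau$-dependence sits in exponentials $e^{\tau(\cdot)}$. Writing $\tau=-Nf+(\sigma N)^{1/3}s$, every factor $H_\alpha(X)e^{\tau X}$ becomes $e^{N h(X)+(\sigma N)^{1/3}sX}$ times a ratio $(\Gamma(\alpha-X)/\Gamma(\alpha+X))^2$, with $h$ as in the proof of Lemma~\ref{lem: limit and upper tail of kernel}. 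The same rewriting applies to the constants: $H_\alpha(t)e^{\tau t}$ and $H_\alpha(-\beta)e^{-\tau\beta}$ tend to $e^{-\ttt^3/3+s\ttt}$ and $e^{\tbeta^3/3-s\tbeta}$, using $h(x)=\tfrac{h'''(0)}{6}x^3+O(x^4)$ with $h'''(0)=-2\sigma$.

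I would then substitute $\beta=\tbeta/(\sigma N)^{1/3}$, $t=\ttt/(\sigma N)^{1/3}$, and $Z,W,U,V=-\widetilde{\cdot}/(\sigma N)^{1/3}$, and collect the local expansions of the rational Gamma factors near $0$. For example,
\[
\frac{\Gamma(t+W)}{\Gamma(t-W)}\frac{\Gamma(-t+W)}{\Gamma(-t-W)}\to\frac{\ttt-\tW}{\ttt+\tW}\cdot\frac{\ttt+\tW}{\ttt-\tW}=1 .
\]
Similarly $\Gamma(-2W)\approx (\sigma N)^{1/3}/(2\tW)$ and $\frac{\sin\pi(Z-W)}{\sin\pi(Z+W)}\to\frac{\tZ-\tW}{\tZ+\tW}$. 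The prefactors $\Gamma(2t)\approx(\sigma N)^{1/3}/(2\ttt)$ and $\Gamma(\beta\pm t)$ ratios give the factors $\frac{\tbeta-\ttt}{\tbeta+\ttt}$. Counting powers of $(\sigma N)^{1/3}$ explains the normalizations in the statement: each contour measure contributes $(\sigma N)^{-1/3}$ and each $\Gamma(-2\cdot)$ or $\Gamma(2t)$ contributes $(\sigma N)^{1/3}$. Quantities carrying an extra $\Gamma(2t)$ ($\M,\C,\D$) therefore need $(\sigma N)^{-1/3}$, while $\NN,\A,\B$ are $O(1)$. I would check that the limiting integrands match $\widetilde{\M}_i,\dots,\widetilde{\D}_i$, including constant terms such as $\frac{\tbeta+\ttt}{\tbeta-\ttt}$ in $\widetilde{\B}_1$ and the explicit $\widetilde{\C}_1$, which involves no integral at all.

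For the convergence, I would deform each line $\I\R-d$ into $V(-\delta/(\sigma N)^{1/3};2\pi/3;b)\cup L(-b;\sqrt3(b-\delta/(\sigma N)^{1/3}))$ with $\max\{\tbeta,0\}<\delta<\ttt$, exactly as in Lemma~\ref{lem: 2 param kernel beta small limit, upper bound and pfaffian}. This deformation crosses no poles, since the contour stays between $-t$ and $-\beta$, and the poles at $\pm\beta$ and $W=\beta$ (from $1/(\beta-W)$) lie appropriately. On $L$, $\mathrm{Re}\,h\le-\varepsilon$, so those pieces are $O(e^{-\varepsilon N})$ even after multiplying by polynomial factors in $N$. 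On $V$, after rescaling, the integrand is dominated by $e^{\mathrm{Re}(\tZ^3/3-s\tZ)}$ times polynomial factors, uniformly in $N$. This uses $|Nh(Z)-\tZ^3/3|\le c|\tZ|^4N^{-1/3}$ together with the descent property, so $\mathrm{Re}\,Nh\le \frac12\mathrm{Re}\,\tZ^3/3$ on $|Z|\le b$. Dominated convergence then gives each limit, with the contour extended to $C(\delta;\pi/3)$.

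The main obstacle I expect is the multiple integrals ($\A_2,\B_2,\M_2,\NN_2,\C_2,\D_2$) that contain factors like $\frac{Z-U}{Z+U}$ or $\frac{1}{V+W}$ coming from the $X$-integrations. These have poles where contour variables sum to zero. Since all contours have real part $-d<0$, the sums have real part $\approx-2\delta/(\sigma N)^{1/3}\neq0$ after deformation, so the factors stay bounded by $O((\sigma N)^{1/3})$, and that bound matches their scaled limit $1/(\tV+\tW)$. I would verify the dominating bound by treating these factors in the rescaled variables, where $|\tV+\tW|\ge2\delta$, and not in the original ones. The same applies to $1/(\beta-W)$, for which $|\tbeta-\tW|\ge\delta-\tbeta>0$, and to the pole at $W=u$ that was removed in Lemma~\ref{lem: finite limit for constants}.
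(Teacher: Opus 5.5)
Your overall route matches the paper's: count powers of $(\sigma N)^{1/3}$, deform onto $C(\delta;\pi/3)$ as in Lemma~\ref{lem: limit and upper tail of kernel}, and apply dominated convergence. One step fails as written, though: your treatment of the factor $1/(\beta-W)$.

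\textbf{The pole is misplaced.} Under the scaling, $\beta-W=(\tbeta+\tW)/(\sigma N)^{1/3}$, so this factor is singular at $\tW=-\tbeta$, not at $\tW=\tbeta$. The bound $|\tbeta-\tW|\ge\delta-\tbeta$ you quote controls the pole of $\Gamma(\beta+W)/\Gamma(\beta-W)$, not this one.

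\textbf{Where it matters.} In $\A_1,\A_2,\C_2,\NN_2$ the slip is harmless. There the factor multiplies $F(W)$, and $\frac{\beta+W}{\beta-W}\frac{\Gamma(\beta+W)}{\Gamma(\beta-W)}=\frac{\Gamma(1+\beta+W)}{\Gamma(1+\beta-W)}$ has no pole near $0$. In $\NN_1$ there is no $F(W)$, so $W=\beta$ is a genuine pole; correspondingly $\widetilde{\NN}_1$ carries the factor $\frac{\tbeta-\tW}{\tbeta+\tW}$.

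\textbf{Why it breaks the argument.} For $\tbeta\in(-\ttt,0)$ the point $-\tbeta$ lies in $(0,\ttt)$. The constraint $\max\{\tbeta,0\}<\delta<\ttt$ neither fixes the side of the contour it lies on nor keeps it off the contour. The residue there is not small. At finite $N$, $c_0$ cancels the $\Gamma(\pm t\pm W)$ ratios at $W=\beta$, $H_\alpha(\beta)H_\alpha(-\beta)=1$, and $-2\beta\,\Gamma(-2\beta)/\Gamma(1-2\beta)=1$, so the residue is exactly $1$. The same holds for the limiting integrand at $\tW=-\tbeta$. Consequently:
\begin{itemize}
\item if the prelimit crossing point $d(\sigma N)^{1/3}$ and $\delta$ lie on opposite sides of $-\tbeta$, your deformation does cross a pole, and $\NN_1\to\widetilde{\NN}_1\pm1$;
\item if $\delta=-\tbeta$, your domination bound fails outright.
\end{itemize}

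\textbf{The repair is short.} Take $|\tbeta|<\delta<\ttt$. This is the side forced by continuing from $\beta>0$, where computing $\int_\tau^\infty e^{X(W-\beta)}\,dX$ required $\mathrm{Re}\,W<\beta$. Use the same crossing point before and after rescaling, $d=\delta(\sigma N)^{-1/3}$. Then $|\tbeta+\tW|\ge\delta+\tbeta>0$ on $C(\delta;\pi/3)$, and the rest of your argument goes through. The paper's stated conditions ($\beta<d<t$ and $\max\{0,\tbeta\}<\delta<\ttt$) have the same omission, so state this constraint explicitly rather than inheriting it from Lemma~\ref{lem: limit and upper tail of kernel}.
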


\begin{proof}
    For $\M_1$, the term $\Gamma(2t)\Gamma(-2W)dW$ contributes a prefactor of $(\sigma N)^{1/3}$. Consequently, $\M_1$ must be scaled by $(\sigma N)^{-1/3}.$ similar scaling arguments apply to $\C_1,\C_2,\D_1,\D_2$. We then apply the steepest descent method as in Lemma~\ref{lem: limit and upper tail of kernel} to obtain the limits. The contours $C(\delta;\pi/3)$ satisfy $\max\{0,\tbeta\}<\delta < \ttt$.
\end{proof}

\begin{lemma}\label{lem: upper bounds for all functions 2 param beta small}
Fix any $\max\{\tbeta,0\}<\delta < \ttt$ and any $s \in \R.$ There exists $N_0 \in \Z_{>0}$ and $C$ depending on $N_0,\delta$ but independent of $u,s$ such that for all $u\geq s,$ and $N\geq N_0$, we have the following upper bounds:
\begin{equation}\label{eq: scaling for each function two param beta small}
    \begin{aligned}
        &|\widehat{\aleph}_1(s,u)|,|(\sigma N)^{1/3}\widehat{\aleph}_2(s,u)| \leq Ce^{-2\delta s}e^{-u\delta}, \quad |(\sigma N)^{-1/3}\widehat{\psi}_1(s,u)|,|\widehat{\psi}_2(s,u)| \leq Ce^{s(\ttt-\delta)}e^{-u\delta},\\
        &|\widehat{\Theta}_1(s,u)|,|(\sigma N)^{1/3}\widehat{\Theta}_2(s,u)| \leq Ce^{-2\delta s}e^{-u\delta}, \quad 
        |\widehat{\zeta}_1(s,v)|,|(\sigma N)^{-1/3}\widehat{\zeta}_2(s,v)| \leq Ce^{s(\ttt-\delta)}e^{-v\delta}\\
        &|(\sigma N)^{1/3}\widehat{\eta}_1(s,u)|,|\widehat{\eta}_2(s,u)| \leq Ce^{-s(\tbeta + \delta)}e^{-v\delta}, \quad 
        |(\sigma N)^{1/3}\widehat{\theta}_1(s,v)|,|\widehat{\theta}_2(s,v)| \leq Ce^{-2s\delta}e^{-v\delta}.\\
    \end{aligned}
\end{equation}
\end{lemma}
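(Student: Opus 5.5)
We will prove all twelve bounds by one template, illustrated on $\widehat{\aleph}_1$ and $\widehat{\psi}_1$. We pass to the scaled variables \eqref{eq: essential scaling}, with $X=-Nf+(\sigma N)^{1/3}u$, $\tau=-Nf+(\sigma N)^{1/3}s$ and $Z,W,U,V=-\widetilde{\cdot}/(\sigma N)^{1/3}$. Each function is then a two- or three-fold contour integral. Its integrand is a product of factors $e^{N h(\cdot)}$, the exponentials $e^{\tau(\cdot)}$ and $e^{X(\cdot)}$, and rational or Gamma prefactors. We deform every contour $\I\R-d$ into $V(-\delta(\sigma N)^{-1/3};2\pi/3;b)\cup L(-b;\sqrt3(b-\delta(\sigma N)^{-1/3}))$, exactly as in the proof of Lemma~\ref{lem: limit and upper tail of kernel}. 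Since $\max\{\tbeta,0\}<\delta<\ttt$, no pole is crossed: the poles at $-t$ lie to the left, the poles at $\beta$ lie to the right, and the poles at $-\beta$ have been removed by the decomposition in Section~\ref{section: 2 param decompose}.

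On the deformed contour every variable has real part $-\delta(\sigma N)^{-1/3}$ near the origin and real part $-b$ on $L$. We factor out $e^{-\delta u}$ from $e^{XW}$ and a factor $e^{-\delta s}$ from each $e^{\tau(\cdot)}$ attached to an integrated variable. For $\widehat{\aleph}_1$ there are two such variables, $U$ and $Z$, which gives the prefactor $e^{-2\delta s}e^{-\delta u}$. For $\widehat{\psi}_1$ the single integrated variable produces $e^{-\delta s}$, and the explicit factor $e^{\tau t}$ together with $H_\alpha(t)$ produces $e^{s\ttt}$ after scaling, since $H_\alpha(t)e^{-Nft}=e^{Nh(t)}\approx e^{\ttt^3/3}$. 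The product is $e^{s(\ttt-\delta)}e^{-\delta u}$. For $\widehat\eta_i$ the analogous constant prefactor $H_\alpha(-\beta)e^{-\tau\beta}$ produces $e^{-s\tbeta}$, giving $e^{-s(\tbeta+\delta)}$.

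What remains is to bound the integrals of the absolute values uniformly in $N\ge N_0$ and $u\ge s$. On the $V$-part we use the local expansion $Nh(Z)=-\tZ^3/3+O(\tZ^4N^{-1/3})$ for $|\tZ|\le \epsilon N^{1/3}$. This gives $\mathrm{Re}(N h)\le -c|\tZ|^3$ along the $\pi/3$ rays. The rational prefactors, which are the scaled forms of $\Gamma(\pm t+Z)/\Gamma(\pm t-Z)$, $\Gamma(\beta+Z)/\Gamma(\beta-Z)$, $\Gamma(-2Z)$, $\sin(\pi(Z-W))/\sin(\pi(Z+W))$ and $1/(U+Z)$, are bounded by polynomials in $|\tZ|$. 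They are also uniformly bounded away from their poles, because the contour stays at distance of order $(\ttt-\delta)(\sigma N)^{-1/3}$ from $t$ and of order $(\delta-\tbeta)(\sigma N)^{-1/3}$ from $\beta$. On the $L$-part, and on the far part of $V$, we use $\mathrm{Re}\,h<-\varepsilon$ together with the Stirling bounds \eqref{eq: bound gamma} and \eqref{eq: UBofKernel}, which give a contribution $O(e^{-\varepsilon N})$ times powers of $N$.

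The powers $(\sigma N)^{\pm1/3}$ in the statement are fixed by counting the Jacobians $dZ=-(\sigma N)^{-1/3}d\tZ$ against the factors that blow up under scaling: $\Gamma(-2Z)\approx(\sigma N)^{1/3}/(2\tZ)$, $\Gamma(2t)\approx(\sigma N)^{1/3}/(2\ttt)$, and the extra factor $W$ (or $Z$) that distinguishes index $1$ from index $2$, which contributes $(\sigma N)^{-1/3}$. This is the same bookkeeping as in the proof of Lemma~\ref{lem: limits of constant terms for 2 param beta small}.

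The main obstacle is obtaining uniformity in $u\ge s$, and in $s$, without the constant depending on these parameters. The factor $e^{(u-s)(\cdot)}$ must be fully absorbed into $e^{-\delta u}$. We handle this by placing the $W$-contour, which carries $u$, at real part exactly $-\delta(\sigma N)^{-1/3}$. Then $|e^{XW}|=e^{Nf\delta(\sigma N)^{-1/3}}e^{-\delta u}$, and the $N$-dependent factor cancels against $e^{N\mathrm{Re}\,h}$ evaluated at the vertex of the contour. The remaining care is that the contour must stay in the allowed window between $\max\{\tbeta,0\}$ and $\ttt$, and that the quotient $(U-Z)/(U+Z)$ is integrable near $U+Z=0$. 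The second point holds because both variables sit on the same contour, so $\mathrm{Re}(U+Z)=-2\delta(\sigma N)^{-1/3}$ and $|U+Z|\ge 2\delta(\sigma N)^{-1/3}$.
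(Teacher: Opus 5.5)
Your route is the paper's: deform to the steepest-descent contour through $-\delta(\sigma N)^{-1/3}$ (which becomes $C(\delta;\pi/3)$ after scaling), and pull out $e^{-\delta s}$ for each $\tau$-variable, $e^{s\ttt}$ or $e^{-s\tbeta}$ from the explicit prefactor, and $e^{-\delta u}$ from the $X$-variable. You then bound the local part by the cubic decay and the far part by $O(e^{-\varepsilon N})$. Your counting of the powers $(\sigma N)^{\pm 1/3}$ also matches the statement. The step you call the main obstacle, however, is not right as written.

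On $V(-\delta(\sigma N)^{-1/3};2\pi/3;b)$ the real part of $W$ is not constant, so $|e^{XW}|=e^{Nf\delta(\sigma N)^{-1/3}}e^{-\delta u}$ holds only at the vertex. What does hold pointwise is that $e^{-Nf\,\mathrm{Re}\,W}|H_\alpha(W)|$ equals $|e^{Nh(W)}|$ up to a bounded factor.

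What is left over, in scaled variables, is $e^{-\delta u}e^{-u\,\mathrm{Re}(\tW-\delta)}$, where $\mathrm{Re}(\tW-\delta)\ge 0$ on $C(\delta;\pi/3)$. Each $\tau$-variable likewise leaves a factor $e^{-s\,\mathrm{Re}(\tZ-\delta)}$. These factors are at most $1$ only when $u,s\ge 0$.

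For fixed $s$ and $u\ge s$ they are bounded by $e^{\max(-s,0)\,\mathrm{Re}(\tW-\delta)}$. The decay of $e^{\mathrm{Re}(\tW^3)/3}$ absorbs this on the local part, and $e^{-\varepsilon N}$ absorbs it on the far part. So you do get the lemma, but with $C$ depending on $s$. Since $s$ is fixed, this is all that is needed, and it is exactly what the paper's proof gives.

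The uniformity in $s$ you claim cannot come from absolute-value bounds on these contours: $\int_{C(\delta;\pi/3)}|e^{\tZ^3/3-s\tZ}|\,|d\tZ|$ grows like $e^{c|s|^{3/2}}$ as $s\to-\infty$. If you want it, keep every contour on the defining vertical line $\mathrm{Re}=-\delta(\sigma N)^{-1/3}$. This is allowed, since $\max\{\beta,0\}<\delta(\sigma N)^{-1/3}<t$. There $|e^{\tau Z}|$ and $|e^{XW}|$ factor exactly. In place of cubic decay you then use the Gaussian decay $\mathrm{Re}(\tZ^3/3)=\delta^3/3-\delta\tilde y^2$ at $\tZ=\delta-\I\tilde y$. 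You also need the fact, from the proof of Lemma~\ref{lem: limit and upper tail of kernel}, that $\mathrm{Re}\,h$ on a vertical line is maximized at the real axis.

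There are also two smaller slips.
With $Z=-\tZ/(\sigma N)^{1/3}$ one has $Nh(Z)=+\tZ^3/3+\dots$; with your sign the integrand would grow along the $\pi/3$ rays. Consequently $e^{Nh(t)}\approx e^{-\ttt^3/3}$, not $e^{\ttt^3/3}$.
The pole of $(W+\beta)/(W-\beta)$ in $\widehat\eta_i$ lies to the right of the contour only if $\delta>-\tbeta$, which the hypothesis does not give when $\tbeta<0$. This is harmless for the deformation, which never sweeps across the real axis, but the case $\delta=-\tbeta$ must be excluded.
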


\begin{proof}
    All upper bounds can be proved in the same manner. We illustrate the argument using $\widehat{\psi}_1$ as an example. We define $\text{Int}(\widehat{\psi}_1) = \frac{(\tbeta-\ttt)(\tZ + \ttt)(\tbeta + \tW)(\tbeta + \tZ)(\tZ - \tW)}{8\ttt \tZ \tW(\tbeta+\ttt)(\tZ - \ttt)(\tbeta - \tW)(\tbeta - \tZ)(\tZ + \tW)}.$ Applying the parameter scaling and the change of variables in \eqref{eq: essential scaling} to $\widehat{\psi}_1$ yields
\begin{equation}
    \begin{aligned}
        &|(\sigma N)^{-1/3}\widehat{\psi}_1(s,u)|= e^{s(\ttt - \delta) - u\delta}\mathcal{O}(e^{- \epsilon N})\\
        &+ \!\!\!\!\!\!\int\limits_{C(\delta;\pi/3)} \!\!\!\!\!\!\frac{d\tW}{2\pi \I}\!\!\!\!\!\!\int\limits_{C(\delta;\pi/3)} \!\!\!\!\!\!\frac{d\tZ}{2\pi \I}e^{\frac{\tZ^3}{3} - s\tZ  - \frac{\ttt^3}{3} + s\ttt+ \frac{\tW^3}{3} - u\tW+\mathcal{O}((\sigma N)^{-1/3})} \bigg|\text{Int}(\widehat{\psi}_1) + \mathcal{O}((\sigma N)^{-1/3})\bigg|
        \leq Ce^{s(\ttt - \delta) - u\delta}.
        \end{aligned}
    \end{equation}
    Here, the term $\mathcal{O}(e^{- \epsilon N})$arises from integration over portions of the contour away from the critical point. The final bound follows since the factor $e^{\frac{\tZ^3}{3} + \frac{\tW^3}{3}}$ decays exponentially along the contours, yielding a finite constant $C>0.$
\end{proof}

\begin{lemma}\label{lem: u to -t limits of Pfaffians diff}
Fix any $\max\{\tbeta,0\}<\delta < \ttt$ and any $s \in \R$ such that $\tau = -Nf + (\sigma N)^{1/3}s$. We have the following limit:
    \begin{equation}\label{eq:1}
    \begin{split}
        &\lim_{N\rightarrow \infty} \mathrm{Pf}\left(J - \boldsymbol{\widehat{\mathcal{K}}} - \widehat{A}\right)
        =\mathrm{Pf}\left(J - \widetilde{\boldsymbol{\mathcal{K}}} - \widetilde{A}\right).
    \end{split}
    \end{equation}
    where $\widehat{A}$ is one of the following matrix kernels:
    \begin{equation}\label{eq: all possible prelimit kernels}
        \begin{aligned}
        &(\sigma N)^{-1/3}\ketbra{\begin{array}{c} \widehat{\zeta}_2 \\
        -\widehat{\zeta}_1
        \end{array}}{\widehat{\aleph}_1 \quad \widehat{\aleph}_2} + (\sigma N)^{-1/3}\ketbra{\begin{array}{c}
            \widehat{\aleph}_1\\
            \widehat{\aleph}_2
        \end{array}}{-\widehat{\zeta}_2 \quad \widehat{\zeta}_1 },\\
        &\ketbra{\begin{array}{c} \widehat{\eta}_2 \\
        -\widehat{\eta}_1
        \end{array}}{\widehat{\Theta}_1 \quad \widehat{\Theta}_2} + \ketbra{\begin{array}{c}
            \widehat{\Theta}_1\\
            \widehat{\Theta}_2
        \end{array}}{-\widehat{\eta}_2 \quad \widehat{\eta}_1 },\\
        &\ketbra{\begin{array}{c} \widehat{\eta}_2 \\
        -\widehat{\eta}_1
        \end{array}}{\widehat{\aleph}_1 \quad \widehat{\aleph}_2} + \ketbra{\begin{array}{c}
            \widehat{\aleph}_1\\
            \widehat{\aleph}_2
        \end{array}}{-\widehat{\eta}_2 \quad \widehat{\eta}_1 },\\
        &\ketbra{\begin{array}{c} \widehat{\theta}_2 \\
        -\widehat{\theta}_1
        \end{array}}{\widehat{\aleph}_1 \quad \widehat{\aleph}_2} + \ketbra{\begin{array}{c}
            \widehat{\aleph}_1\\
            \widehat{\aleph}_2
        \end{array}}{-\widehat{\theta}_2 \quad \widehat{\theta}_1 },\\
        &(\sigma N)^{-1/3}\ketbra{\begin{array}{c} \widehat{\eta}_2 \\
        -\widehat{\eta}_1
        \end{array}}{\widehat{\psi}_1 \quad \widehat{\psi}_2} + (\sigma N)^{-1/3}\ketbra{\begin{array}{c}
            \widehat{\psi}_1\\
            \widehat{\psi}_2
        \end{array}}{-\widehat{\eta}_2 \quad \widehat{\eta}_1 },\\
        &(\sigma N)^{-1/3}\ketbra{\begin{array}{c} \widehat{\theta}_2 \\
       -\widehat{\theta}_1
        \end{array}}{\widehat{\psi}_1 \quad \widehat{\psi}_2} + (\sigma N)^{-1/3}\ketbra{\begin{array}{c}
            \widehat{\psi}_1\\
            \widehat{\psi}_2
        \end{array}}{-\widehat{\theta}_2 \quad \widehat{\theta}_1 },\\
        \end{aligned}
    \end{equation}
    and $\widetilde{A}$ denotes the corresponding limit under the critical scaling. In particular, for $i\in\{1,2\},$ the functions $\widetilde{\psi_i}, \widetilde{\zeta_i}, \widetilde{\aleph_i},\widetilde{\theta_i},\widetilde{\eta_i},\widetilde{\Theta_i}$ denote the limits of $\widehat{\psi_i}, \widehat{\zeta_i}, \widehat{\aleph_i},\widehat{\theta_i},\widehat{\eta_i},\widehat{\Theta_i}$ respectively, after applying the appropriate $(\sigma N)^{1/3}$ scaling specified in \eqref{eq: scaling for each function two param beta small}.
\end{lemma}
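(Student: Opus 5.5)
The proof follows the template of Lemma~\ref{lem: converge of Pfaffian 2param beta > t} and Lemma~\ref{lem: 1param pfaffian diff}. Fix one of the kernels $\widehat{A}$ in \eqref{eq: all possible prelimit kernels}. Each is a sum of two rank-one $2\times 2$ matrix kernels of the form $\ketbra{f}{g}$. I first attach the $(\sigma N)^{1/3}$ factors to the individual functions, exactly as prescribed in \eqref{eq: scaling for each function two param beta small}. For example, in the first kernel the prefactor $(\sigma N)^{-1/3}$ combines with $\widehat{\zeta}_2$, giving the normalized $(\sigma N)^{-1/3}\widehat{\zeta}_2$. It also combines with $\widehat{\aleph}_2$, where the derivative-type entry carries a Jacobian $(\sigma N)^{1/3}$ after the change of variables $X=-Nf+(\sigma N)^{1/3}u$. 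The aim is that every entry of the rescaled matrix kernel is a product of two functions that Lemma~\ref{lem: upper bounds for all functions 2 param beta small} controls and that converge to their tilde counterparts.

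\textbf{Step 1: pointwise limits.} For each $\widehat{\aleph}_i,\widehat{\psi}_i,\widehat{\Theta}_i,\widehat{\zeta}_i,\widehat{\eta}_i,\widehat{\theta}_i$, I apply the steepest descent argument of Lemma~\ref{lem: limit and upper tail of kernel}. Deform each vertical contour $\I\R-d$ to $V(-\delta(\sigma N)^{-1/3};2\pi/3;b)\cup L(-b;\cdot)$ with $\max\{\tbeta,0\}<\delta<\ttt$. This keeps the poles at $-t$ and $-\beta$ on the correct sides; the $-\beta$ poles were already extracted into $\mathsf{C}$. The contribution of $L$ is $\mathcal{O}(e^{-\varepsilon N})$. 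On $V$, substitute $Z=-\tZ/(\sigma N)^{1/3}$, expand $Nh(Z)=\tZ^3/3+o(1)$, and replace each Gamma ratio by its linearization near $0$. Examples are $\Gamma(t+Z)/\Gamma(t-Z)\approx(\ttt-\tZ)/(\ttt+\tZ)$, $\Gamma(-2Z)\approx(\sigma N)^{1/3}/(2\tZ)$, and $\sin(\pi(Z-W))/\sin(\pi(Z+W))\approx(\tZ-\tW)/(\tZ+\tW)$. Counting the powers of $(\sigma N)^{1/3}$ produced by each $dZ$ and each $\Gamma(-2\cdot)$ or $\Gamma(2t)$ yields precisely the normalizations in \eqref{eq: scaling for each function two param beta small}. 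The limits are the $\widetilde{\cdot}$ functions, uniformly for $u$ in compacts.

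\textbf{Step 2: domination and Hadamard.} Lemma~\ref{lem: 2 param kernel beta small limit, upper bound and pfaffian} gives $|\widehat{\K}|$ and its derivatives $\le Ce^{-\delta(u+v)}$. Lemma~\ref{lem: upper bounds for all functions 2 param beta small} gives bounds of the form $C(s)e^{-\delta u}$ for every normalized rank-one factor, with $s$ fixed. Hence every entry of $J^{-1}(\widehat{\boldsymbol{\mathcal{K}}}+\widehat{A})$, rescaled, is bounded by $C'e^{-\delta(u+v)}$ on $[s,\infty)^2$. I then expand the Fredholm Pfaffian as a series. Hadamard's bound controls the $n$-th term by $(2n)^{n/2}C'^n\big(\int_s^\infty e^{-2\delta u}du\big)^n/n!$, which is summable. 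Dominated convergence, term by term and then over $n$, with the pointwise limits of Step 1 gives $\Pf(J-\widetilde{\boldsymbol{\mathcal{K}}}-\widetilde{A})$.

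\textbf{Main obstacle.} The delicate point is Step 1 for the three- and four-fold integrals such as $\widehat{\aleph}_i$ and $\widehat{\theta}_i$. These contain factors like $(U-Z)/(U+Z)$ and $(\beta+W)/(\beta-W)$ from the inner products. I must check that the simultaneous deformation to $C(\delta;\pi/3)$ for all variables creates no new singularity. The denominators $U+Z$ and $V+W$ have real part $\approx -2\delta(\sigma N)^{-1/3}\neq 0$, and the $\tbeta-\tW$ factors are safe because $\delta>\tbeta$. I also need the polynomial growth of these rational factors to be dominated by $e^{\tZ^3/3}$ along the rays, uniformly in $N$. I would handle this as in the proof of Lemma~\ref{lem: upper bounds for all functions 2 param beta small}, bounding the rational factors by $C(1+|\tZ|)^k$ and using cubic decay.
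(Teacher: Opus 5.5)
Your proposal is correct and follows the paper's proof, which is exactly your Step~2: entrywise bounds $Ce^{-\delta(u+v)}$ from Lemma~\ref{lem: upper bounds for all functions 2 param beta small} and Lemma~\ref{lem: 2 param kernel beta small limit, upper bound and pfaffian}, then Hadamard's bound and dominated convergence, with the pointwise steepest-descent limits of your Step~1 taken for granted there. One small slip in your obstacle paragraph: since $W=-\tW(\sigma N)^{-1/3}$, the factor $(\beta+W)/(\beta-W)$ (it appears in $\widehat{\eta}_i$) has its apparent pole at $\tW=-\tbeta$, which the condition $\delta>\tbeta$ does not keep off the contour when $\tbeta<0$; it is harmless only because it is cancelled by the factor $1/\Gamma(\beta-W)$ in $F(W)$, via $(\beta-W)\Gamma(\beta-W)=\Gamma(1+\beta-W)$.
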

\begin{proof}
    Since we fix $s\in \R$, we only use the upper bounds in terms of $u$ from Lemma~\ref{lem: upper bounds for all functions 2 param beta small}, treating the dependence on $s$ as a constant. By Lemma~\ref{lem: upper bounds for all functions 2 param beta small} and Lemma~\ref{lem: 2 param kernel beta small limit, upper bound and pfaffian}, we get that for all choices of $\widehat{A}$ and for all $u,v \geq s$, there exists a constant $C>0$ independent of $u,v$ such that
    \begin{equation}
        \bigg|\boldsymbol{\widehat{\mathcal{K}}} + \widehat{A}\bigg|(u,v) \leq \begin{pmatrix}
            Ce^{-\delta(u+v)} & Ce^{-\delta(u+v)}\\
            Ce^{-\delta(u+v)} & Ce^{-\delta(u+v)}
        \end{pmatrix}.
    \end{equation}
    The convergence of the Fredholm Pfaffian then follows from the dominated convergence theorem together with Hadamard’s bound. 
\end{proof}

\begin{proof}[Proof of Theorem~\ref{thm: High density beta small asymptotic}]
Theorem \ref{thm: High density beta small asymptotic} follows from Lemmas \ref{lem: 2 param kernel beta small limit, upper bound and pfaffian}, \ref{lem: limits of constant terms for 2 param beta small}, and \ref{lem: u to -t limits of Pfaffians diff} provided that $\Pf\left(J - \widetilde{\boldsymbol{\mathcal{K}}}\right) \neq 0$. Since the point-wise limit of increasing function is still increasing, $\mathcal{J}(s)=\frac{\widetilde{\M}\widetilde{\NN} - \widetilde{\A}\widetilde{\D} + \widetilde{\B}\widetilde{\C}}{\Pf\left(J - \widetilde{\boldsymbol{\mathcal{K}}}\right)}$ is increasing in $s$. Due to the same reason as in the proof of Theorem \ref{thm: two param beta < t finite}, we see that all zeros of $\Pf\left(J - \widetilde{\boldsymbol{\mathcal{K}}}\right)$ is a removable singularity and the equality in \eqref{eq: K_0 version 2 param} can be extended to $\R$.
\end{proof}

\begin{proof}[Proof of Theorem~\ref{thm: High density beta small probability}]
Fix any $\tbeta$, $\ttt$ with $-\ttt<\tbeta < \ttt.$
By the tightness proved in Theorem \ref{thm:lower_2para} and Theorem \ref{thm:two_para_upper_tail}, we know that there exists a limiting random variable $\mathcal{X}_{\tbeta}$ such that $\frac{\log Z^{t,\beta}(N,N) + Nf}{(\sigma N)^{1/3}}$ converges weakly along a subsequence. Define $F_{\beta, N}(r) = \Pb\left(\frac{\log Z^{t,\beta}(N,N) + Nf}{(\sigma N)^{1/3}} \leq r\right)$ and  $\widetilde{F}_{\tbeta}(r) = \Pb(\X_{\tbeta} \leq r).$ By Theorem \ref{thm:lower_2para}, we have that for any $\epsilon >0,$ there exists $C>0$, $N_0\in \Z_{\geq 0}$ such that for all $N\geq N_0,$ ${F}_{\beta,N}(r) \leq Ce^{(1-\epsilon)2\ttt|r|}$ for all $r<0$. We thus have the same lower tail upper bound for $\widetilde{F}_{\tbeta}$.
We use similar arguments as in the proof of Theorem \ref{thm: One param probability}. By Corollary \ref{cor: 1param and 2param beta large lower tails}, we have for any $1\gg \epsilon>0$, there exists $N_0 >0$, $C>0$ such that for all $N\geq N_0,$
\begin{equation}
    |(\sigma N)^{-1/3}Q_{N,\beta < t}^{High}(-2N\psi(\alpha) + x(\sigma N)^{1/3})| \leq C|x| e^{-(1-\epsilon)2\ttt |x|}\quad \text{ for all } x<0.
\end{equation} 
The difference in this case is the upper tail of ${Q}_{N,\beta < t}^{High}$. By Lemma \ref{lem: two param beta small upper tail Q}, there exists $C>0,$ $s_0>0$ and $N_0$ such that for all $N \geq N_0$ and all $x > s_0$, 
\[|(\sigma N)^{-1/3}Q_{N,\beta <t}^{High}(-2N\psi(\alpha)+ x(\sigma N)^{1/3})|\leq Ce^{x(\ttt-\tbeta)}.
\]
Fix any $\epsilon \ll 1$ such that $(1-\epsilon)2\ttt > \ttt - \tbeta.$ We need to choose $A$ such that $(1-\epsilon)2\ttt>A>\ttt- \tbeta>0$. Then by dominated convergence theorem, we obtain the limit
\begin{equation}\label{eq: fourierIdentity}
\begin{aligned}
    &\frac{(A - \I y)}{2\pi}\int_{\R}\widetilde{Q}_{\tbeta < \ttt}^{High}(x + r )e^{-Ax}e^{\I x y} dx =  \frac{1}{2\pi}\int_{\R} \int_{0}^{\infty} \widetilde{F}_{\tbeta}(x+r-u)(\tbeta - \ttt)e^{-(\tbeta - \ttt)u}e^{-Ax}e^{\I x y}du dx.
\end{aligned}
\end{equation}
We verify that such Fourier transform is invertible by showing that
\[
H(z) := e^{-Az}\int_{0}^{\infty} \widetilde{F}_{\tbeta}(z+r-u)\, e^{-u(\tbeta - \ttt)} \,du
\]
belongs to $\mathbb{L}^2(\R)$. Let $\lambda = (1-\epsilon)2\ttt$. By Theorem \ref{thm:lower_2para}, we have $|\widetilde{F}_{\tbeta}(z)| \leq Ce^{\lambda z}$ for all $z \in \R$. For $z < 0$, recall that we have chosen $\lambda > A>\ttt - \tbeta$,
\begin{equation}
    |H(z)| \leq e^{-Az}\int_{0}^\infty Ce^{\lambda(z+r-u) - u(\tbeta - \ttt)} du = C'e^{(\lambda -A)z}.
\end{equation}
For $z>0,$
\begin{equation}
\begin{aligned}
    \left|\int_{0}^{\infty} \widetilde{F}_{\tbeta}(z+r-u)\, e^{-u(\tbeta - \ttt)} \,du\right| &\leq \int_{0}^{z+r} e^{-u(\tbeta - \ttt)} du + \int_{z+r}^{\infty} Ce^{\lambda (z+r-u)}e^{-u(\tbeta - \ttt)} du\\
    &= \frac{1}{\ttt-\tbeta}\left( e^{-(z+r)(\tbeta - \ttt)}-1\right) + e^{\lambda(z+r)}\int_{z+r}^{\infty} Ce^{-(\lambda+\tbeta - \ttt)u } du\\
    &\leq  C' \left( e^{-(z+r)(\tbeta - \ttt)} +e^{\lambda(z+r)} e^{-(\lambda+\tbeta - \ttt)(z+r)}\right)\\
    &\leq C'' e^{(\ttt-\tbeta)z}
\end{aligned}
\end{equation}
Since $A>\ttt-\tbeta$, we have $H(z) \in \mathbb{L}^2(\R).$ Then we can apply the fourier inversion to \eqref{eq: fourierIdentity} and get
\begin{equation}\label{eq: before shift argument}
 \int_{0}^{\infty} \widetilde{F}_{\tbeta}(r-u)(\tbeta - \ttt)e^{-(\tbeta - \ttt)u}du = \frac{1}{2\pi}\int_{\R}\int_{\R}(A - \I y)\widetilde{Q}_{\tbeta < \ttt}^{High}(x + r)e^{-Ax}e^{\I x y} dx dy.
\end{equation}
Since we know the above equality holds for all $r$, $\widetilde{F}_{\tbeta}$ can be uniquely characterized. Hence, the full sequence converges to $\X_{\tbeta}$. Now, we write $ F_{\beta,N}(x)=F_{x, N}(\tbeta)$ to emphasize the dependence on $\tbeta$ for fixed $x$. Assume $\tbeta >\ttt$. We have $(\tbeta - \ttt)e^{-(\tbeta - \ttt)u}$ is the density function of Exp$(\tbeta - \ttt)$.
By the deconvolution method in \cite[Lemma 3.3]{PatrikStatExp}, we get the following identity for any fixed $r$, $N$, $\tbeta$, $\ttt$,
\begin{equation}\label{eq: two param equal distribution}
        {F}_{r,N}(\tbeta) = \left(1+ \frac{\partial_{r}}{\tbeta - \ttt}\right)\int_{0}^{\infty} {F}_{r-u,N}(\tbeta)(\tbeta - \ttt)e^{-(\tbeta - \ttt)u}du.
\end{equation}
We have that $F_{x, N}(\tbeta)$ is an analytic function of $\tbeta$ because density function of Gamma inverse random variable is an analytic function in its parameters and this integral is well-defined under $\tbeta \in (-\ttt,\ttt)$. Then we can analytically extend \eqref{eq: two param equal distribution} to $\tbeta \in (-\ttt +\varsigma,\ttt - \varsigma)$ for some $0<\varsigma \ll 1$. Finally, taking the limit $N\rightarrow \infty$ of \eqref{eq: two param equal distribution} gives
\begin{equation}\label{eq: after shift}
\begin{aligned}
    &\Pb\left(\X_{\tbeta} \leq r\right) =\widetilde{F}_{\tbeta}(r) = \left(1+ \frac{\partial_{r}}{\tbeta - \ttt}\right)\int_{0}^{\infty} \widetilde{F}_{\tbeta}(r-u)(\tbeta - \ttt)e^{-(\tbeta - \ttt)u}du\\
    &=  \left(1+ \frac{\partial_{r}}{\tbeta - \ttt}\right)\frac{1}{2\pi}\int_{\R}\int_{\R}(A - \I y)\widetilde{Q}_{\tbeta < \ttt}^{High}(x + r)e^{-Ax}e^{\I x y} dx dy.
\end{aligned}
\end{equation}
\end{proof}

\section{Lower tail estimates for half-space stationary models}
In this section, we prove lower tail estimates for the half-space stationary models using three main ingredients: stochastic dominance via coupling, the distribution correspondence between full-space and half-space log-gamma partition functions, and random walk estimates. We first introduce these tools.

\subsection{Stochastic dominance via coupling}
\begin{lemma}[Stochastic dominance with respect to shape parameters]
\label{lem:stochastic_dominance}
Let $D\subset\mathbb Z^2$ be a connected set. Let $\{\alpha_v\}_{v\in D}$ and $\{\tilde{\alpha}_v\}_{v\in D}$ be two sets of shape parameters such that 
\begin{equation}
\alpha_v \ge \tilde\alpha_v
\qquad\text{for all } v\in D.
\end{equation}
For each $v\in D$, let
\begin{equation}
W_v\sim \Gamm(\alpha_v),
\qquad
\widetilde W_v\sim \Gamm(\tilde\alpha_v),
\end{equation}
be independent collections of inverse-gamma random variables with shape parameters
$\{\alpha_v\}_{v\in D}$ and $\{\tilde\alpha_v\}_{v\in D}$, respectively. Let $\Pi_{x\to y}(D)$ denote the set of up-right paths in $D$ from $x$ to $y$, and define the
corresponding partition functions
\begin{equation}
Z(x,y) := \sum_{\pi\in\Pi_{x\to y}(D)} \prod_{v\in\pi} W_v,
\qquad
\widetilde Z(x,y) := \sum_{\pi\in\Pi_{x\to y}(D)} \prod_{v\in\pi} \widetilde W_v.
\end{equation}

Then, there exists a coupling of $Z$ and $\widetilde{Z}$ such that
\[
\log Z(x,y) \le \log \widetilde Z (x,y).
\]
We write it as $\log Z(x,y) \le_{\mathrm{st}}\log \widetilde Z (x,y).$
\end{lemma}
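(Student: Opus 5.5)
The plan is to construct the coupling site by site, using a monotone coupling of a single pair of inverse-gamma variables, and then to propagate the pointwise inequality through the path sum.

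\textbf{Step 1: one site.} Fix $v\in D$ with $\alpha_v\ge\tilde\alpha_v$. Recall that $W\sim\Gamm(a)$ means $1/W\sim\mathrm{Gamma}(a,1)$. Gamma variables with unit scale are stochastically increasing in the shape parameter. Concretely, if $a\ge b$, let $G_b\sim\mathrm{Gamma}(b)$ and $G_{a-b}\sim\mathrm{Gamma}(a-b)$ be independent, where $G_0:=0$. Additivity of independent Gammas with a common scale then gives $G_b+G_{a-b}\sim\mathrm{Gamma}(a)$. Alternatively, one can use quantile coupling with a single uniform $U_v$, $F_a^{-1}(U_v)\ge F_b^{-1}(U_v)$, which follows from the monotone likelihood ratio in the shape. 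Setting
\[
W_v:=1/(G_{\tilde\alpha_v}+G_{\alpha_v-\tilde\alpha_v}),\qquad \widetilde W_v:=1/G_{\tilde\alpha_v},
\]
we get $W_v\sim\Gamm(\alpha_v)$, $\widetilde W_v\sim\Gamm(\tilde\alpha_v)$, and $0<W_v\le\widetilde W_v$ almost surely.

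\textbf{Step 2: independence across sites.} Carry out the Step 1 construction independently for each $v\in D$, using independent auxiliary Gamma variables (or independent uniforms) indexed by $v$. The collection $\{W_v\}$ then consists of independent variables with the correct marginal laws, and the same holds for $\{\widetilde W_v\}$. So the joint laws of $Z$ and $\widetilde Z$ are exactly as specified. Since $D$ may be infinite, use a product probability space over the countable set $D$.

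\textbf{Step 3: monotonicity of the partition function.} All weights are positive, so for every path $\pi\in\Pi_{x\to y}(D)$ we have $\prod_{v\in\pi}W_v\le\prod_{v\in\pi}\widetilde W_v$ almost surely. The set of up-right paths from $x$ to $y$ is finite, so summing over it gives $Z(x,y)\le\widetilde Z(x,y)$ simultaneously for all $x,y$ on a single event of full probability. Taking logarithms preserves the inequality, since both sides are positive, and this is the claim.

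The only step with any content is Step 1, the monotone coupling in the shape parameter. I expect it to be immediate via Gamma additivity. The only care needed is the degenerate case $\alpha_v=\tilde\alpha_v$, where we set $G_0=0$, and making sure that every shape parameter appearing is positive. This is guaranteed by the model's constraints, for example $\beta+t>0$ and $\alpha\pm t>0$.
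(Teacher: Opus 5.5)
Your proof is correct. Its skeleton is the same as the paper's: a monotone coupling at each site, independence across sites, then termwise domination of the finitely many path products. The difference lies only in how the one-site coupling is produced.

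The paper uses the quantile coupling $W_v=H(\alpha_v;U_v)$, $\widetilde W_v=H(\tilde\alpha_v;U_v)$, where $H(\alpha;\cdot)$ is the inverse CDF of $\Gamm(\alpha)$. It checks monotonicity by differentiating $F(\alpha;H(\alpha;x))=x$ in $\alpha$, which gives $\partial_\alpha\log H(\alpha;x)=-\mathrm{Cov}(-\log X,\mathbbm{1}[X\le y])/(y\rho_\alpha(y))<0$; the covariance is positive because both functions are decreasing in $X$.

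Your main route sets $1/W_v=G_{\tilde\alpha_v}+G_{\alpha_v-\tilde\alpha_v}$ and $1/\widetilde W_v=G_{\tilde\alpha_v}$. It replaces that computation by Gamma additivity and is more elementary. Your fallback via monotone likelihood ratio is essentially the paper's quantile coupling, with the derivative computation replaced by a soft ordering argument.

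What the paper's construction buys is that a single uniform per site couples the whole family $\{\Gamm(\alpha)\}_{\alpha>0}$ monotonically at once. Yours is built for one pair of parameter sets. For the lemma as stated, and for the pairwise comparisons it is used for, this makes no difference; a Gamma subordinator would also give you the simultaneous version if you wanted it.
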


\begin{proof}
Recall that the inverse-gamma random variable with parameter $\alpha$ has the following cumulative distribution function
\begin{equation}
F(\alpha;x) = \int_0^x \frac{1}{\Gamma(\alpha)}x^{-\alpha - 1}e^{-x^{-1}}dx.
\end{equation}
Let $\rho_\alpha(x) = \frac{1}{\Gamma(\alpha)}x^{-\alpha - 1}e^{-x^{-1}}$. Let $H(\alpha;\cdot)$ be the inverse of $F(\alpha; \cdot)$ defined on the interval $(0,1)$. We can calculate the first logarithmic derivative of $H$ with respect to $\alpha$:
\begin{equation}
    \partial_\alpha \log H(\alpha;x) = -\frac{1}{y\rho_\alpha(y)}\mathrm{Cov}(-\log X, \mathbbm{1}[X \leq y]) < 0
\end{equation}
where $y = H(\alpha;x)$ and $X \sim \Gamm(\alpha)$.

Now let $\{U_v\}_{v \in D}$ be a sequence of independent uniform $[0,1]$ random variables. Setting 
\begin{equation}
W_v = H(\alpha_v; U_v), \quad \widetilde{W}_v = H(\tilde{\alpha}_v; U_v)
\end{equation}
yields a coupling under which 
\begin{equation}
W_v\sim \Gamm(\alpha_v),
\qquad
\widetilde W_v\sim \Gamm(\tilde\alpha_v).
\end{equation}
Since $\alpha_v \geq \tilde{\alpha}_v$ for all $v \in D$ and $\partial_\alpha \log H(\alpha,x) < 0$ for all $\alpha > 0$, we have
\begin{equation}
W_v \leq \widetilde{W}_v
\end{equation}
for all $v \in D$ and thus
\begin{equation}
\log Z(x,y) \leq \log \widetilde{Z}(x,y).
\end{equation}
\end{proof}

\subsection{Correspondence between the full-space and half-space log-gamma partition function}
The correspondence is encoded in the following theorem \cite[Theorem 1.4]{BarraquandShouda}:

\begin{thm}\label{thm:BW}
Fix integers $n\ge 1$ and $m\ge 0$. Let $\alpha^\circ\in\R$,
$\alpha=(\alpha_1,\dots,\alpha_n)\in\R^n$, and $\beta=(\beta_1,\dots,\beta_m)\in\R^m$
satisfy
\begin{equation}\label{eq:BW_admissibility}
\alpha_i+\alpha^\circ>0,\qquad
\alpha_i+\alpha_j>0,\qquad
\alpha_i+\beta_k>0.
\end{equation}
for all $1\leq i,j \leq n$ and $1 \leq k \leq m$. 

\smallskip
\noindent\textbf{(A) Full-space point-to-point polymer.}
Consider the rectangular domain
\begin{equation}
\{(i,j): 1\le i\le n+m+1,\ 1\le j\le n\}.
\end{equation}
Let the weights on the rectangular domain be independently distributed as follows:
\begin{equation}\label{eq:BW_full_params}
W_{i,j}\sim
\begin{cases}
\Gamm(\alpha^\circ + \alpha_j), & i=1,\\
\Gamm(\alpha_{i-1}+\alpha_{j}), & 2\le i\le n+1,\\
\Gamm(\beta_{i-n-1}+\alpha_j), & n+2\le i\le n+m+1.
\end{cases}
\end{equation}
Let $Z_{full}(n+m+1,n)$ denote the full-space log-gamma polymer partition function from $(1,1)$ to $(n+m+1,n)$ according to the model above.

\smallskip
\noindent\textbf{(B) Trapezoidal point-to-line polymer.}
Consider the trapezoidal domain
\begin{equation}
\bigl\{(i,j): 1\le j\le n,\ \ j\le i \le 2n+m-j+1\bigr\}.
\end{equation}
Let the weights on the trapezoid domain be independently distributed as follows:
\begin{equation}\label{eq:BW_trap_params}
W_{i,j}\sim
\begin{cases}
\Gamm(\alpha_i+\alpha^\circ), & 1\le i=j\le n,\\
\Gamm(\alpha_j+\alpha_i), & 1\le j<i\le n,\\
\Gamm(\alpha_j+\beta_{i-n}), & 1\le j\le n,\ \ n<i\le n+m,\\
\Gamm(\alpha_j+\alpha_{2n+m-i+1}), & 1\le j\le n,\ \ n+m<i\le 2n+m-j+1.
\end{cases}
\end{equation}

Let $Z_{trap}(n,m)$ denote the log-gamma polymer partition function from $(1,1)$ to $(n,m)$ according to the model above. Let $Z_{line}$ be the trapezoidal point-to-line partition defined as follows:
\begin{equation}
    Z_{line}(n+m+1,n) :=\sum_{k=0}^{n-1} Z_{trap}(n+m+k+1, n-k).
\end{equation}

Then the following identity in distribution holds:
\begin{equation}\label{eq:BW_identity}
Z_{full}(n+m+1,n)\ \stackrel{d}{=}\ Z_{line}(n+m+1,n).
\end{equation}
\end{thm}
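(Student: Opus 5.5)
The plan is to prove the identity \eqref{eq:BW_identity} at the level of Laplace transforms, identifying both sides with the same Whittaker-function integral. Both $Z_{full}(n+m+1,n)$ and $Z_{line}(n+m+1,n)$ are positive random variables. It therefore suffices to show that
\[
\E\left[e^{-\xi Z_{full}(n+m+1,n)}\right]=\E\left[e^{-\xi Z_{line}(n+m+1,n)}\right]\quad\text{for all }\xi>0 .
\]
Equivalently, one can match the Mellin transforms $\E[Z^{-s}]$ for $s$ in a strip; this is justified because all negative moments of products of inverse-gamma weights are finite under \eqref{eq:BW_admissibility}.

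\textbf{Step 1 (full-space side).} Apply the geometric RSK correspondence of Corwin--O'Connell--Sepp\"al\"ainen--Zygouras to the $(n+m+1)\times n$ array \eqref{eq:BW_full_params}. This shows that $\log Z_{full}(n+m+1,n)$ is the first coordinate of a Whittaker measure with density proportional to
\[
\psi_{\alpha}(x)\,\psi_{(\alpha^\circ,\alpha,\beta)}(x)\,e^{-e^{-x_n}}
\]
on $\R^n$. Here the row parameters are $\alpha_1,\dots,\alpha_n$ and the column parameters are $(\alpha^\circ,\alpha_1,\dots,\alpha_n,\beta_1,\dots,\beta_m)$. Because there are more columns than rows, this is really a projection of the Whittaker process onto the bottom $n$ coordinates. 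Its Laplace transform then becomes an explicit integral against $\psi_{\alpha}$, and by the Whittaker--Plancherel theorem it is a contour integral of Sklyanin type with Gamma-product weight
\[
\prod_{i,j}\Gamma(\alpha_i+\lambda_j)\prod_j \Gamma(\alpha^\circ+\lambda_j)\prod_{j,k}\Gamma(\beta_k+\lambda_j).
\]

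\textbf{Step 2 (trapezoid side).} Apply the half-space (symmetric) geometric RSK of O'Connell--Sepp\"al\"ainen--Zygouras / Barraquand--Borodin--Corwin to the trapezoid. First fold the trapezoid along its diagonal, viewing it as a symmetric array. Then process the $\beta$-columns and the reflected $\alpha$-columns as additional ``row insertions.'' The point-to-line quantity $\sum_{k}Z_{trap}(n+m+k+1,n-k)$ is exactly what appears as the bottom-coordinate statistic $e^{x_1}$ of the resulting half-space Whittaker process: summing over the exit points on the anti-diagonal boundary corresponds to the ``free end'' in the symmetric RSK, where the diagonal weights carry $\alpha^\circ$. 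Using the half-space Whittaker measure, with density proportional to
\[
\psi_{\alpha}(x)\,\mathcal{T}_{\alpha^\circ}(x)\times(\text{$\beta$-insertions}),
\]
I will compute the Laplace transform of $Z_{line}$ as an integral against $\psi_{\alpha}$ as well.

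\textbf{Step 3 (matching).} Match the two integrals. The key input is a Whittaker integral identity of Bump--Stade / Ishii--Stade type. It states that the boundary-term integral
\[
\int e^{-e^{-x_n}}\,\psi_\lambda(x)\,\prod_i e^{-\alpha^\circ x_i}\cdots
\]
produces $\prod_j\Gamma(\alpha^\circ+\lambda_j)\prod_{i<j}\Gamma(\lambda_i+\lambda_j)$. The factor $\prod_{i<j}\Gamma(\lambda_i+\lambda_j)$ is precisely what the reflected $\alpha$-columns supply on the full-space side; this is why the trapezoid is built with the reflected parameters $\alpha_{2n+m-i+1}$. Once both Laplace transforms are written as the same $\lambda$-integral with the same Gamma-product weight and the same Sklyanin measure, the identity follows by uniqueness of the Laplace transform.

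The main obstacle will be Step 2. Identifying the point-to-line sum, rather than a single point-to-point value, with a clean marginal of the half-space Whittaker process requires care with the geometry of the symmetric gRSK when extra $\beta$-columns are inserted. If this identification proves awkward, my fallback is a direct inductive argument on $n$. I would use local (Burke-type) moves of geometric RSK to ``unfold'' one row of the trapezoid at a time into an extra row of the rectangle. At each step I would invoke the parameter-permutation invariance of Lemma \ref{lem: symmetry} to reorder parameters, checking that each move preserves the law of the relevant boundary sum.
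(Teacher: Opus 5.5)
The paper gives no proof of this statement; it is quoted from \cite[Theorem 1.4]{BarraquandShouda}. Judged on its own, your proposal has a genuine gap, and it sits exactly where the content of the theorem lies.

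Step 2 is asserted rather than argued. The symmetric geometric RSK of O'Connell--Sepp\"al\"ainen--Zygouras and Barraquand--Borodin--Corwin yields \emph{point-to-point} half-space partition functions as the top coordinate of the shape. You give no construction under which the sum $\sum_{k}Z_{trap}(n+m+k+1,n-k)$ over exit points on the boundary $i+j=2n+m+1$ becomes a single coordinate of a Whittaker-type process. In the trapezoid, row $j$ stops at column $2n+m+1-j$, so beyond column $n+m$ you are no longer inserting into a rectangular or symmetric array. The required map, the dynamics of the shape along this staircase, and the resulting law are all missing; ``the free end of the symmetric RSK'' is not a mechanism. Without an actual formula for $\E[e^{-\xi Z_{line}(n+m+1,n)}]$ there is nothing to match in Step 3. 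Your fallback does not fill the hole either. Lemma~\ref{lem: symmetry} concerns only the diagonal point-to-point $Z(N,N)$ in half-space and says nothing about point-to-line sums or rectangles, and the ``Burke-type moves'' that would unfold the trapezoid are not specified.

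Step 3 is also inconsistent as written. By your own Step 1, the full-space Gamma weight is $\prod_{i,j}\Gamma(\alpha_i+\lambda_j)\prod_j\Gamma(\alpha^\circ+\lambda_j)\prod_{j,k}\Gamma(\beta_k+\lambda_j)$. It contains no factor $\prod_{i<j}\Gamma(\lambda_i+\lambda_j)$: the $\alpha$-columns of the rectangle supply $\prod_{i,j}\Gamma(\alpha_i+\lambda_j)$, not $\prod_{i<j}\Gamma(\lambda_i+\lambda_j)$. On the trapezoid side, by your account, the half-space boundary identity does produce $\prod_j\Gamma(\alpha^\circ+\lambda_j)\prod_{i<j}\Gamma(\lambda_i+\lambda_j)$. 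So the mirrored triangle, together with the summation over exit points, would have to remove $\prod_{i<j}\Gamma(\lambda_i+\lambda_j)$ and create the $n^2$ factors $\prod_{i,j}\Gamma(\alpha_i+\lambda_j)$. That conversion is the real content of the theorem, and it is neither stated nor proved.

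A smaller point concerns the parameter range. The admissibility conditions allow $\alpha^\circ$ and $\beta_k$ to be negative; the paper itself applies the theorem with $\beta_2=-t$ for $Z^{t,\beta}_1$ and with $\alpha^\circ=\beta$, possibly negative, for $Z^{t,\beta}_2$. There the half-space Whittaker integrals you invoke need not converge. This is repairable: prove the identity for positive parameters and extend by real-analyticity of $\E[e^{-\xi Z}]$ in the parameters over the convex admissible region. But it must be addressed.
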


Additionally, we need the following lower tail result from the full space model \cite[Proposition 3.8]{basu2024temporal}.

\begin{thm}
Consider the homogeneous full-space model as shown in Figure \ref{fig: Homogeneous Full space polymer model} where the weight at $(1,1)$ is $1$ and the rest of the weights are distributed as independent $\Gamm(2\alpha)$. Let $\tilde{Z}_{full}(N,N)$ denote the point-to-point partition function from $(1,1)$ to $(N,N)$ of this model. Then there exist constants $c$, $N_0 \in \mathbb{N}$ such that for all $N\geq N_0$ and $t\geq 1$, we have
    \begin{equation}\label{eq3}
        \Pb\left(\log \tilde{Z}_{full}(N+1,N+1) + 2N\psi(\alpha) \leq -tN^{1/3} \right) \leq e^{-c \min\{t^{3/2}, tN^{1/3}\}}. 
    \end{equation}
\end{thm}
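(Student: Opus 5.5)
This result is quoted from \cite[Proposition 3.8]{basu2024temporal} and we will use it as a black box. If we had to reprove it, we would split according to the size of $t$ relative to $N^{2/3}$, since the two terms in the minimum $\min\{t^{3/2},tN^{1/3}\}$ come from two different mechanisms.

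\textbf{Reduction to the standard model.} Let $Z'$ be the full-space partition function in which the weight at $(1,1)$ is also an independent $\Gamm(2\alpha)$ variable $\omega$. Every path passes through $(1,1)$, so $Z'=\omega\,\tilde{Z}_{full}$. Hence
\[
\Pb(\log\tilde{Z}_{full}\le -x)\le \Pb(\log Z'\le -x/2)+\Pb(\log\omega\ge x/2).
\]
The second term decays like $e^{-2\alpha x/2}$ because $\omega$ has a polynomial tail. With $x=tN^{1/3}$ this is at most $e^{-ctN^{1/3}}$, which is at most the claimed bound. This step is harmless, so it suffices to treat the standard homogeneous model.

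\textbf{Large deviation regime $t\ge N^{2/3}$.} Here the deviation $tN^{1/3}$ is at least of order $N$. We would bound $Z'$ from below by the product of weights along one fixed up-right path:
\[
\log Z'\ge \sum_{i=1}^{2N+1}\log Y_i,\qquad Y_i \text{ i.i.d. } \Gamm(2\alpha).
\]
The variable $-\log Y_i=\log \mathrm{Gamma}(2\alpha)$ has an exponential moment of every positive order, since $\Pb(\mathrm{Gamma}\ge e^{y})\approx e^{-e^{y}}$. A Chernoff bound then gives
\[
\Pb\Big(\sum_i \log Y_i\le -2N\psi(\alpha)-tN^{1/3}\Big)\le e^{-c\,tN^{1/3}}
\]
once $tN^{1/3}\ge C N$. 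This holds because the centering $-2N\psi(\alpha)$ and the mean of the sum, $-(2N+1)\psi(2\alpha)$, differ only by $O(N)$. Intermediate values $t\in[N^{2/3},CN^{2/3}]$ are absorbed by adjusting $c$ or by the moderate regime below.

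\textbf{Moderate deviation regime $1\le t\le CN^{2/3}$.} This is where the exponent $t^{3/2}$ must come from, and it is the main obstacle. We would try the following route first. Fix a scale $k\approx N/t^{3/2}$ and chop the diagonal into about $t^{3/2}$ blocks of size $k$. Superadditivity of the log partition function along concatenated sub-rectangles gives that $\log Z'$ is at least a sum of about $t^{3/2}$ independent block free energies, up to the cost of the shared corner weights. Each block has fluctuations of order $k^{1/3}\approx N^{1/3}/t^{1/2}$. For the total to fall $tN^{1/3}$ below its mean, a positive fraction of blocks would each need to fall a constant multiple of their own fluctuation scale below their mean. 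Given a uniform $O(1)$ tail bound for a single block (from tightness, e.g.\ \cite{barraquand2021fluctuations}-type one-point estimates), a large deviation bound for independent sums then gives $e^{-ct^{3/2}}$.

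Two issues need care. First, the curvature correction: the block means $2k\psi(\alpha)$ must add up exactly to $2N\psi(\alpha)$, and the $O(k^{1/3})$ corrections per block, summed over $t^{3/2}$ blocks, must be only $O(tN^{1/3})$ with a small constant; this requires a quantitative mean estimate of the form $\E\log Z(k,k)=-2k\psi(\alpha)+O(k^{1/3})$. Second, the tail of each block below its mean must be sub-exponential on scale $k^{1/3}$, uniformly in $k$. If the block argument fails to close, the fallback is to cite the Laplace-transform/Fredholm-determinant lower-tail bounds for the full-space log-gamma polymer directly, which give $e^{-ct^{3/2}}$ uniformly for $t\le \varepsilon N^{2/3}$.
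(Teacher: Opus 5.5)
Your proposal agrees with the paper, which does not prove this statement but imports it from \cite[Proposition 3.8]{basu2024temporal} and uses it as a black box. Your preliminary reduction, writing $Z'=\omega\,\tilde{Z}_{full}$ and absorbing the polynomial tail of the $\Gamm(2\alpha)$ corner weight $\omega$ at $(1,1)$, is a correct and harmless way to reconcile the deterministic corner weight with the convention of the cited model. The block-concatenation reproof is unnecessary and should not be presented as self-contained: it presupposes a lower-tail bound for each block that is uniformly sub-exponential on the block's own scale $k^{1/3}$, which is essentially an estimate of the same type as the one being proved.
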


\subsection{Random Walk Estimates}
\begin{lemma}\label{lem_RW_approximation}
Let $X_1, \cdots, X_{N} \sim \Gamm(\alpha + t)$ and $Y_1, \cdots, Y_N \sim \Gamm(\alpha - t)$ be two sequences of independent random variables. Let $W_1 = \log X_1$, $W_i =\log X_i - \log Y_i$ for $2 \leq i \leq N$. We define the sum $S_k = \sum_{i = 1}^k W_i$ for $1\leq k \leq N$. Take $t = {\ttt}/{\Nsigma}$ for some $\ttt > 0$. There exist a positive constant $C$ and a positive integer $N_0$ such that for all $N \geq N_0$ and $a > 0$, we have
    \begin{equation}
        \Pb\left(\max_{1 \leq  k \leq N} S_k > a(\sigma N)^{1/3}\right) \leq Ce^{-2\ttt a}.
    \end{equation}
\end{lemma}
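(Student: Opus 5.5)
The plan is an exponential martingale combined with Doob's maximal inequality. Using independence, for any $\lambda>0$ with $\lambda<\alpha+t$,
\begin{equation*}
\E[e^{\lambda W_i}] = \E[X_i^{\lambda}]\,\E[Y_i^{-\lambda}] = \frac{\Gamma(\alpha+t-\lambda)}{\Gamma(\alpha+t)}\cdot\frac{\Gamma(\alpha-t+\lambda)}{\Gamma(\alpha-t)},\qquad i\geq 2 .
\end{equation*}
Here we used $\E[X^{\lambda}]=\Gamma(a-\lambda)/\Gamma(a)$ for $X\sim\Gamm(a)$ and $\E[Y^{-\lambda}]=\Gamma(a+\lambda)/\Gamma(a)$. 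At $\lambda=2t$ this factor is exactly $1$, since the numerators become $\Gamma(\alpha-t)\Gamma(\alpha+t)$. Hence $M_k:=e^{2tS_k}$ is a nonnegative martingale for $k\geq 2$, apart from the first step, where $W_1=\log X_1$ alone satisfies $\E[e^{2tW_1}]=\Gamma(\alpha-t)/\Gamma(\alpha+t)$. This quantity is a finite constant, and it tends to $1$ as $t\to 0$, so it is uniformly bounded for $N\geq N_0$.

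Doob's maximal inequality for the submartingale $e^{2tS_k}$ (a martingale multiplied by the independent first factor) then gives
\begin{equation*}
\Pb\Bigl(\max_{k\le N} S_k > a(\sigma N)^{1/3}\Bigr)=\Pb\Bigl(\max_{k\le N} e^{2tS_k} > e^{2ta(\sigma N)^{1/3}}\Bigr)\le e^{-2ta(\sigma N)^{1/3}}\,\E[e^{2tS_N}] .
\end{equation*}
Since $2t(\sigma N)^{1/3}=2\ttt$, the exponent equals $-2\ttt a$ exactly. Moreover, $\E[e^{2tS_N}]=\Gamma(\alpha-t)/\Gamma(\alpha+t)\le C$ uniformly in $N\ge N_0$, with $N_0$ chosen so that $2t<\alpha+t$, which holds once $t<\alpha$. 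To keep the step clean, I would write $S_k=W_1+S'_k$, where $S'_k$ is a genuine mean-one exponential martingale, and condition on $W_1$; alternatively I would apply Doob directly to the process $e^{2tS_k}$, whose expectation is nondecreasing in $k$.

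No real obstacle is expected. The only points to check are that $\lambda=2t$ lies in the range where the moments are finite, which requires $2t<\alpha+t$, and that the constant coming from the unpaired first step $W_1$ is uniform in $N$. The latter holds because $t=\ttt/(\sigma N)^{1/3}\to 0$ and $\Gamma$ is continuous and positive near $\alpha$. The bound holds for every $a>0$, with no restriction of the form $a\ge 1$.
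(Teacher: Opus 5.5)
Your proposal is correct and follows the same route as the paper: an exponential martingale in $S_k$ with $\lambda = 2t$, combined with Doob's maximal inequality. The only difference is cosmetic. You use the exact cancellation $\E[e^{2tW_i}]=1$ for $i\ge 2$, so $e^{2tS_k}$ is itself a martingale with $\E[e^{2tS_N}]=\Gamma(\alpha-t)/\Gamma(\alpha+t)$. The paper instead normalizes by $\Lambda_k$ and bounds it with a Taylor expansion in $\tilde\lambda$, which is unnecessary at $\tilde\lambda=2\ttt$.
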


\begin{proof}
We define the exponential martingale for $1 \leq k \leq N$ as the following
\begin{equation}
    M_k = \exp\left(\lambda S_k - \Lambda_k\right)
\end{equation}
where
\begin{equation}
\begin{aligned}
    \Lambda_k :=& k\log \E[\exp(\lambda \log X_1)] + (k-1)\log \E[\exp(-\lambda Y_1)]
\end{aligned}
\end{equation}
and for $\alpha \pm (t -\lambda) > 0$,
\begin{equation}
\begin{split}
    &\log\E[\exp(\lambda \log X_1)] = \log \Gamma(\alpha + t- \lambda) - \log \Gamma(\alpha + t)\\
    &\log\E[\exp(-\lambda \log Y_1)] = \log \Gamma(\alpha - t+  \lambda) - \log \Gamma(\alpha - t).
\end{split}
\end{equation}

We want to apply Doob's maximal inequality to the exponential martingale:
    \begin{equation}
        \PP \left( \max_{1 \leq k \leq N} M_k \geq K \right) \leq \E[M_N]K^{-1} = K^{-1}. 
    \end{equation}
To do so, we need to upper bound $\Lambda_k$ for all $1\leq k \leq N$. Let $\lambda = \frac{\tilde{\lambda}}{\Nsigma}$ for some $\tilde{\lambda}$ to be chosen later. By taylor expansion,
\begin{equation}
\log \E[\exp(\lambda W_1)] = \frac{(\ttt-\tilde{\lambda})^2- \ttt^2}{(\sigma N)^{2/3}}\psi^{(1)}(\alpha) + O(N^{-4/3}).
\end{equation}
Because $\psi^{(1)}(\alpha) > 0$, as long as $(\ttt-\tilde{\lambda})^2 - \ttt^2 \leq 0$, we know that there exists some positive constant $C$ such that for all positive integer $N$ this holds: $\max_{1 \leq k \leq N}\Lambda_k < C$. Thus, we choose $\tilde{\lambda} = 2\tilde{t}$ and get that
    \begin{equation}
    \begin{aligned}
        \PP \left( \max_{1 \leq k \leq N} S_k > a\Nsigma \right) &\leq \PP \left( \max_{1 \leq k \leq N} M_k > \exp\left(\lambda a\Nsigma - C\right) \right) \leq e^{C-2\tilde{t}a}.
    \end{aligned}
    \end{equation}
\end{proof}

\subsection{Proof of lower tail for half-space product stationary model}
\begin{figure}[h]
\centering
\begin{tikzpicture}[
  scale=0.85,
  grid/.style={gray!50,dotted},
  baseline=(current bounding box.center),
  dot/.style={circle,inner sep=1.4pt}]
\def\Xmax{7}
\def\Ymax{6}

\foreach \x in {1,...,\Xmax}
  \draw[grid] (\x,1) -- (\x,\Ymax);
\foreach \y in {1,...,\Ymax}
  \draw[grid] (1,\y) -- (\Xmax,\y);

\foreach \y in {1,...,\Ymax}
  \node[dot,fill=green!70!black] at (1,\y) {};

\foreach \x in {2,...,7}
  \foreach \y in {1,...,\Ymax}
    \node[dot,fill=red!80!black] at (\x,\y) {};

\node[below left,font=\small] at (0.8,1) {$\footnotesize{1}$};
\node[above right, font=\small] at (\Xmax,\Ymax) {${\widetilde{Z}_{full}(N+1,N)}$};

\node[black,font=\small] at (0.5,2) {$\footnotesize{2\alpha}$};
\node[black,font=\small] at (0.5,3) {$\footnotesize{2\alpha}$};
\node[black,font=\small] at (0.5,4) {$\footnotesize{2\alpha}$};
\node[black,font=\small] at (0.5,5) {$\footnotesize{2\alpha}$};
\node[black,font=\small] at (0.5,6) {$\footnotesize{2\alpha}$};

\node[black,font=\small] at (4.5,3.5) {$\footnotesize{2\alpha}$};

\node[black,font=\small] at (2,0.6) {$\footnotesize{2\alpha}$};
\node[black,font=\small] at (3,0.6) {$\footnotesize{2\alpha}$};
\node[black,font=\small] at (4,0.6) {$\footnotesize{2\alpha}$};
\node[black,font=\small] at (5,0.6) {$\footnotesize{2\alpha}$};
\node[black,font=\small] at (6,0.6) {$\footnotesize{2\alpha}$};
\node[black,font=\small] at (7,0.6) {$\footnotesize{2\alpha}$};

\end{tikzpicture}
\caption{Homogeneous full space polymer model}
\label{fig: Homogeneous Full space polymer model}
\end{figure}
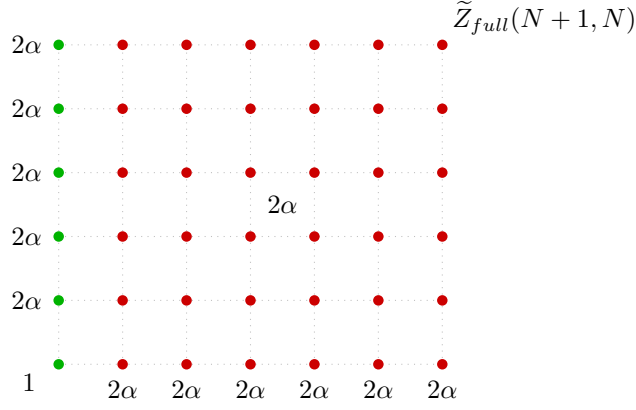

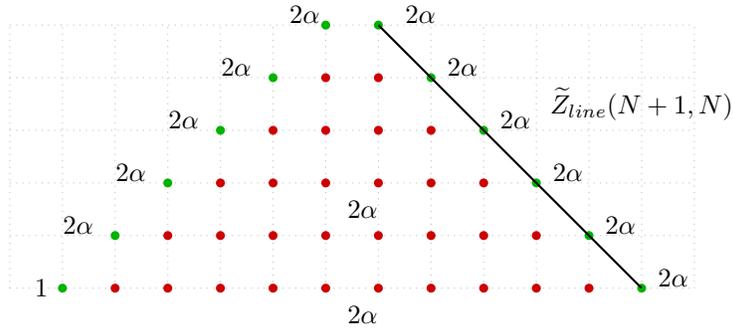
\begin{figure}[h]
\centering
\begin{tikzpicture}[
  scale=0.7,
  grid/.style={gray!50, dotted},
  baseline=(current bounding box.center),
  dot/.style={circle, inner sep=1.2pt},
  every node/.style={font=\small}
]
\def\N{5} 

\foreach \x in {-6,...,7} \draw[grid] (\x,0) -- (\x,\N);
\foreach \y in {0,...,\N} \draw[grid] (-6,\y) -- (7,\y);

\foreach \y in {0,...,\N}{
  \pgfmathsetmacro{\H}{\N - \y + 1}   
  \foreach \x in {-6,...,7}{
    \pgfmathparse{ (\x >= 1-\H) && (\x <= \H) ? 1 : 0 }
    \ifnum\pgfmathresult=1\relax
      \pgfmathparse{ (\x == 1-\H) || (\x == \H) ? 1 : 0 }
      \ifnum\pgfmathresult=1\relax
        \node[dot, fill=green!70!black] at (\x,\y) {};
      \else
        \node[dot, fill=red!80!black]   at (\x,\y) {};
      \fi
    \fi
  }
}

\node[black] at (-3.7,2.2) {$\footnotesize{2\alpha}$};
\node[black] at (-2.7,3.2) {$\footnotesize{2\alpha}$};
\node[black] at (-1.7,4.2) {$\footnotesize{2\alpha}$};
\node[black] at (-4.7,1.2) {$\footnotesize{2\alpha}$};

\node[black] at (-5.4,0) {$\footnotesize{1}$};

\node[black] at (-0.4,5.2) {$\footnotesize{2\alpha}$};
\node[black] at (1.8,5.2) {$\footnotesize{2\alpha}$};
\node[black] at (0.7,-0.5) {$\footnotesize{2\alpha}$};

\node[black] at (0.7,1.5) {$\footnotesize{2\alpha}$};

\node[black] at (4.6,2.2) {$\footnotesize{2\alpha}$};
\node[black] at (5.6,1.2) {$\footnotesize{2\alpha}$};
\node[black] at (3.6,3.2) {$\footnotesize{2\alpha}$};
\node[black] at (2.6,4.2) {$\footnotesize{2\alpha}$};

\node[black] at (6.6,0.2) {$\footnotesize{2\alpha}$};

\draw[black, thick]
(1,5) -- (6,0);
\node[black] at (6,3.5) {$\footnotesize{\widetilde{Z}_{line}(N+1,N)}$};

\end{tikzpicture}
\caption{Trapozoid homogeneous log-gamma polymer}
\label{fig: Homogeneous Trapozoid polymer model}
\end{figure}

\begin{figure}[h]
\centering
\begin{tikzpicture}[
  scale=0.7,
  grid/.style={gray!50, dotted},
  baseline=(current bounding box.center),
  dot/.style={circle, inner sep=1.2pt},
  every node/.style={font=\small}
]
\def\N{5} 

\foreach \x in {-6,...,7} \draw[grid] (\x,0) -- (\x,\N);
\foreach \y in {0,...,\N} \draw[grid] (-6,\y) -- (7,\y);

\foreach \y in {0,...,\N}{
  \pgfmathsetmacro{\H}{\N - \y + 1}   
  \foreach \x in {-6,...,7}{
    \pgfmathparse{ (\x >= 1-\H) && (\x <= \H) ? 1 : 0 }
    \ifnum\pgfmathresult=1\relax
      \pgfmathparse{ (\x == 1-\H) || (\x == \H) ? 1 : 0 }
      \ifnum\pgfmathresult=1\relax
        \node[dot, fill=green!70!black] at (\x,\y) {};
      \else
        \node[dot, fill=red!80!black]   at (\x,\y) {};
      \fi
    \fi
  }
}

\node[black] at (-3.7,2.2) {$\footnotesize{\alpha-t}$};
\node[black] at (-2.7,3.2) {$\footnotesize{\alpha-t}$};
\node[black] at (-1.7,4.2) {$\footnotesize{\alpha-t}$};
\node[black] at (-4.7,1.2) {$\footnotesize{\alpha-t}$};

\node[black] at (-5.4,0) {$\footnotesize{1}$};

\node[black] at (-3.2,5.2) {$\footnotesize{Z^t(N,N)}$};
\draw[->, thin] (-2.1,4.9) to[bend right=30]  (-0.1,4.9);
\node[black] at (3.5,5.2) {$\footnotesize{Z^t(N+1,N)}$};
\draw[->, thin] (3,5.6) to[bend right=30] (1.1,5.1);

\node[black] at (-0.7,5.2) {$\footnotesize{\alpha-t}$};
\node[black] at (0.5,5.2) {$\footnotesize{2\alpha}$};
\node[black] at (0.7,-0.5) {$\footnotesize{\alpha+t}$};

\node[black] at (0.7,1.5) {$\footnotesize{2\alpha}$};

\node[black] at (4.6,2.2) {$\footnotesize{2\alpha}$};
\node[black] at (5.6,1.2) {$\footnotesize{2\alpha}$};
\node[black] at (3.6,3.2) {$\footnotesize{2\alpha}$};
\node[black] at (2.6,4.2) {$\footnotesize{2\alpha}$};

\node[black] at (6.6,0.2) {$\footnotesize{2\alpha}$};

\draw[black, thick]
(1,5) -- (6,0);
\node[black] at (6,3.5) {$\footnotesize{Z^t_{line}(N+1,N)}$};

\end{tikzpicture}
\caption{Trapozoid product stationary log-gamma polymer}
\label{fig: Stationary Trapozoid polymer model}
\end{figure}
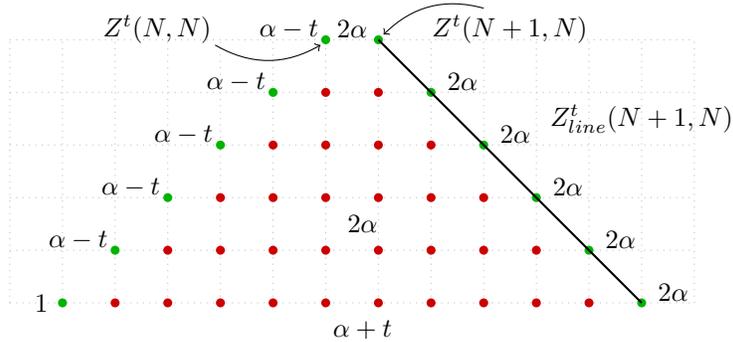

\begin{thm}\label{thm:lower_tail_prodstat}
Let $\log Z^t(N,N)$ be the free energy of the half-space model defined in Definition \ref{def: product stationary model}. Let us scale $t = \frac{\ttt}{\Nsigma}$ for some fixed $\ttt>0$. For any small $\varepsilon > 0$, there exist positive constants $C$ and $N_0$ such that for all $N \geq N_0,$ and $x > 0$, we have
    \begin{equation}
        \Pb\left(\frac{\log Z^t(N,N) + 2N\psi(\alpha)}{(\sigma N)^{1/3}} \leq -x\right) \leq Ce^{-(1-\varepsilon)2\ttt x}.
    \end{equation}
\end{thm}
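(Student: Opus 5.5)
We follow the three ingredients just listed: stochastic domination, the correspondence of Theorem \ref{thm:BW}, and the random walk estimate of Lemma \ref{lem_RW_approximation}. The goal is to bound the half-space diagonal partition function $Z^t(N,N)$ from below by a point-to-line quantity whose law is that of a homogeneous full-space point-to-point partition function, up to a random walk correction.

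\textbf{Step 1: embedding into a trapezoid.} Reflect the half-space product stationary model across the diagonal, as in Figure \ref{fig: Stationary Trapozoid polymer model}, to obtain a trapezoidal model. Its left boundary carries $\Gamm(\alpha-t)$ weights, the bottom carries $\alpha+t$, the bulk is $2\alpha$, and the corner weight is $1$. The trapezoid partition function to the anti-diagonal line, $Z^t_{line}(N+1,N)=\sum_k Z^t_{trap}(N+k+1,N-k)$, involves endpoints other than the diagonal. By Lemma \ref{lem:stochastic_dominance}, every parameter of this trapezoid is at most $2\alpha$ (we have $\alpha-t<2\alpha$ and $\alpha+t\le 2\alpha$). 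Hence, under a coupling, $Z^t_{line}\ge_{\mathrm{st}}\widetilde Z_{line}(N+1,N)$, the homogeneous trapezoid of Figure \ref{fig: Homogeneous Trapozoid polymer model}. By Theorem \ref{thm:BW}, with $\alpha^\circ=\alpha_i=\alpha$, the latter has the law of $\widetilde Z_{full}(N+1,N)$. The estimate \eqref{eq3}, combined with comparing $(N+1,N)$ to $(N+1,N+1)$ through one extra inverse-gamma weight whose logarithm has exponential tails, then gives
\[
\Pb\big(\log Z^t_{line}(N+1,N)+2N\psi(\alpha)\le -x(\sigma N)^{1/3}\big)\le e^{-cx}
\]
for $x\ge 1$. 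The monotone coupling must be applied simultaneously to all line endpoints; this works because the coupling is weightwise.

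\textbf{Step 2: from line to diagonal.} Write $Z^t_{line}=Z^t(N,N)\sum_k \frac{Z^t_{trap}(N+k+1,N-k)}{Z^t(N,N)}$, noting that the relevant trapezoid region near the diagonal coincides with the half-space one, so $Z^t_{trap}(N,N)=Z^t(N,N)$. By stationarity (Definition of stationary measures and the product stationary fact), the ratios along the down-right anti-diagonal path starting from the diagonal form a multiplicative walk. The first step is distributed as $\Gamm(\alpha+t)$, and subsequent steps are increments $\log X_i-\log Y_i$ with $X_i\sim\Gamm(\alpha+t)$ and $Y_i\sim\Gamm(\alpha-t)$. This is the stationarity of the down-right path for the log-gamma increment structure (Burke property), which we would verify from \cite{LogGammaStationary}. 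The walk has negative drift of order $t$, so $\sum_k e^{S_k}\le N e^{\max_k S_k}$. Then
\[
\{\log Z^t(N,N)\le -2N\psi(\alpha)-x(\sigma N)^{1/3}\}\subset\{\log Z^t_{line}\le -2N\psi(\alpha)-(1-\varepsilon)x(\sigma N)^{1/3}+\log N\}\cup\{\max_k S_k>\varepsilon x(\sigma N)^{1/3}\}.
\]

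\textbf{Main obstacle: getting the rate $(1-\varepsilon)2\ttt$.} The split above gives only $e^{-2\ttt\varepsilon x}$ from Lemma \ref{lem_RW_approximation}, and $e^{-cx}$ with an unspecified $c$ from Step 1. To recover the sharp constant, we reverse the roles. We allot a small portion $\varepsilon x$ of the deficit to the line term, which is handled by \eqref{eq3}: its $e^{-c\min(t^{3/2},tN^{1/3})}$ dominates $e^{-2\ttt x}$ for large $x$. We allot $(1-\varepsilon)x$ to the walk maximum, which yields $Ce^{-(1-\varepsilon)2\ttt x}$. The $\log N$ and the crude sum bound are absorbed by bounding the sum more carefully: $\sum_k e^{S_k}\le \sum_k e^{S_k-\text{drift}}$, with the drift making the sum $O((\sigma N)^{1/3})$ times $e^{\max}$, which is negligible on the $(\sigma N)^{1/3}$ scale. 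For bounded $x$ the claim is trivial after enlarging $C$. One further point must be checked: for $x$ large compared to $N^{2/3}$, the $tN^{1/3}$ branch of \eqref{eq3} still beats $e^{-2\ttt x}$. If it does not, we use the deterministic single-path lower bound on $Z^t(N,N)$ together with Chernoff estimates.
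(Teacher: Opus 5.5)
Your argument is correct, and once you reverse the split as in your last paragraph it is exactly the paper's proof: a monotone coupling of the trapezoid (diagonal $\alpha-t$, first row $\alpha+t$) against the homogeneous one, Theorem \ref{thm:BW} with \eqref{eq3} for the $\varepsilon x$ share, and the stationary down-right random walk with Doob's inequality at $\lambda=2t$ for the $(1-\varepsilon)x$ share. The extra devices are unnecessary, since $\log N=o(N^{1/3})$ is absorbed directly and the linear branch of \eqref{eq3} carries a factor $N^{1/3}$ that beats $e^{-2\ttt x}$ for large $N$; the one real correction is that the one-weight comparison should use $\widetilde Z_{full}(N+1,N)\ge W_{N+1,N}\,\widetilde Z_{full}(N,N)$ rather than going through $(N+1,N+1)$.
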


\begin{proof}
Let $Z_{full}(N+1, N)$ and $Z_{line}(N+1, N)$ denote the full-space point-to-point partition function and trapezoidal point-to-line partition function defined in Theorem \ref{thm:BW} with $\alpha^{\circ} =\alpha_1 = \cdots = \alpha_N = \alpha$. Moreover, let $\widetilde{Z}_{full}(N+1, N)$ and $\widetilde{Z}_{line}(N+1, N)$ denote the same partition functions except the weights at $(1,1)$ in both models are replaced by $1$, as shown in Figure~\ref{fig: Homogeneous Full space polymer model} and Figure~\ref{fig: Homogeneous Trapozoid polymer model} respectively. We know from Theorem \ref{thm:BW} that
\begin{equation}
    Z_{full}(N+1,N) \overset{d}{=} Z_{line}(N+1,N).
\end{equation}
For both models, the weights at $(1,1)$ are independent from the rest and have the same $\Gamm(2\alpha)$ distribution. Since the characteristic function for log-inverse-gamma random variable with parameter $2\alpha$ is never zero, we can conclude
\begin{equation}
    \widetilde{Z}_{full}(N+1, N) \overset{d}{=} \widetilde{Z}_{line}(N+1,N).
\end{equation}

On the same trapezoidal domain
\begin{equation}
\{(i,j): 1 \leq j \leq N, j \leq i \leq 2N+1-j\}
\end{equation}
we define the following independently distributed weights as shown in Figure \ref{fig: Stationary Trapozoid polymer model}
\begin{equation}\label{eq:BW_trap_params}
W^{t}_{i,j}\sim
\begin{cases}
1, & i=j = 1,\\
\Gamm(\alpha - t), & 2\le i=j\le N,\\
\Gamm(\alpha + t), & j=1, 2 \le i\le 2N,\\
\Gamm(2\alpha), & j \geq 2, j<i\le 2N+1-j
\end{cases}
\end{equation}
Let $Z^t(N+k,N+1-k)$ denote the point-to-point partition function from $(1,1)$ to $(N+k, N+1-k)$ for the model defined above. Moreover, let
\begin{equation}
    Z^t_{line}(N+1, N) = \sum_{k=1}^{N} Z^t(N+k, N+1-k).
\end{equation}
By Lemma \ref{lem:stochastic_dominance}, since $\alpha - t, \alpha + t \leq 2\alpha$ for large enough $N$, for any $\varsigma \in \R$ and large enough $N$
\begin{equation}
    \PP\left(Z^t_{line}(N+1, N) \leq \varsigma \right) \leq \PP\left(\tilde{Z}_{line}(N+1,N)  \leq \varsigma \right) = \PP\left(\tilde{Z}_{full}(N+1,N)  \leq \varsigma \right).
\end{equation}

Let $T_k = \log Z^t(N+k, N+1-k)$ for $1 \leq k \leq N$. By the stationarity of the model \eqref{eq:BW_trap_params}, $T_k - \log Z^t(N,N)$ has the same distribution as $S_k$, defined in Lemma \ref{lem_RW_approximation}. Thus, apply Lemma \ref{lem_RW_approximation}, we have
\begin{equation}\label{eq_4}
    \PP \left( \max_{1 \leq k \leq N} T_k - \log Z^t(N,N) \geq a\Nsigma \right) \leq Ce^{-2\ttt a}. 
\end{equation}
Because 
\begin{equation}
    \log Z^t_{line}(N+1,N) \leq \log N + \max_{1 \leq k \leq N} T_k,
\end{equation}
we have for any $\varepsilon \in (0,1)$
\begin{equation}
\begin{aligned}
         \PP \bigg( \log Z^t(N,N) + 2N\psi(\alpha) &\leq -a\Nsigma \bigg)\\
         &\leq \PP \left(\log Z^t(N,N) - \log {Z}^{t}_{line}(N+1,N)  \leq -(1-\varepsilon)a\Nsigma \right)\\
         &\quad + \PP\left(\log {Z}^{t}_{line}(N+1,N)+2N\psi(\alpha) \leq -\varepsilon a \Nsigma\right)\\
         &\leq \PP\left(\max_{1 \leq k \leq N}T_k - \log Z^{t}(N,N)  \geq (1-\varepsilon)a \Nsigma -\log N \right)\\
         & \quad + \PP\left(\log \tilde{Z}_{full}(N+1,N) +2N\psi(\alpha) \leq -\varepsilon a\Nsigma\right)\\
\end{aligned}
\end{equation}
Because of the full-space lower tail (\ref{eq3}) and the random walk estimates (\ref{eq_4}), we know that there exist positive constants $C,N_0$ such that the probability above is bounded by $Ce^{-(1-\varepsilon)2\ttt a}$ for all $N\geq N_0.$
\end{proof}

\subsection{Proof of lower tail for half-space two-parameter stationary model}

\begin{thm}\label{thm:lower_2para}
Let $\log Z^{t,\beta}(N,N)$ be the free energy of the half-space model defined in Definition \ref{def: two-param stationary model} and shown in Figure \ref{Fig:Two_para}. Let us scale $t = \frac{\ttt}{\Nsigma}, \beta = \frac{\tbeta}{\Nsigma}$ for some fixed $\tbeta > -\ttt$. For any small $\varepsilon > 0$, there exist positive constants $C$ and $N_0$ such that for all $N \geq N_0,$ and $x > 0$, we have
    \begin{equation}
        \Pb\left(\frac{\log Z^{t,\beta}(N,N) + 2N\psi(\alpha)}{(\sigma N)^{1/3}} \leq -x\right) \leq Ce^{-(1-\varepsilon)2\ttt x}.
    \end{equation}
\end{thm}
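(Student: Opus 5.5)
The plan is to run the argument of Theorem~\ref{thm:lower_tail_prodstat} again, with the two-parameter model in place of the product stationary one. The first step is to embed $Z^{t,\beta}$ into a trapezoidal domain
\[
\{(i,j): 1\le j\le N,\ j\le i\le 2N+1-j\},
\]
reflecting the half-space weights across the anti-diagonal as in Figure~\ref{fig: Stationary Trapozoid polymer model}. The first two rows are kept as in Definition~\ref{def: two-param stationary model}: weight $1$ at $(1,1)$ and $(2,1)$, $\Gamm(\beta+t)$ at $(2,2)$, $\Gamm(\alpha+t)$ on the remainder of row $2$, $\Gamm(\alpha-t)$ on row $1$. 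The diagonal carries $\Gamm(\alpha+\beta)$ and the bulk carries $\Gamm(2\alpha)$. Along the down-right line $k\mapsto (N+k,N+1-k)$, I set $T_k=\log Z^{t,\beta}(N+k,N+1-k)$ and $Z^{t,\beta}_{line}=\sum_k e^{T_k}$.

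The next step is to compare $Z^{t,\beta}_{line}$ with the homogeneous line partition function $\widetilde Z_{line}(N+1,N)$ via Lemma~\ref{lem:stochastic_dominance}. That requires every parameter to be at most $2\alpha$, and under the critical scaling $\alpha\pm t$, $\alpha+\beta$ and $\beta+t$ are all below $2\alpha$ for $N$ large. The constant weight $1$ at $(2,1)$ is a problem, since in the homogeneous model that site carries $\Gamm(2\alpha)$. I would handle it by conditioning: replace the $1$ by an independent $\Gamm(2\alpha)$ weight $\omega$ and use $Z\ge \min(1,\omega)^{-1}\cdots$. More simply, $\log\omega\le \log\omega_{+}$, which has exponential tails of every order on the scale $N^{1/3}$. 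This shifts the threshold only by an $O(1)$ random term, which is harmless for an exponential bound of the form $e^{-(1-\varepsilon)2\ttt x}$. Theorem~\ref{thm:BW} then identifies $\widetilde Z_{line}$ with $\widetilde Z_{full}$, and \eqref{eq3} controls its lower tail.

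It then remains to show that $\log Z^{t,\beta}(N,N)$ is not much below $\log Z^{t,\beta}_{line}$. This uses $\log Z_{line}\le \log N+\max_k T_k$ together with the union bound from the previous proof. The needed input is a bound on $\mathbb P(\max_k (T_k-T_0)\ge a(\sigma N)^{1/3})\le Ce^{-2\ttt a}$, where $T_0=\log Z^{t,\beta}(N,N)$. By stationarity, the increments along the down-right line are distributed as $\mathcal T_{\beta,t}$: a $\Gamm(\alpha+t)$ walk plus a nonnegative correction coming from $\frac1\omega\sum R_1(\ell)R_2(x)/R_2(\ell-1)$.

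This step is the main obstacle, since $\mathcal T_{\beta,t}$ is not a product walk and the correction term can only increase the increments. My first approach is the bound
\[
\log\mathcal T_{\beta,t}(x)\le \log R_2(x)+\log\Big(1+\tfrac1\omega\sum_{\ell\le x}R_1(\ell)/R_2(\ell-1)\Big).
\]
The $\log R_2$ part is handled by Lemma~\ref{lem_RW_approximation}, since the maximum of the walk $S_k$ there has log-increments of $\Gamm(\alpha+t)$ minus those of $\Gamm(\alpha-t)$ with tail $e^{-2\ttt a}$. For the second part, $\log R_1(\ell)-\log R_2(\ell-1)$ is again a walk of the type in Lemma~\ref{lem_RW_approximation} with drift of the right sign, so its maximum has tail $Ce^{-2\ttt a}$. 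Also $\log(1/\omega)$ with $\omega\sim\Gamm(\beta+t)$ has exponential tail $e^{-(\tbeta+\ttt)a}$ at scale $N^{1/3}$, and the sum contributes at most an extra $\log N$.

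If $\tbeta+\ttt<2\ttt$, that last estimate is worse than the target rate. In that case I would instead use the alternative representation from Figure~\ref{fig: two-parameter stationary figure} together with Lemma~\ref{lem: symmetry} and reverse the walk direction, or absorb the $\omega$ factor by noting that it multiplies $Z^{t,\beta}(N,N)$ and the line sum equally. I expect that last observation to be the correct fix: the quantity $\omega$ cancels in $T_k-T_0$ to leading order, so only the walk maxima matter, giving $Ce^{-2\ttt a}$ and hence the theorem with constant $(1-\varepsilon)2\ttt$.
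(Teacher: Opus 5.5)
\textbf{The gap is in your third step, and it defeats the point-to-line strategy for this model.} That strategy needs $Z^{t,\beta}_{line}$ to be dominated by terms near $(N,N)$, i.e.\ $\max_k(T_k-T_0)=O(\Nsigma)$, and this fails here. In your embedding the line ends at $(2N,1)$, where $T_N=\sum_{i=3}^{2N}\log\omega_{i,1}$ with $\omega_{i,1}\sim\Gamm(\alpha-t)$. Hence, with high probability,
\[
T_N+2N\psi(\alpha)=2N\bigl(\psi(\alpha)-\psi(\alpha-t)\bigr)+O(1)+O(N^{1/2})\approx 2\ttt\,\psi'(\alpha)\sigma^{-1/3}N^{2/3}.
\]
So the lower bound on $Z^{t,\beta}_{line}$ coming from $\widetilde Z_{full}$ is already met, with room $e^{cN^{2/3}}$, by the single term $Z^{t,\beta}(2N,1)$, and it says nothing about $Z^{t,\beta}(N,N)$. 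Combined with Theorem~\ref{thm:two_para_upper_tail}, $T_N-T_0\gtrsim N^{2/3}$ with high probability, so the bound $\Pb(\max_k(T_k-T_0)\ge a\Nsigma)\le Ce^{-2\ttt a}$ you need is false. This is the High density picture: row $1$ is heavy.

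Two errors hide this from you. First, stationarity only identifies the horizontal process $(Z(N+x,N)/Z(N,N))_{x\ge 0}$ with $\mathcal{T}_{\beta,t}$; it gives nothing along the anti-diagonal. In the proof of Theorem~\ref{thm:lower_tail_prodstat}, the anti-diagonal law rests on a Burke property of a product measure, in the representation where row $1$ carries the lighter parameter $\alpha+t$. Second, $\log R_1(\ell)-\log R_2(\ell-1)$ has drift $\psi(\alpha+t)-\psi(\alpha-t)>0$, the opposite sign to the walk $S_k$ of Lemma~\ref{lem_RW_approximation}. Its maximum over $\ell\le N$ is therefore of order $tN\asymp N^{2/3}$. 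Likewise, $\log R_2$ alone has order-one drift $-\psi(\alpha+t)$ and is not $S_k$ either.

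\textbf{The missing idea} is that no second point-to-line argument is needed: the monotone coupling you already invoke reduces the statement to Theorem~\ref{thm:lower_tail_prodstat}.
\begin{itemize}
\item Work in the representation of Figure~\ref{Fig:Two_para}, which gives the same law of $Z(N,N)$ by Lemma~\ref{lem: symmetry}. Let $\overline Z^t$ be the product-stationary model with its $(2,1)$ weight replaced by $1$.
\item For large $N$, each parameter of $Z^{t,\beta}$ is at most the corresponding one of $\overline Z^t$: $\beta+t\le\alpha+t$ at $(2,2)$, $\alpha+\beta\le 2\alpha$ on row $2$, and equality elsewhere. Lemma~\ref{lem:stochastic_dominance} then gives $\log\overline Z^t(N,N)\le_{\mathrm{st}}\log Z^{t,\beta}(N,N)$.
\item Every path passes through $(2,1)$, so $\log Z^t(N,N)\stackrel{d}{=}\log\overline Z^t(N,N)+\log G$ with an independent $G\sim\Gamm(\alpha-t)$.
\item A union bound with Theorem~\ref{thm:lower_tail_prodstat} and the exponential upper tail of $\log G$ on scale $\Nsigma$ gives $Ce^{-(1-\varepsilon)2\ttt x}$. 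This is the same device you proposed for the unit weight, and the rate $2\ttt$ does not depend on $\tbeta>-\ttt$.
\end{itemize}
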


\begin{figure}[htbp]
\centering
\begin{minipage}{0.48\textwidth}
  \centering
\begin{tikzpicture}[
  scale=0.7,
  grid/.style={gray!50, dotted},
  redpt/.style={circle, fill=red!80, draw=none, inner sep=1.4pt},
  grnpt/.style={circle, fill=green!70!black, draw=none, inner sep=1.4pt},
  baseline=(current bounding box.center),
  every node/.style={font=\small}
]

\def\N{5} 

\foreach \x in {0,...,\N} {
  \draw[grid] (\x,0) -- (\x,\N);
}
\foreach \y in {0,...,\N} {
  \draw[grid] (0,\y) -- (\N,\y);
}

\foreach \x in {1,...,\N} {
  \foreach \y in {0,...,\numexpr\x-1\relax} {
    \node[redpt] at (\x,\y) {};
  }
}

\foreach \k in {0,...,\N} {
  \node[grnpt] at (\k,\k) {};
}

\node[anchor=east] at (-0.2,0) {$1$};

\node[anchor=north] at (1,-0.1) {$1$};
\node[anchor=east]  at (0.8,1) {$\beta+t$};

\foreach \k in {2,...,\numexpr\N\relax} {
  \node[anchor=east] at (\k-0.2,\k) {$\alpha+t$};
}

\node[anchor=west] at (\N+0.1,\N+0.1) {$Z^{t,\beta}(N,N)$};

\node at (6,0) {$\alpha - t$};
\node at (6,1) {$\alpha + \beta$};
\node at (6,2) {$2\alpha$};
\node at (6,3) {$2\alpha$};
\node at (6,4) {$2\alpha$};

\end{tikzpicture}
\caption{Two parameter stationary $Z^{t,\beta}$}
\label{Fig:Two_para}
\end{minipage}
\hfill
\begin{minipage}{0.48\textwidth}
  \centering
  \begin{tikzpicture}[
  scale=0.7,
  grid/.style={gray!50, dotted},
  redpt/.style={circle, fill=red!80, draw=none, inner sep=1.4pt},
  grnpt/.style={circle, fill=green!70!black, draw=none, inner sep=1.4pt},
  baseline=(current bounding box.center),
  every node/.style={font=\small}
]

\def\N{5} 

\foreach \x in {0,...,\N} {
  \draw[grid] (\x,0) -- (\x,\N);
}
\foreach \y in {0,...,\N} {
  \draw[grid] (0,\y) -- (\N,\y);
}

\foreach \x in {1,...,\N} {
  \foreach \y in {0,...,\numexpr\x-1\relax} {
    \node[redpt] at (\x,\y) {};
  }
}

\foreach \k in {0,...,\N} {
  \node[grnpt] at (\k,\k) {};
}

\node[anchor=east] at (-0.2,0) {$1$};

\node[anchor=north] at (1,-0.1) {$1$};

\foreach \k in {1,...,\numexpr\N\relax} {
  \node[anchor=east] at (\k-0.2,\k) {$\alpha+t$};
}

\node[anchor=west] at (\N+0.1,\N+0.1) {$\overline{Z}^t(N,N)$};

\node at (6,0) {$\alpha - t$};
\node at (6,1) {$2\alpha$};
\node at (6,2) {$2\alpha$};
\node at (6,3) {$2\alpha$};
\node at (6,4) {$2\alpha$};

\end{tikzpicture}
\caption{Product stationary with the weight at $(2,1)$ replaced by $1$: $\overline{Z}^t$.}
\label{Fig:modifiedStat}
  \end{minipage}
\end{figure}

\begin{proof}
    Let us start with the modified half-space product stationary model as illustrated in Figure \ref{Fig:modifiedStat} where the $\Gamm(\alpha-t)$ weight at $(2,1)$ in the product-stationary model is replaced by constant $1$. Let $\overline{Z}^t$ denote the partition function of this model and $Z^t$ denote the partition function of the product-stationary model. Thus, $\log \overline{Z}^t(N,N) + \log G \overset{d}{=} \log Z^t(N,N)$ for $G \sim \Gamm(\alpha-t)$. For $x > 0$, we have for any $\delta \in (0,1)$
    \begin{equation}
    \Pb\left(\frac{\log \overline{Z}^{t}(N,N) + 2N\psi(\alpha)}{(\sigma N)^{1/3}} \leq -x\right) \leq \Pb\left(\frac{\log {Z}^{t}(N,N) + 2N\psi(\alpha)}{(\sigma N)^{1/3}} \leq -(1-\delta)x\right) + \Pb\left(\frac{\log G}{(\sigma N)^{1/3}} \geq \delta x\right).
    \end{equation}
   By Theorem \ref{thm:lower_tail_prodstat}, the first term is bounded by $Ce^{-(1-\varepsilon)(1-\delta)2\ttt x}$ for all large enough $N$. For the second term, we know it is bounded by 
   \begin{equation}
   \Pb\left(\log G' \geq \Nsigma \delta x \right)
   \end{equation}
   for $G' \sim \Gamm(\alpha/2)$ for $N$ large enough by Lemma \ref{lem:stochastic_dominance}. Since $\log \Gamm(\alpha/2)$ is a sub-exponential random variable \cite[Proposition D.2]{basu2024temporal}, the second term is thus bounded by 
   \[
   C'e^{-c'\Nsigma \delta x}
   \]
   for some $c',C' > 0$. Combining these two bounds, we get that for any $\varepsilon > 0$, there exist positive constants $C$ and $N_0$ such that for all $N \geq N_0$ and $x > 0$, 
   \begin{equation}
   \Pb\left(\frac{\log \overline{Z}^{t}(N,N) + 2N\psi(\alpha)}{(\sigma N)^{1/3}} \leq -x\right) \leq Ce^{-(1-\varepsilon)2\ttt x}.
   \end{equation}
   Again by Lemma \ref{lem:stochastic_dominance}, we see that $ \log \overline{Z}^t(N,N)\leq_{\mathrm{st}}\log Z^{t,\beta}(N,N) $ for large enough $N$ and hence 
   \begin{equation}
   \Pb\left(\frac{\log {Z}^{t,\beta}(N,N) + 2N\psi(\alpha)}{(\sigma N)^{1/3}} \leq -x\right) \leq \left(\frac{\log \overline{Z}^{t}(N,N) + 2N\psi(\alpha)}{(\sigma N)^{1/3}} \leq -x\right) \leq Ce^{-(1-\varepsilon)2\ttt x}.
   \end{equation}
\end{proof}

\section{Upper tail estimates for half-space stationary models}
In this section, we derive upper tail estimates for the half-space stationary models. Our analysis combines analytic continuation and steepest descent analysis of the Fredholm determinant, together with the correspondence between full-space and half-space log-gamma partition functions.

We will first prove the upper tail estimates for the free energy $\log Z^{t,\beta}(N,N)$ of the half-space two-parameter stationary model defined in Definition \ref{def: two-param stationary model}. The upper tail estimates for the free energy $\log Z^t(N,N)$ of the product stationary model defined in Definition \ref{def: product stationary model} will follow as an easy consequence of Lemma \ref{lem:stochastic_dominance}. 

We start by decomposing the partition function $Z^{t,\beta}(N,N)$ into two components as shown in Figure \ref{Fig:decomposition}. Let $\Pi_{x \rightarrow y}$ denote the set of up-right paths in the half-space $\{(i,j) \in \Z^2: i \geq j\}$ from $x$ to $y$. Then, 
\begin{equation}\label{eq:decomposition}
    Z^{t,\beta}(N,N) = \sum_{\pi \in \Pi_{(2,2) \rightarrow (N,N)}} \prod_{v \in \pi} w_{i,j} + \sum_{\pi \in \Pi_{(3,1) \rightarrow (N,N)}} \prod_{v \in \pi} w_{i,j}
\end{equation}
where $w_{i,j}$ is the vertex weight defined according to Definition \ref{def: two-param stationary model}.

\begin{figure}[htbp]
\centering
\begin{tikzpicture}[
  scale=0.7,
  grid/.style={gray!50, dotted},
  redpt/.style={circle, fill=red!80, draw=none, inner sep=1.4pt},
  grnpt/.style={circle, fill=green!70!black, draw=none, inner sep=1.4pt},
  baseline=(current bounding box.center),
  every node/.style={font=\small}
]

\def\N{5} 

\foreach \x in {0,...,\N} {
  \draw[grid] (\x,0) -- (\x,\N);
}
\foreach \y in {0,...,\N} {
  \draw[grid] (0,\y) -- (\N,\y);
}

\foreach \x in {1,...,\N} {
  \foreach \y in {0,...,\numexpr\x-1\relax} {
    \node[redpt] at (\x,\y) {};
  }
}

\foreach \k in {0,...,\N} {
  \node[grnpt] at (\k,\k) {};
}

\node[anchor=east] at (-0.2,0) {$1$};

\node[anchor=north] at (1,-0.1) {$1$};
\node[anchor=east]  at (0.8,1) {$\beta+t$};

\foreach \k in {2,...,\numexpr\N\relax} {
  \node[anchor=east] at (\k-0.2,\k) {$\alpha+t$};
}

\node[anchor=west] at (\N+0.1,\N+0.1) {$Z(N,N)$};

\node at (6,0) {$\alpha - t$};
\node at (6,1) {$\alpha + \beta$};
\node at (6,2) {$2\alpha$};
\node at (6,3) {$2\alpha$};
\node at (6,4) {$2\alpha$};

\draw (1,1) -- (5,1);
\draw (5,1) -- (5,5);
\draw (1,1) -- (5,5);

\draw[blue!60] (2,0) -- (5,0);
\draw[blue!60] (2,0) -- (2,2);
\draw[blue!60] (5,0) -- (5,5);
\draw[blue!60] (2,2) -- (5,5);
\end{tikzpicture}
\caption{This is the half-space log-gamma polymer model with two parameter stationary initial condition specified in Definition \ref{def: most general stationary model} with $\gamma = 1$. The black triangle contains all the paths in the first term of \eqref{eq:decomposition} and the blue trapezoid contains all the paths in the second term of \eqref{eq:decomposition}.}
\label{Fig:decomposition}
\end{figure}
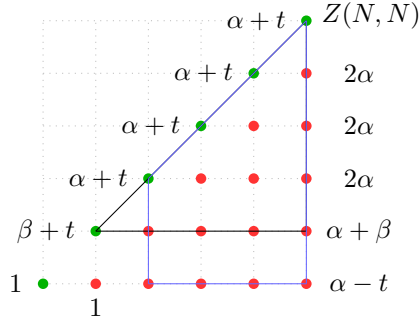

We want to analyze these two terms separately. To this end, we introduce two auxiliary models whose observables reproduce the distributions of the respective terms.

\smallskip
\noindent\textbf{(A)}
Consider the trapezoidal domain
\begin{equation}
D_N = \{(i,j): 1\leq j \leq N-2 \text{ and } j \leq i \leq 2N-1-j\}.
\end{equation}
Let the weights on the trapezoid domain be independently distributed as follows:
\begin{equation}\label{eq:BW_trap_params}
W_{i,j}\sim
\begin{cases}
\Gamm(\alpha + t), & 1\le i=j\le N-2,\\
\Gamm(\alpha + \beta), & i = N-1, \ \ 1\leq j \leq N-2,\\
\Gamm(\alpha - t), & i = N,\ \ 1\leq j \leq N-2,\\
\Gamm(2\alpha), & \text{otherwise}.
\end{cases}
\end{equation}
Let $Z^{t,\beta}_1(n,m)$ denote the log-gamma polymer partition function from $(1,1)$ to $(n,m)$ according to the model above with weights defined in Figure \ref{Fig:Trapezoid1}. By symmetry, we see that
\begin{equation}\label{eq:eq_d_1}
\sum_{\pi \in \Pi_{(3,1) \rightarrow (N,N)}} \prod_{v \in \pi} w_{i,j} \overset{d}{=} Z^{t,\beta}_1(N,N-2).
\end{equation}
Let $Z^{t,\beta}_{1,line}$ be the trapezoidal point-to-line partition defined as follows:
\begin{equation}
Z^{t,\beta}_{1,line} = \sum_{j=1}^{N-2} Z_1^{t,\beta}(2N-1-j,j).
\end{equation}
Then, by the definition of partition function,
\begin{equation}\label{eq:ineq_d_1}
    \log Z_1^{t,\beta}(N, N-2) + \log W_{N+1,N-2} \leq \log Z^{t,\beta}_1(N+1, N-2) \leq \log Z^{t,\beta}_{1, line}. 
\end{equation}
By Theorem \ref{thm:BW}, 
\begin{equation}
Z^{t,\beta}_{1,line} \overset{d}{=} Z^{t,\beta}_{1, full}(N+1, N-2)
\end{equation}
where $Z^{t,\beta}_{1,full}(N+1,N-2)$ is the full-space log-gamma polymer partition function from $(1,1)$ to $(N+1, N-2)$ with weights defined in Figure \ref{Fig:Full1}.

\begin{figure}[htbp]
\centering
\begin{minipage}{0.48\textwidth}
  \centering
  \begin{tikzpicture}[
  scale=0.7,
  grid/.style={black!80, dotted},
  redpt/.style={circle, fill=red!80, draw=none, inner sep=1.4pt},
  grnpt/.style={circle, fill=green!70!black, draw=none, inner sep=1.4pt},
  baseline=(current bounding box.center),
  every node/.style={font=\small}
]

\def\N{3} 
\pgfmathsetmacro{\M}{\N+2}
\def\K{3}
\pgfmathsetmacro{\L}{\M+3}

\foreach \x in {0,...,\K} {
  \draw[grid] (\x,0) -- (\x,\x);
}

\foreach \x in {1,...,\K} {
  \draw[grid] (\M + \x,0) -- (\M +\x,\N-\x+1);
}

\foreach \x in {\K,...,6} {
  \draw[grid] (\x,0) -- (\x,\N);
}

\foreach \y in {0,...,\N} {
  \draw[grid] (\y,\y) -- (\M+\N-\y+1,\y);
}

\foreach \x in {1,...,\N} {
  \foreach \y in {0,...,\numexpr\x-1\relax} {
    \node[redpt] at (\x,\y) {};
  }
}

\foreach \x in {\N+1,\N+2} {
  \foreach \y in {0,...,\N} {
    \node[redpt] at (\x,\y) {};
  }
}

\node[redpt] at (6,2) {};
\node[redpt] at (6,0) {};
\node[redpt] at (7,0) {};
\node[redpt] at (8,0) {};
\node[redpt] at (6,1) {};
\node[redpt] at (7,1) {};

\foreach \k in {0,...,\N} {
  \node[grnpt] at (\k,\k) {};
}

\foreach \k in {0,...,\N} {
  \node[grnpt] at (\M +1 + \k,\N -\k) {};
}

\node[anchor=east] at (-0.2,0) {$\alpha + t$};
\node[anchor=east] at (0.8,1) {$\alpha + t$};
\node[anchor=east] at (1.8,2) {$\alpha + t$};
\node[anchor=east]  at (2.8,3) {$\alpha+t$};

\node[anchor=south] at (1,-0.8) {$2\alpha$};
\node[anchor=south] at (2,-0.8) {$2\alpha$};
\node[anchor=south] at (3,-0.8) {$2\alpha$};
\node[anchor=north] at (4,4) {$\alpha+\beta$};
\node[anchor=north] at (5.2,4) {$\alpha-t$};
\node[anchor=north] at (4,3) {$\alpha+\beta$};
\node[anchor=north] at (5.2,3) {$\alpha-t$};
\node[anchor=north] at (4,2) {$\alpha+\beta$};
\node[anchor=north] at (5.2,2) {$\alpha-t$};
\node[anchor=north] at (4,1) {$\alpha+\beta$};
\node[anchor=north] at (5.2,1) {$\alpha-t$};

\node[anchor=south] at (6,-0.8) {$2\alpha$};
\node[anchor=south] at (7,-0.8) {$2\alpha$};
\node[black,font=\small] at (7.4,3.7) {$(N+1,N-2)$};

\node[anchor=west] at (6.1,3) {$2\alpha $};

\node[anchor=west] at (7.1,2) {$2\alpha $};
\node[anchor=west]  at (8.1,1) {$2\alpha$};
\node[anchor=west]  at (9.1,0) {$2\alpha$};

\end{tikzpicture}
  \caption{}
  \label{Fig:Trapezoid1}
\end{minipage}
\hfill
\begin{minipage}{0.48\textwidth}
  \centering
\begin{tikzpicture}[
  scale=0.85,
  grid/.style={black!80,dotted},
  redpt/.style={circle, fill=red!80, draw=none, inner sep=1.4pt},
  grnpt/.style={circle,fill=green!70!black, draw=none, inner sep=1.4pt},
  baseline=(current bounding box.center),
  dot/.style={circle,inner sep=1.4pt}]
\def\Xmax{7}
\def\Ymax{7}

\foreach \x in {1,...,\Xmax}
  \draw[grid] (\x,1) -- (\x,\Ymax);
\foreach \y in {1,...,\Ymax}
  \draw[grid] (1,\y) -- (\Xmax,\y);

\foreach \y in {1,...,\Ymax}
  \node[grnpt] at (1,\y) {};

\foreach \x in {2,...,7}
  \foreach \y in {1,...,\Ymax}
    \node[redpt] at (\x,\y) {};


\node[black,font=\small] at (0.2,1) {$\footnotesize{\alpha+t}$};
\node[black,font=\small] at (0.2,2) {$\footnotesize{\alpha+t}$};
\node[black,font=\small] at (0.2,3) {$\footnotesize{\alpha + t}$};
\node[black,font=\small] at (0.2,4) {$\footnotesize{\alpha+t}$};
\node[black,font=\small] at (0.2,5) {$\footnotesize{\alpha+t}$};
\node[black,font=\small] at (0.2,6) {$\footnotesize{\alpha+t}$};
\node[black,font=\small] at (0.2,7) {$\footnotesize{\alpha+t}$};

\node[black,font=\small] at (2,0.6) {$\footnotesize{2\alpha}$};
\node[black,font=\small] at (3,0.6) {$\footnotesize{2\alpha}$};
\node[black,font=\small] at (4,0.6) {$\footnotesize{2\alpha}$};
\node[black,font=\small] at (5,0.6) {$\footnotesize{2\alpha}$};
\node[black,font=\small] at (3.5,4.6) {$\footnotesize{2\alpha}$};
\node[black,font=\small] at (6,0.6) {$\footnotesize{\alpha + \beta}$};
\node[black,font=\small] at (6,1.6) {$\footnotesize{\alpha + \beta}$};
\node[black,font=\small] at (6,2.6) {$\footnotesize{\alpha + \beta}$};
\node[black,font=\small] at (6,3.6) {$\footnotesize{\alpha + \beta}$};
\node[black,font=\small] at (6,4.6) {$\footnotesize{\alpha + \beta}$};
\node[black,font=\small] at (6,5.6) {$\footnotesize{\alpha + \beta}$};
\node[black,font=\small] at (6,6.6) {$\footnotesize{\alpha + \beta}$};
\node[black,font=\small] at (7,0.6) {$\footnotesize{\alpha-t}$};
\node[black,font=\small] at (7,1.6) {$\footnotesize{\alpha-t}$};
\node[black,font=\small] at (7,2.6) {$\footnotesize{\alpha-t}$};
\node[black,font=\small] at (7,3.6) {$\footnotesize{\alpha-t}$};
\node[black,font=\small] at (8.4,4) {$(N+1,N-2)$};
\node[black,font=\small] at (7,4.6) {$\footnotesize{\alpha-t}$};
\node[black,font=\small] at (7,5.6) {$\footnotesize{\alpha-t}$};
\node[black,font=\small] at (7,6.6) {$\footnotesize{\alpha-t}$};
\node[black,font=\small] at (8.4,7) {$(N+1,N+1)$};

\end{tikzpicture}
\caption{}
\label{Fig:Full1}
  \end{minipage}
\end{figure}

\smallskip
\noindent\textbf{(B)}
Consider the trapezoidal domain
\begin{equation}
E_N = \{(i,j): 1\leq j \leq N-1 \text{ and } j \leq i \leq 2N-1-j\}.
\end{equation}
Let the weights on the trapezoid domain be independently distributed as follows:
\begin{equation}\label{eq:BW_trap_params}
\widetilde{W}_{i,j}\sim
\begin{cases}
\Gamm(\beta + t), & i=j = 1,\\
\Gamm(\alpha + \beta), & 2\le i=j\le N-1,\\
\Gamm(\alpha + t), & 2 \leq i \leq 2N-3, \ \ j =1,\\
\Gamm(2t), & i=2N-2,\ \ j=1,\\
\Gamm(2\alpha), & \text{otherwise}.
\end{cases}
\end{equation}
Let $Z^{t,\beta}_2(n,m)$ denote the log-gamma polymer partition function from $(1,1)$ to $(n,m)$ according to the model above as shown in Figure \ref{Fig:Trapezoid2}. By the symmetry property described in Lemma \ref{lem: symmetry}, we can exchange the parameter $\beta$ and $t$ and get that
\begin{equation}\label{eq:eq_d_2}
\sum_{\pi \in \Pi_{(2,2) \rightarrow (N,N)}} \prod_{v \in \pi} w_{i,j} \overset{d}{=} Z^{t,\beta}_2(N-1,N-1).
\end{equation}
Let $Z^{t,\beta}_{2,line}$ be the trapezoidal point-to-line partition defined as follows:
\begin{equation}
Z^{t,\beta}_{2,line} = \sum_{j=1}^{N-1} Z_2^{t,\beta}(2N-1-j,j).
\end{equation}
Then, 
\begin{equation}\label{eq:ineq_d_2}
    \log Z_2^{t,\beta}(N-1, N-1) + \log \widetilde{W}_{N,N-1} \leq \log Z^{t,\beta}_2(N, N-1) \leq Z^{t,\beta}_{2, line}. 
\end{equation}
Moreover, by Theorem \ref{thm:BW}, 
\begin{equation}
Z^{t,\beta}_{2,line} \overset{d}{=} Z^{t,\beta}_{2, full}(N, N-1)
\end{equation}
where $Z^{t,\beta}_{2,full}(N,N-1)$ is the full-space log-gamma polymer partition function from $(1,1)$ to $(N, N-1)$ with weights defined in Figure \ref{Fig:Full2}.

\begin{figure}[htbp]
\centering
\begin{minipage}{0.48\textwidth}
  \centering
\begin{tikzpicture}[
  scale=0.7,
  grid/.style={black!80, dotted},
  redpt/.style={circle, fill=red!80, draw=none, inner sep=1.4pt},
  grnpt/.style={circle, fill=green!70!black, draw=none, inner sep=1.4pt},
  baseline=(current bounding box.center),
  every node/.style={font=\small}
]
\def\N{4} 

\foreach \y in {0,...,\N}{
  \pgfmathsetmacro{\H}{\N - \y + 1}
  \foreach \x in {-6,...,7}{
    \pgfmathparse{ (\x >= 1-\H) && (\x <= \H) ? 1 : 0 }
    \ifnum\pgfmathresult=1\relax

      \pgfmathparse{ (\x+1 >= 1-\H) && (\x+1 <= \H) ? 1 : 0 }
      \ifnum\pgfmathresult=1\relax
        \draw[grid] (\x,\y) -- (\x+1,\y);
      \fi

      \pgfmathparse{ (\y+1 <= \N) ? 1 : 0 }
      \ifnum\pgfmathresult=1\relax
        \pgfmathsetmacro{\Hup}{\N-(\y+1)+1}
        \pgfmathparse{ (\x >= 1-\Hup) && (\x <= \Hup) ? 1 : 0 }
        \ifnum\pgfmathresult=1\relax
          \draw[grid] (\x,\y) -- (\x,\y+1);
        \fi
      \fi

    \fi
  }
}

\foreach \y in {0,...,\N}{
  \pgfmathsetmacro{\H}{\N - \y + 1}   
  \foreach \x in {-6,...,7}{
    \pgfmathparse{ (\x >= 1-\H) && (\x <= \H) ? 1 : 0 }
    \ifnum\pgfmathresult=1\relax
      \pgfmathparse{ (\x == 1-\H) || (\x == \H) ? 1 : 0 }
      \ifnum\pgfmathresult=1\relax
        \node[grnpt] at (\x,\y) {};
      \else
        \node[redpt]   at (\x,\y) {};
      \fi
    \fi
  }
}

\node[black] at (-2.7,2.2) {$\footnotesize{\alpha+\beta}$};
\node[black] at (-1.7,3.2) {$\footnotesize{\alpha+\beta}$};
\node[black] at (-0.7,4.2) {$\footnotesize{\alpha+\beta}$};
\node[black] at (-3.7,1.2) {$\footnotesize{\alpha+\beta}$};
\node[black] at (-4.7,0.2) {$\footnotesize{\beta+t}$};

\node[black] at (1.5,4.5) {$(N,N-1)$};

\node[black] at (0.5,1.5) {$\footnotesize{2\alpha}$};

\node[black] at (3.6,2.2) {$\footnotesize{2\alpha}$};
\node[black] at (4.6,1.2) {$\footnotesize{2\alpha}$};
\node[black] at (2.6,3.2) {$\footnotesize{2\alpha}$};
\node[black] at (1.6,4) {$\footnotesize{2\alpha}$};
\node[black] at (5.6,0.2) {$\footnotesize{2t}$};
\draw[decorate, decoration={brace, amplitude=6pt,mirror}]
(-3,-0.2) -- (4,-0.2)
node[midway, below=6pt] {$\footnotesize \alpha+t$};
\end{tikzpicture}
\caption{}\label{Fig:Trapezoid2}
\end{minipage}
\hfill
\begin{minipage}{0.48\textwidth}
  \centering
  \begin{tikzpicture}[
  scale=0.85,
  grid/.style={black!80,dotted},
  redpt/.style={circle, fill=red!80, draw=none, inner sep=1.4pt},
  grnpt/.style={circle,fill=green!70!black, draw=none, inner sep=1.4pt},
  baseline=(current bounding box.center),
  dot/.style={circle,inner sep=1.4pt}]
\def\Xmax{6}
\def\Ymax{6}

\foreach \x in {1,...,\Xmax}
  \draw[grid] (\x,1) -- (\x,\Ymax);
\foreach \y in {1,...,\Ymax}
  \draw[grid] (1,\y) -- (\Xmax,\y);

\foreach \y in {1,...,\Ymax}
  \node[grnpt] at (1,\y) {};

\foreach \x in {2,...,6}
  \foreach \y in {1,...,\Ymax}
    \node[redpt] at (\x,\y) {};


\node[black,font=\small] at (0.2,1) {$\footnotesize{\beta+t}$};
\node[black,font=\small] at (0.2,2) {$\footnotesize{\beta+\alpha}$};
\node[black,font=\small] at (0.2,3) {$\footnotesize{\beta+\alpha}$};
\node[black,font=\small] at (0.2,4) {$\footnotesize{\beta+\alpha}$};
\node[black,font=\small] at (0.2,5) {$\footnotesize{\beta+\alpha}$};
\node[black,font=\small] at (0.2,6) {$\footnotesize{\beta+\alpha}$};

\node[black,font=\small] at (2,0.6) {$\footnotesize{2t}$};
\node[black,font=\small] at (2,1.6) {$\footnotesize{\alpha + t}$};
\node[black,font=\small] at (2,2.6) {$\footnotesize{\alpha + t}$};
\node[black,font=\small] at (2,3.6) {$\footnotesize{\alpha + t}$};
\node[black,font=\small] at (2,4.6) {$\footnotesize{\alpha + t}$};
\node[black,font=\small] at (2,5.6) {$\footnotesize{\alpha + t}$};
\node[black,font=\small] at (3,0.6) {$\footnotesize{2\alpha}$};
\node[black,font=\small] at (4,0.6) {$\footnotesize{2\alpha}$};
\node[black,font=\small] at (5,0.6) {$\footnotesize{2\alpha}$};
\node[black,font=\small] at (6,0.6) {$\footnotesize{2\alpha}$};
\node[black,font=\small] at (4.5,3.6) {$\footnotesize{2\alpha}$};
\node[black,font=\small] at (6,6.3) {$(N,N)$};
\node[black,font=\small] at (6.3,5.3) {$(N,N-1)$};

\end{tikzpicture}
\caption{}
\label{Fig:Full2}
\end{minipage}
\end{figure}

\begin{lemma}\label{lemma:two_para_upper_tail}
There exists some positive constants $c,C$ and $N_0$ such that for all $N \geq N_0$ and $x > 0$,
\begin{equation}
    \begin{aligned}
        \PP \left( \log Z^{t,\beta}(N,N) + 2N\psi(\alpha) \geq x \Nsigma  \right) \leq Ce^{-cx},
    \end{aligned}
\end{equation}
provided that the following two bounds hold (with different positive constants $c, C,N_0$):
\begin{equation}
    \PP \left( \log Z^{t,\beta}_{1,full}(N+1,N-2) + 2N\psi(\alpha) \geq x \Nsigma  \right) \leq Ce^{-cx},
\end{equation}
\begin{equation}
    \PP \left( \log Z^{t,\beta}_{2,full}(N,N-1) + 2N\psi(\alpha) \geq x \Nsigma  \right) \leq Ce^{-cx}.
\end{equation}
\end{lemma}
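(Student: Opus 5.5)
Write $Z^{t,\beta}(N,N)=P_1+P_2$ as in \eqref{eq:decomposition}. Here $P_1$ is the sum over paths from $(3,1)$ and $P_2$ the sum over paths from $(2,2)$. Since $\log(P_1+P_2)\le \log 2+\max\{\log P_1,\log P_2\}$, a union bound gives
\begin{equation*}
\PP\left(\log Z^{t,\beta}(N,N)+2N\psi(\alpha)\ge x\Nsigma\right)\le \sum_{i=1}^2\PP\left(\log P_i+2N\psi(\alpha)\ge x\Nsigma-\log 2\right).
\end{equation*}
For $x\ge 1$ and $N$ large, $x\Nsigma-\log 2\ge \tfrac{x}{2}\Nsigma$, so it suffices to bound each term by $Ce^{-cx/2}$. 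For $x<1$ the claim is trivial after enlarging $C$ (take $C\ge e^{c}$).

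For $P_1$, use \eqref{eq:eq_d_1} to replace $P_1$ by $Z_1^{t,\beta}(N,N-2)$. By \eqref{eq:ineq_d_1} and Theorem \ref{thm:BW},
\begin{equation*}
\log Z_1^{t,\beta}(N,N-2)\le \log Z^{t,\beta}_{1,line}-\log W_{N+1,N-2},\qquad Z^{t,\beta}_{1,line}\overset{d}{=}Z^{t,\beta}_{1,full}(N+1,N-2).
\end{equation*}
Split the event once more: either $\log Z^{t,\beta}_{1,full}(N+1,N-2)+2N\psi(\alpha)\ge \tfrac{x}{4}\Nsigma$, or $-\log W_{N+1,N-2}\ge \tfrac{x}{4}\Nsigma$.
\begin{itemize}
\item The first probability is at most $Ce^{-cx/4}$ by the assumed bound.
\item For the second, $W_{N+1,N-2}\sim\Gamm(2\alpha)$ has a parameter that does not depend on $N$. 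So $-\log W_{N+1,N-2}$ is the log of a Gamma$(2\alpha)$ variable, which is sub-exponential (as in \cite[Proposition D.2]{basu2024temporal}). Its tail is at most $C'e^{-c'x\Nsigma}\le C'e^{-c'x}$ once $N\ge 1$.
\end{itemize}

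The same argument handles $P_2$. Use \eqref{eq:eq_d_2} and \eqref{eq:ineq_d_2}, with the extra weight $\widetilde W_{N,N-1}\sim\Gamm(2\alpha)$, and the identity $Z^{t,\beta}_{2,line}\overset{d}{=}Z^{t,\beta}_{2,full}(N,N-1)$. Then apply the second assumed bound.

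The only point needing care is the removed weight, which could be small and make the lower-bound inequality lossy. This is exactly why we pick an inequality whose extra weight has an $N$-independent Gamma$(2\alpha)$ law: its log-tail is then uniform in $N$. Collecting constants finishes the proof, with $c$ replaced by $\min\{c/4,c'\}$.
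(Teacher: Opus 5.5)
Your proof is correct and follows the paper's argument. It uses the same union bound over the two pieces of \eqref{eq:decomposition}, the same comparisons \eqref{eq:ineq_d_1} and \eqref{eq:ineq_d_2} with the point-to-line sums and their full-space equalities in distribution, and the same sub-exponential tail of the removed $\Gamm(2\alpha)$ weight. Your explicit absorption of the $\log 2$ shift (restricting to $x\ge 1$ with $N$ large, and treating $x<1$ trivially by enlarging $C$) is slightly more careful than the paper, which drops that shift without comment.
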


\begin{proof}
    By the equalities in distribution in \eqref{eq:eq_d_1} and \eqref{eq:eq_d_2} and equation \eqref{eq:decomposition}, for $x > 0$, we have
    \begin{equation}
        \begin{aligned}
        \PP &\left( \log Z^{t,\beta}(N,N)+ 2N\psi(\alpha) \geq x \Nsigma  \right) \\
            \leq  &\PP \left( \log Z_1^{t,\beta}(N,N-2) + 2N\psi(\alpha) \geq x \Nsigma - \log 2  \right) \\&+ \PP \left( \log Z_2^{t,\beta}(N-1,N-1) + 2N\psi(\alpha) \geq x \Nsigma - \log 2  \right).
        \end{aligned}
    \end{equation}
By the inequality in \eqref{eq:ineq_d_1}, we see that
\begin{equation}
\begin{aligned}
    \PP &\left( \log Z_1^{t,\beta}(N,N-2) + 2N\psi(\alpha) \geq x \Nsigma - \log 2  \right) \\ &\leq \PP \left( \log Z_{1,line}^{t,\beta} - \log W_{N+1,N-2} + 2N\psi(\alpha) \geq x \Nsigma  \right)\\
    &\leq \PP \left( \log Z_{1,line}^{t,\beta}  +2N\psi(\alpha) \geq \frac{x}{2} \Nsigma \right) + \PP \left(  \log W_{N+1,N-2}\leq -\frac{x}{2}\Nsigma \right).
\end{aligned}
\end{equation}
Similarly, by the inequality \eqref{eq:ineq_d_2}, we have
\begin{equation}
\begin{aligned}
    \PP &\left( \log Z_2^{t,\beta}(N-1,N-1) + 2N\psi(\alpha) \geq x \Nsigma - \log 2  \right) \\ &\leq \PP \left( \log Z_{2,line}^{t,\beta} - \log \widetilde{W}_{N,N-1} + 2N\psi(\alpha) \geq x \Nsigma\right)\\
    &\leq \PP \left( \log Z_{2,line}^{t,\beta} + 2N\psi(\alpha) \geq \frac{x}{2} \Nsigma \right) + \PP \left( \log \widetilde{W}_{N,N-1} \leq -\frac{x}{2}\Nsigma \right).
\end{aligned}
\end{equation}
Because $\log W_{N+1,N-2}\overset{d}{=}\log \widetilde{W}_{N,N-1} \sim \log \Gamm(2\alpha)$ are sub-exponential random variables \cite[Proposition D.2]{basu2024temporal}, we know that there exists some $C,c > 0$ such that
\[
\PP \left( \log W_{N+1,N-2} \leq -\frac{x}{2}\Nsigma \right) = \PP \left( \log \widetilde{W}_{N,N-1} \leq -\frac{x}{2}\Nsigma \right)  \leq Ce^{-cx\Nsigma}.
\]
This completes the proof.
\end{proof}

To establish the probability bounds required in Lemma~\ref{thm:two_para_upper_tail}, we need the following Fredholm determinant formula for the cumulative distribution function of the free energy from $(1,1)$ to $(N,N)$ in the full-space inhomogeneous log-gamma polymer model. The following theorem is from \cite[Theorem 5.4]{imamura2022solvablemodelskpzclass}.

\begin{thm}\label{thm:1full_space}
Fix integers $N\ge 1$ and parameters $A_1,\dots,A_N,B_1,\cdots, B_N>0$.
Let $Z(N,N)$ denote the point-to-point partition function of the log-gamma polymer
from $(1,1)$ to $(N,N)$ on $\mathbb{Z}_{>0}^2$ with inverse-gamma weights having row/column
parameters $\{A_i\}_{i=1}^N$ and $\{B_j\}_{j=1}^N$. Then, for any $\tau \in \R$,
    \begin{equation}\label{eq:full_space_pf}
        \PP(\log Z(N,N) + \mathcal{G} < \tau) = \det (\Id-\bold{L})_{\mathbb{L}^2(\tau, +\infty)}
    \end{equation}
    where $\mathcal{G}$ is a standard Gumbel random variable and the kernel is defined as 
    \begin{equation}\label{eq:LG-rect-kernel}
        \bold{L}(X,Y) = \int_{\I \R - d_1} \frac{dZ}{2\pi\I} \int_{\I \R + d_2} \frac{dW}{2\pi\I}\frac{\pi}{\sin(\pi(W-Z))}\prod_{i=1}^N \frac{\Gamma(A_i+Z)}{\Gamma(B_i-Z)}\frac{\Gamma(B_i-W)}{\Gamma(A_i+W)}e^{ZX-WY}
    \end{equation}
    for $d_1, d_2>0$ such that $\frac{1}{N} \leq d_1 + d_2 < 1$ and $d_1, d_2 < \min\{A_1, \cdots, A_N, B_1, \cdots, B_N\}$
\end{thm}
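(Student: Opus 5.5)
The identity follows from two ingredients: a Gumbel randomization that turns the Laplace transform of $Z(N,N)$ into a distribution function, and a Mellin--Barnes type Fredholm determinant for that Laplace transform, rewritten on $\mathbb{L}^2(\tau,\infty)$ by the cyclic identity $\det(\Id - AB)=\det(\Id - BA)$.

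\textbf{Step 1: Gumbel randomization.} Since $\PP(\mathcal{G}\le x)=\exp(-e^{-x})$ and $\mathcal{G}$ is independent of $Z(N,N)$, conditioning on $Z(N,N)$ gives
\[
\PP(\log Z(N,N)+\mathcal{G}<\tau)=\E\big[\exp(-e^{\log Z(N,N)-\tau})\big]=\E\big[e^{-e^{-\tau}Z(N,N)}\big].
\]
So it suffices to prove $\E[e^{-\zeta Z(N,N)}]=\det(\Id-\bold{L})_{\mathbb{L}^2(\tau,\infty)}$ with $\zeta=e^{-\tau}$.

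\textbf{Step 2: Laplace transform formula.} Next I invoke the known Fredholm determinant for the Laplace transform of the inhomogeneous full-space log-gamma partition function. This comes from the Whittaker measure description of $Z(N,N)$ (as in \cite{CorwinOConnellSeppalainenZygouras2014} and \cite{borodin2013log}). It has the form
\[
\E[e^{-\zeta Z(N,N)}]=\det(\Id+K_\zeta)_{\mathbb{L}^2(C_v)},\qquad
K_\zeta(v,v')=\int\frac{ds}{2\pi\I}\,\Gamma(-s)\Gamma(1+s)\,\zeta^{s}\,\frac{\mathcal{F}(v+s)}{\mathcal{F}(v)}\,\frac{1}{v+s-v'},
\]
where $\mathcal{F}$ is the ratio of products $\prod_i\Gamma(A_i+\cdot)/\Gamma(B_i-\cdot)$. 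I would first establish this for parameters in a range where the contours separate the poles $\{-A_i-k\}$ from $\{B_i+k\}$. I then set $Z=v$ and $W=v+s$ (up to the sign conventions in \eqref{eq:LG-rect-kernel}) and use the reflection identity $\Gamma(-s)\Gamma(1+s)=-\pi/\sin(\pi s)$. This converts the kernel into a product of the weight $\prod_i\Gamma(A_i+Z)\Gamma(B_i-W)/(\Gamma(B_i-Z)\Gamma(A_i+W))$, the factor $\pi/\sin(\pi(W-Z))$, and a Cauchy factor $1/(W-Z')$.

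\textbf{Step 3: Moving to $\mathbb{L}^2(\tau,\infty)$.} On the chosen contours $\mathrm{Re}(W-Z')=d_1+d_2>0$, so
\[
\frac{e^{-\tau(W-Z')}}{W-Z'}=\int_\tau^\infty e^{-x(W-Z')}\,dx .
\]
Here $\zeta^{W-Z}=e^{-\tau(W-Z)}$ supplies the exponential. This factorizes $K_\zeta=AB$, where $B:\mathbb{L}^2(C)\to\mathbb{L}^2(\tau,\infty)$ and $A:\mathbb{L}^2(\tau,\infty)\to\mathbb{L}^2(C)$. Applying $\det(\Id+AB)=\det(\Id+BA)$ yields exactly the kernel $\bold{L}(X,Y)$ of \eqref{eq:LG-rect-kernel} on $\mathbb{L}^2(\tau,\infty)$, with the sign absorbed so that it reads $\det(\Id-\bold{L})$.

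\textbf{Main obstacle: trace-class justification.} The determinant manipulations require $A$ and $B$ to be Hilbert--Schmidt, and the contour conditions in the statement must be verified. Stirling's estimate, as in \eqref{eq: bound gamma}, gives
\[
\Big|\prod_{i=1}^N\frac{\Gamma(A_i+Z)}{\Gamma(B_i-Z)}\Big|\lesssim(1+|\mathrm{Im} Z|)^{-N(d_1+\cdots)},
\]
and similarly in $W$. The factor $1/\sin(\pi(W-Z))$ decays like $e^{-\pi|\mathrm{Im}(W-Z)|}$, and it is well defined because $0<d_1+d_2<1$. The condition $d_1+d_2\ge 1/N$ ensures integrability of the product of the two Gamma ratios, and the factor $e^{-d_1X}$ gives decay in $X$. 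A final analytic-continuation step in the parameters extends the identity from the restricted range of Step 2 to all $A_i,B_i>0$, since both sides are analytic there.
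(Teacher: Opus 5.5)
Your Step 1 is correct. The manipulations in Step 3 are the standard way to move a Mellin--Barnes kernel onto $\mathbb{L}^2(\tau,\infty)$: substitute $s=W-Z$ and use $\Gamma(-s)\Gamma(1+s)=-\pi/\sin(\pi s)$, conjugate by $e^{\tau Z}$ so the exponential sits on $Z'$, write $e^{-\tau(W-Z')}/(W-Z')=\int_\tau^\infty e^{-x(W-Z')}\,dx$, and apply $\det(\mathrm{I}+AB)=\det(\mathrm{I}+BA)$. For reference, the paper gives no proof of this statement; it imports it from \cite[Theorem 5.4]{imamura2022solvablemodelskpzclass}, where it is obtained by a different, free-fermionic route through the periodic Schur measure.

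\textbf{The gap is in Step 2: the contour $C_v$.} You leave it unspecified and then tacitly take it to be the line $\I\R-d_1$ in Step 3. The determinant actually proved in \cite{borodin2013log}, starting from the Mellin--Barnes integral of \cite{CorwinOConnellSeppalainenZygouras2014}, lives on a \emph{small} positively oriented closed contour. In your normalization that contour encloses the first poles $-A_1,\dots,-A_N$ of $\prod_i\Gamma(A_i+v)$ and none of $-A_i-1,-A_i-2,\dots$. The line $\I\R-d_1$, traversed upward, is instead the positively oriented boundary of the half-plane $\{\mathrm{Re}\,Z<-d_1\}$, so it surrounds \emph{all} poles $-A_i-k$, $k\ge 0$.

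Replacing one contour by the other is therefore not a contour deformation. Each $Z$-integral in the Fredholm expansion picks up extra residues at $Z=-A_i-k$, $k\ge 1$, so the kernel itself changes. The claim that the determinant nonetheless does not change is a nontrivial identity. Given \cite{borodin2013log}, it is essentially equivalent to the theorem you are trying to prove.

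If you keep the small contour instead, your factorization argument goes through but outputs a kernel whose $Z$-integral runs over a small loop around $\{-A_i\}$, which is not $\mathbf{L}$. The closing analytic continuation in $A_i,B_i$ cannot repair this, because the obstruction is the shape of the contour, not the parameter range. Choosing parameters so that a contour separates $\{-A_i-k\}$ from $\{B_i+k\}$ is the natural hypothesis for the infinite-contour formula you need, not for the one that is known. What is missing is an independent derivation of the Laplace-transform determinant with $v$ on an infinite vertical contour. Once that is in hand, your Steps 1 and 3 do finish the proof.

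A secondary technical point: on an infinite line, the operator with kernel $e^{xZ'}$ from $\mathbb{L}^2(C_v)$ to $\mathbb{L}^2(\tau,\infty)$ is not Hilbert--Schmidt, since its Hilbert--Schmidt norm contains $\int_{C_v}|dZ'|=\infty$. The factorization has to be done with polynomial weights in $\mathrm{Im}\,Z$, for example by factoring $\mathbf{L}$ through a weighted $\mathbb{L}^2$ space on the $Z$-line. This is where a lower bound on $d_1+d_2$ in terms of $1/N$ is actually used.
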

Now we are ready to prove upper tails for $\log Z^{t,\beta}_{1,full}(N+1,N-2)$ and $\log Z^{t,\beta}_{2,full}(N,N-1)$.

\begin{thm}\label{thm:full_space_upper_tail}
    Recall that we fix $\alpha >0,$ $\ttt>0$, $\tbeta \in (-\ttt, \infty)$ and scale $\beta = \frac{\tbeta}{\Nsigma}$ and $t = \frac{\ttt}{\Nsigma}$. There exists $N_0 \in \mathbb{N}$ and $C, c > 0$ such that for all $N \geq N_0$ and $x > 0,$
    \begin{equation}
        \Pb\left(\frac{\log Z_{2,full}^{t,\beta}(N,N-1)+2N\psi(\alpha)}{(\sigma N)^{1/3}} \geq x\right) \leq Ce^{-cx}.
    \end{equation}
\end{thm}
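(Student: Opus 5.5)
The plan is to read the upper tail off the Fredholm determinant of Theorem~\ref{thm:1full_space}, using a direct steepest descent analysis of the kernel $\bold{L}$ that gives bounds uniform in the spatial variables.

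\emph{Step 1: encode $Z^{t,\beta}_{2,full}$ in Theorem~\ref{thm:1full_space}.} First I would complete $Z^{t,\beta}_{2,full}(N,N-1)$ to a square. Since $Z^{t,\beta}_{2,full}(N,N)\ge Z^{t,\beta}_{2,full}(N,N-1)\,W_{N,N}$, and $\log W_{N,N}$ is an independent sub-exponential variable (\cite[Proposition D.2]{basu2024temporal}), it suffices to bound the upper tail of $\log Z^{t,\beta}_{2,full}(N,N)$. This costs an $e^{-cx\Nsigma}$ error.

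Next I would read off row parameters $\{A_i\}$ and column parameters $\{B_j\}$ from Figure~\ref{Fig:Full2} so that each weight at $(i,j)$ is $\Gamm(A_i+B_j)$. Where an exact additive splitting is not available, Lemma~\ref{lem:stochastic_dominance} lets me lower some shape parameters, which only enlarges the partition function and is harmless for an upper tail.

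Theorem~\ref{thm:1full_space} requires all $A_i,B_j>0$, while $\beta$ may be negative. I would use the gauge freedom $A_i\mapsto A_i+c$, $B_j\mapsto B_j-c$, which leaves every weight unchanged. Since $\beta+t>0$, one shift of order $N^{-1/3}$ makes all parameters positive, with minimum $m=\tilde m/\Nsigma$ for some $\tilde m>0$ depending only on $\ttt,\tbeta$.

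\emph{Step 2: remove the Gumbel variable.} With $\mathcal G$ independent,
\begin{equation*}
\PP(\log Z\ge s)\,\PP(\mathcal G\ge -1)\le \PP(\log Z+\mathcal G\ge s-1)=1-\det(\Id-\bold L)_{\mathbb L^2(s-1,\infty)} .
\end{equation*}
Take $s=-2N\psi(\alpha)+x\Nsigma$ and rescale $X=-2N\psi(\alpha)+\Nsigma u$, $Y=-2N\psi(\alpha)+\Nsigma v$. Expanding the determinant and using Hadamard's inequality,
\begin{equation*}
\bigl|1-\det(\Id-\bold L)\bigr|\le\sum_{n\ge1}\frac{n^{n/2}}{n!}\Bigl(\int_{x-1}^\infty C e^{-\delta u}\,du\Bigr)^{n}\le C' e^{-\delta x},
\end{equation*}
provided I have the pointwise kernel bound $|\Nsigma\,\bold L(X,Y)|\le Ce^{-\delta(u+v)}$ for all $u,v\ge x-1$ and $N\ge N_0$. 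So everything reduces to this kernel bound.

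\emph{Step 3: kernel bound (the main obstacle).} I would write the integrand of $\bold L$ as
\begin{equation*}
\exp\bigl(N(h(Z)-h(-W))+\Nsigma(uZ-vW)\bigr)\times(\text{finitely many Gamma factors}),
\end{equation*}
with $h$ as in Lemma~\ref{lem: limit and upper tail of kernel}. The finitely many Gamma factors come from the $O(1)$ rows and columns whose parameters are $O(N^{-1/3})$; their poles at $Z=-A_i$ and $W=B_i$ sit at distance $\asymp N^{-1/3}$ from the critical point $0$.

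I would place the $Z$-contour at $-d_1$ and the $W$-contour at $+d_2$, with $d_1=d_2=\delta/\Nsigma$ and $0<\delta<\tilde m$, so that $\frac1N\le d_1+d_2<1$ and no poles are crossed. Then I deform to the steepest descent contours $V(\cdot;2\pi/3;b)\cup L(\cdot)$ and their mirror images, exactly as in Lemma~\ref{lem: limit and upper tail of kernel}. On the far pieces $h$ gives a factor $e^{-\varepsilon N}$. On the local pieces, the scaling $Z=-\tZ/\Nsigma$ turns the exponent into the Airy-type $\tZ^3/3-u\tZ$ up to $O(N^{-1/3})$ errors. The Gamma factors become rational functions of the scaled variables times $\Nsigma$ powers, which cancel against the Jacobian, and $1/\sin(\pi(W-Z))$ contributes one factor $\Nsigma$.

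The essential point is uniformity in large $u,v$. This holds because $\mathrm{Re}(\Nsigma uZ)\le -\delta u$ on the whole $Z$-contour, and symmetrically for $W$, and because the remaining integrand is bounded independently of $u,v\ge x-1$. This yields $e^{-\delta(u+v)}$ with no $N$-dependent additive error.

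The delicate part is checking that the rational factors stay bounded along the deformed contours, that is, keeping the contours at distance $\gtrsim N^{-1/3}$ from all the near-critical poles, including near the junction of the $V$ and $L$ pieces. I would handle this by taking $\delta<\tilde m$ strictly and verifying the monotonicity of $\mathrm{Re}\,h$ along vertical lines, as in that lemma. Combining Steps 1–3 gives $\PP(\cdot\ge x)\le Ce^{-cx}$ with $c=\delta$ for $x\ge x_0$; for $0<x<x_0$ the bound holds after enlarging $C$.
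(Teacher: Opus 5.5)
Your proposal is correct. Its analytic core is the same as the paper's: steepest descent for the kernel of Theorem~\ref{thm:1full_space}, with the $Z$- and $W$-contours kept a distance of order $\Nsig$ apart on either side of the relevant poles; a kernel bound uniform in $u,v\ge 0$ that comes from those real parts; Hadamard's inequality; and completing $(N,N-1)$ to $(N,N)$ at the cost of a sub-exponential $\log\Gamm(2\alpha)$ term.

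The difference is how you handle $\beta\le 0$. The paper keeps the original parameters and deforms to V-shaped contours with apexes $\tilde\varsigma_1\Nsig<\tilde\varsigma_2\Nsig$, where $\max\{-\tbeta,-\ttt\}<\tilde\varsigma_1<\tilde\varsigma_2<\ttt$. It then conjugates the kernel by $e^{-\tilde\varsigma(X-Y)/\Nsigma}$ and extends the identity by analytic continuation in $\beta$. Your gauge shift $A_i\mapsto A_i+c$, $B_j\mapsto B_j-c$ gives exactly this conjugated, translated kernel. Substituting $Z'=Z+c$, $W'=W+c$ turns your kernel into the paper's $\overline{L}$ with $\tilde\varsigma=c\Nsigma$ and $\tilde\varsigma_{1,2}=\tilde\varsigma\mp\delta$. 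The advantage is that you can apply Theorem~\ref{thm:1full_space} directly with positive parameters. You never need to check analyticity of either side in $\beta$, and the admissible decay rate $\delta<\tilde m$ comes out explicitly.

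Your removal of the Gumbel variable through $\PP(\mathcal G\ge -1)>0$ is also a little cleaner than the paper's three-way union bound.

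One slip to correct. After the shift, the bulk factor is $\Gamma(\alpha+c+Z)/\Gamma(\alpha-c-Z)$. So the exponent is $N\bigl(h(Z+c)-h(W+c)\bigr)$, with critical point at $-c$ rather than $0$. Separately, $h(Z)-h(-W)$ as you wrote it equals $h(Z)+h(W)$ because $h$ is odd, which has the wrong sign in $W$. Since $c=O(\Nsig)$, neither point affects the Airy-scale analysis or your choice of mirrored contours.
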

\begin{proof}
    Let us start with the equation \eqref{eq:full_space_pf} in Theorem \ref{thm:1full_space}. We observe that $\PP(\log Z(N,N) + \mathcal{G} < \tau)$ is an analytic function in $\beta$ for $\beta > -t$. Recall the notations from Definition \ref{def:contour}
    \begin{equation}
    V(a;\theta;d) =\left( \{a+re^{\I\theta}: r > 0\} \cup \{a-re^{-\I\theta}: r \leq 0\}\right) \cap \{z \in \CC: |\text{Re}(z)| \leq d\},
    \end{equation}
    \begin{equation}
    L(a;b) = \{a + k\I: |k| \geq b\}.
    \end{equation}
    Fix some $\max\{-\tbeta,-\ttt\} < \tilde{\varsigma}_1 < \tilde{\varsigma}_2 < \ttt$. Let us deform the contour $\I \R - d_1$ into 
    \begin{equation}
    C_1 = V(\tilde{\varsigma_1}\Nsig; 2\pi/3;d_1 ) \bigcup L(-d_1;\sqrt{3}(d_1 + \tilde{\varsigma_1}\Nsig))
    \end{equation}
    and $\I \R + d_2$ into
    \begin{equation}
    C_2 = V(\tilde{\varsigma_2}\Nsig; \pi/3;d_2) \bigcup L(d_2;\sqrt{3}(d_2 - \tilde{\varsigma_2}\Nsig))
    \end{equation}
    as shown in the Figure \ref{fig:full_space_contour_2}. Since there is no pole at zero, Cauchy's integral theorem gives that  
    \begin{equation}
    \begin{aligned}
        L(X,Y) := \int_{C_1} \frac{dZ}{2\pi\I} \int_{C_2} \frac{dW}{2\pi\I}&\frac{\pi}{\sin(\pi(W-Z))}\left( \frac{\Gamma(\alpha + Z)}{\Gamma(\alpha + W)}\right)^{N-2}\left( \frac{\Gamma(\alpha - W)}{\Gamma(\alpha - Z)}\right)^{N-1} \\
        &\frac{\Gamma(t+Z)}{\Gamma(t+W)}\frac{\Gamma(\beta+ Z)}{\Gamma(\beta + W)}\frac{\Gamma(t-W)}{\Gamma(t-Z)} e^{ZX-WY}
    \end{aligned}
    \end{equation}
    is an analytic continuation of $\bold{L}(X,Y)$ in \eqref{eq:LG-rect-kernel}. Choose some $\tilde{\varsigma} \in (\tilde{\varsigma}_1, \tilde{\varsigma}_2)$. We define
    \begin{equation}
    \overline{L}(X,Y) = e^{-\frac{\tilde{\varsigma}}{\Nsigma}X + \frac{\tilde{\varsigma}}{\Nsigma}Y}L(X,Y). 
    \end{equation}
    We want to show that $\det (\Id-\overline{L})$ is well-defined for $\beta > -t$ and therefore, is the analytic continuation of $\det(\Id - \bold{L})$. Consider the following scaling and the change of variables
    \begin{equation}
        \begin{aligned}
            \tau = -2N\psi(\alpha) &+ \Nsigma x,\\
            X = -2N\psi(\alpha) + u \Nsigma, &\quad Y = -2N\psi(\alpha) + v \Nsigma.
        \end{aligned}
    \end{equation}
    We can rewrite the kernel $\overline{L}$ as
    \begin{equation}
    \begin{aligned}
            \overline{L}(u,v) =& \int_{C_1} \frac{dZ}{2\pi\I} \int_{C_2} \frac{dW}{2\pi\I}  \frac{\pi}{\sin(\pi(W-Z))} \left( \frac{\Gamma(\alpha + W)}{\Gamma(\alpha + Z)}\right)^{2}\left( \frac{\Gamma(\alpha - Z)}{\Gamma(\alpha - W)}\right) \frac{\Gamma(t+Z)}{\Gamma(t+W)}\\&\frac{\Gamma(\beta+ Z)}{\Gamma(\beta + W)}\frac{\Gamma(t-W)}{\Gamma(t-Z)}\frac{e^{Nh(Z) + Z \Nsigma u - \tilde{\varsigma}u}}{e^{Nh(W) + W \Nsigma v- \tilde{\varsigma}v}}
    \end{aligned}
    \end{equation}
    where $h(Z) = \log \Gamma(\alpha + Z) - \log \Gamma(\alpha-Z) - fZ$. By the same steepest descent analysis used in Lemma \ref{lem: limit and upper tail of kernel}, we see that only the integrals over $V(\tilde{\varsigma_1}\Nsig; 2\pi/3;d_1)$ and $V(\tilde{\varsigma_2}\Nsig; \pi/3;d_2)$ contribute in the limit. We do another change of variables
    \[
    W = \frac{\tW}{\Nsigma}, \quad Z = \frac{\tZ}{\Nsigma}
    \]
    and let 
    $$T_N(Z,W) = \frac{(\ttt+\tW)(\tbeta+\tW)(\ttt-\tZ)}{(\tW - \tZ)(\ttt+\tZ)(\tbeta + \tZ)(\ttt-\tW)} + \mathcal{O}((\sigma N)^{-1/3}).$$
    Thus,
    \[
    \begin{aligned}
    \overline{L}(u,v)
    &=\int\limits_{V_1} \frac{d\tZ}{2\pi\I} \int\limits_{V_2} \frac{d\tW}{2\pi\I} e^{-\frac{\tZ^3}{3} + \frac{\tW^3}{3} + u(\tZ - \tilde{\varsigma}) - v(\tW - \tilde{\varsigma}) + \mathcal{O}((\sigma N)^{-1/3})}T_N(Z,W)\\
    &\quad+ \mathcal{O}(e^{-\varepsilon N-(\tilde{\varsigma}+d_1\Nsigma)u - (d_2\Nsigma - \tilde{\varsigma})v})\\
    \end{aligned}
    \]
    where $V_1 = V(\tilde{\varsigma}_1;2\pi/3;d_1\Nsigma)$and $V_2 = V(\tilde{\varsigma}_2;\pi/3;d_2\Nsigma)$. The first integral converges due to the exponential decay of $e^{-\frac{\tZ^3}{3} + \frac{\tW^3}{3}}$ along the contours and the second term vanishes as $\text{Re}(h(Z) -h(W)) < -\varepsilon$ outside of $V_1$ and $V_2$. Thus, there exists positive constant $C$ and $N_0$ independent of $u,v$ such that following upper bound holds for all $N\geq N_0,$ and for all $u,v\geq 0$,
    \begin{equation}
    \left| \Nsigma \overline{L}(u,v) \right| \leq Ce^{-(\tilde{\varsigma}+\min\{\tbeta,\ttt\})u - (\ttt - \tilde{\varsigma})v}.
    \end{equation}
    Our choice of $\tilde{\varsigma}$ satisfies $\tilde{\varsigma} + \min\{\tbeta , \ttt\} >0$ for all $\tbeta >-\ttt$ and $\ttt - \tilde{\varsigma}>0$. Thus, we see that $\det(\Id-\overline{L})$ is well-defined for all $\tbeta > -\ttt$. By analytic continuation, we have for all $\tbeta >-\ttt$
    \begin{equation}
    \PP(\log Z_{2,full}^{t,\beta}(N,N) + \mathcal{G} < \tau) = \det (\Id-\overline{L})_{\mathbb{L}^2(\tau, +\infty)}.
    \end{equation} Thus, for any $x>0$,
    \begin{equation}\label{eq: pfaffian close to 1}
        \begin{aligned}
            &\Pb\left(\frac{\log Z_{2,full}^{t,\beta}(N,N)+\mathcal{G} +2N\psi(\alpha)}{(\sigma N)^{1/3}} \geq x\right) = 1-\det(1-\overline{L})_{\mathbb{L}^2(-2N\psi(\alpha) + \Nsigma x, +\infty)}\\
            &= \sum_{\ell=1}^\infty \frac{(-1)^{\ell-1}}{\ell!} \int_x^\infty \cdots \int_x^\infty \det[ \Nsigma \overline{L}( x_i,x_i) ] dx_1 \cdots dx_\ell \\
            & \leq \sum_{\ell=1}^\infty  \int_x^\infty \cdots \int_x^\infty \ell^{\ell/2}C^{\ell} e^{-c(x_1 + \cdots + x_{\ell})} dx_1 \cdots dx_\ell\\
            &\leq C'e^{-cx}
        \end{aligned}
    \end{equation}
where we have used the Hadamard's bound in the first inequality and set the constant $c = \min\{ \tilde{\varsigma}+\min\{\tbeta,\ttt\}, \ttt - \tilde{\varsigma}\} > 0$. Lastly, 
\begin{equation}
\begin{aligned}
        &\Pb\left(\frac{\log Z_{2,full}^{t,\beta}(N,N-1) +2N\psi(\alpha)}{(\sigma N)^{1/3}} \geq x\right)\\ 
        &\leq \Pb\left(\frac{\log Z_{2,full}^{t,\beta}(N,N)-\log \Gamm(2\alpha) +2N\psi(\alpha)}{(\sigma N)^{1/3}} \geq x\right)\\
        &\leq \Pb\left(\frac{\log Z_{2,full}^{t,\beta}(N,N-1) + \mathcal{G} +2N\psi(\alpha)}{(\sigma N)^{1/3}} \geq \frac{x}{3}\right) + \Pb\left(\frac{\mathcal{G}}{(\sigma N)^{1/3}} \leq -\frac{x}{3}\right)  +\Pb\left(\frac{\log \Gamm(2\alpha) }{(\sigma N)^{1/3}} \leq -\frac{x}{3}\right) \\
        &\leq C'e^{-cx/3}+e^{-e^{x\Nsigma/3}}+e^{-c'{x\Nsigma/3}}
\end{aligned}
\end{equation}
because $\log \Gamm(2\alpha)$ is sub-exponential. This completes the proof.
\end{proof}

\scalebox{0.8}{
\begin{minipage}{0.6\textwidth}
    \begin{tikzpicture}[thick]

\draw (-4.5,0) -- (4.5,0);

\fill (-4,0) circle (2pt) node[below] {$-\ttt$};
\fill (-1,0) circle (2pt) node[below] {$0$};
\fill (-0.4,0) circle (2pt) node[above] {$-\tilde\beta$};

\fill (0.3,0) circle (2pt) node[below] {$\tilde{\varsigma}_1$};
\fill (1.5,0) circle (2pt) node[below] {$\tilde{\varsigma}_2$};

\fill (4,0) circle (2pt) node[below] {$\ttt$};

\draw (0.3,0) -- (-2, 3.4);
\draw (-2,3.4) -- (-2,4);

\draw (0.3,0) -- (-2,-3.4);
\draw (-2,-3.4) -- (-2,-4) node[below] {$-d_1$};

\draw (1.5,0) -- (3,2);
\draw (3,2) -- (3,4);

\draw (1.5,0) -- (3,-2);
\draw (3,-2) -- (3,-4) node[below] {$d_2$};

\end{tikzpicture}
    \captionof{figure}{The contour of $\overline{L}$ for $Z^{t,\beta}_{2,full}$ when $\tbeta <0$. We choose $-\ttt<-\tbeta<\tilde{\varsigma}_1< \tilde{\varsigma}_2 < \ttt$.}
    \label{fig:full_space_contour_2}
\end{minipage}

\hfill 
\begin{minipage}{0.6\textwidth}
\begin{tikzpicture}[thick]

\draw (-4.5,0) -- (4.5,0);

\fill (-2.7,0) circle (2pt) node[below] {$-\ttt$};
\fill (-1.5,0) circle (2pt) node[below] {$0$};
\fill (-0.8,0) circle (2pt) node[above] {$-\tilde\beta$};

\fill (0.3,0) circle (2pt) node[below] {$\tilde{\varsigma}_1$};
\fill (1.5,0) circle (2pt) node[below] {$\tilde{\varsigma}_2$};

\fill (-0.3,0) circle (2pt) node[below] {$\ttt$};

\draw (0.3,0) -- (-2, 3.4);
\draw (-2,3.4) -- (-2,4);

\draw (0.3,0) -- (-2,-3.4);
\draw (-2,-3.4) -- (-2,-4) node[below] {$-d_1$};

\draw (1.5,0) -- (3,2);
\draw (3,2) -- (3,4);

\draw (1.5,0) -- (3,-2);
\draw (3,-2) -- (3,-4) node[below] {$d_2$};

\end{tikzpicture}
    \captionof{figure}{The contour of $\overline{L}$ for $Z^{t,\beta}_{1,full}$ when $\tbeta <0$. As $-\tbeta < \ttt$, we choose $ \ttt<\tilde{\varsigma}_1< \tilde{\varsigma}_2$.}
    \label{fig:full_space_contour_1}
\end{minipage}
}

\begin{thm}\label{thm:full_space_upper_tail_2}
    There exists $N_0 \in \mathbb{N}$ and $C, c > 0$ such that for all $N \geq N_0$ and $x > 0,$
    \begin{equation}
        \Pb\left(\frac{\log Z_{1,full}^{t,\beta}(N+1,N-2)+2N\psi(\alpha)}{(\sigma N)^{1/3}} \geq x\right) \leq Ce^{-cx}.
    \end{equation}
\end{thm}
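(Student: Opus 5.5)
The proof runs parallel to that of Theorem \ref{thm:full_space_upper_tail}. We apply Theorem \ref{thm:1full_space} to the full-space model of Figure \ref{Fig:Full1}. To get a square, we first extend the rectangle to $(N+1,N+1)$ with three extra rows whose weights are independent of the first $N-2$ rows. Since $Z^{t,\beta}_{1,full}(N+1,N-2)\,W_{N+1,N-1}W_{N+1,N}W_{N+1,N+1}\le Z^{t,\beta}_{1,full}(N+1,N+1)$, it suffices to bound the upper tail of the square partition function. The three extra inverse-gamma weights are then handled exactly as the single $\log\Gamm(2\alpha)$ weight was at the end of the previous proof, using their sub-exponential lower tails. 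Two further points need care. First, the extra weights on the column with parameter $\alpha-t$ have shape $\alpha-t$, which is still bounded away from $0$. Second, the Gumbel variable $\mathcal{G}$ is removed the same way as before.

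The square model has row parameters $A_j$ and column parameters $B_i$. Reading off Figure \ref{Fig:Full1}, the column parameters are essentially $(\alpha+t)$ for the first column, $\alpha$ for the bulk columns, and then $\beta$ and $-t$ for the last two columns. The row parameters are all $\alpha$. The kernel \eqref{eq:LG-rect-kernel} then carries the factors
\[
\frac{\Gamma(t+Z)}{\Gamma(t+W)}\frac{\Gamma(\beta+Z)}{\Gamma(\beta+W)}\frac{\Gamma(-t+Z)}{\Gamma(-t+W)},
\]
together with the $\alpha$-powers $(\Gamma(\alpha+Z)/\Gamma(\alpha-Z))^{N+O(1)}$ and their $W$-analogues. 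For $\beta,t>0$ Theorem \ref{thm:1full_space} applies directly. For the scaled parameters the negative parameter $-t$ places the $Z$-pole at $Z=t$ and the $W$-pole at $W=t$, and when $\tbeta<0$ the pole at $-\beta$ is also positive. Following Figure \ref{fig:full_space_contour_1}, we therefore take $\ttt<\tilde\varsigma_1<\tilde\varsigma_2$. We deform the $Z$-contour to $V(\tilde\varsigma_1\Nsig;2\pi/3;d_1)\cup L(\cdot)$, which passes to the right of the poles $-t$, $-\beta$ and $t$, and the $W$-contour to $V(\tilde\varsigma_2\Nsig;\pi/3;d_2)\cup L(\cdot)$. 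The deformed kernel defines the analytic continuation in $(\beta,t)$. We conjugate by $e^{\mp\tilde\varsigma X/\Nsigma}$ with $\tilde\varsigma\in(\tilde\varsigma_1,\tilde\varsigma_2)$, as before.

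The steepest descent analysis then goes through exactly as in Lemma \ref{lem: limit and upper tail of kernel}. The tails $L(\cdot)$ contribute $O(e^{-\varepsilon N})$. After rescaling we obtain
\[
\bigl|\Nsigma\,\overline{L}(u,v)\bigr|\le Ce^{-c_1u-c_2v},\qquad u,v\ge0,
\]
with $c_2=\tilde\varsigma_2-\tilde\varsigma>0$ coming from the $W$-contour, and with $c_1>0$ determined by $\tilde\varsigma_1$ and the $Z$-exponent $e^{u(\tZ-\tilde\varsigma)}$. Hadamard's bound then gives $1-\det(\Id-\overline{L})\le C'e^{-cx}$ as in \eqref{eq: pfaffian close to 1}.

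\textbf{Main obstacle.} The delicate step is justifying the analytic continuation when contours pass to the right of poles that were originally on the left, namely $Z=t$ from $\Gamma(-t+Z)$ and, when $\tbeta<0$, $Z=-\beta$. Crossing these poles produces residue terms. I would try first to show that these residues correspond to rank-one pieces whose contributions combine into a shift, namely a Gamma-type random variable on the last column. This can instead be absorbed by stochastic domination (Lemma \ref{lem:stochastic_dominance}): we replace the $\alpha-t$ column by a larger-parameter column so that no crossing is needed. Concretely, I would dominate the $(\alpha+\beta)$ and $(\alpha-t)$ columns by columns with parameters $\alpha+\min(\beta,-t)\ge\alpha-t$ that remain positive. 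Failing that, I would keep $\tilde\varsigma_1$ to the right of all poles, verify that the contour deformation is legitimate for the positive-parameter regime, and extend to the full parameter range by analyticity of both sides in $\beta$.
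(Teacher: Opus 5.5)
Your main line matches the paper's proof: embed into the $(N+1)\times(N+1)$ model and strip off the three $\Gamm(\alpha-t)$ weights and the Gumbel variable, apply Theorem \ref{thm:1full_space}, route the $Z$-contour through $\tilde\varsigma_1(\sigma N)^{-1/3}$ with $\tilde\varsigma_1>\ttt$ and the $W$-contour through $\tilde\varsigma_2(\sigma N)^{-1/3}$, conjugate by $e^{\mp\tilde\varsigma X/(\sigma N)^{1/3}}$, and finish with steepest descent and Hadamard's bound; your rate $c_1=\tilde\varsigma-\tilde\varsigma_1$ is the sharp form of the paper's $\tilde\varsigma-\ttt$. The ``main obstacle'' does not arise, for three reasons: $1/\Gamma(-t+W)$ is entire, so there is no $W$-pole at $t$; if you deform while $\beta$ and the last column parameter $A$ (the one later set to $-t$) are still positive, no poles are swept, and lowering $A$ to $-t$ and $\beta$ to $\tbeta(\sigma N)^{-1/3}$ afterwards only moves the $Z$-poles to points $\le t<\tilde\varsigma_1(\sigma N)^{-1/3}$, so no residues appear; and the continuation must be in $A$, not just in $\beta$, because Theorem \ref{thm:1full_space} never applies directly with $-t<0$ as a column parameter. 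So your third fallback is the right justification, whereas the stochastic-domination fallback fails, since a model dominating from above needs smaller, not larger, shape parameters.
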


\begin{proof}
The upper bound for the upper tail of $\log Z_{1,full}^{t,\beta}(N+1,N+1)$ follows analogously to that of Theorem \ref{thm:full_space_upper_tail} except that we choose $\tilde{\varsigma}_2>\tilde{\varsigma}>\tilde{\varsigma}_1>\ttt$ and the contour for $\overline{L}$ as shown in Figure \ref{fig:full_space_contour_1}. In this case, there exists positive constant $C$ and $N_0$ independent of $u,v$ such that following upper bound holds for all $N\geq N_0,$ and for all $u,v \geq 0$,
    \begin{equation}
    \left| \Nsigma \overline{L}(u,v) \right| \leq Ce^{-(\tilde{\varsigma}-\ttt)u - (\kappa - \tilde{\varsigma})v}
    \end{equation}
for any $\kappa > \tilde{\varsigma}$ since we can freely deform $W-$contour so that $\tilde{\varsigma}_2 > \kappa$. The rest follows from Hadamard's bound and taking the constant $c = \tilde{\varsigma} -\ttt > 0$ in \eqref{eq: pfaffian close to 1}. Once we establish the upper bound for the upper tail of $\log Z_{1,full}^{t,\beta}(N+1,N+1)$, the upper bound for $\log Z_{1,full}^{t,\beta}(N+1,N-2)$ also follows as $\log \Gamm(\alpha-t)$ stochastically dominates $\log \Gamm(2\alpha).$
\end{proof}

\begin{thm}\label{thm:two_para_upper_tail}
Fix any $\alpha >0,$ $\ttt>0$, $\tbeta \in (-\ttt, \infty)$ and scale $\beta = \frac{\tbeta}{\Nsigma}$ and $t = \frac{\ttt}{\Nsigma}$. There exists some positive constants $c,C$ and $N_0$ such that for all $N \geq N_0$ and $x > 0$,
\begin{equation}\label{eq:two_para_upper_tail}
    \begin{aligned}
        \PP \left( \log Z^{t,\beta}(N,N) + 2N\psi(\alpha) \geq x \Nsigma  \right) \leq Ce^{-cx},
    \end{aligned}
\end{equation}
\begin{equation}\label{eq:xxx}
    \PP \left( \log Z^{t}(N,N) + 2N\psi(\alpha) \geq x \Nsigma  \right) \leq Ce^{-cx}.
\end{equation}
\end{thm}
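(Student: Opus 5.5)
The plan is to assemble the two bounds from results already in hand. For \eqref{eq:two_para_upper_tail}, Lemma~\ref{lemma:two_para_upper_tail} reduces the upper tail of $\log Z^{t,\beta}(N,N)$ to upper tails of the two full-space partition functions $\log Z^{t,\beta}_{1,full}(N+1,N-2)$ and $\log Z^{t,\beta}_{2,full}(N,N-1)$. These are supplied by Theorem~\ref{thm:full_space_upper_tail_2} and Theorem~\ref{thm:full_space_upper_tail}, respectively, for all $\tbeta>-\ttt$. The one bookkeeping issue is the centering. The full-space statements are centered at $-2N\psi(\alpha)$ but concern lattice points at distance $N+O(1)$. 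Since the centering used in Lemma~\ref{lemma:two_para_upper_tail} is exactly the one in those theorems, no correction is needed beyond absorbing constants into $C,c$. Combining the two hypotheses in Lemma~\ref{lemma:two_para_upper_tail} with a union bound gives $Ce^{-cx}$, after shrinking $c$ and enlarging $C$ and $N_0$.

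For \eqref{eq:xxx}, I would compare $Z^{t}(N,N)$ with a two-parameter model via Lemma~\ref{lem:stochastic_dominance}. Take the intermediate model $\mathcal{G}^{t,\beta}$ of Definition~\ref{def: most general stationary model} with $\beta=\alpha$ and $\gamma\sim\Gamm(\alpha-t)$. Its diagonal partition function has the law of $Z^{t}(N,N)$. It differs from $Z^{t,\alpha}(N,N)$ (the case $\gamma=1$, $\beta=\alpha$) only through the single weight at $(2,1)$. Path-wise, $\mathcal{G}^{t,\alpha}(N,N)\le \max(1,\gamma)\,Z^{t,\alpha}(N,N)$, since every path either avoids $(2,1)$ or picks up the factor $\gamma$ there. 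Hence
\[
\PP\left(\log Z^{t}(N,N)+2N\psi(\alpha)\ge x\Nsigma\right)\le \PP\left(\log Z^{t,\alpha}(N,N)+2N\psi(\alpha)\ge \tfrac{x}{2}\Nsigma\right)+\PP\left(\log\gamma\ge \tfrac{x}{2}\Nsigma\right).
\]
The first term is controlled by \eqref{eq:two_para_upper_tail}, applied with a $\beta$ of order one rather than $\tbeta/\Nsigma$. To justify this, I would use Lemma~\ref{lem:stochastic_dominance} again: decreasing $\beta$ from $\alpha$ to $\tbeta\Nsig$ for any fixed $\tbeta>0$ only decreases the relevant shape parameters, so $\log Z^{t,\alpha}(N,N)\le_{\mathrm{st}}\log Z^{t,\beta}(N,N)$ with $\beta=\tbeta\Nsig$. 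For the second term, $\log\gamma$ is stochastically dominated by $\log\Gamm(\alpha/2)$ for large $N$, and that variable is sub-exponential by \cite[Proposition D.2]{basu2024temporal}. So the second term is at most $Ce^{-cx\Nsigma}\le Ce^{-cx}$.

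The step I expect to need the most care is this monotonicity comparison. I must check that every weight parameter in $Z^{t,\alpha}$ dominates the corresponding one in $Z^{t,\beta}$ under a common labeling of the lattice, namely $\alpha+\alpha\ge\alpha+\beta$ and $\beta+t$ at $(2,2)$. I would use the symmetry of Lemma~\ref{lem: symmetry} to place both models on the same picture (Figure~\ref{fig: two-parameter stationary figure}) before coupling. If that alignment fails, the fallback is to rerun the argument of Lemma~\ref{lemma:two_para_upper_tail} directly with $\beta=\alpha$ fixed. The corresponding full-space kernels then carry a factor $\Gamma(\alpha+Z)/\Gamma(\alpha+W)$ with no pole near the origin, so the steepest descent bounds of Theorem~\ref{thm:full_space_upper_tail} apply verbatim.
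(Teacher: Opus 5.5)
Your proposal is correct and follows the paper's proof. For \eqref{eq:two_para_upper_tail} you use exactly Lemma~\ref{lemma:two_para_upper_tail} combined with Theorems~\ref{thm:full_space_upper_tail} and~\ref{thm:full_space_upper_tail_2}. For \eqref{eq:xxx} the paper likewise peels off the $\Gamm(\alpha-t)$ weight at $(2,1)$ and compares the remaining model $\overline{Z}^{t}$ (your $\gamma=1$, $\beta=\alpha$ model) with $Z^{t,\beta}$ via Lemma~\ref{lem:stochastic_dominance}, as in the proof of Theorem~\ref{thm:lower_2para}.

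Two small simplifications apply. Every half-space path from $(1,1)$ passes through $(2,1)$, so $\log Z^{t}(N,N)\stackrel{d}{=}\log \overline{Z}^{t}(N,N)+\log\gamma$ holds exactly and no $\max(1,\gamma)$ is needed. The alignment you worry about is automatic, because the two compared models differ only in $\beta$ within the same definition, so the fallback is not required.
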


\begin{proof}
    Combining Lemma \ref{lemma:two_para_upper_tail}, Theorem \ref{thm:full_space_upper_tail}, and Theorem \ref{thm:full_space_upper_tail_2}, we obtain \eqref{eq:two_para_upper_tail}. For \eqref{eq:xxx}, it follows by applying same arguments of stochastic domination and exponential tail of log-inverse gamma random variable as in the proof of Theorem \ref{thm:lower_2para} to \eqref{eq:two_para_upper_tail}.
\end{proof}

\section{Lower tails for $Q^{Low}_{N,t}$, $Q^{High}_{N,\beta > t}$, $Q^{High}_{N,\beta < t}$}
Recall that 
\begin{equation}\label{eq:Qlow}
    Q_{N,t}^{Low}(\tau) = \E\left[ 2K_0 (2e^{(\log Z^t(N,N) - \tau)/2})\right]
\end{equation}
\begin{equation}\label{eq:QHigh>}
\begin{aligned}
    Q_{N,\beta > t}^{High} (\tau) =\E \left[ 2K_0 (2e^{(\log Z^{t, \beta}(N,N) - \tau+ \log W)/2}) \right] \text{ where } W \sim \Gamm(\beta -t) \text{ and } \beta > t
\end{aligned}
\end{equation}
\begin{equation}\label{eq:QHigh<}
Q_{N,\beta < t}^{High}(\tau) = \frac{1}{\Gamma(\beta - t)}\E\left[ \int_0^\infty 2K_0 (2e^{(\log Z^{t,\beta}(N,N) - \tau + \log w)/2}) w^{t-\beta - 1}e^{-w^{-1}}dw\right] \text{ for } -t<\beta < t
\end{equation}
where $\beta = \tbeta \Nsig$ and $t = \ttt \Nsig$ for some fixed $\tbeta,\ttt$ such that $\tbeta > -\ttt$ and $\ttt > 0$. We will prove the lower tail for \eqref{eq:Qlow} and \eqref{eq:QHigh>} first. 

\begin{thm}\label{thm:Qlower}
    Let $\tau = (z+r)\Nsigma - 2N\psi(\alpha)$ for some fixed $r \in \R$. Define
    \begin{equation}
    Q_N(z) = \E \left[ 2K_0 (2e^{(\log Z(N) + 2N \psi(\alpha) - (z+r)\Nsigma)/2}) \right]
    \end{equation}
    such that $\log Z(N)$ is a random variable satisfying the following lower tail estimates: there exists some positive constants $c,C$ and $N_0$ such that for all $N \geq N_0$ and $x > 0$, we have
    \begin{equation}\label{eq:lower_tail}
    \Pb \left( \log Z(N) + 2N\psi(\alpha) \leq -x\Nsigma \right) \leq Ce^{-cx}.
    \end{equation}
    Then for any $\varepsilon > 0$, there exists some positive constants $C'$ and $N_0$ such that for all $N \geq N_0$ and all $z < 0$,
    \begin{equation}
    \Nsig Q_N(z) \leq C'|z|e^{-c(1-\varepsilon)|z|}
    \end{equation}
    for the same constant $c> 0$ as in \eqref{eq:lower_tail}.
\end{thm}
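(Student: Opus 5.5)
Write $\rho=\Nsigma$ and $Y_N=(\log Z(N)+2N\psi(\alpha))/\rho$. The quantity to bound is
\[
\rho^{-1}Q_N(z)=\rho^{-1}\E\big[2K_0\big(2e^{\rho(Y_N-z-r)/2}\big)\big].
\]
For $z<0$ the shift $z+r$ is very negative. So $K_0$ is evaluated at a huge argument unless $Y_N$ is itself very negative, and the lower tail \eqref{eq:lower_tail} controls exactly that event.

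The first step is to collect two elementary bounds on $K_0$. For large argument, $K_0(y)\le Ce^{-y}$ for $y\ge 1$. For small argument, $K_0(y)\le C(1+|\log y|)$ for $0<y\le 1$. Both follow from the integral representation $2K_0(2\sqrt c)=\int_0^\infty e^{-cy-y^{-1}}y^{-1}dy$ given before \eqref{eq:shift}. Setting $g(w)=2K_0(2e^{w/2})$, these give
\[
g(w)\le C(1+|w|)\ \text{for } w\le 0, \qquad g(w)\le C\exp(-2e^{w/2})\ \text{for } w\ge 0 .
\]

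Next, split the expectation according to the sign of $w=\rho(Y_N-z-r)$.

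\emph{On $\{Y_N\ge z+r\}$.} Here $w\ge 0$. Because $\exp(-2e^{w/2})$ decays superexponentially in $w$, the plan is to truncate further at $Y_N\ge (1-\varepsilon)z+r$. On that smaller event $w\ge \rho\varepsilon|z|$, so $g(w)\le C\exp(-2e^{\rho\varepsilon|z|/2})$, which is far smaller than $e^{-c|z|}$ uniformly in $N$. On the strip $z+r\le Y_N<(1-\varepsilon)z+r$, use $g\le C$ together with the lower tail \eqref{eq:lower_tail}. Since $(1-\varepsilon)z+r<0$ once $|z|$ is large (and $|z|$ bounded is trivial), this gives a bound $C e^{-c(1-\varepsilon)|z|+c|r|}$. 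The factor $\rho^{-1}$ only helps.

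\emph{On $\{Y_N<z+r\}$.} Here $w<0$ and $g(w)\le C(1+\rho|Y_N-z-r|)$, so
\[
\rho^{-1}\E[g\,;\,Y_N<z+r]\le C\,\E\big[(\rho^{-1}+|Y_N-z-r|)\,\mathbbm{1}\{Y_N<z+r\}\big].
\]
The main point is that the logarithmic growth of $K_0$ costs exactly one power of $\rho$, which cancels the prefactor $\rho^{-1}$.

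The main obstacle is controlling this remaining expectation uniformly in $N$. I would handle it with a layer-cake estimate:
\[
\E\big[|Y_N-z-r|\,\mathbbm{1}\{Y_N<z+r\}\big]=\int_0^\infty \PP(Y_N<z+r-s)\,ds\le \int_0^\infty Ce^{-c(|z|-|r|+s)}ds\le C'e^{-c|z|}.
\]
The indicator term is handled the same way and contributes $\rho^{-1}e^{-c(|z|-|r|)}$.

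The factor $|z|$ allowed in the statement absorbs any slack. One source is the regime $|z|\lesssim |r|$, where the tail estimate is unavailable and one just uses the trivial bound. The other is polynomial prefactors coming from adjusting $\varepsilon$. All constants are uniform for $N\ge N_0$ because \eqref{eq:lower_tail} is.
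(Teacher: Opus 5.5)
Your argument is correct and is essentially the paper's proof. It uses the same split at $Y_N=z+r$ (the paper's $X\le 1$ versus $X\ge 1$), the logarithmic bound on $K_0$ with a layer-cake integration of the lower tail on the first event, and an $\varepsilon$-split on the second event where the decay of $K_0$ kills the far region. Because you use $K_0(y)\le Ce^{-y}$ instead of the paper's $K_0(x)\le D/x$ and split the expectation directly, you even avoid the extra factor $|z|$ that the paper picks up from the length of its integration interval.

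One caveat applies equally to the paper: the factor $|z|$ cannot absorb the bounded-$|z|$ regime, as you claim it does. $Q_N$ is positive and increasing in $z$, so $\rho^{-1}Q_N(z)$ stays bounded away from $0$ as $z\to 0^-$, while the stated right-hand side tends to $0$. What is really proved, and what the later Fourier argument uses, is $\rho^{-1}Q_N(z)\le C(1+|z|)e^{-c(1-\varepsilon)|z|}$; in your case it is even $\rho^{-1}Q_N(z)\le Ce^{-c(1-\varepsilon)|z|}$.
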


\begin{proof}
Because $2K_0(2x)$ is non-negative and decreasing for all $x \in \R$, we have
\begin{equation}
\begin{split}
        \E \left[ 2K_0 \left(2e^{\frac{1}{2}\left(\log Z(N) +2N\psi(\alpha) - (z+r)(\sigma N)^{1/3}  \right)}\right)\right] &= \int_0^\infty \PP \left( 2K_0(2X) > t \right) dt
\end{split}
\end{equation}
with $X = \exp\left(\frac{1}2(\log Z(N) + 2N\psi(\alpha) - (z+r)\Nsigma)\right)$.
By the following power series expansion of $K_0$ near $0$ \cite[10.31.1]{abramowitz1965handbook} and \cite[10.25.2]{abramowitz1965handbook}:
\begin{equation}
K_0(x) = -(\log(x/2) + \gamma)I_0(x) + O(x^2) \text{ where } I_0(x) = \sum_{k=0}^\infty \frac{x^{2k}}{4^k k! \Gamma(k+1)},
\end{equation}
and the asymptotic expansion of $K_0$ near $\infty$ \cite[10.25.3]{abramowitz1965handbook}
\begin{equation}
K_0(x) \sim \sqrt{\pi/(2x)}e^{-x},
\end{equation}
we can upper bound $K_0$ by the following:
\begin{equation}\label{eq:K0Upper}
    \begin{aligned}
       K_0(x)\ \le\ g(x) = 
\begin{cases}
-D\log(x/2), & 0<x\le 2,\\
Dx^{-1}, & x\ge 2.
\end{cases}
    \end{aligned}
\end{equation}
for some constant $D > 0$. Thus, 
\begin{equation}
\begin{aligned}
    &\int_0^\infty \PP \left( 2K_0(2X) > t \right) dt \leq \int_0^\infty \PP \left( g(2X) > \frac{t}{2} \right)dt
\end{aligned}
\end{equation}
and 
\begin{equation}
\begin{split}
    \int_0^\infty \PP \left( g(2X) > \frac{t}{2} \right) dt &= \int_0^\infty  \PP \left( -\log Z(N) -2N\psi(\alpha) + (z+r) (\sigma N)^{1/3}  > tD^{-1} \text{ and } X \leq 1 \right) dt\\
    &+ \int_0^\infty  \PP \left( e^{-\frac{1}{2}\left(\log Z(N) +2N\psi(\alpha) - (z+r)(\sigma N)^{1/3} \right)}  > tD^{-1} \text{ and } X \geq 1 \right) dt.
\end{split}
\end{equation}
Because $r$ is fixed, we can modify \eqref{eq:lower_tail} to
\begin{equation}
    \Pb \left( \log Z(N) + 2N\psi(\alpha) \leq -(x+r)\Nsigma \right) \leq C'e^{-cx}.
\end{equation}
for all $x > 0$ and for some new constant $C'>0$.

For \eqref{eq:lower_tail}, we can in fact extend the range of $x>0$ to $x>-r$ for some fixed $r >0$ by making the constant larger. Thus, for the first integral, we perform the change of variable $t = sN^{1/3}$ and for $z < 0$
\begin{equation}
\begin{split}
    &\int_0^\infty  \PP \left( -\log Z(N) -2N\psi(\alpha) + (z+r) (\sigma N)^{1/3}\ > tD^{-1} \text{ and } X \leq 1 \right) dt\\
    &\leq N^{1/3}\int_0^\infty  \PP \left( \log Z(N)+ 2N\psi(\alpha) < (z+r) (\sigma N)^{1/3} - sD^{-1}N^{1/3} \right) ds\\
    &\leq N^{1/3} \int_0^\infty C'e^{c((z+r)-sD^{-1}\sigma^{-1/3})}ds\\
    &\leq N^{1/3} C'' e^{-c|z|}
\end{split}
\end{equation}
for some new constant $C''>0$.

For the second integral, notice that $X \geq 1$ implies that $\log Z(N) + 2N\psi(\alpha) \geq (z+r) \Nsigma$ and thus if $t > D$, then the integrand probability is zero. Thus, we can change the range of integration from $[0, \infty]$ to $[0, D]$. Let us also do the change of variable that $t = e^{sN^{1/3}}$. Thus, for $z < 0$ and any $\varepsilon >0$, we have
\begin{equation}
\begin{split}
    &\int_0^\infty  \PP \left( e^{-\frac12\left(\log Z(N) +2N\psi(\alpha) - (z+r)\Nsigma \right)}  > tD^{-1} \text{ and } X \geq 1 \right) dt 
    \\& \leq \int_{-\infty}^{\frac{ \log D}{N^{1/3}}} N^{1/3}e^{sN^{1/3}} \PP\left( - \frac12(\log Z (N)+ 2N\psi(\alpha) - (z+r)\Nsigma) > sN^{1/3} - \log D \right) ds\\
    & = \int_{\varepsilon \sigma^{1/3}z}^{\frac{ \log D}{N^{1/3}}} N^{1/3}e^{sN^{1/3}} \PP\left(\log Z(N) + 2N\psi(\alpha) \leq (z+r)\Nsigma- 2sN^{1/3} +2 \log D \right) ds\\
    & + \int_{-\infty}^{\varepsilon \sigma^{1/3} z} N^{1/3}e^{sN^{1/3}} \PP\left(\log Z(N) + 2N\psi(\alpha) \leq (z+r)\Nsigma- 2sN^{1/3} +2 \log D \right) ds\\
    &\leq N^{1/3} \left(\frac{\log D}{N^{1/3}} - \varepsilon \sigma^{1/3} z \right) DC'e^{-c(1-2\varepsilon)|z|} + N^{1/3}\int_{-\infty}^{\varepsilon \sigma^{1/3} z} e^{sN^{1/3}} ds \\
    &\leq N^{1/3}C'''|z|e^{-c(1-2\varepsilon)|z|} + e^{\varepsilon z\Nsigma}
    \end{split}
\end{equation}
for some new constant $C'''>0$. We conclude the proof by combining the upper bounds for these two integrals.
\end{proof}

\begin{cor}\label{cor: 1param and 2param beta large lower tails}
    Let $\tau = (z+r)\Nsigma - 2N\psi(\alpha)$ for some fixed $r \in \R$. Recall that $\beta = \tbeta \Nsig$ and $t = \ttt \Nsig$ for some fixed $\tbeta>\ttt$. For any $\varepsilon > 0$, there exists some positive constants $C$ and $N_0$ such that for all $N \geq N_0$ and all $z < 0$,
    \begin{equation}
    \Nsig Q_{N,t}^{Low}\left((z+r)\Nsigma - 2N\psi(\alpha)\right) \leq C|z|e^{-(1-\varepsilon)2\ttt|z|}
    \end{equation}
    \begin{equation}\label{eq: beta > t Q lower tail}
    \Nsig Q_{N,\beta > t}^{High}\left((z+r)\Nsigma - 2N\psi(\alpha)\right) \leq C|z|e^{-(1-\varepsilon)2\ttt|z|}.
    \end{equation}
\end{cor}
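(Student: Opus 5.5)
The plan is to apply Theorem \ref{thm:Qlower} twice, once for each quantity, with the random variable $\log Z(N)$ chosen suitably and the lower-tail constant $c=(1-\varepsilon')2\ttt$ supplied by the lower-tail results of Section 13.

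\emph{Low density.} By \eqref{eq:Qlow}, $Q_{N,t}^{Low}(\tau)=\E[2K_0(2e^{(\log Z^t(N,N)-\tau)/2})]$, so with $\tau=(z+r)\Nsigma-2N\psi(\alpha)$ this is exactly $Q_N(z)$ of Theorem \ref{thm:Qlower} with $\log Z(N)=\log Z^t(N,N)$. Fix $\varepsilon'>0$. Theorem \ref{thm:lower_tail_prodstat} gives \eqref{eq:lower_tail} with $c=(1-\varepsilon')2\ttt$, for all $x>0$ and $N\ge N_0$. Theorem \ref{thm:Qlower} then yields $\Nsig Q^{Low}_{N,t}\le C'|z|e^{-(1-\varepsilon)(1-\varepsilon')2\ttt|z|}$ for $z<0$. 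Choosing $\varepsilon,\varepsilon'$ so that $(1-\varepsilon)(1-\varepsilon')\ge 1-\varepsilon''$ gives the first bound, after renaming $\varepsilon''$ as $\varepsilon$.

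\emph{High density, $\tbeta>\ttt$.} By \eqref{eq:QHigh>}, take $\log Z(N)=\log Z^{t,\beta}(N,N)+\log W$ with $W\sim\Gamm(\beta-t)$ independent. The main step is to check \eqref{eq:lower_tail} for this sum with rate $(1-\varepsilon')2\ttt$. Since $\beta-t=(\tbeta-\ttt)\Nsig$ is small, $W$ is typically huge: $\log W$ is of order $\Nsigma\cdot\mathrm{Exp}$, and in particular it is stochastically large and has a light lower tail.

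Concretely, I would split
\[
\Pb\big(\log Z^{t,\beta}+\log W+2N\psi(\alpha)\le -x\Nsigma\big)\le \Pb\big(\log Z^{t,\beta}+2N\psi(\alpha)\le -(1-\delta)x\Nsigma\big)+\Pb\big(\log W\le -\delta x\Nsigma\big).
\]
The first term is bounded by Theorem \ref{thm:lower_2para} by $Ce^{-(1-\varepsilon')(1-\delta)2\ttt x}$.

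For the second term, use $W\ge_{\mathrm{st}}\Gamm(2\alpha)$ for large $N$ (Lemma \ref{lem:stochastic_dominance}, since $\beta-t<2\alpha$). This gives $\Pb(\log W\le-\delta x\Nsigma)\le \Pb(\log G\le -\delta x\Nsigma)$ with $G\sim\Gamm(2\alpha)$. By sub-exponentiality of $\log\Gamm(2\alpha)$ \cite[Proposition D.2]{basu2024temporal}, this is at most $C e^{-c'\delta x\Nsigma}$, which is at most $Ce^{-2\ttt x}$ once $N$ is large. This stays uniform in $x>0$, since $c'\delta\Nsigma\ge 2\ttt$ for $N\ge N_0$.

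Combining the two terms and taking $\delta,\varepsilon'$ small gives \eqref{eq:lower_tail} with rate $(1-\varepsilon'')2\ttt$. Theorem \ref{thm:Qlower} then gives \eqref{eq: beta > t Q lower tail}.

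The only delicate point is tracking the $\varepsilon$'s so that the final exponent is $(1-\varepsilon)2\ttt$ for an arbitrary prescribed $\varepsilon$. This is routine, since every loss is a multiplicative factor close to $1$.
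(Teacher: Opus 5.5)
Your proposal is correct and follows the paper's proof. Like the paper, you verify the hypothesis \eqref{eq:lower_tail} of Theorem~\ref{thm:Qlower}: via Theorem~\ref{thm:lower_tail_prodstat} for $\log Z^t(N,N)$, and for $\log Z^{t,\beta}(N,N)+\log W$ by splitting off a $\delta$-fraction and using Theorem~\ref{thm:lower_2para} for the main term. The only difference is the $\log W$ term: your comparison with the fixed-shape $\Gamm(2\alpha)$ plus sub-exponentiality gives the stronger bound $Ce^{-c'\delta x\Nsigma}$, whereas the paper compares $W$ with $\overline{W}\sim\Gamm((2\delta^{-1}+1)t)$ and uses an exponential Markov bound at scale $\lambda=\tilde{\lambda}\Nsig$. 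As written, the paper invokes the positive moment $\E[\overline{W}^{\lambda}]$, while a lower-tail bound actually needs $\E[\overline{W}^{-\lambda}]=\Gamma(k+\lambda)/\Gamma(k)$ with $k=(2\delta^{-1}+1)t$, which is likewise bounded; your route sidesteps that computation.
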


\begin{proof}
    By Theorem \ref{thm:Qlower}, we only need to check the lower tail condition \eqref{eq:lower_tail} for $\log Z^t(N,N)$ and $\log Z^{t,\beta}(N,N) + \log W$ where $W \sim \Gamm(\beta - t)$. The lower tail for $\log Z^t(N,N)$ follows from Theorem \ref{thm:lower_tail_prodstat}. For $\log Z^{t,\beta}(N,N) + \log W$ and $x > 0$, we have
    \begin{equation}
    \begin{aligned}
        &\PP \left( \log Z^{t,\beta}(N,N) + \log W + 2N\psi(\alpha) \leq -x \Nsigma \right)\\
        &\leq \PP \left( \log Z^{t,\beta}(N,N) + 2N\psi(\alpha) \leq -x(1- \delta) \Nsigma \right) + \PP\left(\log W \leq -x\delta \Nsigma\right)
    \end{aligned}
    \end{equation}
for any small $\delta > 0$. Choose $\delta$ such that $(2\delta^{-1}+1)t > \beta - t.$
Notice that by Lemma \ref{lem:stochastic_dominance}, $\log W \geq_{\mathrm{st}} \log \overline{W}$ where $\overline{W} \sim \Gamm((2\delta^{-1}+1)t)$. Set $\lambda = \tilde{\lambda}\Nsig$ for some fixed $\tilde{\lambda} < (2\delta^{-1}+1)\ttt$. Then, 
\begin{equation}
    \E\left[ \overline{W}^\lambda \right] = \frac{\Gamma\left( (2\delta^{-1}\ttt +\ttt-\tilde{\lambda})\Nsig \right)}{\Gamma\left( (2\delta^{-1}\ttt +\ttt)\Nsig \right)}
\end{equation}
which is bounded for all large $N$ as it converges to $\Gamma(2\delta^{-1}\ttt + \ttt - \tilde{\lambda})\Gamma(2\delta^{-1}\ttt + \ttt)^{-1}$. Thus,
\begin{equation}
   \PP\left(\log W \leq -x \delta \Nsigma\right) \leq \PP\left(\log \overline{W} \leq -x \delta \Nsigma\right) \leq \PP\left( W^\lambda \leq \exp(- \lambda x \delta \Nsigma)\right) \leq Ce^{-
   \tilde{\lambda}\delta x}.
\end{equation}
In particular, taking $\tilde{\lambda} = 2\delta^{-1}\ttt$ concludes the proof.
\end{proof}

\begin{thm}\label{thm: lower tail for Q beta small 2param}
     Let $\tau = (z+r)\Nsigma - 2N\psi(\alpha)$ for some fixed $r \in \R$. Recall that $\beta = \tbeta \Nsig$ and $t = \ttt \Nsig$ for some fixed $-\ttt<\tbeta<\ttt$. For any $\epsilon >0,$ there exists some positive constants $C$ and $N_0$ such that for all $N \geq N_0$ and all $z < 0$,
    \begin{equation}\label{eq: beta small lower tail}
    \Nsig Q_{N,\beta < t}^{High}\left((z+r)\Nsigma - 2N\psi(\alpha)\right) \leq C|z|e^{-(1-\epsilon)2\ttt|z|}.
    \end{equation}
\end{thm}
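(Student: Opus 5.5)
The plan is to reduce \eqref{eq: beta small lower tail} to the same two-integral argument as Theorem \ref{thm:Qlower}. The new feature is that the mixing weight $w^{t-\beta-1}e^{-w^{-1}}/\Gamma(\beta-t)$ is no longer a probability density once $\beta<t$. Indeed, $\Gamma(\beta-t)<0$ and the $w$-integral diverges at $w=\infty$. So the expectation cannot be read as a single random shift $\log W$. Instead I would work directly with the double integral.

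\textbf{Step 1: rewriting the integrand.} By Fubini (justified via $K_0\ge0$ and the bound $K_0(x)\le g(x)$ from \eqref{eq:K0Upper}), write
\[
\Nsig Q^{High}_{N,\beta<t}(\tau)=\frac{\Nsig}{|\Gamma(\beta-t)|}\int_0^\infty \E\big[2K_0(2e^{(\log Z^{t,\beta}(N,N)-\tau+\log w)/2})\big]\,w^{t-\beta-1}e^{-w^{-1}}\,dw .
\]
Here I take absolute values and use the sign convention so that the quantity is nonnegative; the bound is stated for the absolute value. Note that $|\Gamma(\beta-t)|\sim (\sigma N)^{1/3}/(\ttt-\tbeta)$. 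This factor absorbs the extra $(\sigma N)^{1/3}$ that the $w$-integral will produce.

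\textbf{Step 2: splitting the $w$-integral.} Substitute $w=e^{y(\sigma N)^{1/3}}$, so that $dw\,w^{t-\beta-1}=(\sigma N)^{1/3}e^{(\ttt-\tbeta)y}\,dy$ and $e^{-w^{-1}}$ kills $y\ll0$. For fixed $y$, the inner expectation is exactly $Q_N(z+y)$ from Theorem \ref{thm:Qlower}, with $\log Z(N)=\log Z^{t,\beta}(N,N)$. Theorem \ref{thm:lower_2para} supplies the required lower tail with $c=2\ttt(1-\epsilon)$. I then split into $y\le |z|/2$ and $y>|z|/2$.

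For $y\le|z|/2$, and more precisely while $z+y<0$, I apply Theorem \ref{thm:Qlower} directly. This gives the bound $(\sigma N)^{1/3}C|z+y|e^{-2\ttt(1-\epsilon)|z+y|}$. Integrating against $e^{(\ttt-\tbeta)y}$ with $\ttt-\tbeta<2\ttt$, the exponent is $-2\ttt(1-\epsilon)|z| + (2\ttt(1-\epsilon)+\ttt-\tbeta)y$. Over $y\le |z|/2$ this is not small enough directly. So I instead choose the cutoff $y\le \eta|z|$ with small $\eta$, which yields $C|z|e^{-2\ttt(1-\epsilon')|z|}$.

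For $y>\eta|z|$, the argument $z+y$ may be positive, so I only use the crude bound $\Nsig Q_N(z+y)\le C(1+|z+y|)$. This follows from $K_0\le g$ and the upper tail of Theorem \ref{thm:two_para_upper_tail}. However, the weight $e^{(\ttt-\tbeta)y}$ then grows, and this is the main obstacle: the divergent large-$w$ region.

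\textbf{Step 3: handling large $w$.} I would use the decay of $K_0$ at infinity. When $\log w$ is large, $2K_0(2e^{(\cdots+\log w)/2})\le C e^{-(\log Z-\tau+\log w)/2}$, so the integrand carries a factor $w^{-1/2}$. The remaining $w$-power is $w^{t-\beta-3/2}$, which is integrable. The cost is the moment $\E[(Z^{t,\beta}(N,N)e^{-\tau})^{-1/2}]$. I would bound this moment using the lower tail, Theorem \ref{thm:lower_2para}, in the form $\E[e^{-\frac12(\log Z+2N\psi(\alpha))}]$.

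That naive form is too large, since $1/2$ is not a small multiple of $(\sigma N)^{-1/3}$. To fix this, I would replace $e^{-x}\le x^{-1}$ by $K_0(x)\le C_\kappa x^{-\kappa}$ with $\kappa=\kappa_N=\tilde\kappa(\sigma N)^{-1/3}$ and $\tilde\kappa\in(\ttt-\tbeta,2\ttt(1-\epsilon))$. This choice makes $w^{t-\beta-1-\kappa}$ integrable at infinity, with the integral of order $(\sigma N)^{1/3}/(\tilde\kappa-\ttt+\tbeta)$. The corresponding moment is $\E[e^{-\tilde\kappa(\log Z+2N\psi(\alpha)-\tau)/(\sigma N)^{1/3}}]$, which is finite by the lower tail because $\tilde\kappa<2\ttt(1-\epsilon)$. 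Combining the pieces leaves a tail of order $e^{-\tilde\kappa|z|}$ from this region. Since $\tilde\kappa$ is below $2\ttt$, I would restrict this crude estimate to $y\ge\eta|z|$, where the factor $e^{-\tilde\kappa(|z|... )}$ combined with the shift improves the decay. I would tune $\eta$ and $\tilde\kappa$ so that the total exponent reaches $2\ttt(1-\epsilon)$. Verifying that this bookkeeping closes is the step I expect to be delicate.
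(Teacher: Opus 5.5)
\textbf{Sign error in Step 2.} With $\tau=(z+r)\Nsigma-2N\psi(\alpha)$ and $\log w=y\Nsigma$, the exponent inside $K_0$ is $\log Z+2N\psi(\alpha)-(z-y+r)\Nsigma$. So the inner expectation is $Q_N(z-y)$, not $Q_N(z+y)$, and this inverts your picture.

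\begin{itemize}
\item Large $w$ ($y>0$) pushes the argument further into the lower tail. Theorem \ref{thm:Qlower} then gives decay $e^{-(1-\epsilon)2\ttt(|z|+y)}$, which beats the weight $e^{(\ttt-\tbeta)y}$ because $2\ttt(1-\epsilon)>\ttt-\tbeta$. This is exactly the paper's treatment of $y>0$. There is no divergent large-$w$ obstacle.
\item The region that needs an idea is $y<0$, where the argument $z+|y|$ moves toward $0$ and past it. The paper uses $e^{y(\ttt-\tbeta)}\le e^{y(\ttt-\tilde\zeta)}$ for $\tilde\zeta>\ttt$ together with $\log Z^{t,\beta}\ge_{\mathrm{st}}\log Z^{t,\zeta}$ (Lemma \ref{lem:stochastic_dominance}). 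It then extends the integral to all $y$ and invokes Corollary \ref{cor: 1param and 2param beta large lower tails} for $Q^{High}_{N,\zeta>t}$. Your remark that $e^{-w^{-1}}$ kills $y\ll 0$ could replace this step, but it would have to be quantified.
\end{itemize}

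As written, your $\eta|z|$ splitting rests on the wrong identification, and the final step is left unverified.

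\textbf{The fix is already in your Step 3.} Apply the moment estimate to the whole $w$-integral, with no splitting, and it is a complete proof. Use $2K_0(2\sqrt c)=\int_0^\infty e^{-cy-1/y}y^{-1}dy$ and $e^{-x}\le\kappa^{\kappa}e^{-\kappa}x^{-\kappa}$. Set $X_N=(\log Z^{t,\beta}(N,N)+2N\psi(\alpha))/\Nsigma$. Tonelli then gives
\[
\Nsig\,\big|Q^{High}_{N,\beta<t}(\tau)\big|\le \Nsig\,\frac{\kappa^{\kappa}e^{-\kappa}\,\Gamma(\kappa)\,\Gamma(\kappa+\beta-t)}{|\Gamma(\beta-t)|}\,e^{\tilde\kappa(z+r)}\,\E\big[e^{-\tilde\kappa X_N}\big],\qquad \kappa=\tilde\kappa\Nsig .
\]
Check the prefactor:
\begin{itemize}
\item $\Gamma(\kappa)\sim\Nsigma/\tilde\kappa$. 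This is your untracked $C_\kappa$, and it is exactly what the outer $\Nsig$ absorbs.
\item $\Gamma(\kappa+\beta-t)/|\Gamma(\beta-t)|\to(\ttt-\tbeta)/(\tilde\kappa+\tbeta-\ttt)$.
\end{itemize}
So the prefactor is $O(1)$ whenever $\tilde\kappa>\ttt-\tbeta$.

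For the expectation, apply Theorem \ref{thm:lower_2para} with some $\epsilon_1$. Then $\E[e^{-\tilde\kappa X_N}]$ is bounded uniformly in $N$ for every $\tilde\kappa<2\ttt(1-\epsilon_1)$. Since $\ttt-\tbeta<2\ttt$, choose $\epsilon_1$ so small that $2\ttt(1-\epsilon_1)>\max\{\ttt-\tbeta,\,2\ttt(1-\epsilon)\}$, and take $\tilde\kappa$ in between. This gives $\Nsig|Q|\le Ce^{-(1-\epsilon)2\ttt|z|}$ directly. Your worry that ``$\tilde\kappa$ is below $2\ttt$'' is not an obstruction, because the target rate is $2\ttt(1-\epsilon)$, not $2\ttt$.

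\textbf{Comparison with the paper.} This corrected route avoids Theorem \ref{thm:Qlower}, the split at $w=1$, and the comparison with the $\beta>t$ model. It is the paper's own $e^{-x}\le x^{-1}$ domination trick from the proof of Theorem \ref{thm: two param beta < t finite}, with the exponent rescaled to order $\Nsig$. It also yields the bound without the factor $|z|$.

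The literal form $C|z|e^{-(1-\epsilon)2\ttt|z|}$ cannot hold as $z\to0^-$ under either argument, since the left side stays bounded away from $0$. The statement should be read with $C(1+|z|)$, which both your corrected bound and the paper's argument actually give.
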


\begin{proof}
By Fubini's theorem, we can write
    \begin{equation}
        \begin{aligned}
            &Q_{N,\beta < t}^{High}\left((z+r)\Nsigma - 2N\psi(\alpha)\right)\\ &= \frac{1}{\Gamma(\beta - t)} \int_0^\infty \E\left[2K_0 (2e^{(\log Z^{t,\beta}(N,N) + 2N\psi(\alpha) - (z+r)\Nsigma + \log w)/2})\right] w^{t-\beta - 1}e^{-w^{-1}}dw.
        \end{aligned}
    \end{equation}
Make the change of variables $y = \Nsig \log w$.
    \begin{equation}
        \begin{aligned}
            &\Nsig Q_{N,\beta < t}^{High}\left((z+r)\Nsigma - 2N\psi(\alpha)\right) = \frac{\Nsigma}{\Gamma\left(\tbeta \Nsig  - \ttt \Nsig\right)}\\
           & \quad \times \int_{-\infty}^\infty \Nsig\E\left[2K_0 (2e^{(\log Z^{t,\beta}(N,N) + 2N\psi(\alpha) - (z+r)\Nsigma + y \Nsigma )/2})\right] e^{y(\ttt-\tbeta)}e^{-e^{-y \Nsigma}}dy.
        \end{aligned}
    \end{equation}
Notice that the factor $\Nsigma \Gamma\left(\tbeta \Nsig  - \ttt \Nsig\right)^{-1}$ is bounded as it converges to $\tbeta - \ttt$. Now we will analyze the integral in two parts, over $(0, \infty)$ and $(-\infty,0)$. For the first part, we apply Theorem \ref{thm:Qlower} using the lower tail for $\log Z^{t,\beta}(N,N)$ obtained in Theorem \ref{thm:lower_2para} where we choose $\varepsilon > 0$ such that $(1-\varepsilon)2\ttt > \ttt-\tbeta$:
\begin{equation}\label{eq: half y positve}
\begin{aligned}
        &\int_{0}^\infty \Nsig\E\left[2K_0 (2e^{(\log Z^{t,\beta}(N,N) + 2N\psi(\alpha) - (z+r)\Nsigma + y \Nsigma )/2})\right] e^{y(\ttt-\tbeta)}e^{-e^{-y \Nsigma}}dy \\
        &\leq \int_0^\infty C'(|z|+y)e^{-(1-\varepsilon)2\ttt(|z|+y)}e^{y(\ttt-\tbeta)} dy \leq C''|z|e^{-(1-\varepsilon)2\ttt|z|}.
\end{aligned}
\end{equation}
for some new constant $C'' > 0$.

For the second part, notice that when $y< 0$, $e^{y(\ttt-\tbeta)}$ increases as $\tbeta$ increases. Moreover, recall that $K_0(x)$ is monotonically decreasing over $(0, \infty)$. Let $\zeta = \tilde{\zeta}\Nsig$ for any fixed $\tilde{\zeta} > \tilde{t}$. By Lemma \ref{lem:stochastic_dominance}, $\log Z^{t,\beta} \geq_{\mathrm{st}} \log Z^{t,\zeta}$. Thus,
\begin{equation}\label{eq: half y negative}
    \begin{aligned}
        &\int_{-\infty}^0 \Nsig\E\left[2K_0 (2e^{(\log Z^{t,\beta}(N,N) + 2N\psi(\alpha) - (z+r)\Nsigma + y \Nsigma) /2})\right] e^{y(\ttt-\tbeta)}e^{-e^{-y \Nsigma}}dy \\
        &\leq \int_{-\infty}^0 \Nsig\E\left[2K_0 (2e^{(\log Z^{t,\zeta}(N,N) + 2N\psi(\alpha) - (z+r)\Nsigma + y \Nsigma /2})\right] e^{y(\ttt-\tilde{\zeta})}e^{-e^{-y \Nsigma}}dy\\
        &\leq \int_{-\infty}^\infty \Nsig\E\left[2K_0 (2e^{(\log Z^{t,\zeta}(N,N) + 2N\psi(\alpha) - (z+r)\Nsigma + y \Nsigma /2})\right] e^{y(\ttt-\tilde{\zeta})}e^{-e^{-y \Nsigma}}dy\\
        &= \frac{\Gamma(\tilde{\zeta}\Nsig - \ttt \Nsig)}{\Nsigma} \Nsig Q_{N,\zeta > t}^{High}\left((z+r)\Nsigma - 2N\psi(\alpha)\right)\\
        &\leq C'|z|e^{-(1-\varepsilon)2\ttt|z|}
\end{aligned}
\end{equation}
for some $C'>0$. The last inequality follow from Corollary \ref{cor: 1param and 2param beta large lower tails}. Combining \eqref{eq: half y positve} and \eqref{eq: half y negative} gives the desired lower tail \eqref{eq: beta small lower tail}.


\end{proof}

\section{Upper tails for $Q^{Low}_{N,t}$, $Q^{High}_{N,\beta > t}$, $Q^{High}_{N,\beta < t}$}

\subsection{Upper tails for $Q^{Low}_{N,t}$, $Q^{High}_{N,\beta > t}$}
Unlike lower tails, which was proved based on the lower tails of the model, the upper tails are obtained from the exact Pfaffian formulas.
We use the new parameter $u,v$ to replace $X,Y$, for example, $\widehat{\phi}_1(u) = \widehat{\phi}_1(X) = \widehat{\phi}_1(-2\psi(\alpha)N + (\sigma N)^{1/3}u).$ We use the notation $C(N_0,\delta)$ to show dependence on $N_0$ and $\delta$.

\begin{lemma}\label{lem: upper tail for phi and C}
    Fix any $0<\delta < \ttt.$ There exists $N_0 \in \Z_{>0}$, $C(N_0,\delta)>0$ independent of $u,s$ such that for all $N\geq N_0$ and $u \geq s$, we have 
    \begin{equation}
        \begin{aligned}
            &|\widehat{\phi}_1(u,s)| \leq Ce^{s(\ttt - \delta)}e^{-\delta u}, \quad |(\sigma N)^{-1/3}\widehat{\phi}_2(u,s)| \leq Ce^{s(\ttt - \delta)}e^{-\delta u}, \quad |(\sigma N)^{-1/3}\widehat{C}(s)| \leq Ce^{s(\ttt - \delta)},\\
            &|\widehat{B}(u)| \leq Ce^{-\delta u}, \quad |(\sigma N)^{1/3}\widehat{B}'(u)| \leq Ce^{-\delta u}.
        \end{aligned}
    \end{equation}
\end{lemma}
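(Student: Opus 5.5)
The plan is to run the steepest descent analysis of Lemma \ref{lem: limit and upper tail of kernel} once more, now keeping track of how the bounds depend on $u$ and $s$. Concretely, for each of $\widehat{\phi}_1,\widehat{\phi}_2,\widehat{C},\widehat{B},\widehat{B}'$ I substitute $X=-Nf+(\sigma N)^{1/3}u$, $\tau=-Nf+(\sigma N)^{1/3}s$, $t=\ttt(\sigma N)^{-1/3}$, $\beta=\tbeta(\sigma N)^{-1/3}$. The integrand then factors as $e^{Nh(Z)+(\sigma N)^{1/3}uZ}$ (and $e^{Nh(W)+(\sigma N)^{1/3}sW}$ for the $\tau$-variable) times Gamma ratios, where $h(Z)=\log\Gamma(\alpha+Z)-\log\Gamma(\alpha-Z)-fZ$. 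The prefactor $e^{\tau t}G_{\alpha,\beta}(t)$ turns into $e^{-\ttt^3/3+s\ttt}$ up to $1+\mathcal{O}(N^{-1/3})$.

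I deform each vertical line $\I\R-d$ into $V(-\delta(\sigma N)^{-1/3};2\pi/3;b)\cup L(-b;\sqrt3(b-\delta(\sigma N)^{-1/3}))$. This crosses no poles, because $\delta<\ttt$ keeps the poles at $-t$ (and $-\beta$ with $\beta>t$) to the left. On the local part I rescale $Z=-\tZ(\sigma N)^{-1/3}$. Every point of $C(\delta;\pi/3)$ has $\mathrm{Re}\,\tZ\ge\delta$, so $|e^{-u\tZ}|\le e^{-\delta u}$ for $u\ge0$. For general $u\ge s$ I write $e^{-u\tZ}=e^{-\delta u}e^{-u(\tZ-\delta)}$ and use that $\mathrm{Re}(\tZ-\delta)\ge0$ along the rays. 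The $W$-variable paired with $s$ gives $e^{-s\tW}$, and combined with $e^{s\ttt}$ this yields $e^{s(\ttt-\delta)}e^{-s(\tW-\delta)}$. I also need $N$-uniform control of $|e^{\tW^3/3-s(\tW-\delta)}|$. Since $|e^{\tW^3/3}|$ decays like $e^{-c|\tW|^3}$ along the $\pm\pi/3$ rays and $\mathrm{Re}(\tW-\delta)=|\tW-\delta|/2$, the factor $e^{-s(\tW-\delta)}$ is harmless for $s\ge0$. For $s<0$ it is bounded by $\sup_r e^{-cr^3+|s|r/2}$, a constant depending on $s$ only through a compact range. To avoid that dependence, I will restrict to $s$ in a half-line bounded below (this is how the lemma is used in the Fourier argument, where $s\to+\infty$) and absorb the rest into $C(N_0,\delta)$. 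The Gamma ratios contribute rational factors such as $\frac{\ttt+\tW}{\ttt-\tW}$, $\frac{1}{\tZ}$, $\frac{\tbeta+\tW}{\tbeta-\tW}$, plus $\mathcal{O}(N^{-1/3})$ corrections. These are bounded on $C(\delta;\pi/3)$ because $\delta\neq\ttt,\tbeta$ and $\delta>0$. The powers $(\sigma N)^{\pm1/3}$ in the statement come from counting the Jacobians $dZ=-(\sigma N)^{-1/3}d\tZ$, the factor $\Gamma(2t)\sim(\sigma N)^{1/3}/(2\ttt)$, $\Gamma(-2Z)\sim(\sigma N)^{1/3}/(2\tZ)$, and the extra $Z$ or $W$ in $\widehat{\phi}_1,\widehat{B}'$. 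This matches the normalizations in Lemma \ref{lem: limit of functions 2param beta >t}.

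For the far pieces on $L$, I use the descent property $\mathrm{Re}\,h\le-\varepsilon$ there. This gives a factor $e^{-\varepsilon N}$, while $|e^{(\sigma N)^{1/3}uZ}|=e^{-b(\sigma N)^{1/3}u}\le e^{-\delta u}$ for $u\ge0$ since $b(\sigma N)^{1/3}\gg\delta$. The Gamma ratios on $L$ decay polynomially (Stirling bound \eqref{eq: bound gamma} together with the sine estimate), as in \eqref{eq: UBofKernel}. Mixed pieces, with one variable local and one far, are handled in the same way.

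The main obstacle I expect is the uniformity in $N$ of the bounds on the local pieces, where the rescaled contour extends to length $\sim b(\sigma N)^{1/3}$. There I have to check that $Nh(-\tZ(\sigma N)^{-1/3})\le \mathrm{Re}(\tZ^3/3)/2$ on the whole $V$-segment, and not just as a pointwise limit. I will get this from a Taylor bound $|h(Z)-h'''(0)Z^3/6|\le C|Z|^4$ with $b$ chosen small, so that the quartic error is dominated by half the cubic term. I also need the Gamma ratios to be bounded uniformly by a polynomial in $|\tZ|$. Once these are in place, dominated integration gives each bound with a constant $C(N_0,\delta)$.
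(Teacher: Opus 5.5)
Your proposal is correct and takes essentially the paper's route. You deform to the steepest-descent contour $C(\delta;\pi/3)$, pull out $e^{-\delta u}$ and $e^{s(\ttt-\delta)}$ using $\mathrm{Re}\,\tZ,\mathrm{Re}\,\tW\ge\delta$, and bound the rest uniformly in $N$ by the cubic decay of $e^{\tZ^3/3+\tW^3/3}$; your Taylor estimate and your handling of the far pieces on $L$ supply details the paper only asserts.

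Your restriction to $s$ bounded below is not a weakness relative to the paper. The paper's displayed step $|e^{-u\tZ}e^{s(\ttt-\tW)}|\le e^{-u\delta}e^{s(\ttt-\delta)}$ likewise holds only for $u,s\ge 0$. The restricted version is also all that is used later: fixed $s$ for dominated convergence, and $s>s_0$ for the tail bounds.
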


\begin{proof}
    We use $(\sigma N)^{1/3}\widehat{\phi}_1$ to give an example. Let $\rho = (\sigma N)^{-1/3}$. We have
    \begin{equation}
        \begin{aligned}
            &\frac{e^{-\frac{\ttt^3}{3}}}{8\ttt}\!\!\!\!\int \limits_{\mathcal{C}(\delta;\pi/3)}\!\!\!\!\frac{d\tZ}{2\pi\I}\!\!\!\! \int \limits_{\mathcal{C}(\delta;\pi/3)}\!\!\!\!\frac{d\tW}{2\pi\I}  e^{-u\tZ}e^{s(\ttt-\tW)}\!\!\left(\frac{(-\ttt + \tZ)(\ttt+\tZ)(\ttt + \tW)}{\tW(-\ttt -\tZ)(\ttt - \tZ)(\ttt - \tW)} +\mathcal{O}(\rho)\right) e^{\frac{\tZ^3}{3}+ \frac{\tW^3}{3} + \mathcal{O}(\rho)}\\
            &\leq e^{-u\delta}e^{s(\ttt - \delta)}\frac{e^{-\frac{\ttt^3}{3}}}{8\ttt}\!\!\!\int \limits_{\mathcal{C}(\delta;\pi/3)}\!\!\!\frac{d\tZ}{2\pi\I}\!\!\! \int \limits_{\mathcal{C}(\delta;\pi/3)}\!\!\!\frac{d\tW}{2\pi\I}  \bigg|\frac{(\ttt + \tW)}{\tW(\ttt - \tW)} + \mathcal{O}(\rho)\bigg| e^{\frac{\tZ^3}{3} + \frac{\tW^3}{3}+ \mathcal{O}(\rho)}\leq Ce^{-u\delta}e^{s(\ttt - \delta)}.
        \end{aligned}
    \end{equation}
    for constant $C>0$ independent of $u$ and uniformly in $N$ because $e^{\frac{\tZ^3}{3} + \frac{\tW^3}{3}}$ has exponential decay along the chosen contours.
    The upper bounds for $\widehat{\phi}_2$, $\widehat{C}$, $\widehat{B}$ and $\widehat{B}'$ are justified by similar arguments. Moreover, the upper bounds for $\widehat{B}$ and $\widehat{B}'$ do not depend on $s$.
\end{proof}

\begin{rmk}
    Since $\delta$ can be chosen arbitrarily close to $\ttt$, we claim that for any $1 \gg c_0 >0,$ there exists $N_0 \in \Z_{>0}$, and $C(N_0,\delta) >0$ independent of $u$ such that 
    \begin{equation}
        |\widehat{\phi}_1(u,s)| \leq Ce^{c_0 s}e^{-(\ttt - c_0) u}, \quad\quad  |(\sigma N)^{-1/3}\widehat{\phi}_2(u,s)| \leq Ce^{c_0 s}e^{-(\ttt-c_0) u}.
    \end{equation}
\end{rmk}

\begin{lemma}\label{lem:upper tail for modified kernel}
    Fix any $\alpha>0,$ $\tbeta > \ttt>0.$ Consider the scaling $\tau = -2\psi(\alpha)N + (\sigma N)^{1/3}s,$ $\beta = \tbeta/(\sigma N)^{1/3}$, and $t = \ttt/(\sigma N)^{1/3}$. For any $\varepsilon >0$, there exist $s_0 \gg 1$ and $N_0 \in \Z_{>0}$ such that for all $s > s_0$ and $N > N_0$, we have 
    \begin{equation}
    \begin{aligned}
        &\bigg|\Pf\left(J - \widehat{\mathbf{K}}\right)_{L^{2}(\tau,\infty)} -1 \bigg| < \varepsilon,\\
        &\Bigg|\Pf\left(J - \widehat{\bold{K}} - (\sigma N)^{-1/3}\ketbra{\begin{array}{c} \widehat{\phi}_2 \\
        -\widehat{\phi}_1
        \end{array}}{\widehat{B} \quad -\widehat{B}^{\prime}} - (\sigma N)^{-1/3}\ketbra{\begin{array}{c}
            \widehat{B}\\
            -\widehat{B}^{\prime}
        \end{array}}{-\widehat{\phi}_2 \quad \widehat{\phi}_1 }\right)_{L^{2}(\tau,\infty)} -1\Bigg| < \varepsilon.
    \end{aligned}
    \end{equation}
\end{lemma}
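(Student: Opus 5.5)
The plan is to expand both Fredholm Pfaffians as series and show that every term with $n\ge 1$ is small once $s$ is large. This rests on the uniform exponential bounds already available: Lemma \ref{lem: limit and upper tail of kernel} for the entries of $\widehat{\mathbf K}$, and Lemma \ref{lem: upper tail for phi and C} for $\widehat\phi_1,\widehat\phi_2,\widehat B,\widehat B'$. In the rescaled variables (with $X=-2\psi(\alpha)N+(\sigma N)^{1/3}u$), the Pfaffian over $\mathbb L^2(\tau,\infty)$ is
\[
\Pf(J-\mathbf{M})=1+\sum_{n\ge1}\frac{(-1)^n}{n!}\int_s^\infty\!\!\cdots\!\int_s^\infty \Pf\big(\mathbf M^{\rho}(u_i,u_j)\big)_{i,j=1}^n\,du_1\cdots du_n .
\]
Here $\mathbf M$ is either $\widehat{\mathbf K}$ or the perturbed kernel, and $\mathbf M^{\rho}$ carries the Jacobian factors $(\sigma N)^{1/3}$. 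These factors are absorbed exactly as in the proof of Lemma \ref{lem: converge of Pfaffian 2param beta > t}.

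The first step is to fix $0<\delta<\ttt$ and $N_0$ so that, for all $N\ge N_0$ and $u,v\ge s$, every entry of the rescaled $\widehat{\mathbf K}$ is bounded by $Ce^{-\delta(u+v)}$. Lemma \ref{lem: limit and upper tail of kernel} gives this bound for $u,v\ge -r$. The steepest-descent bound there holds uniformly once $s\ge -r$ is fixed, so the same constant $C$ works for all $s>s_0\ge 0$.

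The second step handles the rank-two perturbations. Lemma \ref{lem: upper tail for phi and C} gives
\[
|\widehat\phi_1(u,s)|,\ |(\sigma N)^{-1/3}\widehat\phi_2(u,s)|\le Ce^{s(\ttt-\delta)}e^{-\delta u},\qquad |\widehat B(v)|,\ |(\sigma N)^{1/3}\widehat B'(v)|\le Ce^{-\delta v}.
\]
The products entering the perturbed kernel are therefore bounded by $C^2e^{s(\ttt-\delta)}e^{-\delta(u+v)}$. This carries a growing factor $e^{s(\ttt-\delta)}$, which is the main obstacle: a naive Hadamard bound would give $n$-th term size roughly $(e^{s(\ttt-\delta)}e^{-2\delta s})^n$. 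I handle it by sharpening the bound using the Remark after Lemma \ref{lem: upper tail for phi and C}, whose proof is the same contour shift. One may take the $\widetilde W$-contour through $\delta_1$ and the $\widetilde Z$-contour through $\delta_2$ independently, with $\delta_1$ near $\ttt$ and $\delta_2$ near $\ttt$. This gives $|\widehat\phi_i(u,s)|\le Ce^{c_0s}e^{-(\ttt-c_0)u}$ with $c_0$ small.

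With this choice, every entry of the perturbed kernel, for $u,v\ge s$, is bounded by $C'e^{c_0 s}e^{-(\ttt-c_0)(u+v)}$. I can split it as $e^{-\gamma(u+v)}$ times a factor bounded by $C'e^{c_0s-(\ttt-c_0-\gamma)2s}\le C'$, where $\gamma=(\ttt-c_0)/2$ and $c_0$ is small enough that $c_0<2(\ttt-c_0-\gamma)$.

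In the final step I apply Hadamard's bound to the $2n\times 2n$ Pfaffian, $|\Pf(\cdot)|\le (2n)^{n/2}\prod_i C'e^{-2\gamma u_i}$. This gives
\[
|\Pf(J-\mathbf M)-1|\le\sum_{n\ge1}\frac{(2n)^{n/2}C'^n}{n!}\Big(\frac{e^{-2\gamma s}}{2\gamma}\Big)^n .
\]
The series converges since $(2n)^{n/2}/n!$ decays superexponentially, and it is at most $C''e^{-2\gamma s}$ once $e^{-2\gamma s}\le1$. Choosing $s_0$ with $C''e^{-2\gamma s_0}<\varepsilon$ yields both inequalities, uniformly in $N\ge N_0$. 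The unperturbed case is the special instance $c_0=0$ with decay $\delta$.
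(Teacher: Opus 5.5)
Your argument is correct and is essentially the paper's proof: expand the Fredholm Pfaffian as a series, bound the entries using Lemma \ref{lem: limit and upper tail of kernel} and Lemma \ref{lem: upper tail for phi and C}, and apply Hadamard's bound term by term. The only difference is cosmetic: the growing factor $e^{s(\ttt-\delta)}$ is not really an obstacle, because the paper keeps the naive bound $\bigl(e^{-2\delta s}+e^{s(\ttt-3\delta)}\bigr)^n$ and simply takes $\delta\in(\ttt/3,\ttt)$, which amounts to your choice of $\delta=\ttt-c_0$ close to $\ttt$.
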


\begin{proof}
    We prove the more difficult kernel. The result for $\widehat{\mathbf{K}}$ can be analyzed similarly. Let $\rho = (\sigma N)^{-1/3}$. By Lemma \ref{lem: limit and upper tail of kernel}, Lemma \ref{lem: upper tail for phi and C} and the definition of Fredholm Pfaffian, we get
    \begin{equation}
        \begin{aligned}
            &\Bigg|\Pf\left(J - \widehat{\bold{K}} - \rho\ketbra{\begin{array}{c} \widehat{\phi}_2 \\
        -\widehat{\phi}_1
        \end{array}}{\widehat{B} \quad -\widehat{B}^{\prime}} - \rho\ketbra{\begin{array}{c}
            \widehat{B}\\
            -\widehat{B}^{\prime}
        \end{array}}{-\widehat{\phi}_2 \quad \widehat{\phi}_1 }\right) -1\Bigg|\\
        &\leq \sum_{n=1}^{\infty} \frac{1}{n!} \int_{s}^{\infty}\cdots \int_{s}^{\infty}\\
        &\Bigg|\Pf\left( \left( \begin{pmatrix}
            \widehat{K} & -\rho^{-1}\partial_{u_j}\widehat{K}\\
            -\rho^{-1}\partial_{u_i}\widehat{K} & \rho^{-2}\partial_{u_i}\partial_{u_j}\widehat{K}
        \end{pmatrix} +\ketbra{\!\!\!\begin{array}{c} \rho\widehat{\phi}_2 \\
        -\widehat{\phi}_1
        \end{array}\!\!\!}{\widehat{B} \quad -\!\rho^{-1}\widehat{B}^{\prime}} + \ketbra{\!\!\begin{array}{c}
            \widehat{B}\\
            -\rho^{-1}\widehat{B}^{\prime}
        \end{array}\!\!\!}{-\rho\widehat{\phi}_2 \quad \widehat{\phi}_1 } \right)(u_i,u_j)\right)_{i,j=1}^n\Bigg|  \prod_{i=1}^{n}du_i\\
        &\leq \sum_{n=1}^{\infty} \frac{1}{n!} \int_{s}^{\infty}\cdots \int_{s}^{\infty} (2n)^{\frac{n}{2}}C^n(1+e^{s(\ttt-\delta)})^{n} e^{-2\delta\sum_{i=1}^nu_i}\prod_{i=1}^{n}du_i\\
        &= \sum_{n=1}^{\infty} \frac{1}{n!}(2n)^{\frac{n}{2}}C^n(1+e^{s(\ttt-\delta)})^{n} \frac{e^{-2\delta n s}}{(2\delta)^n}= \sum_{n=1}^{\infty} \frac{1}{n!}(2n)^{\frac{n}{2}}\left(\frac{C}{{2\delta}}\right)^n\left({e^{-2\delta s}} + e^{s(\ttt-3\delta)}\right)^n <\varepsilon,
        \end{aligned}
    \end{equation}
    for sufficiently large $s$ and for some $\delta$ satisfying $0<\delta < \ttt$ and $\ttt-3\delta <0.$ We recall that $\delta$ determines the integral contour for each function.
\end{proof}

\begin{lemma}\label{lem: 1param 2param beta large upper tail Q}
    Fix any $\alpha >0,$ $\ttt>0,$ $\tbeta > \ttt,$ and $s\in \R$. Consider the scaling $\tau = -2N\psi(\alpha) + (\sigma N)^{1/3}s,$ $\beta = \tbeta/(\sigma N)^{1/3}$, and $t = \ttt/(\sigma N)^{1/3}$. Recall the definition of ${Q}_{N,\beta >t}^{High}(\tau)$ in Definition \ref{def: two param beta > t} and the definition of $Q_{N,t}^{Low}(\tau)$ in Definition \ref{def: one_param_finite}. Then for any $\varepsilon >0,$ there exist $s_0 \gg 1$ and $N_0 \in \Z_{>0}$ and $C_1,C_2 >0$ independent of $s$ such that for all $s>s_0$ and $N > N_0$, we have
    \begin{equation}
    \begin{aligned}
        &|(\sigma N)^{-1/3}{Q}_{N,\beta >t}^{High}(-2N\psi(\alpha) + (\sigma N)^{1/3}s)| \leq C_1e^{s(\ttt - \delta)},\\
        &|(\sigma N)^{-1/3}Q_{N,t}^{Low}(-2N\psi(\alpha) + (\sigma N)^{1/3}s)| \leq C_2e^{s(\ttt - \delta)}.\\
    \end{aligned}
    \end{equation}
\end{lemma}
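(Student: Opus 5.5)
We bound each of the two summands in Definition \ref{def: two param beta > t} (resp. Definition \ref{def: one_param_finite}) directly under the critical scaling. We write
\[
(\sigma N)^{-1/3}Q^{High}_{N,\beta>t}(\tau)=\Pf(J-\widehat{\mathbf K})\big((\sigma N)^{-1/3}\widehat C-(\sigma N)^{-1/3}\big)+(\sigma N)^{-1/3}\Pf\big(J-\widehat{\mathbf K}-\cdots\big).
\]
The rescaled rank-two perturbation carries the factor $(\sigma N)^{-1/3}$, as in Lemma \ref{lem: 1param pfaffian diff}. Since $\widehat{\phi}_i$ contain $\Gamma(2t)$, we pull the factor $(\sigma N)^{-1/3}$ inside the perturbation, as in Remark \ref{modifed}. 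The second Pfaffian then becomes exactly the one in Lemma \ref{lem:upper tail for modified kernel}, up to the overall prefactor $(\sigma N)^{-1/3}$ coming from the rewriting.

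The key step is to take $s_0$ from Lemma \ref{lem:upper tail for modified kernel} with $\varepsilon=1/2$. Then for $s>s_0$ and $N>N_0$ both Fredholm Pfaffians lie in $[1/2,3/2]$. By Lemma \ref{lem: upper tail for phi and C}, $|(\sigma N)^{-1/3}\widehat C(s)|\le Ce^{s(\ttt-\delta)}$ uniformly in $N\ge N_0$. Combining these,
\[
|(\sigma N)^{-1/3}Q^{High}_{N,\beta>t}|\le \tfrac32\big(Ce^{s(\ttt-\delta)}+(\sigma N)^{-1/3}\big)+\tfrac32 (\sigma N)^{-1/3}.
\]
Since $\ttt-\delta>0$ and $s>s_0>0$, we have $e^{s(\ttt-\delta)}\ge1$, so the constant terms are absorbed and the bound $C_1e^{s(\ttt-\delta)}$ follows. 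For $Q^{Low}_{N,t}$ we repeat the argument verbatim with $\beta=\alpha$ unscaled, i.e. with the $L$-versions of Lemmas \ref{lem: limit and upper tail of kernel}, \ref{lem: upper tail for phi and C} and \ref{lem:upper tail for modified kernel}, which hold by the same steepest descent.

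The main obstacle is the bookkeeping of the $(\sigma N)^{\pm1/3}$ factors in the rank-two perturbation. The pairs $\widehat\phi_2\leftrightarrow\widehat B'$ and $\widehat\phi_1\leftrightarrow\widehat B$ must be redistributed so that each product is $O(e^{s(\ttt-\delta)}e^{-\delta(u+v)})$ uniformly in $N$. This is already done inside the proof of Lemma \ref{lem:upper tail for modified kernel} via conjugation of the Pfaffian kernel by $\mathrm{diag}(1,\rho^{-1})$. There we needed $\ttt-3\delta<0$ to make the Pfaffian close to $1$, while the prefactor bound on $\widehat C$ only requires $\delta<\ttt$. Both can hold for a single admissible $\delta$ with $\ttt/3<\delta<\ttt$, and the stated exponent $\ttt-\delta$ refers to this choice. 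If one insists on arbitrary $\delta\in(0,\ttt)$, we would instead bound the Pfaffian crudely by $\sum_n (2n)^{n/2}(C/2\delta)^n(1+e^{s(\ttt-\delta)})^n e^{-2\delta ns}/n!$, which is still bounded by a constant times $e^{s(\ttt-\delta)}$ for large $s$.
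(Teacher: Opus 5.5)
Your argument is the paper's. For $s>s_0$ both Fredholm Pfaffians are within $1/2$ of $1$ by Lemma \ref{lem:upper tail for modified kernel} (with $\delta\in(\ttt/3,\ttt)$, as you note), and $|(\sigma N)^{-1/3}\widehat C|\le Ce^{s(\ttt-\delta)}$ by Lemma \ref{lem: upper tail for phi and C}. The Low density case is identical with $\beta=\alpha$. Two points need correcting, though neither changes the conclusion.

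\textbf{The rescaling step is misstated.} Let $\rho=(\sigma N)^{-1/3}$ and let $R$ be the rank-two term in Definition \ref{def: two param beta > t}. Remark \ref{modifed} says
\[
c\,\bigl[\Pf(J-\widehat{\mathbf K})-\Pf(J-\widehat{\mathbf K}-R)\bigr]=\Pf(J-\widehat{\mathbf K})-\Pf(J-\widehat{\mathbf K}-cR),
\]
i.e.\ $\Pf(J-\widehat{\mathbf K}-cR)$ is affine in $c$. Hence
\[
\rho\,Q^{High}_{N,\beta>t}(\tau)=\Pf(J-\widehat{\mathbf K})\bigl(\rho\widehat C-1\bigr)+\Pf\bigl(J-\widehat{\mathbf K}-\rho R\bigr).
\]
The first term has $-1$ rather than $-\rho$, and the second Pfaffian carries no prefactor. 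Your expression $\Pf(J-\widehat{\mathbf K})(\rho\widehat C-\rho)+\rho\,\Pf(J-\widehat{\mathbf K}-\rho R)$ differs from $\rho Q$ by $(1-\rho)\bigl[\Pf(J-\widehat{\mathbf K})-\Pf(J-\widehat{\mathbf K}-\rho R)\bigr]$.

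Some such rewriting is genuinely needed. After the $\mathrm{diag}(1,\rho^{-1})$ conjugation, every entry of the unscaled $R$ is of order $\rho^{-1}$, so Hadamard's bound cannot control $\Pf(J-\widehat{\mathbf K}-R)$ uniformly in $N$. With the correct identity you get $|\rho Q|\le\frac32\bigl(Ce^{s(\ttt-\delta)}+1\bigr)+\frac32$. The constants are then absorbed into $C_1e^{s(\ttt-\delta)}$ exactly as you say.

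\textbf{Your closing aside is false.} For $\delta<\ttt/3$, the Hadamard series $\sum_n\frac{(2n)^{n/2}}{n!}x^n$ with $x\asymp e^{s(\ttt-3\delta)}$ grows like $\exp\bigl(c\,e^{2s(\ttt-3\delta)}\bigr)$, not like $e^{s(\ttt-\delta)}$. The aside is also unnecessary. $Q$ does not depend on the contour parameter, and $e^{s(\ttt-\delta)}$ is decreasing in $\delta$ for $s>0$. So the bound obtained with any $\delta'\in(\ttt/3,\ttt)$, $\delta'\ge\delta$, already gives it for every $\delta\in(0,\ttt)$.
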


\begin{proof}
    The bound for ${Q}_{N,\beta >t}^{High}(\tau)$ is a direct consequence of Lemma \ref{lem: upper tail for phi and C} and the upper bound of $\widehat{C}$ provided in Lemma \ref{lem:upper tail for modified kernel}. To get the same result for $Q_{N,t}^{Low}(\tau)$, we simply need to prove a very similar version of Lemma \ref{lem: upper tail for phi and C} and Lemma \ref{lem:upper tail for modified kernel} for product stationary formulas. 
\end{proof}

\subsection{Upper tails for $Q^{High}_{N,\beta < t}$}

Previously, in Lemma~\ref{lem: limits of constant terms for 2 param beta small}, we fixed $s\in \R$ and the following functions are constants. Now, we regard the following quantities as functions of $s$ and derive their upper bounds in terms of $s$.
\begin{lemma}\label{lem: upper tail functions 2 param} 
Fix any $\alpha >0,$ $\ttt>0,$ $-\ttt<\tbeta < \ttt,$ and $s\in \R$. Fix any $\max\{\tbeta,0\}<\delta < \ttt.$ Consider the scaling $\tau = -2N\psi(\alpha) + (\sigma N)^{1/3}s,$ $\beta = \tbeta/(\sigma N)^{1/3}$, and $t = \ttt/(\sigma N)^{1/3}$. Then there exist $N_0 \in \Z_{>0}$ and $C(N_0,\delta) >0$ independent of $s$ such that for all $N\geq N_0$, we have
\begin{equation}
    \begin{aligned}
        &|\A_1(s)| \leq Ce^{-(\tbeta + \delta)s}, \quad |\A_2(s)| \leq Ce^{-(\tbeta + 3\delta)s},\quad |\B_1(s)| \leq Ce^{-2\delta s}, \quad |\B_2(s)| \leq Ce^{-4\delta s},\\
        &|(\sigma N)^{-1/3}\C_1(s)| \leq Ce^{-(\tbeta -\ttt)s}, \quad |(\sigma N)^{-1/3}\C_2(s)| \leq Ce^{-(2\delta + \tbeta -\ttt)s},\\
        &|(\sigma N)^{-1/3}\D_1(s)| \leq Ce^{-(\delta -t)s}, \quad |(\sigma N)^{-1/3}\D_2(s)| \leq Ce^{-(3\delta -\ttt)s},\\
        &|(\sigma N)^{-1/3}\M_1(s)| \leq Ce^{-(\delta -\ttt)s}, \quad |(\sigma N)^{-1/3}\M_2(s)| \leq Ce^{-(3\delta -\ttt)s},\\
        &|\NN_1(s)-1| \leq Ce^{-(\delta +\tbeta)s}, \quad |\NN_2(s)| \leq Ce^{-(3\delta +\tbeta)s}.\\
    \end{aligned}
\end{equation}
\end{lemma}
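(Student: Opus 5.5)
The plan is to bound each constant by the same steepest-descent procedure used in Lemma~\ref{lem: upper bounds for all functions 2 param beta small} and Lemma~\ref{lem: upper tail for phi and C}, this time tracking the $s$-dependence explicitly rather than treating $s$ as fixed. Under the scaling \eqref{eq: essential scaling}, every $\A_i,\B_i,\C_i,\D_i,\M_i,\NN_i$ becomes a multiple contour integral whose integrand has two kinds of factors: exponentials $e^{\tau(\cdot)}$, which become $e^{-s\tV}$ in each scaled integration variable, and explicit prefactors such as $e^{-\tau(\beta-W)}$ or $e^{\tau(t+W)}$, which become $e^{-s\tbeta}$ or $e^{s\ttt}$, times $e^{\pm \tbeta^3/3}$ or $e^{-\ttt^3/3}$ and a power of $(\sigma N)^{1/3}$ coming from $\Gamma(2t)$. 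So the first step is to read off, for each term, the number of integration variables and the explicit prefactor. For example, $\A_1$ has prefactor $e^{-s\tbeta}$ and one variable, while $\B_2$ has no prefactor and four variables. The powers of $(\sigma N)^{1/3}$ are already matched by the normalizations in Lemma~\ref{lem: limits of constant terms for 2 param beta small}.

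Second, I deform every contour, as in Lemma~\ref{lem: limit and upper tail of kernel}, to $V(-\delta\rho;2\pi/3;b)\cup L(-b;\cdot)$ with $\rho=(\sigma N)^{-1/3}$. In scaled variables this is $C(\delta;\pi/3)$ near the critical point. Since $\max\{\tbeta,0\}<\delta<\ttt$, no poles are crossed: the poles at $\pm\ttt$, $\tbeta$ and $0$ lie off the contour on the correct side, consistent with how the $\beta$- and $u$-poles were extracted in Section~\ref{section: 2 param decompose}. On $C(\delta;\pi/3)$ we have $\mathrm{Re}(\tV)\ge\delta$ for $s\ge 0$, so $|e^{-s\tV}|\le e^{-s\delta}$, and each variable contributes a factor $e^{-\delta s}$. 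The remaining integrand is
\[
\bigl|e^{\tV^3/3}\bigr|\times(\text{rational factor})+\mathcal O(\rho),
\]
which is integrable uniformly in $N\ge N_0$. The part of the contour away from the critical point gives $\mathcal O(e^{-\varepsilon N})$ times the same $s$-exponential, because there $\mathrm{Re}(W)\le -\delta\rho$ as well. Multiplying these factors gives each stated rate. For instance, $\A_2$ gives $e^{-s\tbeta}e^{-3\delta s}$, $\D_2$ gives $e^{s\ttt}e^{-3\delta s}$, and $\C_1$ has no integral, so it is just $e^{-(\tbeta-\ttt)s}$ exactly.

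Third, I handle the additive constants. The term $1$ in $\NN_1$ is subtracted off in the statement. The term $\frac{\Gamma(\beta-t)}{\Gamma(\beta+t)}\frac{\Gamma(1-\beta+t)}{\Gamma(1-\beta-t)}$ inside $\B_1$ is the residue at $W=u$ that was added back when the contour was moved in Lemma~\ref{lem: finite limit for constants}. I would check that, once the contour is placed at real part $-\delta\rho$ on the correct side of that pole, this constant is exactly what makes the decomposition consistent, so that the integral alone carries the $e^{-2\delta s}$ decay. Similar residue bookkeeping applies to $\M_1$.

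The step I expect to be the main obstacle is this residue bookkeeping: making sure that deforming to $C(\delta;\pi/3)$ with $\delta>\max\{\tbeta,0\}$ crosses none of the poles at $W=\pm\beta$, at $W=t$, or at the removed $W=u$ pole, for every one of the twelve terms. If one term does cross a pole, its residue would add a non-decaying piece, such as the constant in $\B_1$, and that piece would have to be reassigned. I would handle this by checking each integrand's pole set against the paper's stated contour conventions, one term at a time.
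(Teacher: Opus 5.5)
Your strategy matches the paper's: after the scaling, you bound $|e^{-s\tV}|\le e^{-\delta s}$ for each integration variable on $C(\delta;\pi/3)$ when $s\ge 0$. You read off the explicit prefactors $e^{-s\tbeta}$ (from $e^{-\tau\beta}$) and $e^{s\ttt}$ (from $e^{\tau t}$), and you bound the rest uniformly in $N$ using the decay of $e^{\tV^3/3}$. The paper does this for $\B_2$ only and calls the rest similar. This yields every entry except $\B_1$, and your third step, which is meant to handle $\B_1$, cannot work.

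The term $\kappa_N:=\frac{\Gamma(\beta-t)}{\Gamma(\beta+t)}\frac{\Gamma(1-\beta+t)}{\Gamma(1-\beta-t)}$ is part of the definition of $\B_1$. It is the $W=u$ residue that had to be split off in Lemma~\ref{lem: finite limit for constants} to continue to $u=-t$, so no choice of contour relocates it. It does not depend on $s$, and $\kappa_N\to\frac{\tbeta+\ttt}{\tbeta-\ttt}$, which is nonzero for $\tbeta\in(-\ttt,\ttt)$. Your argument shows the double integral in $\B_1$ is $O(e^{-2\delta s})$. Hence $\B_1(s)\to\kappa_N\neq 0$ as $s\to\infty$, and the bound $|\B_1(s)|\le Ce^{-2\delta s}$ is false for large $s$. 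The paper's closing section itself records $\widetilde{\B}_1=\frac{\tbeta+\ttt}{\tbeta-\ttt}+\mathcal{O}(e^{-\delta s})$. Checking that the constant ``makes the decomposition consistent'' can at best show that the integral part decays, not $\B_1$ itself.

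What you can prove is $|\B_1(s)-\kappa_N|\le Ce^{-2\delta s}$, so $|\B_1(s)|\le C$ for $s\ge 0$, uniformly in $N\ge N_0$. The $\B_1$ entry of the lemma should be replaced by this. The weaker bound suffices for the only application, Lemma~\ref{lem: two param beta small upper tail Q}. There the term $(\sigma N)^{-1/3}\widehat{\B}\,\widehat{\C}$ is then $O(e^{(\ttt-\tbeta)s})$, driven by $\C_1$, which is exactly the rate claimed.

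Two smaller remarks. First, no residue bookkeeping is needed for $\M_1$: the $W=u$ residue equals $1$ and cancels the leading $1$, so $\M_1$ has no additive constant and $e^{(\ttt-\delta)s}$ is a genuine bound. Second, your estimates, like the paper's, are uniform only for $s\ge 0$; as $s\to-\infty$ the absolute integrals grow like $e^{c|s|^{3/2}}$. This is harmless, since the lemma is used only for $s>s_0$.
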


\begin{proof}
    We proceed as in Lemma~\ref{lem: upper bounds for all functions 2 param beta small}  and Lemma~\ref{lem: upper tail for phi and C}. We explain the most involved term $\B_2$ and all remaining terms can be treated by a similar analysis. We first factor out $e^{\tau(Z+U+V+W)}$ which admits the upper bound $|e^{-s(\tZ+\tU+\tV+\tW)}| \leq e^{4s\delta}$ for $\tZ,\tW,\tU,\tV$ on the contour $C(\delta; \pi/3).$ After applying the scaling, the remaining integrand contains $e^{\frac{\tZ^3}{3} + \frac{\tW^3}{3} + \frac{\tU^3}{3} + \frac{\tV^3}{3} + \mathcal{O}((\sigma N)^{-1/3})}$ that has an exponential decay. Consequently, the resulting integrals are uniformly bounded by a constant.
\end{proof}

\begin{lemma}\label{lem:upper tail for modified kernel 2param}
Fix any $\alpha>0,$ $\ttt>0$, $-\ttt<\tbeta < \ttt.$ Consider the scaling $\tau = -2N\psi(\alpha) + (\sigma N)^{1/3}s,$ $\beta = \tbeta/(\sigma N)^{1/3}$, and $t = \ttt/(\sigma N)^{1/3}$.
Then for any $\varepsilon >0$, there exist $s_0 \gg 1$ and $N_0 \in \Z_{>0}$ such that for all $s > s_0$ and $N > N_0$, we have 
    \begin{equation}
    \begin{aligned}
        &\bigg|\Pf\left(J - \boldsymbol{\widehat{\mathcal{K}}}\right)_{L^{2}(\tau,\infty)} -1 \bigg| < \varepsilon,\quad
        \Bigg|\Pf\left(J - \boldsymbol{\widehat{\mathcal{K}}} -\widehat{A} \right)_{L^{2}(\tau, \infty)}-1\Bigg| < \varepsilon,
    \end{aligned}
    \end{equation}
    where $\widehat{A}$ can be any of the matrix kernels in \eqref{eq: all possible prelimit kernels}.
\end{lemma}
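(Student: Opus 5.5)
We follow the proof of Lemma~\ref{lem:upper tail for modified kernel} verbatim, replacing the kernel bounds there by those of Lemma~\ref{lem: 2 param kernel beta small limit, upper bound and pfaffian} and Lemma~\ref{lem: upper bounds for all functions 2 param beta small}. We expand each Fredholm Pfaffian as a series over $\mathbb{L}^2((s,\infty))$ after the change of variables $X=-2N\psi(\alpha)+(\sigma N)^{1/3}u$. We conjugate the matrix kernel by $\mathrm{diag}(1,\rho^{-1})$ with $\rho=(\sigma N)^{-1/3}$, which leaves the Pfaffian unchanged, so that every entry appears with the normalization used in \eqref{eq: 2 param K upper bound} and \eqref{eq: scaling for each function two param beta small}. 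The $n=0$ term equals $1$. It therefore suffices to bound every entry of the $2n\times 2n$ matrices by $C(s)\,e^{-\delta(u_i+u_j)}$, where $C(s)$ grows at most exponentially in $s$ at a rate we can control. Hadamard's bound then gives
\[
|\Pf(\cdot)-1|\le \sum_{n\ge1}\frac{(2n)^{n/2}}{n!}\Big(\frac{C(s)}{2\delta}\Big)^n e^{-2\delta n s}.
\]

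For $\widehat{\boldsymbol{\mathcal K}}$ alone, \eqref{eq: 2 param K upper bound} gives $C(s)=C$, which is independent of $s$. The series is then at most $\sum_n \frac{(2n)^{n/2}}{n!}(C e^{-2\delta s}/(2\delta))^n$, and this is less than $\varepsilon$ once $s>s_0$. This bound is uniform in $N\ge N_0$ because the constant in Lemma~\ref{lem: 2 param kernel beta small limit, upper bound and pfaffian} is uniform.

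For each rank-two perturbation $\widehat A$ in \eqref{eq: all possible prelimit kernels}, the entries are products $f(u_i)g(u_j)$. Here $f,g$ range over $\widehat\zeta,\widehat\eta,\widehat\theta,\widehat\aleph,\widehat\psi,\widehat\Theta$, scaled as in Lemma~\ref{lem: upper bounds for all functions 2 param beta small}, and each carries a factor $e^{-\delta u}$ together with an $s$-dependent prefactor. We track the product of prefactors case by case. For $\zeta\otimes\aleph$ we get $e^{s(\ttt-\delta)}e^{-2\delta s}=e^{s(\ttt-3\delta)}$. For $\eta\otimes\Theta$ we get $e^{-s(\tbeta+\delta)}e^{-2\delta s}$. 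For $\eta\otimes\aleph$ the prefactor is the same as for $\eta\otimes\Theta$. For $\theta\otimes\aleph$ we get $e^{-4\delta s}$. For $\eta\otimes\psi$ we get $e^{s(\ttt-2\delta-\tbeta)}$. For $\theta\otimes\psi$ we get $e^{s(\ttt-3\delta)}$. Where the lemma bounds a function only for the range $u\ge s$, we use it as stated. The $\rho$ factors in \eqref{eq: all possible prelimit kernels} are designed to match the normalizations in \eqref{eq: scaling for each function two param beta small}, so after the conjugation no net positive power of $(\sigma N)^{1/3}$ should survive. Verifying this bookkeeping is routine, though it is the place where an error could slip in.

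After the $u$-integrals each term contributes a further factor $e^{-2\delta s}$. The main obstacle is to make sure every resulting exponent, for example $\ttt-5\delta$ or $\ttt-4\delta-\tbeta$, is negative. This requires choosing $\delta$ close enough to $\ttt$ inside $(\max\{\tbeta,0\},\ttt)$; taking $\delta>\ttt/3$ and $\delta>(\ttt-\tbeta)/4$ suffices. Such a $\delta$ exists because $\tbeta>-\ttt$. As noted in the remark after Lemma~\ref{lem: upper tail for phi and C}, $\delta$ may be taken arbitrarily close to $\ttt$, since the steepest descent contours allow it. With this choice $C(s)e^{-2\delta s}\to0$, and the series is less than $\varepsilon$ for $s>s_0$, uniformly in $N>N_0$.
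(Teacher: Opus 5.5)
Your proposal is correct and follows the paper's proof. You expand the Fredholm Pfaffian over $(s,\infty)$ in the rescaled variables, bound each entry by $C(1+e^{\kappa s})e^{-\delta(u_i+u_j)}$ using Lemma~\ref{lem: 2 param kernel beta small limit, upper bound and pfaffian} and Lemma~\ref{lem: upper bounds for all functions 2 param beta small}, apply Hadamard's bound, and choose $\delta$ close to $\ttt$ so the surviving exponent is negative. Strictly, the rescaling by $\mathrm{diag}(1,\rho^{-1})$ does not leave the Pfaffian literally unchanged; it absorbs the Jacobian $\rho^{-n}$. The only real difference is bookkeeping: the paper treats just the worst perturbation $\eta\otimes\psi$, bounds $e^{s(\ttt-\tbeta-2\delta)}\le e^{s(2\ttt-2\delta)}$ using $\tbeta>-\ttt$, and takes $\delta>\ttt/2$, whereas you check all six cases and get the less restrictive condition $\delta>\max\{\ttt/3,(\ttt-\tbeta)/4\}$.
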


\begin{proof}
We follow the same argument as in Lemma~\ref{lem:upper tail for modified kernel} and focus on the kernel with the largest growth rate. Other kernels can be proved similarly. Let $\rho = (\sigma N)^{-1/3}$. Using Lemma~\ref{lem: upper bounds for all functions 2 param beta small} gives
    \begin{equation}
        \begin{aligned}
            &\Bigg|\Pf\left(J - \widetilde{\boldsymbol{\mathcal{K}}} - \rho\ketbra{\begin{array}{c} \widehat{\eta}_2 \\
        -\widehat{\eta}_1
        \end{array}}{\widehat{\psi}_1 \quad \widehat{\psi}_2} - \rho\ketbra{\begin{array}{c}
            \psi_1\\
            \psi_2
        \end{array}}{-\eta_2 \quad \eta_1 }\right) -1\Bigg|\\
        &\leq \sum_{n=1}^{\infty} \frac{1}{n!} \int_{s}^{\infty}\cdots \int_{s}^{\infty}\\
        &\Bigg|\Pf\left(\left(\begin{pmatrix}
            \K & -\rho^{-1}\partial_{u_j}\K\\
            -\rho^{-1}\partial_{u_i}\K & \rho^{-2}\partial_{u_i}\partial_{u_j}\K
        \end{pmatrix} +\ketbra{\begin{array}{c} \eta_2 \\
        -\rho^{-1}\eta_1
        \end{array}}{\rho\psi_1 \quad \psi_2} + \ketbra{\begin{array}{c}
            \rho\psi_1\\
            \psi_2
        \end{array}}{-\eta_2 \quad \rho^{-1}\eta_1 } \right)\Bigg|(u_i,u_j)\right)_{1\leq i,j\leq n}  \prod_{i=1}^{n}du_i\\
        &\leq \sum_{n=1}^{\infty} \frac{1}{n!} \int_{s}^{\infty}\cdots \int_{s}^{\infty} (2n)^{\frac{n}{2}}C^n(1+e^{s(\ttt-\tbeta-2\delta)})^{n} e^{-2\delta\sum_{i=1}^nu_i}\prod_{i=1}^{n}du_i\\
        &= \sum_{n=1}^{\infty} \frac{1}{n!}(2n)^{\frac{n}{2}}C^n(1+e^{s(2\ttt-2\delta)})^{n} \frac{e^{-2\delta n s}}{(2\delta)^n}
        = \sum_{n=1}^{\infty} \frac{1}{n!}(2n)^{\frac{n}{2}}\left(\frac{C}{{2\delta}}\right)^n\left( {e^{-2\delta s} + e^{2s(\ttt-2\delta)}}\right)^n <\varepsilon
        \end{aligned}
    \end{equation}
    for sufficiently large $s$ and for some $\delta$ satisfying $\max\{\tbeta,0\}<\delta < \ttt$ and $\ttt-2\delta <0.$
\end{proof}

\begin{lemma}\label{lem: two param beta small upper tail Q}
    Fix any $\alpha >0,$ $\ttt>0,$ $\tbeta > \ttt,$ and $s\in \R$. Consider the scaling $\tau = -2N\psi(\alpha) + (\sigma N)^{1/3}s,$ $\beta = \tbeta/(\sigma N)^{1/3}$, and $t = \ttt/(\sigma N)^{1/3}$. Recall the definition of ${Q}_{N,\beta <t}^{High}(\tau)$ in Definition \ref{def: two param beta < t finite}. There exist $s_0 \gg 1$, $N_0 \in \Z_{>0}$ and $C>0$ independent of $s$ such that for all $s>s_0$ and $N > N_0$, we have
    \begin{equation}
        \big|(\sigma N)^{-1/3}{Q}_{N,\beta <t}^{High}(-2N\psi(\alpha) + (\sigma N)^{1/3}s)\big| \leq Ce^{s(\ttt - \tbeta)}.
    \end{equation}
\end{lemma}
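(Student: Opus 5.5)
The plan is to read $Q_{N,\beta<t}^{High}$ through its exact formula in Definition \ref{def: two param beta < t finite}, taken at the parameters in the statement. I would bound the numerator $\widehat{\M}\widehat{\NN}-\widehat{\A}\widehat{\D}+\widehat{\B}\widehat{\C}$ term by term in $s$, and separately show that the denominator $\Pf(J-\widehat{\boldsymbol{\mathcal{K}}})$ stays bounded away from $0$ for $s>s_0$. I would not use the probabilistic representation, because the tail bound required here is an analytic statement about the finite-$N$ formula under the critical scaling $\tau=-2N\psi(\alpha)+(\sigma N)^{1/3}s$.

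\emph{Step 1 (denominator and Pfaffian factors).} By Lemma \ref{lem:upper tail for modified kernel 2param}, for $s>s_0$ and $N>N_0$ the quantities $\Pf(J-\widehat{\boldsymbol{\mathcal{K}}})$ and every $\Pf(J-\widehat{\boldsymbol{\mathcal{K}}}-\widehat A)$ lie within $\varepsilon$ of $1$. Hence $1/\Pf(J-\widehat{\boldsymbol{\mathcal{K}}})\le 2$. Also, each of $\widehat{\M},\widehat{\NN},\widehat{\A},\widehat{\B},\widehat{\C},\widehat{\D}$ equals (its constant part) $\times(1+O(\varepsilon))$ plus an $O(\varepsilon)$ Pfaffian difference.

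\emph{Step 2 (constant parts).} Insert the bounds of Lemma \ref{lem: upper tail functions 2 param}, carrying the factor $(\sigma N)^{-1/3}$ on the $\M,\C,\D$ pieces so that the overall normalization matches $(\sigma N)^{-1/3}Q$. This should give:
\begin{itemize}
\item $\widehat{\NN}$ and $\widehat{\B}$ are $O(1)$;
\item $\widehat{\A}=O(e^{-(\tbeta+\delta)s})$;
\item $(\sigma N)^{-1/3}\widehat{\C}=O(e^{(\ttt-\tbeta)s})$, coming from $\C_1$;
\item $(\sigma N)^{-1/3}\widehat{\M}$ and $(\sigma N)^{-1/3}\widehat{\D}$ are $O(e^{(\ttt-\delta)s})$.
\end{itemize}
The term $\widehat{\B}\widehat{\C}$ then produces exactly the target rate $e^{s(\ttt-\tbeta)}$. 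The product $\widehat{\A}\widehat{\D}$ is $O(e^{-(\tbeta+2\delta-\ttt)s})$.

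\emph{Step 3 (main obstacle: the $\widehat{\M}\widehat{\NN}$ term and the exponent comparison).} The crude bound on $\widehat{\M}\widehat{\NN}$ is $e^{(\ttt-\delta)s}$. When $\tbeta>\ttt$, this is dominated by $e^{(\ttt-\tbeta)s}$ only if $\delta\ge\tbeta$, whereas the contour condition forces $\delta<\ttt<\tbeta$.

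To handle this, I would first try shifting the $\tW$-contours in $\M_1$, $\M_2$ and in $\widehat{\zeta}_i$ to the right, past $\ttt$, up to $\delta'\in(\tbeta,\ ?)$. The residues picked up at $\tW=\ttt$ are explicit. I expect them to cancel against the constant $-1$ in $\widehat{\M}$ and against matching residue terms in $\widehat{\A}\widehat{\D}$, leaving pieces that decay like $e^{(\ttt-\delta')s}$.

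If that cancellation does not close, I would fall back on showing that the prefactor $(\tbeta-\ttt)/(\tbeta+\ttt)$ in $\M_1$ combines with the $\NN_1-1$ bound. Concretely, I would aim to prove $|\widehat{\M}\widehat{\NN}|\le Ce^{s(\ttt-\tbeta)}$ directly from the residue bookkeeping.

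The remaining $O(\varepsilon)$ Pfaffian-difference corrections are controlled by the same Hadamard estimate as in Lemma \ref{lem:upper tail for modified kernel 2param}. Adjusting $C$ then gives the claim.
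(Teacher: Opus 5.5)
Your Steps 1 and 2 are exactly the paper's proof. Both Pfaffians are within $\varepsilon$ of $1$ by Lemma~\ref{lem:upper tail for modified kernel 2param}, the termwise bounds of Lemma~\ref{lem: upper tail functions 2 param} are inserted, and $\C_1$ supplies the dominant rate. The gap is Step~3, which addresses an obstacle that exists only because you read the hypothesis $\tbeta>\ttt$ literally. That hypothesis is a misprint for $-\ttt<\tbeta<\ttt$, for three reasons:
\begin{itemize}
\item $Q^{High}_{N,\beta<t}$ is only defined for $\beta\in(-t,t)$ (Definition~\ref{def: two param beta < t finite}; its contour needs $\beta<d<t$).
\item Both lemmas you invoke assume $-\ttt<\tbeta<\ttt$ and contours with $\max\{\tbeta,0\}<\delta<\ttt$.
\item The lemma is applied in Theorem~\ref{thm: High density beta small probability} in exactly that regime, with a \emph{growing} bound $e^{x(\ttt-\tbeta)}$.
\end{itemize}
You also misread the contour constraint: it forces $\delta>\tbeta$, not $\delta<\ttt<\tbeta$. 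Under your reading no admissible $\delta$ exists, so the bounds you use in Steps~1--2 would not be available at all.

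In the correct regime nothing is left to do. Since $\delta>\tbeta$, we have $\ttt-\delta<\ttt-\tbeta$. Combined with $\ttt-\tbeta>0$, this gives $(\sigma N)^{-1/3}|\widehat{\M}\widehat{\NN}|\le C(1+e^{(\ttt-\delta)s})\le C'e^{(\ttt-\tbeta)s}$ for $s>s_0$. Likewise every $O(1)$ or $O(\varepsilon)$ contribution from Step~1 is absorbed, and your Step~2 bounds finish the proof.

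The repair you sketch for the literal reading could not succeed. A decaying target $e^{-(\tbeta-\ttt)s}$ cannot absorb the non-decaying $O(\varepsilon)$ errors and $\widehat{\NN}\approx 1$ that your own Step~1 produces. Moreover, moving the $\tW$-contour of $\M_1$ across the double pole at $\tW=\ttt$ yields the residue $s-\ttt^2-\frac{1}{\tbeta-\ttt}-\frac{1}{\tbeta+\ttt}$ (cf.\ the right-tail computation in the paper's last section). This grows linearly in $s$. It is not cancelled by the $-1$ in $\widehat{\M}$, nor by $\widehat{\A}\widehat{\D}$ or $\widehat{\B}\widehat{\C}$, both of which decay when $\tbeta>\ttt$.

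So drop Step~3 and replace it with the one-line exponent comparison above.
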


\begin{proof}
By Lemma \ref{lem: upper tail functions 2 param} and Lemma \ref{lem:upper tail for modified kernel 2param}, we get that for any $\varepsilon >0,$ there exists $s_0 \gg 1$ and $N_0 \in \Z_{>0}$, and $C>0$ independent of $s$ such that for all $s \geq s_0$, 
    \begin{equation}
    \begin{aligned}
        &\bigg|(\sigma N)^{-1/3}{Q}_{N,\beta <t}^{High}(-2N\psi(\alpha) + (\sigma N)^{1/3}s)\bigg|
        \leq \bigg(\left(\big|(\sigma N)^{-1/3}\M_1(s)\big| + |\M_2(s)| + \varepsilon\right)\left(1+|\NN_2(s)| +  \varepsilon\right)\\
        &-(|\A_1(s)| + |\A_2(s)| + \varepsilon)\left(\big|(\sigma N)^{-1/3}\D_1(s)\big| + \big|(\sigma N)^{-1/3}\D_2(s)\big| + \varepsilon \right)\\
        &+ \left(|\B_1(s)| + |\B_2(s)| +  \varepsilon\right)\left(\big|(\sigma N)^{-1/3}\C_1(s)\big| + \big|(\sigma N)^{-1/3}\C_2(s)\big| +  \varepsilon\right)\leq Ce^{-(\tbeta - \ttt)s}
    \end{aligned}
    \end{equation}
    where the final bound follows from the fact that $\big|(\sigma N)^{1-/3}\C_1(s)\big|$ has the largest growth rate among all contributing terms.
\end{proof}

\section{Numerical evaluation of stationary log-gamma polymer and exponential LPP diagonal distribution}
In this section, we take the log-gamma to exponential limit of \eqref{eq: Two_param_formula_positive} and \eqref{Two_param_formula_negative}, see \cite[section 3]{LogGammaStationary} for example, and compute the exact distribution of the Exponential LPP when $N=2$ and $N = 3$. We also compare the resulting Fredholm Pfaffian representation with the formula obtained by \cite{PatrikStatExp}. At the level of finite $N$,
we do not see a direct transformation relating the two formulas. We also provide numerical evaluations of \eqref{eq: One_param_formula} via MATLAB for small $N$.

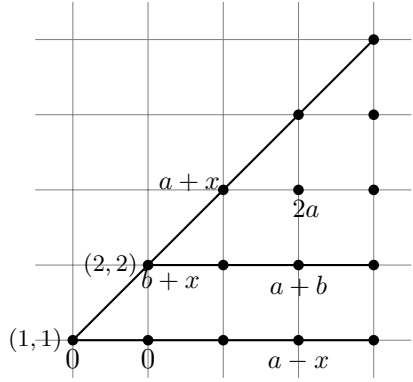
\begin{figure}[H]
\centering
\begin{tikzpicture}
\draw[very thin, gray] (-0.5,-0.5) grid (4.5,4.5);
    \foreach \x in {1,...,4} {
        \foreach \y in {0,1,...,\x} {
            \fill (\x,\y) circle (2pt);
        }
    }
    \fill (0,0) circle (2pt);
    \draw[thick,-] (0,0) -- (4,0);
    \draw[thick,-] (0,0) -- (4,4);
    \draw[thick,-] (1,1) -- (4,1);
    \fill (0,0) circle (1pt);
    \node at (0,-0.25) {$0$};
     
    \node at (1,-0.25) {$0$};
    \node at (-0.5,0) {\footnotesize{$(1,1)$}};
    \node at (0.5,1) {\footnotesize{$(2,2)$}};
    \node at (1.3,0.8) {\small{$b+x$}};
    \fill (1,1) circle (1pt);
    \node at (3,-0.28) {\small{$a-x$}};
    \node at (3,0.72) {\small{$a+b$}};
    \node at  (1.55,2.1) {\small{$a+x$}};
    \node at (3.1,1.75) {\small{$2a$}};
\end{tikzpicture}
\caption{Degeneration from $Z^{t,\beta}$ to stationary Exponential LPP model $G^{stat}_{exp}$ with diagonal parameter $x$, first-row parameter $-x$, second-row parameter $b$ and bulk parameter $a$.}
\label{fig:2param Exp stationary}
\end{figure}

\begin{figure}[H]
\centering
\begin{tikzpicture}
\draw[very thin, gray] (-0.5,-0.5) grid (4.5,4.5);
    \foreach \x in {1,...,4} {
        \foreach \y in {0,1,...,\x} {
            \fill (\x,\y) circle (2pt);
        }
    }
    \fill (0,0) circle (2pt);
    \draw[thick,-] (0,0) -- (4,0);
    \draw[thick,-] (0,0) -- (4,4);
    \draw[thick,-] (1,1) -- (4,1);
    \fill (0,0) circle (1pt);
    \node at (0,-0.25) {$x+y$};
     
    \node at (1,-0.25) {$b+y$};
    \node at (-0.5,0) {\footnotesize{$(1,1)$}};
    \node at (0.5,1) {\footnotesize{$(2,2)$}};
    \node at (1.3,0.8) {\small{$b+x$}};
    \fill (1,1) circle (1pt);
    \node at (3,-0.28) {\small{$a+y$}};
    \node at (3,0.72) {\small{$a+b$}};
    \node at  (1.55,2.1) {\small{$a+x$}};
    \node at (3.1,1.75) {\small{$2a$}};
\end{tikzpicture}
\caption{Inhomogeneous Exponential LPP model $G_{exp}$ with diagonal parameter $x$, first-row parameter $y$, second-row parameter $b$ and bulk parameter $a$.}
\label{fig:2param Exp before stationary}
\end{figure}
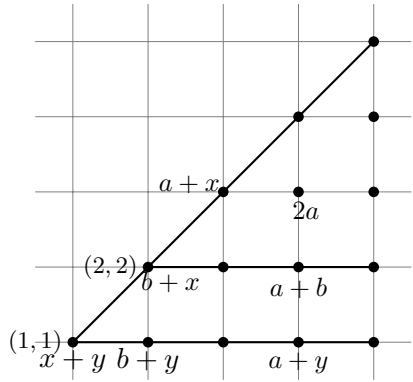

Consider the Gamma inverse random variable $\Gamm(x)$. We know that $\epsilon \Gamm(\epsilon x)$ converges to $\text{Exp}(x)$ in distribution as $\epsilon \rightarrow 0$. We call this type of scaling limit: the log-gamma-to-exponential limit. We first introduce a few notations.
Let $G_{exp}^{\text{stat}}(N,N)$ be the half-space exponential LPP value from $(1,1)$ to $(N,N)$ under the two-parameter exponential stationary initial condition, see \cite[Proposition 3.2]{LogGammaStationary}. The specific weights of this model is described in Figure \ref{fig:2param Exp stationary} where the
weights at $(1,1)$ and $(2,1)$ are set to $0$, and all remaining weights
are independent exponential random variables with the labeled rates. Let $G_{exp}(N,N)$ be the Exponential LPP value with weights described in Figure \ref{fig:2param Exp before stationary} and $G'_{exp}(N,N)$ be the Exponential LPP value with weights described in Figure \ref{fig:2param Exp before stationary} except the weights at $(1,1)$ and $(2,1)$ are replaced by $0$.
We state the shift argument for removing random variables Exp$(x+y)$ at $(1,1)$ and Exp$(b+y)$ at $(2,1)$. The geometric version of this can be found in \cite[section 6.3.1]{zeng2025stationary}. We have
\begin{equation}
    \Pb\left( G'_{exp}(N,N) \leq \gamma \right) = \partial_{\gamma} \frac{\Pb\left( G_{exp}(N,N) \leq \gamma \right)}{x+y} + \frac{1}{b +y}\partial_{\gamma}^2\frac{\Pb\left( G_{exp}(N,N) \leq \gamma \right)}{x+y}.
\end{equation}
We then take $y \rightarrow -x$ on both sides and let $g(s) : =\lim_{y \rightarrow -x}\frac{\Pb\left( G_{exp}(N,N) \leq s \right)}{x+y}.$ The fact that $g(s)$ is well-defined follows from \cite[Theorem 1.5 (1.9)]{zeng2025stationary}. We get
\begin{equation}\label{eq: exp shift}
    \Pb\left( G_{exp}^{\text{stat}}(N,N) \leq \gamma \right) = \partial_{\gamma} g(\gamma) + \frac{1}{b - x}\partial_{\gamma}^2 g(\gamma).
\end{equation}

Consider the following scaling of parameters $\beta,t,\alpha,u,\tau$ for the stationary log-gamma model:
\begin{equation}\label{eq: scalings}
    \beta = \epsilon b, \quad t = \epsilon x, \quad \alpha = \epsilon a, \quad u = \epsilon y, \quad \tau = \gamma/\epsilon.
\end{equation} 
Let $K_{exp}$ denote the log-gamma-to-exponential limit of the kernel $\bold{\check{K}}$ and let $\Pf\left(J - K_{exp}\right)_{\mathbb{L}^2((\gamma,\infty))}$ be the log-gamma-to-exponential limit of $\Pf\left(J - \bold{\check{K}}\right)_{\mathbb{L}^2((\tau,\infty))}$.
One can check that performing the same kernel decomposition and analytic continuation analysis to $\Pf\left(J - K_{exp}\right)$ gives the same formula as taking the log-gamma-to-exponential limit of \eqref{eq: One_param_formula}, \eqref{eq: Two_param_formula_positive}, and \eqref{Two_param_formula_negative}.

We compute the log-gamma-to-exponential limit of \eqref{Two_param_formula_negative} when $N=2$. We use a superscript $f^e$ to denote the log-gamma-to-exponential limit of each function $f$.
\begin{equation}\label{eq: N=2 two param}
\begin{aligned}
    &\frac{1}{\Pf\left(J - \widehat{\boldsymbol{\mathcal{K}}}^e\right)}\left( \widehat{\M}^e\widehat{\NN}^e - \widehat{\A}^e \widehat{\D}^e + \widehat{\B}^e \widehat{\C}^e\right)\\
    &= \gamma - \frac{1}{b - x} - \frac{1}{b + x} + \frac{e^{\gamma(x-b)}(x+b)}{2x(b-x)}+ \frac{(x-b)}{2x(x+b)}e^{-\gamma(b+x)},
\end{aligned}  
\end{equation}
where
\begin{equation}
    \begin{aligned}
    &\widehat{\boldsymbol{\mathcal{K}}}^e= \begin{pmatrix}
        0 & 0 \\
        0 & 0
    \end{pmatrix},\quad 
        \widehat{\M}^{e} = \gamma - \frac{1}{b - x} - \frac{1}{b + x}, \quad \widehat{\NN}^{e} = 1,\\
        &\widehat{\A}^{e} = - e^{-\gamma(b + x)}, \quad \widehat{\B}^{e} = \frac{b+x}{b-x},\quad
        \widehat{\C}^{e} = \frac{e^{\gamma(x-b)}}{2x}, \quad \widehat{\D}^{e} = \frac{(x-b)}{2x(x+b)}.
    \end{aligned}
\end{equation}
Then we can apply the shift argument \eqref{eq: exp shift} to \eqref{eq: N=2 two param} and get
\begin{equation}\label{eq: correct CDF}
\begin{aligned}
    \Pb\left( G_{exp}^{\text{stat}}(2,2) \leq \gamma \right) &=\left(\partial_\gamma + \frac{1}{b-x}\partial_\gamma^2\right)\left(\gamma - \frac{1}{b - x} - \frac{1}{b + x} +\frac{e^{\gamma(x-b)}(x+b)}{2x(b-x)}+ \frac{(x-b)}{2x(x+b)}e^{-\gamma(b+x)}\right)\\
    &= 1- e^{-(x + b)\gamma}.
    \end{aligned}
\end{equation}
The log-gamma-to-exponential limit of \eqref{eq: Two_param_formula_positive} under $N=2$ is
\begin{equation}\label{eq: N=2, beta large finite formula}
\begin{aligned}
    &\Pf\left(J - \widehat{\boldsymbol{{K}}}^e\right)\left( \widehat{C}^e - \brabarket{\widehat{B^e} \quad - \widehat{B^e}'}{\left(\Id - J^{-1}\widehat{\boldsymbol{{K}}}^e\right)^{-1}}{\begin{array}{c}
        \widehat{\phi}_1^e\\
        \widehat{\phi}_2^e
    \end{array}}\right)\\
    &=\gamma - \frac{1}{b-x} - \frac{1}{b+x} + \frac{e^{\gamma(x-b)}(x+b)}{2x(b-x)} + \frac{(x-b)}{2x(x+b)}e^{-s(x+b)},
\end{aligned}  
\end{equation}
where
\begin{equation}
    \begin{aligned}
        &\widehat{\boldsymbol{{K}}}^e= \begin{pmatrix}
        0 & 0 \\
        0 & 0
        \end{pmatrix},\quad \Pf\left(J - \widehat{\boldsymbol{{K}}}^e\right) = 1, \quad \widehat{C}^e = \gamma - \frac{1}{b-x} - \frac{1}{b+x} + \frac{e^{\gamma(x-b)}(x+b)}{2x(b-x)},\\
        &\widehat{\phi}_1^e(X) = \frac{b}{2x}\frac{(b-x)}{(b+x)}e^{-bX}, \quad \widehat{\phi}_2^e(X) = \frac{-1}{2x}\frac{(b-x)}{(b+x)}e^{-bX},\\
        &\widehat{B^e}(Y) = \frac{x+b}{x-b}\left( e^{-bY}- e^{-xY} \right), \quad \widehat{B^e}'(Y) = \frac{x+b}{x-b}\left( -be^{-bY}+x e^{-xY} \right),\\
        &-\Pf\left(J - \widehat{\boldsymbol{{K}}}^e\right)\brabarket{\widehat{B^e} \quad - \widehat{B^e}'}{\left(\Id - J^{-1}\widehat{\boldsymbol{{K}}}^e\right)}{\begin{array}{c}
        \widehat{\phi}_1^e\\
        \widehat{\phi}_2^e
        \end{array}} = \frac{(x-b)}{2x(x+b)}e^{-s(x+b)}.
    \end{aligned}
\end{equation}
We see that \eqref{eq: N=2 two param} and \eqref{eq: N=2, beta large finite formula} match.


Next, we consider the case $N = 3$.
When $b \neq x,-x,$ we have the following log-gamma-to-exponential limit formula for \eqref{Two_param_formula_negative}:
\begin{equation}
    \begin{aligned}
        &\widehat{\M}^e = \gamma - \frac{1}{a -x} -\frac{1}{a +x}-\frac{1}{b -x}-\frac{1}{b +x} - \frac{(b +a)}{2x(b - a)}\left(\frac{(b - x)(x + a)}{(b + x)(x - a)}e^{-\gamma(a-x)}+\frac{(b + x)( a-x)}{(b - x)( a+x)}e^{-\gamma(a+x)}\right) \\
        &\widehat{\NN}^e = 1- e^{-\gamma(a + b)}, \quad \Pf\left(J - \widehat{\boldsymbol{\mathcal{K}}}^e\right) = 1-e^{-\gamma(a + b)}.\\
        &\widehat{\A}^e = \left(e^{-\gamma(b + a)} -e^{-\gamma(b + x)}\right)\frac{(x+a)(b+a)}{(x-a)(b-a)}, \quad
        \widehat{\B}^e = \frac{b + x}{b - x}(1-e^{-\gamma(a + x)}), \\
        &\widehat{\C}^e = \frac{1}{2x} \left(e^{-\gamma(b + a)}- e^{-\gamma(b -x)} \right)\frac{(a-x)(b+a)}{(a+x)(b-a)}, \quad 
        \widehat{\D}^e = -\frac{1}{2x}\frac{b - x}{b + x}(1-e^{-\gamma(a - x)}).
    \end{aligned}
\end{equation}

where $\widehat{\M}^e$ is evaluated as the log-gamma-to-exponential limit of the following $\widehat{\M}$
\begin{equation}
    \widehat{\M} =\Pf\left(J - \widehat{\boldsymbol{\mathcal{K}}}\right) \left(\widehat{\M}_1 + \widehat{\M}_2 \right)- \Pf\left(J - \widehat{\boldsymbol{\mathcal{K}}} \right)\brabarket{\widehat{\aleph}_1 \quad \widehat{\aleph}_2
}{\left(\Id - J^{-1}\widehat{\boldsymbol{\mathcal{K}}}\right)}{\begin{array}{c}
            \widehat{\zeta}_1\\
            \widehat{\zeta}_2
        \end{array}}
\end{equation}
rather using the difference of Pfaffian formula. Similarly, we obtain $\widehat{\NN}^e,$ $\widehat{\A}^e,$ $\widehat{\B}^e,$ $\widehat{\C}^e,$ $\widehat{\D}^e.$

Taking the log-gamma-to-exponential limit of \eqref{eq: Two_param_formula_positive}, we obtain that
\begin{equation}
    \begin{aligned}
        &\Pf\left( J - \widehat{\boldsymbol{{K}}}^e\right) = 1-e^{-(a + b)\gamma},\\
        &\widehat{C}^e = \gamma - \frac{1}{a-x} - \frac{1}{b - x} - \frac{1}{a + x} - \frac{1}{b + x} - \frac{(a + b)(b - x)(a + x)e^{-\gamma(a -x)}}{2x(b - a)(x - a)(x+b)} - \frac{(a  + b)(a - x)(b + x)e^{-\gamma(b - x)}}{2x(a - b)(a + x)(x-b)},\\
        &-\Pf\left( J - \widehat{\boldsymbol{{K}}}^e\right)\brabarket{\widehat{B}^e \quad -\widehat{B^{\prime}}^e}{\left(\Id - J^{-1}\widehat{\boldsymbol{{K}}}^e\right)^{-1}}{\begin{array}{c}
            \widehat{\phi}_1^e\\
            \widehat{\phi}_2^e
        \end{array}} = \\
        &-\frac{a+b}{2(a-b)(a-x)(b-x)x(a+x)(b+x)}
\bigg(-2e^{-2(a+b)\gamma}\bigl(-1+e^{(a+b)\gamma}\bigr)(b-x)^2(a+x)^2 \\
&\quad
+ e^{(-3a-2b+x)\gamma}\bigl(-1+e^{(a+b)\gamma}\bigr)(b-x)^2(a+x)^2
+ e^{-(a+2b+x)\gamma}\bigl(-1+e^{(a+b)\gamma}\bigr)(b-x)^2(a+x)^2 \\
&\quad
+ e^{-(3a+2b+x)\gamma}\bigl(e^{a \gamma}-e^{x \gamma}\bigr)^2(b-x)^2(a+x)^2
+ 2 e^{-2(a+b)\gamma}\bigl(-1+e^{(a+b)\gamma}\bigr)(a-x)^2(b+x)^2 \\
&\quad
- e^{(-2a-3b+x)\gamma}\bigl(-1+e^{(a+b)\gamma}\bigr)(a-x)^2(b+x)^2
- e^{-(2a+b+x)\gamma}\bigl(-1+e^{(a+b)\gamma}\bigr)(a-x)^2(b+x)^2 \\
&\quad
- e^{-(2a+3b+x)\gamma}\bigl(e^{b \gamma}-e^{x \gamma}\bigr)^2(a-x)^2(b+x)^2
\bigg).
    \end{aligned}
\end{equation}

One can easily check using Mathematica that 
\begin{equation}
    \begin{aligned}
        \Pf\left(J - \widehat{\boldsymbol{\mathcal{K}}}^e\right)\bigg(\widehat{\M}^e\widehat{\NN}^e &-\widehat{\A}^e\widehat{\D}^e + \widehat{\B}^e\widehat{\C}^e\bigg) = \\
        &\Pf\left(J - \widehat{\boldsymbol{{K}}}^e\right)\widehat{C}^e-\Pf\left( J - \widehat{\boldsymbol{K}}^{e}\right)\brabarket{\widehat{B}^e \quad -\widehat{B'}^e}{\left(\Id - J^{-1}\widehat{\boldsymbol{K}}^{e}\right)^{-1}}{\begin{array}{c}
            \widehat{\phi}^e_1\\
            \widehat{\phi}^e_2
        \end{array}}.
    \end{aligned}
\end{equation}

We directly compute the $N=3$ formula in \cite[Theorem 2.4]{PatrikStatExp}. We use the half-space exponential LPP kernel for $G^{{stat}}_{exp}$ defined in Figure \ref{fig:2param Exp stationary}. Let 
\begin{equation}
    \mathsf{K}(X,Y) = \begin{pmatrix}
        K_{11} & K_{12}\\
        K_{21} & K_{22}
    \end{pmatrix}(X,Y),
\end{equation}
where $\mathsf{K}(X,Y)$ is defined in \cite[(2.13)]{PatrikStatExp}.
\begin{equation}
    \begin{aligned}
        K_{11}(X,Y) &= \frac{b + a}{b-a}(b - x)(x - a)(e^{-b X - aY} - e^{-aX - b Y }), \quad K_{21}(X,Y) = - K_{12}(Y,X),\\
        K_{12}(X,Y) &= \frac{b + a}{b-a}\left(2a\frac{b - x}{a+x}e^{-b X - aY} - 2b\frac{a-x}{b + x} e^{-aX - b Y } +(a-x)\frac{b - x}{b + x}e^{-aX+x Y } - (b-x)\frac{a - x}{a + x}e^{-b X+x Y }\right),\\
        K_{22}(X,Y) &= \frac{4b a(b + a)}{(b - a)(a+x)(x + b)}\left(e^{-aX-b Y} - e^{-b X - aY}\right) + \frac{(b + a)(b - x)(2a)}{(b + x)(b - a)(x + a)}\left(e^{x X- aY} - e^{-aX + x Y}\right)\\
        &-\frac{(a-x)(a+b)(2b)}{(a+x)(a-b)(b + x)}\left(e^{x X - b Y} - e^{-b X + x Y}\right) + K_{22}^{exp}(X,Y),\\
        &K_{22}^{exp}(X,Y) = \mathrm{sgn}(X-Y)e^{x|X-Y|}.
    \end{aligned}
\end{equation}
To evaluate the Fredholm Pfaffian of above kernel, we make the decomposition:
\begin{equation}
    \begin{aligned}
        &d_2(X) = \frac{b + a}{b-a} \left(\frac{e^{xX}}{-(b + x) } + \frac{2b}{(b + x)(b - x)}e^{-b X}\right), \quad g_1(X) = \frac{b + a}{b-a} e^{-b X},\\
        &f_1(Y) = (b - x)(x - a)e^{-aY}, \quad h_2(Y) = 2a\frac{b - x}{a+x}e^{-aY} - (a-x)\frac{b - x}{a+x}e^{x Y}.
    \end{aligned}
\end{equation}
\begin{equation}
    X_1 = \ket{{\begin{array}{c}
           d_2 \\
           g_1
        \end{array}}}, \quad Y_1 = \bra{f_1 \quad h_2}, \quad X_2 =\ket{{\begin{array}{c}
           -h_2 \\
           f_1
        \end{array}}}, \quad Y_2 = \bra{-g_1 \quad d_2}.
\end{equation}
Then we compute
\begin{equation}
\begin{aligned}
    &\braket{Y_1}{X_1} = \frac{2x(b - x)}{(b - a)(a+x)}e^{-\gamma(b + a)}- \frac{(a-x)(b +a)}{(a+x)(b - a)}e^{-\gamma(b -x)}+ \frac{(b + a)(b - x)}{(b - a)(b+x)}e^{-\gamma(a - x)}+\frac{2b(x - a)}{(b + a)(b+x)}e^{-\gamma(b + a)},\\
    &\brabarket{Y_1}{\begin{pmatrix}
        0 & -K_{22}^{exp}\\
        0 & 0
    \end{pmatrix}}{X_1} = \frac{(b - x)(x - a)(b + a)}{(b - a)}\brabarket{e^{-aX}}{\mathrm{sgn}(X-Y)e^{x|X-Y|}}{e^{-b Y}}
    = -e^{-(a+b)\gamma},
    \end{aligned}
\end{equation}
and
\begin{equation}\label{eq: Pfaffian for exp Patrik}
    \begin{aligned}
        &\Pf\left(J - \mathsf{K}\right) = 1- \braket{Y_1}{X_1} - \brabarket{Y_1}{\begin{pmatrix}
            0 & -K_{22}^{exp}\\
            0 & 0
        \end{pmatrix}}{X_1}\\
        &= 1-\frac{2x(b - x)}{(b - a)(a+x)}e^{-\gamma(b + a)}+\frac{(a-x)(b +a)}{(a+x)(b - a)}e^{-\gamma(b -x)}- \frac{(b + a)(b - x)}{(b - a)(b+x)}e^{-\gamma(a - x)}-\frac{2b(x - a)}{(b + a)(b+x)}e^{-\gamma(b + a)}\\
        &\quad + e^{-(a+b)\gamma}.
    \end{aligned}
\end{equation}
We see that the Fredholm Pfaffian of the central kernel in \eqref{eq: Pfaffian for exp Patrik} is more complicated than $\Pf\left(J - \widehat{\boldsymbol{\mathcal{K}}}^e\right)$
even though both formulas describe the same stationary exponential LPP distribution.

\subsection{Right tail behavior of asymptotic formulas} By KPZ universality, one should expect that taking the critical scaling of exponential LPP pfaffian formula should coincide with the asymptotics in the log-gamma case. Then it is natural to test our formula using the shift argument from ELPP rather than using the fourier transform and inversion. Then we immediately see that those complicated formulas after shift argument have the correct right tail behavior of a CDF function. 

Consider the product stationary asymptotics \eqref{eq: one param asymptotic formula}. Fix $0<\delta < \ttt$. The residue at $W = \ttt$ of the complex integral in $\widetilde{C}_L$ gives $s-\ttt^2$. Assume $s \gg 1$. We have Fredholm Pfaffian is close to $1$ using similar arguments in Lemma \ref{lem:upper tail for modified kernel}. The asymptotic version of the one-parameter shift argument is simply the partial derivative as shown in \cite[Theorem 2.7 and Lemma 3.3]{PatrikStatExp}.
Applying $\partial_s$ yields 
\begin{equation}
    \lim_{N\rightarrow \infty}\Pb\left(\frac{\log Z^{t} +Nf}{(\sigma N)^{1/3}}\leq s\right)= \partial_s \left((s-\ttt^2)\left(1-\mathcal{O}(e^{-\delta s})\right)\right) = 1 - \mathcal{O}(e^{-\delta s}).
\end{equation}
Next, we consider the two-parameter asymptotics \eqref{eq: some equations beta>t 2 param} when $\tbeta >\ttt$. The only difference of it from the product stationary case is the residue at $W=\ttt$ of $\widetilde{C}$ in \eqref{eq: some equations beta>t 2 param}:
\begin{equation}
    \text{Res}(\widetilde{C},\ttt) = s - \ttt^2 - \frac{1}{\tbeta - \ttt}- \frac{1}{\tbeta + \ttt}.
\end{equation}
Applying $\partial_s$ yields 
\begin{equation}
    \lim_{N\rightarrow \infty}\Pb\left(\frac{\log Z^{t,\beta} +Nf}{(\sigma N)^{1/3}}\leq s\right)= \partial_s \left(\left(s - \ttt^2 - \frac{1}{\tbeta - \ttt}- \frac{1}{\tbeta + \ttt}\right)\left(1-\mathcal{O}(e^{-\delta s})\right)\right) = 1 + \mathcal{O}(e^{-\delta s}).
\end{equation}
Finally, we consider the two-parameter asymptotics \eqref{eq: K_0 version 2 param} when $-\ttt<\tbeta < \ttt$. Fix $\max\{\tbeta,0\}<\delta < \ttt$. We have the following estimates of each function:
\begin{equation}
    \begin{aligned}
        &\Pf\left(J - \widetilde{\boldsymbol{\mathcal{K}}}\right) = 1 - \mathcal{O}(e^{-\delta s}),\quad \text{Res}(\widetilde{\A}_1, \ttt) = -e^{\frac{\tbeta^3}{3} + \frac{\ttt^3}{3} - s(\tbeta + \ttt)} + \mathcal{O}(e^{-\delta s}), \quad \text{Res}(\widetilde{\A}_2, \ttt) = \mathcal{O}(e^{-\delta s}),\\
        &\text{Res}(\widetilde{\B}_1, \ttt) = \frac{(\tbeta + \ttt)}{(\tbeta - \ttt)}+ \mathcal{O}(e^{-\delta s}), \quad \text{Res}(\widetilde{\B}_2, \ttt) = \mathcal{O}(e^{-\delta s}),\\
        &\text{Res}(\widetilde{\C}_1, \ttt) = \frac{1}{2\ttt}e^{\frac{\tbeta^3}{3} - \frac{\ttt^3}{3} + s(\ttt-\tbeta)}+ \mathcal{O}(e^{-\delta s}), \quad \text{Res}(\widetilde{\C}_2, \ttt) = \mathcal{O}(e^{-\delta s}).\\
        \end{aligned}
\end{equation}
\begin{equation}
\begin{aligned}
        &\text{Res}(\widetilde{\D}_1, \ttt) = \frac{(\ttt-\tbeta)}{2\ttt(\tbeta+\ttt)}+ \mathcal{O}(e^{-\delta s}), \quad \text{Res}(\widetilde{\D}_2, \ttt) = \mathcal{O}(e^{-\delta s}),\\
        &\text{Res}(\widetilde{\M}_1, \ttt) = s - t^2 - \frac{1}{\tbeta - \ttt} - \frac{1}{\tbeta + \ttt}+ \mathcal{O}(e^{-\delta s}), \quad \text{Res}(\widetilde{\M}_2, \ttt) = \mathcal{O}(e^{-\delta s}),\\
        &\text{Res}(\widetilde{\NN}_1,\ttt) = 1+ \mathcal{O}(e^{-\delta s}), \quad \text{Res}(\widetilde{\NN}_2, \ttt) = \mathcal{O}(e^{-\delta s}).\\
    \end{aligned}
\end{equation}
Applying $\partial_s$ yields
\begin{equation}
    \begin{aligned}
        \lim_{N\rightarrow \infty}\Pb\left(\frac{\log Z^{t,\beta} +Nf}{(\sigma N)^{1/3}}\leq s\right)=  \left(\frac{1}{\tbeta - \ttt}\partial_s + \partial_s^2\right) \left(\widetilde{\M}\widetilde{\NN} - \widetilde{\A}\widetilde{\D} + \widetilde{\B}\widetilde{\C} \right) = 1 - e^{\frac{\tbeta^3}{3} + \frac{\ttt^3}{3}-3s(\ttt+\tbeta)} + \mathcal{O}(e^{-\delta s}).
    \end{aligned}
\end{equation}

\subsection{Matlab Evaluations and Simulation results}
Recall the definition of $Q_{N,t}^{Low}(\tau)$ in Definition \ref{def: one_param_finite}. We compute $Q_{N,t}^{Low}(\tau)$ by direct evaluation of complex integrals and approximation of Fredholm Pfaffian using Matlab. Set $N = 4,$ $t = 0.4,$ $\alpha = 2.1$, and use the contour $-d+\I\R = -0.2 + \I\R.$ Let $S = \E\bigg[ 2K_0\left( 2
            {e^{\left(\log{{\mathcal{Z}}^t}(N,N)-\tau\right)/2} }\right)  \bigg].$ We estimate $S$ by generating $10^5$ samples and approximate expectation via SLLN. The results are
\begin{equation}
\begin{aligned}
\text{When }\tau = 1, \quad & \quad Q_{N,t}^{Low}(\tau) =2.8322, S(\tau) = 2.7992, \\
\text{When }\tau = 2.5, \quad & \quad Q_{N,t}^{Low}(\tau) =4.1138, S(\tau) = 4.1580, \\
\text{When }\tau = 3, \quad & \quad Q_{N,t}^{Low}(\tau) = 4.5987, S(\tau) = 4.6372, \\
\text{When }\tau = 5, \quad & \quad Q_{N,t}^{Low}(\tau) = 6.5765,  S(\tau) = 6.5912,\\
\text{When }\tau = 7, \quad & \quad Q_{N,t}^{Low}(\tau) = 8.5778, S(\tau) = 8.5816,\\
\text{When }\tau = 9, \quad & \quad Q_{N,t}^{Low}(\tau) = 10.5833, S(\tau) = 10.5800.
\end{aligned}
\end{equation}

\bibliographystyle{plain} 
\bibliography{main.bib}
\end{document}